\documentclass[a4paper,one side, oldfontcommands,11pt]{memoir}
\usepackage[table]{xcolor}
\usepackage{pdfpages}
\usepackage[utf8x]{inputenc}

\usepackage{a4wide}
\usepackage{bbm}
\usepackage{amssymb,amsmath,amsthm,booktabs}
\usepackage[square,numbers]{natbib}
\usepackage{mathrsfs}
\usepackage{enumerate}
\usepackage{mdwlist}
\usepackage{psfrag}
\usepackage{caption}
\usepackage{makecell}

\usepackage{subfig}
\usepackage{setspace}
\usepackage[chapter]{algorithm}
\usepackage{algorithmic,bm}
\usepackage{multirow}
\usepackage{url,longtable}
\usepackage{xcolor}
\usepackage{booktabs}
\usepackage{hyperref}
\usepackage{datetime}
\usepackage{lscape}
\usepackage{natbib}
\usepackage{bibunits}
\usepackage{enumitem}
\usepackage{lipsum}
\usepackage{calc,soul}
\usepackage[export]{adjustbox}

\usepackage{amsfonts}

\usepackage{diagbox}
\usepackage{placeins}
\usepackage{multirow}
\usepackage{enumerate}
\usepackage{enumitem}
\usepackage{float}
\numberwithin{figure}{chapter}
\numberwithin{table}{chapter}
\usepackage{tikz}
\usepackage{afterpage}
\usepackage[T1]{fontenc}
\newcommand{\sys}{\mathrm{sys}}
\usetikzlibrary{arrows.meta}

\usetikzlibrary{decorations.markings}
\usepackage{booktabs}
\newcommand\blankpage{%
    \null
    \thispagestyle{empty}%
    \addtocounter{page}{-1}%
    \newpage}
\usepackage{epstopdf}
\usepackage{mathtools}
\usepackage{upgreek}

\def\leftalgn{0.05}\def\rightalgn{0.7}
\def\algnrow{\rule{\leftalgn\textwidth}{0ex}&\rule{\rightalgn\textwidth}{0ex}}
 {\\\algnrow%
  \end{array}\vspace{-\baselineskip}%
  \end{equation}\ignorespacesafterend%
}

\usepackage{adjustbox}
\DeclareMathOperator{\sech}{sech}
\DeclareMathOperator{\csch}{csch}
\newcommand{\R}{\mathbb{R}}
\newcommand{\C}{\mathbb{C}}

\newcommand{\N}{\mathbb{N}}

\newcommand{\p}{\mathrm{PSL}(2,\mathbb{R})}

\newcommand{\h}{\mathbb{H}}

\newtheorem{theorem}{Theorem}
\newtheorem*{repthm}{Theorem}
\newtheorem*{repprop}{Proposition}
\newtheorem{lemma}{Lemma}

\newtheorem{question}{Question}
\newtheorem{example}{Example}
\newtheorem{non-example}{Non-example}

\def\bc{\begin{center}}
\def\ec{\end{center}}

\definecolor{mycolor}{rgb}{0,0,0}
\setsecheadstyle{\color{mycolor}\Large\bfseries\sffamily\raggedright}
\setsubsecheadstyle{\color{mycolor}\large\bfseries\sffamily\raggedright}
\setsubsubsecheadstyle{\color{mycolor}\normalsize\bfseries\sffamily\raggedright}

\hypersetup
{
    pdfauthor={Achintya Dey},
    pdfsubject={SCSs},
    pdftitle={SPP},
    pdfkeywords={SPP, CFS},
    colorlinks=true,       % false: boxed links; true: colored links
    linkcolor=black,          % color of internal links
    citecolor=blue,        % color of links to bibliography
    filecolor=cyan,      % color of file links
    urlcolor=blue,
}

\makeatletter
\newlength\dlf@normtxtw
\newsavebox{\feline@chapter}
\newcommand\feline@chapter@marker[1][4cm]{%
\sbox\feline@chapter{%
\resizebox{!}{#1}{\fboxsep=1pt%
\colorbox{mycolor}{\color{white}\bfseries\sffamily\thechapter}%
}}%
\rotatebox{90}{%
\resizebox{%
\heightof{\usebox{\feline@chapter}}+\depthof{\usebox{\feline@chapter}}}%
{!}{\scshape\so\@chapapp}}\quad%
\raisebox{\depthof{\usebox{\feline@chapter}}}{\usebox{\feline@chapter}}%
}
\newcommand\feline@chm[1][4cm]{%
\sbox\feline@chapter{\feline@chapter@marker[#1]}%
\makebox[0pt][l]{% aka \rlap
\makebox[1cm][r]{\usebox\feline@chapter}%
}}
\makechapterstyle{daleif1}{

\renewcommand\printchapternum{\null\hfill\feline@chm[2.5cm]\par}

}
\makeatother
\chapterstyle{daleif1}
\captionnamefont{\small\sffamily\bfseries}
\captiontitlefont{\small\sffamily}
\makepagestyle{myruled}
\makeheadrule {myruled}{\textwidth}{1pt}%
\makefootrule {myruled}{\textwidth}{1pt}{\footruleskip}
\makeoddhead {myruled}{\small\sffamily\rightmark}{}{\small\thepage}
\makeatletter % because of \@chapapp
\makepsmarks{myruled}{
  \nouppercaseheads\createmark{chapter}{both}{shownumber}{
  \@chapapp\ }{.\ }
 \createmark{section}{right}{shownumber}{}{. \ }
  \createmark{subsection}{right}{shownumber}{}{. \ }
  \createmark{subsubsection}{right}{shownumber}{}{. \ }
  \createplainmark {toc}{both} {\contentsname}
  \createplainmark {lof}{both} {\listfigurename}
  \createplainmark {lot}{both} {\listtablename}
  \createplainmark {index}{both} {\indexname}
  \createplainmark {glossary}{both} {\glossaryname}
}
\makeatother

\usepackage{stackengine}
\usepackage{scalerel}
\usepackage{xcolor}

\numberwithin{equation}{section}
\newtheorem{definition}[]{Definition}
\newtheorem{remark}[theorem]{Remark}
\newtheorem{corollary}[theorem]{Corollary}
\newtheorem{proposition}[theorem]{Proposition}
\numberwithin{theorem}{chapter}
\numberwithin{lemma}{chapter}
\numberwithin{algorithm}{chapter}
\numberwithin{definition}{chapter}
\allowdisplaybreaks
\begin{document}
\pagenumbering{roman}
\setcounter{page}{1}
%\pagenumbering{gobble}
%\include{Cover_page}
%\cleardoublepage
%\include{title1}
%\blankpage
\thispagestyle{empty}

%\vglue 48mm
\begin{table}[h]
\centering
\begin{tabular}{c}
    \toprule
    \midrule\\[-2mm]
    {\LARGE \textbf{Systolic Geometry and Combinatorics of Surfaces}}\\[3mm]
    \midrule
    \bottomrule
\end{tabular}
\end{table}

\vfill

\begin{center}
{{\em A thesis submitted in partial fulfillment of the requirements }} \\[1.5ex]
{{\em for the degree of}} \\[4ex]
{\large {\textbf{ DOCTOR OF PHILOSOPHY}}}\\
\vglue 3mm
%\vfill
{\textsl by}\\[4ex]
{\large {\textbf {ACHINTYA DEY}}\\ \vskip .2cm (Roll No. 19208263)}\\[12ex]
%\vglue 20mm
\vfill
\includegraphics[width=1.9in]{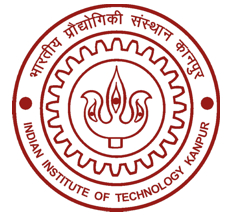} \\
\vfill
{\textsl to the} \\
{\normalsize \textbf  {DEPARTMENT OF MATHEMATICS}}  \\
{\Large  INDIAN INSTITUTE OF TECHNOLOGY KANPUR}\\
\vspace{1 cm}
June 2026
\end{center}

\blankpage
\pagestyle{plain}
{\raggedleft
\includegraphics[width=2.5in]{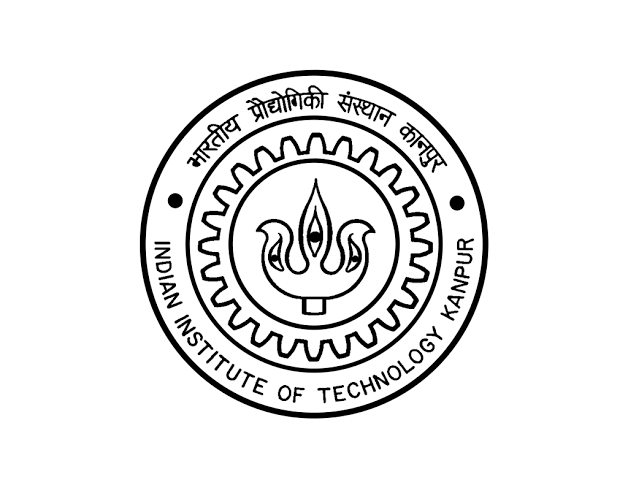}}
\begin{center}
{\textbf {\LARGE Certificate}}
\end{center}
\vspace{0.5 in} 
It is certified that the work contained in the thesis titled \textbf{\emph{``Systolic  geometry and combinatorics of surfaces”}} by \textbf{\emph{``Achintya Dey”}} has been carried out under my supervision and that this work has not been submitted elsewhere for a degree.
\par
\vspace{0.7 in}

\vspace{-0.2 in}

\begin{tabular}[c]{ll}
%{} & {\bf (Signature of Supervisor)}\\
{} & {\textbf {Prof. Bidyut Sanki}}\\
{} & {(Thesis Supervisor)} \\
{} & \textbf {Department of Mathematics,} \\
 \textbf{June, 2026} $\hspace{2.0cm}$ &  {\textbf {Indian Institute of Technology, Kanpur.}} 
\end{tabular}

\blankpage

%\blankpage
%\tableofcontents
%\cleardoublepage
%\chapter*{Aknowledgment}
%\addcontentsline{toc}{chapter}{\numberline{}Acknowledgment}
%\include{Acknowledgment}
%\chapter*{Declaration from Author}
%\addcontentsline{toc}{chapter}{\numberline{}Papers from the Thesis}
%\include{Author_declaration}
\pagestyle{plain}
{\raggedleft
\includegraphics[width=2.5in]{IITK_Logo.png}}
\begin{center}
{\textbf {\LARGE Declaration}}
\end{center}
\vspace{0.3 in} 
This is to certify that the thesis titled \textbf{\emph{``Systolic  geometry and combinatorics of surfaces”} } has been authored by me. It presents the research conducted by me under the supervision of \textbf {\itshape Prof. Bidyut Sanki}.

\noindent To the best of my knowledge, it is an original work, both in terms of research content and narrative and has not been submitted elsewhere, in part or in full, for a degree. Further, due credit has been attributed to the relevant state-of-the-art collaborations (if any) with appropriate citations and acknowledgments in line with established norms and practices.

\par
\vspace{0.2 in}
\begin{flushright}
\begin{quote}
\begin{figure}[hb]
 % {\hspace*{2 in}\includegraphics[width=5cm]{Achintya's signature.png_}}
\end{figure}
\vspace{-0.1 in}
\begin{tabular}[c]{ll}
%{} & {}\\
{} & \textbf{Achintya Dey} \\
{} & \textbf{Department of Mathematics,}\\
\textbf{June, 2026}$\hspace{2.0cm}$ & \textbf{Indian Institute of Technology, Kanpur.} 
\end{tabular}
\end{quote}
\end{flushright}
\blankpage
\cleardoublepage
\newpage
%\addcontentsline{toc}{chapter}{Synopsis}
% \begin{bibunit}[abbrv]
% \include{Synopsis}
% \renewcommand{\bibsection}{\section*{ \huge Synopsis Bibliography\\}}
% \putbib[synops]
% \end{bibunit}

%\addcontentsline{toc}{chapter}{Synopsis}
% =============================================================== %
%                         Title and authors                       %
% =============================================================== %
%\title{\textbf{SYNOPSIS}}
\thispagestyle{empty}

\thispagestyle{empty}
\begin{center}
\textbf{\huge{SYNOPSIS}}
\end{center}
\vspace{0.5cm}

%\chapter*{Synopsis}
%\addcontentsline{toc}{chapter}{Synopsis}

%\begin{center} 
%{\sc Synopsis}
%\end{center}
%\begin{table}[!ht]
%\scriptsize
%\centering
%\begin{tabular}{rcl}
%\toprule
%Name of The Student & : & \textbf{Debashis Samanta}  %\\
%Degree for which Submitted & : & \textbf{Ph.D} \\
%Department & : & \textbf{Statistics}  \\
%Title & : & \textbf{Analysis of Life Testing Data}  % \\
%& & \textbf{Using Step-stress Model}   \\
%Supervisor & : & \textbf{Dr. Arindam Gupta}  \\
%Month and Year of Submission & : & %\textbf{December, 2018} \\
%\bottomrule
%\end{tabular}
%\end{table}

\vspace{-5mm}
\begin{table}[h]
\begin{center}
\begin{tabular}[c]{ll}
\toprule\midrule\\
Name of the Student: & \textbf{Achintya Dey} \\[3mm]
Roll Number: & \textbf{19208263}\\[3mm]
Degree for which Submitted: & \textbf{Doctor of Philosophy}\\[3mm]
Department: & \textbf{Mathematics}\\[3mm]
Thesis Title: &
\textbf{Systolic Geometry and}\\
&
\textbf{Combinatorics of Surfaces}\\[3mm]
Thesis Supervisor: & \textbf{Dr. Bidyut Sanki}\\[3mm]
Month and Year of Submission:$\hspace{8.5mm}$ & \textbf{June 2026}\\[0.4cm]
\midrule
\bottomrule
\end{tabular}
\end{center}
\end{table}

 This thesis investigates two themes. The first studies \emph{systolic embeddings of graph on translation surfaces}, proving existence results for \emph{essential} and \emph{cellular systolic embeddings} and determining extremal genera for certain graph families. The second examines \emph{hyperbolic surfaces in the Thurston set}, constructing a one-parameter deformation that increases the \emph{systole length} and produces \emph{Bolza-like surfaces}, i.e., surfaces whose systolic geodesics decompose the surfaces into triangles, for infinitely many genera. A symmetry property of geodesic intersections on these surfaces is also established, together with an application of these Bolza-like surfaces.
 
\section*{Systolic Embedding of Graphs On Translation Surfaces}
A \emph{translation surface} is obtained by identifying pairs of parallel sides of a finite collection of Euclidean polygons via translations. The vertices of the polygons give rise to \emph{cone points}, with total angle $2k\pi$ for some $k \in \mathbb{N}$; points with $k>1$ are \emph{singular points}, while points with $k=1$ are \emph{marked points}. A \emph{saddle connection} is a geodesic segment connecting two singular points without singularities in its interior, and a \emph{shortest saddle connection} is one of minimal length. For surfaces without singular points, marked points are used in place of singular points for these definitions.

Let \(S\) be a translation surface. The \emph{systolic graph} \(\Gamma_S\) of \(S\) is the graph whose vertices correspond to the singular (or marked) points of \(S\) and whose edges correspond to the shortest saddle connections. A finite graph \(G\) is said to admit a \emph{systolic embedding} on \(S\) if \(G\) is isomorphic to \(\Gamma_S\).
 An embedding of a graph on a translation surface is said to be \emph{celluar}(respectively, \emph{essential}) if each complementary region is a topological disk (respectively, no complementary region is a tolpological disk). 

Embedding of graphs on closed hyperbolic surfaces has been studied by Sanki~\cite{sa19} and Gadgil--Sanki~\cite{san19}. Sanki~\cite{sa19} showed that any metric graph $G$ can be essentially and isometrically embedded on a closed hyperbolic surface, possibly up to some scaling and determined the minimal genus required in terms of the Betti number and Betti deficiency of $G$. Later Gadgil-Sanki~\cite{san19} studied which fat graphs can arise as systolic graphs. In this thesis, we extend these ideas to \emph{systolic embeddings of graphs on translation surfaces}.
We shall study the following problems regarding to systolic embedding of graphs on translation surfaces. 

\begin{enumerate}
      \item Given a finite graph $G$, does there exist a translation surface $S$ such that $G$ admits a systolic embedding on $S$?
      
    \item If such an embedding exists, then what is the minimum (and maximum) possible genus of a surface $S$ on which $G$ admits systolic embedding?
\end{enumerate} 

\subsection*{Essential-systolic embedding.}
We study the essential-systolic embedding of a finite graph $G$ on a translation surface. To answer Question~(1), we show that every finite graph admits such an embedding (see Theorem 1.1, \cite{MR4985436}). To establish this, we prove several intermediate results that serve as tools for this result as follows.

(i) Every complete graph $K_n$, with n odd and $n\geq5$, admits essential-systolic embedding embedded on a translation surface $S_{g_n}$ of genus $g_n= [1-n(1-\frac{n-1}{2})]$ (Theorem 1.2, \cite{MR4985436}).

(ii) For any odd $n\ge5$, every sub-graph of $K_n$ admits essential-systolic embedding on a translation surface of genus $g_n$  (Proposition 1.3, \cite{MR4985436}).

\subsection*{Genus of Translation Surfaces Admitting Essential-Systolic Graph Embeddings}
%{j}
For a given graph $G$ we define 
$P_G$ := \{$g$ $\mid$  $G$ is systolically and essentially embedded on a translation surface of genus $g$\}. $P_G \neq \emptyset$ by above result.
If a graph admits an essential-systolic embedding on a translation surface of genus $g$, then there exists an integer $g'>g$ such that the graph admits such an embedding on a translation surface of genus $g'$. Therefore, the maximum-genus problem of such surfaces does not make sense, and hence we study the minimum-genus problem. We define $
g^{\min}_G := \min \left\{ g \mid g \in P_G \right\}.$ 

To address Question (2), we compute the following upper bound for \( g^{\min}_G \) (Theorem 1.4, \cite{MR4985436})
\[
g^{\min}_G \leq 
\begin{cases}
1 - n\left(1 - \frac{n - 1}{2}\right), & \text{if } n \text{ is odd and } n \geq 5, \\[0.5em]
1 - (n + 1)\left(1 - \frac{n}{2}\right), & \text{if } n \text{ is even and } n \geq 5, \\[0.5em]
6, & \text{if } n < 5.
\end{cases}
\].
 
\subsection*{Cellular-systolic embedding}
In contrast to essential embedding, not all graphs admit cellular-systolic embedding on translation surfaces. For example, the complete graph $K_3$ admits cellular embedding only on the sphere, which is not a translation surface. Thus, in response to Question (1), it is natural to ask whether there exists a collection of graphs that can be embedded in this way. In this context, we have identified a family of graphs $\Sigma_n$, which denotes the wedge of $n$ circles for $n \geq 2$. We establish the following results, which jointly answer questions (1) and (2) in this setting.

Let $\Sigma_n$ be the wedge of $n$ circles, where $n \geq 2$. Then we have the following:
\begin{enumerate}
    \item $\Sigma_n$ admits cellular-systolic embedding on a translation surface of genus $\left\lfloor \frac{n}{2} \right\rfloor$. Furthermore, this is the maximum genus for which such an embedding is possible. 
    
    \item The minimum genus of a translation surface on which $\Sigma_n$ admits cellular-systolic embedding is $\left\lceil \frac{n+3}{6} \right\rceil$.  
    
    \item For any integer $g$ satisfying $\left\lceil \frac{n+3}{6} \right\rceil \leq g \leq \left\lfloor \frac{n}{2} \right\rfloor$, the graph $\Sigma_n$ admits  cellular-systolic embedding on a translation surface of the genus $g$ (See Theorem 1.5, \cite{MR4985436}).
\end{enumerate}

Furthermore, we identify a well-structured collection of graphs with more than one vertex and prove the existence of translation surfaces on which this collection admit cellular-systolic embedding (Theorem 4.2, \cite{MR4985436}).

\section*{Bolza like surfaces in Thurston set}

Let $\mathcal{T}_g$ be the Teichm\"uller space of a closed orientable surface $S_g$ of genus $g$~\cite{farb2011primer}. The \emph{systole function} $\sys:\mathcal{T}_g \to \R$ assigns to each hyperbolic surface $X\in \mathcal{T}_g$ the length of its shortest non-trivial geodesic~\cite{Schmutz}, called a \emph{systolic geodesic}. Surfaces whose systolic geodesics fill the surface are central to Thurston's spine problem~\cite{thurstonspine}. Thurston proposed that the set $\chi_g$ of such surfaces, known as the \emph{Thurston set}, forms a spine in the moduli space $\mathcal{M}_g$, but many properties of $\chi_g$, including its connectedness, contractibility, and explicit elements, remain unknown.

Thurston~\cite{thurstonspine} constructed a deformation of hyperbolic surfaces outside $\chi_g$ that increases the systole length. In this thesis, we introduce a deformation on surfaces in $\chi_g$ that is also systole length increasing. The systole function is a topological Morse function~\cite{akrout} and has no minima for $g\ge 2$~\cite{Buser}, with global maxima known only for $g=2$. Jenni~\cite{Jenni} showed that the unique genus two surface attaining the maximum, called the so-called \emph{Bolza surface}, has $12$ systolic geodesics decomposing it into $(4,4,4)$-triangles~\cite{Schmutz}. Motivated by this, we define that a hyperbolic surface is called \emph{Bolza-like} if its systolic geodesics decompose it into $(p,q,r)$-triangles with $\frac{1}{p}+\frac{1}{q}+\frac{1}{r}<1$. 

In particular, we prove that
for every integer \(g = kn + 1\), where \(k, n \in \mathbb{N}\), \(k \geq 2\), and \(n \geq 4\), there exists a continuous function
\[
\mathcal{D}_g : \left[ 0, \frac{\pi}{12} \right] \to \mathcal{T}_g
\]
satisfying the following properties: 

\begin{enumerate}
    \item \(\mathcal{D}_g(t) \in \chi_g\) for all \(t \in \left[ 0, \frac{\pi}{12} \right]\), where \(\chi_g\) is the Thurston set.
    \item \(\mathrm{sys}(\mathcal{D}_g(t_1)) < \mathrm{sys}(\mathcal{D}_g(t_2))\) for all \(0 \le t_1 < t_2 \le \frac{\pi}{12}\).
    \item The surface \(\mathcal{D}_g\left(\frac{\pi}{12}\right)\) has \(6g-6\) systolic geodesics that decompose it into triangles, i.e., \(\mathcal{D}_g\left(\frac{\pi}{12}\right)\) is Bolza-like (Theorem 1.3, \cite{DeySahaSankiToAppear}).
\end{enumerate}

Furthermore, we provide an application of these Bolza-like surfaces. In addition, we establish that the systolic geodesics on these surfaces are symmetrically distributed, which means that every simple closed geodesic on the Bolza-like surface \(\mathcal{D}_g\left(\frac{\pi}{12}\right)\) intersects the set \(\mathrm{Sys}\big(\mathcal{D}_g\left(\frac{\pi}{12}\right)\big)\) of all systolic geodesics in an even number of points (Theorem 1.4, \cite{DeySahaSankiToAppear}). We prove this by constructing a one-parameter family of explicit examples of elements in the Thurston set \(\chi_g\).

\clearpage

\phantomsection
\pagestyle{plain}
\newpage
\blankpage

%\noindent {\Large{\bf\huge Dedication}}

%\vglue 75mm

%\begin{center}
%{\sl Dedicated to} \\[2ex]
%\noindent {\Large  Kundu Sir (Prof. Debasis Kundu)}.
%\end{center}
\thispagestyle{empty}
\vspace*{\fill}
\begingroup
\centering
{\Huge {\textrm{\emph{Dedicated to my parents}}}} 
\vspace{10 cm}
%{\Huge {\sf{{\bf{\emph{Dedicated}}}}}  \\ {\bf{\emph{to}}} \\
%[24pt] {\bf{\emph{My Family}}}}

%{\huge {\bf {\emph{$\mathfrak{D}$edicated} \\[8pt] \emph{$\mathfrak{T}$o} \\ [25pt]
%\emph{$\mathfrak{M}$y $\mathfrak{F}$amily}}}}
\endgroup
\vspace*{\fill}
\newpage
\cleardoublepage
\cleardoublepage\phantomsection\addcontentsline{toc}{chapter}{Acknowledgements}
\newpage
\cleardoublepage
\blankpage
\begin{center}
  {\Large \textbf{\huge \underline{Acknowledgements}}}
\end{center}

\label{Acknowledgements}
\vglue 10mm

I am truly grateful to my advisor, Prof. Bidyut Sanki, for his unwavering support throughout my Ph.D.\ journey. His patience, encouragement, enthusiasm, and extensive knowledge have greatly supported both my research and the writing of this thesis. I am truly grateful for his guidance, which has continuously inspired me throughout all stages of my work. I could not have wished for a better advisor and mentor for my Ph.D.\ studies.

I am deeply grateful to Professors Abhijit Pal, Debasis Sen, Ajay Singh Thakur, and Vikramjeet Singh Chandel for graciously serving as members of my Doctoral Monitoring Committee and for their valuable time, guidance, and constructive feedback throughout my research journey.

The coursework during my Ph.D.\ journey offered a remarkable opportunity to learn from outstanding educators whose guidance greatly influenced my academic growth. Their teaching not only enriched my knowledge but also played a crucial role in shaping and refining my mathematical thinking and skills. I am deeply grateful to Professors Bidyut Sanki, Abhijit Pal, Debasis Sen, Arijit Ganguly, Malay Banerjee, Ashis Mandal, Aparna Dar, G.~Santhanam, and T.~Muthukumar for their invaluable contributions as course instructors during this formative stage of my research career.

I am sincerely thankful to the Head of the Department and all those who have contributed to fostering and sustaining a conducive research environment. I am equally indebted to the National Board of Higher Mathematics for their generous support through funding and fellowship, which made it possible for me to undertake and complete my research at IIT Kanpur.

I am grateful to IIT Kanpur for providing financial support to attend the CIMPA School at BHU, Varanasi, India; the ``Conference on Quantum Topology and Hyperbolic Geometry'' at VIASM, Vietnam; and the ``Geometric Topology Days'' conference in 2025 at Grenoble, France.

I am deeply grateful to Prof. Shiv Parsad from IIT Goa for his valuable insights and for serving as an external expert on the NBHM JRF to SRF upgrade committee.

I am deeply grateful to my early teachers Goutam Babu, Asim Babu, Naresh Babu, Sukumar Babu, Deepak Babu, Shanti Babu, Bimal Babu, Sushobhan Babu, Bodidhisatwa Babu, and Jayashri Ma’am of Babuijore Dharanidhar High School, as well as to Professors Shubhendu Ghosh, Ramprasad Saha, Sayan Mukherjee, and Shobhan Lal Mishra of Suri Vidyasagar College, for their invaluable teaching and guidance. I am equally thankful to Professors Anil Mani Tripathi, Buddhadev Pal, Krishnendu Mukherjee, Mukut Mani Tripathi, Ravi Pratap Gupta, A.~K.~Mishra, and Harish Chandra of Banaras Hindu University for their academic support and encouragement. I also extend my sincere thanks to my tuition teachers Pathik Sen, Sadananda Badyakar, Jiten Mahata, S.~P.~Tiwari, Sakhanath Ghosh and Partha Mukherjee Sir for their dedicated efforts in shaping my academic foundation.

I am grateful to all my school friends Rajib, Debapriya, Sudipta, Ramesh, Astik, Rana Bhandari, and Bikram—for their companionship and support. I would like to offer special thanks to my friend Rishi Kumar Acharya and his family members—Kaku, Kakima, Tulika Di, and Amma for their invaluable role in shaping my life. I also extend my heartfelt thanks to my college friends Chiranjit, Animesh, Manashi, Archita, Satwata, Ananta, Preeti, and Rituparna for their friendship and encouragement. Furthermore, I convey my sincere regards to my seniors Pinaki Da, Arindam Da, and Abhijit Da for for their consistent support and encouragement.

I warmly acknowledge the support and encouragement of my friends and seniors Bhola, Arvind, Suraj, Ritesh, Rana, Sourav, Hari, Santu, Annoy, Abhishek, Sanjit, Abhisek Da (Abhishek Sarkar), Sudipta Da, Gopal Da, Shantanu Da, Ankit Bhai, Chaman Bhaiya, and Rajkamal Bhaiya for the wonderful times we have shared at IIT Kanpur and for our engaging discussions on mathematics. I am also grateful to my postdoctoral senior, Tejbir, for his invaluable support and guidance throughout this journey.
 
 I extend my sincere appreciation to my other friends for their moral support and motivation, which has inspired me to strive for excellence. Sabyasachi, Prakash, Dipak, Subrata, Chitranjan, Samiran, Aman, M. Bapu, Damini, Debabrata, Rakesh, Guru, Mangala, Nishith, Chandan, Himanshi, Rinki, Vivek, Anil, Kanha, Preeti, Amritha, Dattatreya, Yogendra, Prakhar, Tuhin, Shubham, Tinu, Souvik, Kushal, Krishnogopal...There are too many to name, and I am grateful to all of them. I feel truly blessed to have such friends in my life.

 Finally, and most importantly, I would like to express my deepest gratitude to my family members, whose unwavering love, encouragement, and support have been the foundation of my journey. Like a banyan tree that provides shelter and strength through all seasons, they have stood by me through every challenge and difficulty.

I am especially grateful to my parents, Mr. Aniruddha Dey and Mrs. Bithika Dey, for their endless sacrifices, guidance, and belief in me. I would also like to acknowledge my grandmother, Mrs. Niyati Dey; my aunt, Ms. Anamika Dey; my elder sister, Mrs. Subhra Dey; my brother-in-law, Mr. Shubhendu Dey; my brother, Mr. Apurba Dey; and my beloved nephew, Abir, for their constant affection, encouragement, and support. This achievement would not have been possible without their presence and blessings in my life.

%I would also like to thank my family for their unwavering support throughout my life. In particular, I want to acknowledge my parents, my grandmother, my sister Debi Saha, my sister-in-law Pijush Saha, and my niece Pritishna.

% \begin{flushright}
% \hspace*{1.3in} \textbf{Haripada Roy} \\
% \hspace*{1.3in} September, 2024
%\end{flushright}

\newpage
\cleardoublepage

\begin{KeepFromToc}

\tableofcontents
\end{KeepFromToc}

%\cleardoublepage
%\cleardoublepage
%\addcontentsline{toc}{chapter}{\numberline{}Dedication}

%\cleardoublepage\phantomsection\addcontentsline{toc}{chapter}{Notations and Abbreviations}
%\include{Notations_and_Abbreviations}
\pagenumbering{arabic}
\pagestyle{myruled}

\tikzset{->-/.style={decoration={
  markings,
  mark=at position #1 with {\arrow{>}}},postaction={decorate}}}
  \tikzset{-<-/.style={decoration={
  markings,
  mark=at position #1 with {\arrow{<}}},postaction={decorate}}}

\chapter{Introduction}
\label{Chap1}
In this chapter, we introduce the thesis, which has two main themes, and we will discuss both of them. First, we study systolic embeddings of graphs on translation surfaces, defined via shortest saddle connections between singular points. We review relevant results in this direction and establish the motivation for essential and cellular systolic embeddings. Second, we discuss the systole function on Teichmüller spaces and the Thurston set of surfaces with filling systolic geodesics, highlighting the role of the Bolza surface as a unique maximizer in genus two. Motivated by this, we define Bolza-like surfaces. Finally, we state the central questions of the thesis, present the main results on graph embeddings and Bolza-like surfaces. We conclude the chapter by providing an outline of the subsequent chapters.

\section{Motivation and Background}

\subsection{Systolic Graph Embeddings on Translation Surfaces}
Embedding of graphs on surfaces has emerged as an active area of research in recent times. An embedding of a graph $G$ on a closed surface $S$ is called cellular (respectively, essential) if every component of the complement $S\setminus G$, of $G$ in $S$, is an open topological disk (respectively, not a topological disk). Embedding of graphs on closed hyperbolic surfaces has been studied by Sanki~\cite{sa19} and by Gadgil--Sanki~\cite{san19}. In~\cite{sa19}, Sanki shows that any metric graph $G$ (possibly after scaling its metric) admits an essential and isometric embedding on a closed hyperbolic surface, and further determines the minimum genus of surfaces admitting such embeddings is $\frac{1}{2}(\beta(G)- \zeta(G))+ 2q+r$, where $\beta(G)$ and $\zeta(G)$ denote the Betti number and the Betti deficiency of $G$, respectively, and $q,r$ are the unique integers satisfying $\zeta(G) + 1 = 3q + r,~ 0 \leq r < 3.$ In~\cite{san19}, the authors investigate the problem of characterizing which fat graphs can be realized as systolic graphs of hyperbolic surfaces. In this thesis, we study the systolic embedding of graphs on so-called translation surfaces.

A \emph{translation surface} is obtained by identifying pairs of parallel sides, with opposite orientations, of a finite union of Euclidean polygons by translations. After identification, the vertices of the polygons give rise to cone points or singular points on a translation surface. The total angle at a point corresponding to the equivalence class of a vertex of a polygon is $2k\pi$, for some $k\in\mathbb N$, as the sides of the polygons are glued by Euclidean translations. If $k>1$, then we call a cone point a \emph{singular point}, and if $k=1$, the point is called a \emph{marked point}. A \emph{saddle connection} on a translation surface is a geodesic segment connecting two singular points (may not be distinct), with no singularities in its interior. Among these, a \emph{shortest saddle connection} is one of minimal length. Note that for a translation surface without singular points, we use marked point in place of singular point to define these notions.

Let $S$ be a translation surface. The graph $\Gamma_S$, where vertices correspond to the singular points (or marked points) of $S$ and edges correspond to the shortest saddle connections of $S$ is called the \emph{systolic graph} of $S$.  We say that a finite graph $G$ admits a \emph{systolic embedding} on $S$ if $\Gamma_S$ is isomorphic to $G$. The techniques used to study systolic embeddings on translation surfaces differ notably from those in the hyperbolic setting. In the hyperbolic case, the geometry has constant negative curvature, and systoles are closed geodesics that minimize length in their homotopy class, analyzed using hyperbolic trigonometry, the collar lemma, and fat graph structures. In contrast, translation surfaces are flat except at finitely many singular points, where systoles correspond to the shortest saddle connections. The methods here are primarily combinatorial and Euclidean, relying on polygonal models, cone angle relations, and the maximum number of shortest saddle connections rather than curvature-based tools.

Judge-Parlier~\cite{Pa19} have computed the maximum number of noncontractible loops of minimum length (systole) on a genus two translation surface. 
Boissy-Geninska~\cite{Bo21} have shown that the length of the shortest saddle connection of a translation surface of genus $g$ and having $r$ number of singular points (or marked points) is at most $\left[\frac{\sqrt{3}}{2}\left(2g-2+r\right)\right]^{-\frac{1}{2}}$ and the maximum number of shortest saddle connections is $3(2g-2+r)$. A translation surface attaining both of these bounds is called a \emph{maximal surface}. Later, Columbus-Herrlich-M\"utzel-Schmith\"usen have studied the \emph{saddle connections graph} of a translation surface, whose vertices correspond to the singular points of the surface and whose edges represent saddle connections between them (see ~\cite{MR4790974}, Section 4). In~\cite{MR4790974}, the authors established a precise relationship between the systoles of this graph and those of the underlying translation surface. In particular, they showed that if \(c\) is a systole in the graph with geometric realization \(\gamma\), then \(\gamma\) is a systole of the surface whenever the combinatorial length of \(c\) is not \(3\); moreover, if the combinatorial length of \(c\) is \(3\) and all its interior angles are at least \(\pi\), then \(\gamma\) is also a systole of the surface. Motivated by this concept of saddle connection graph, we study the systolic graph, which can be viewed as the graph of the shortest saddle connections on a translation surface.

We now turn to the second major theme of this thesis, which focuses on the Bolza-like surfaces.

\subsection{Bolza-like Surfaces in the Thurston Set}
Let $\mathcal{T}_g$ denote the Teichm\"uller space of a closed orientable surface $S_g$ of genus $g$ (see~\cite{farb2011primer}). The systole function $\sys : \mathcal{T}_g \to \R,~ g \geq 2$, is defined on the Teichm\"uller space and for a hyperbolic surface $X\in \mathcal{T}_g$, it outputs the length of a shortest non-trivial geodesic in $X$ (see~\cite{Schmutz}). A closed geodesic in the surface $X$ having length $\sys(X)$ is called a \emph{systolic geodesic}. Hyperbolic surfaces with a filling set of systolic geodesics are of great interest due to Thurston's well-known spine problem (see~\cite{thurstonspine}).

A spine in the Teichm\"uller space $\mathcal{T}_g$ is a mapping class group equivariant deformation retract in $\mathcal{T}_g$ of minimal dimension. Thurston claimed that the set $\chi_g$ consisting of those hyperbolic surfaces with a filling set of systolic geodesics, called as the Thurston set, is a spine in $\mathcal{T}_g$. He sketched the proof in the preprint~\cite{thurstonspine}, but unfortunately, the proof is incomplete. Several aspects of the set $\chi_g$ are still not fully known, including its connectedness, contractibility, and dimension.
 Even finding elements of the set $\chi_g$ is a challenging problem. 

Anderson-Parlier-Pettet (see Theorem 3 \cite{anderson2011small}) have shown that if the systolic geodesics fill a closed hyperbolic surface of genus $g$, then it contains at least $\pi \sqrt{g(g-1)}/\log{(4g-2)}$ many systolic geodesics. The authors further constructed hyperbolic surfaces of genus $g$ with $2g+2$ systolic geodesics and showed that any $2g$ of these geodesics are sufficient to fill the surface. In \cite{anderson6146relative}, the authors constructed a sequence of surfaces $S_{g_k}$ in the Thurston set $\chi_{g_k}$ with large Bers constant, where $g_k$ is large enough. Recently, B. Sanki \cite{MR3877282} has constructed hyperbolic surfaces with $2g$ systolic geodesics that fill the surface, and he also introduced a systolic length increasing deformation in the moduli space. More recently, Bourque~\cite{bourque2022dimension} and Mathieu~\cite{mathieu2023estimating}) have studied the dimension of the set $\chi_g$.

In~\cite{thurstonspine}, Thurston has constructed a deformation of a hyperbolic surface $X$ outside the set $\chi_g$ that increases the systolic length. In this thesis, we introduce a deformation on hyperbolic surfaces in $\chi_g$ and show that it is also systolic length increasing. Akrout~\cite{akrout} has shown that the systole function is a topological Morse function on the Teichm\"uller space. By the collar lemma (see Theorem 4.1.1~\cite{Buser}), the function $\sys$ does not have a minima, for $g\geq 2$. The global maxima for the function $\sys$ is unknown except for $g=2$. Jenni \cite{Jenni} has shown that for any surface $X\in \mathcal{T}_2, $ $ \cosh\left(\frac{\sys(X)}{2}\right) \leq \sqrt{2}+1$ and equality occurs for a unique surface (up to the action of the mapping class group), namely the so-called Bolza surface. Note that $l_X(\gamma)$ denotes the length of the curve $\gamma$ in the hyperbolic surface $X$. The Bolza surface has 12 systolic geodesics, and they decompose the surface into triangles with each triangle of type $(4,4,4)$ (see Section 5, \cite{Schmutz}). Note that a hyperbolic triangle is called of type $(p,q,r)$, for some integers $p,q,r\in \N$ satisfying $\frac{1}{p}+ \frac{1}{q}+ \frac{1}{r}<1$ if its interior angles are $\pi/p, \pi/q$ and $\pi/r$. Motivating from this, we define the following.
\begin{definition}
    A closed hyperbolic surface is called Bolza-like, if the systolic geodesics decompose it into $(p,q,r)$-triangles for some integers $p,q,r$ satisfying $\frac{1}{p}+ \frac{1}{q}+ \frac{1}{r}<1$ .
\end{definition}

In this thesis, we construct Bolza-like surfaces for infinitely many genera $g$. Next, we will see an interesting application of the Bolza-like surfaces that we obtain, namely the construction of global maxima of the systole function. Furthermore, we show that the distribution of the systolic geodesics is symmetric in the sense that any simple closed geodesic intersects the Bolza-like surface at an even number of points.

\section{Central questions and the main results}

 For a translation surface $S$, we define its \emph{systolic graph} $\Gamma_S$ to be the graph whose vertices are the singular (or marked) points of $S$, and whose edges are the shortest saddle connections of $S$. A finite graph $G$ is said to admit a \emph{systolic embedding} on $S$ if $G \cong \Gamma_S$. An embedding of a graph on a translation surface is said to be \emph{cellular} (respectively, \emph{essential}) if each complementary region is a topological disk (respectively, not a topological disk). Motivated by the work~\cite{san19}, in this thesis, we study the following natural questions:

\begin{question}{\label{question 1}}
     Given a finite graph $G$, does there exist a translation surface $S$ such that $G$ admits a systolic embedding on $S$?
\end{question}

Here, we study two types of systolic embeddings of graphs on translation surfaces: (i) essential and (ii) cellular. In the context of essential-systolic embeddings, we show that any finite graph $G$ can be embedded on a translation surface of some genus depending only on the graph $G$. In particular, to answer Question~\ref{question 1}, for essential-systolic embedding, we obtain the following main result.

\begin{theorem}\label{Main Result} (\cite{MR4985436})
Every finite graph admits an essential-systolic embedding on a translation surface. 
\end{theorem}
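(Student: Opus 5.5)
The proof splits into two reductions, following the outline already sketched in the synopsis.

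\emph{Reduction to complete graphs on an odd number of vertices.} Let $G$ have $n$ vertices. Choose an odd integer $m\ge 5$ with $m\ge n$, for instance $m=n$ or $m=n+1$ when $n\ge5$, and $m=5$ when $n<5$. Add $m-n$ isolated vertices, so that $G$ becomes a subgraph of $K_m$ with vertex set equal to the full vertex set of $K_m$. The theorem then follows from two statements. The first is that $K_m$, for $m$ odd and $m\ge5$, admits an essential-systolic embedding on a translation surface of genus $g_m=1-m\left(1-\frac{m-1}{2}\right)$. The second is that every spanning subgraph of $K_m$ admits an essential-systolic embedding on a surface of the same genus.

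One caveat concerns the convention for isolated vertices. They can only appear as singular or marked points with no shortest saddle connection at them. That is harmless, because $\Gamma_S$ has every singular or marked point as a vertex and only the shortest saddle connections as edges.

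\emph{Construction for $K_m$.} The plan is to build $S$ from pieces attached to edges, so that each edge of $K_m$ is realized by a unique shortest saddle connection. For each edge $\{i,j\}$, take a Euclidean cylinder, or more precisely a flat torus with slits, whose core carries a saddle connection of length $1$. All other saddle connections inside the piece should have length strictly greater than $1$. Then glue these pieces along slits at the vertex points so that the $m$ vertex classes become the cone points.

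The genus count has to match the target: $g_m-1=m\cdot\frac{m-3}{2}$. I would aim for each vertex to have cone angle $2\pi(m-2)$. Degree $m-1$ is even for odd $m$, and I would use this parity to pair up half-edges at each vertex. Gauss--Bonnet then gives
\[
\sum_{v}(k_v-1)=m(m-3)=2g-2,
\]
which is consistent with the target genus. Essentiality should come from arranging that every complementary region contains a handle, namely the torus part of each piece, so that no complementary region is a disk.

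\emph{Passing to subgraphs.} To delete an edge $\{i,j\}$, I would deform only the piece carrying it. Stretching that cylinder, or applying an affine shear that lengthens its core, makes the saddle connection strictly longer than $1$. This must be done without creating any new saddle connection of length $\le1$ and without changing the combinatorics elsewhere. Because the pieces are glued only at slits, lengths in the other pieces are unaffected. Complementary regions stay non-disks, since the topology does not change.

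\emph{Main obstacle.} The hard step is verifying that in the glued surface there are \emph{no other} saddle connections of length $\le 1$. This includes ones crossing the slits between pieces, and ones from a vertex to itself. I would handle this by making all slits and all non-core translation vectors in each piece longer than some fixed $L>1$ (for instance $L=2$), and by keeping distinct vertex points at distance $>1$ except along the designated cores. Any geodesic passing through a slit then has length $>1$. This uniform separation bound is where the careful Euclidean estimates would go.
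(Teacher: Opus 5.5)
Your outer reduction is the same as the paper's: realize an odd $K_m$, then remove edges by lengthening. The gap is that the step carrying all the content, a translation surface whose systolic graph is \emph{exactly} $K_m$, is never constructed, and the natural reading of your slit scheme cannot work.

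\begin{itemize}
\item When pieces are glued along a slit, \emph{both} endpoints of the slit become cone points of angle at least $4\pi$. They are therefore vertices of $\Gamma_S$.
\item Suppose the piece for $\{i,j\}$ carries only $p_i$ and $p_j$. Then a slit from $p_i$ must end at $p_j$. The slit it is glued to would force its own piece to contain points of both classes $i$ and $j$, so distinct edge pieces cannot be glued at all.
\item You would need pieces carrying extra vertex-class points, or separate vertex pieces. Then the number of singular points, the cone angles, and the absence of other saddle connections of length $\le 1$ all have to be checked. That last check is the heart of the theorem, and you explicitly defer it.
\end{itemize}

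The paper's key ingredient is exactly such a vertex piece.
\begin{itemize}
\item For each $i$ it takes a regular $(m-1)$-gon $R_i$ of side $s$. It attaches to every side a flat cylinder of circumference $s$ and height $h<s/2$, namely the rectangle $Q_{ij}$ with $b_{ij}\sim b'_{ij}$. This collapses all corners of $R_i$ to a single cone point $V_i$ of angle $2\pi(m-2)$.
\item The free cylinder ends are glued along Walecki's decomposition of $K_m$ into $(m-1)/2$ Hamiltonian cycles, $E_r(i)\sim\bar E_r(\sigma'_r(i))$. This is where the parity of $m-1$ is really used, since $R_i$ has $(m-1)/2$ pairs of parallel sides.
\item Each edge becomes a tube of height $2h<s$, and its perpendicular is a saddle connection $V_i\to V_{i'}$ of length $2h$.
\item One checks that every other saddle connection is longer. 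Sides and diagonals of $R_i$ have length $\ge s$. Any other segment across a tube to $V_{i'}$ is displaced horizontally by a nonzero multiple of $s$. Any remaining segment crosses a whole tube and then continues.
\item Deleting an edge only raises one cylinder's height. This changes no circumference and hence no gluing.
\end{itemize}

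Separately, you resolve your caveat about isolated vertices the wrong way. Every singular point is a vertex of $\Gamma_S$, and the added points have cone angle $2\pi(m-2)>2\pi$. So you realize $G\sqcup (m-n)K_1$ rather than $G$ whenever $n$ is even or $n<5$. The paper's edge-deletion proof of Proposition~\ref{proposition 1} only produces spanning subgraphs and has the same defect. A real fix needs a surface with exactly $n$ singular points.

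Likewise, ``every spanning subgraph'' cannot include the edgeless one. Every translation surface has a shortest saddle connection, so $\Gamma_S$ always has at least one edge. Once all tubes are lengthened, new systolic connections appear. The statement therefore needs $G$ to have at least one edge. Also, like the paper's, your $K_m$ reduction only reaches simple graphs.
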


We prove Theorem~\ref{Main Result} in two steps. In the first step, we prove the theorem below:

\begin{theorem}\label{main theorem 1} (\cite{MR4985436})
For every odd integer $n \geq 5$, the complete graph $K_n$ admits an essential-systolic embedding on a translation surface $S_{g_n}$ of genus $g_n$, where 
\[
g_n = 1 - n \left( 1 - \frac{n-1}{2} \right).
\]
\end{theorem}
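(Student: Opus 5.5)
The plan is to build $S_{g_n}$ explicitly by gluing flat pieces whose boundaries trace out $K_n$. Every edge of $K_n$ will be a saddle connection of length $1$, every vertex will be a cone point, and each complementary region will carry topology (so it is not a disk). A first sanity check is the Euler characteristic. Writing $V=n$ and $E=n(n-1)/2$, the target genus satisfies $2g_n-2=n(n-3)$. By Gauss--Bonnet for translation surfaces, $\sum_v (k_v-1)=2g-2$, so the natural symmetric target is that every vertex of $K_n$ is a singular point of cone angle $2\pi(n-2)$. The formula $g_n=1-V+E$ also suggests the shape of the construction: the ``excess'' genus is exactly what one gets by attaching one handle per independent cycle of $K_n$. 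That is a genus count for a thickened graph, not a cellular one, which is consistent with the embedding being essential.

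\textbf{Step 1 (combinatorial decomposition).} Since $n$ is odd, every vertex of $K_n$ has even degree $n-1$. By Walecki's theorem, $K_n$ decomposes into $(n-1)/2$ edge-disjoint Hamiltonian cycles $C_1,\dots,C_{(n-1)/2}$. I would use these cycles to organize the cyclic order of edges at each vertex, that is, a rotation system. The aim is that around each vertex the $n-1$ incident edges, together with the flat sectors between them, total angle $2\pi(n-2)$.

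\textbf{Step 2 (flat pieces).} For each cycle $C_m$ I take a flat piece $X_m$: a large flat torus, of side much bigger than $1$, with a polygonal hole whose boundary is a closed path of $n$ unit segments. I take the segment directions to be $n$-th roots of unity, so the path closes up. Each unit edge $\{i,j\}$ occurs on the boundary of exactly two pieces, with parallel and oppositely oriented sides, because each edge lies in one cycle and is seen from both of its sides. I glue these sides by translation. Along each edge I may reflect directions or use a second copy of the piece so that the paired sides are genuinely parallel, and I then recompute the angles.

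\textbf{Step 3 (angle and genus bookkeeping).} Next I check that all vertices of the same label are identified, so that $K_n$ embeds and is not a cover of it. I then compute the cone angle at each vertex by adding the angles of the sectors meeting there, and confirm through $\chi=V-E+F$, with the handles counted, that the total genus is $g_n$. Each complementary region contains a torus handle, so none of them is a disk, and the embedding is essential.

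\textbf{Step 4 (systolicity) — main obstacle.} I must show two things: that every saddle connection other than the $E$ unit edges has length $>1$, and that there are no extra unit-length ones. Because the tori are large, any saddle connection leaving a hole boundary and entering the interior of a torus is long. The risky ones are the short diagonals of the holes, between vertices two steps apart along a cycle, and segments crossing the vertex sectors. To control these I would use the developing map: a saddle connection of length at most $1$ lies in a unit neighbourhood of the graph, and that neighbourhood is a union of explicit flat sectors. I would therefore choose the directions of the boundary edges so that every interior angle of a hole is at least $\pi$. Then, as in the convexity criterion of Columbus--Herrlich--M\"utzel--Schmith\"usen, any chord between two vertices is strictly longer than the boundary path it bypasses, unless it coincides with a single edge. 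Getting this angle condition simultaneously with the correct cone angle $2\pi(n-2)$ and parallel side pairings is the delicate point. If it fails directly, I would first try splitting each edge-gluing through a thin parallelogram strip, which preserves the genus count and adds only long saddle connections.
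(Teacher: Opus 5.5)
\textbf{The gap is in Step~2, and it is topological before it is metric.}

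Each edge of $K_n$ lies in exactly one Walecki cycle. So it lies on the hole boundary of exactly one piece $X_m$, and your assertion that it borders two pieces is false. A translation surface is assembled only by gluing sides. Pieces attached to different cycles share no edge, so they are never glued to each other, and whatever you place on the far side of the edges of $C_m$ must again be bounded by $C_m$. Then $C_m\cup X_m\cup X_m'$ closes up on its own. Near $v_i$ you get $(n-1)/2$ cones meeting only at the point $v_i$: that is not a surface, and after separating the cones it is a disjoint union of surfaces each carrying a single cycle.

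More generally, no embedding of $K_n$ can have the $C_m$ as the boundary walks of its complementary regions. The corner between two consecutive edges at $v_i$ lies in a single region, so the two edges lie on a common boundary walk, hence in the same cycle. Going around $v_i$, this forces all $n-1$ edges into one cycle, which is impossible.

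The metric fallback fails too. For odd $n$ the regular $n$-gon has no parallel sides. If you take the $\pi$-rotated copy $X_m'$, the translation carrying $[p_i,p_j]$ onto $[-p_i,-p_j]$ sends $p_i\mapsto -p_j$, gluing the $v_i$-corner to the $v_j$-corner; around an odd cycle this collapses all $n$ labels to one point. Even the angle count is off. The hole has angle $\pi+2\pi/n$ on the surface side, so $v_i$ would receive $\frac{(n-1)(n+2)}{2n}\pi$ or twice that. Neither is a multiple of $2\pi$, because $(n-1)(n+2)\equiv -2 \pmod n$. So Steps~3 and~4 check properties of a surface that has not been built, and the remarks about reflecting, doubling or inserting strips are contingencies rather than a construction.

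\textbf{The missing idea}, which is what the paper uses, is to put the flat geometry at the \emph{vertices} and realise the edges as cylinders.
\begin{itemize}
\item For each $v_i$ take a regular $(n-1)$-gon $R_i$ of side $s$. Since $n-1$ is even, it has exactly $(n-1)/2$ pairs of parallel opposite sides $E_r(i),\bar E_r(i)$, one pair per Walecki cycle.
\item Orienting $C_r$ as an $n$-cycle $\sigma_r'$ gives each vertex one outgoing and one incoming edge of $C_r$. Hence gluing $E_r(i)$ to $\bar E_r(\sigma_r'(i))$ by translation is consistent. This pairing of parallel sides is the real role of the Hamiltonian decomposition.
\item The gluing goes through a flat cylinder of circumference $s$ and height $h<s$: two rectangles of height $<s/2$, each with its short sides identified. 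The seam of the cylinder is a straight segment of length $h$ from $V_i$ to $V_{\sigma_r'(i)}$.
\item Other crossings of that cylinder have length $\sqrt{h^2+k^2s^2}>s$ with $k\neq 0$. The sides of the $R_i$ have length $s$, and every other saddle connection is longer, so the systolic graph is exactly $K_n$.
\item Each $V_i$ gets cone angle $(n-3)\pi+(n-1)\pi=2\pi(n-2)$, as you predicted.
\item The complement of the graph is a ribbon thickening of $K_n$: it is connected with Euler characteristic $\chi(K_n)<0$, hence not a disk. This realises your heuristic $g_n=1-V+E$.
\end{itemize}
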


The construction of $S_{g_n}$ provides an explicit translation surface that realizes such an embedding of $K_n$. In the second step, we prove the following proposition:

\begin{proposition}\label{proposition 1} (\cite{MR4985436})
For every odd $n \geq 5$, every subgraph of $K_n$ admits an essential-systolic embedding on a translation surface of genus $g_n$.
\end{proposition}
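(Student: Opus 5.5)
The plan is to start from the surface $S_{g_n}$ produced by Theorem~\ref{main theorem 1}, on which $\Gamma_{S_{g_n}}\cong K_n$ and every complementary region is a non-disk subsurface. From it we obtain the systolic graph of any subgraph $H\subseteq K_n$ by \emph{local deformations that only lengthen unwanted saddle connections}. We proceed in two stages: first remove edges (spanning subgraphs), then remove vertices.

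\textbf{Edge removal.} Recall that in the construction of $S_{g_n}$ the $\binom{n}{2}$ shortest saddle connections appear as sides of explicit Euclidean polygons, glued in parallel pairs, all of a common length $\ell$.
\begin{enumerate}
\item For an edge $e=uv$ to be deleted, modify the polygon(s) containing the side representing $e$. I would lengthen that pair of parallel sides by a small $\varepsilon>0$, equivalently cut along $e$ and reglue after a small shear or stretch of an adjacent parallelogram strip.
\item Check that all sides representing edges of $H$ keep length $\ell$.
\item Check that no new saddle connection of length $\le \ell$ is created. This holds by continuity of the lengths of the finitely many saddle connections of length $<\ell+\delta$ under small deformations, provided $\varepsilon$ is small.
\item Check that the cone angles, hence the singular set and the genus $g_n$, are unchanged, because the gluing is still by translations with the same vertex identifications.
\end{enumerate}
Doing this independently for each deleted edge (using disjoint supports) gives $\Gamma_S\cong$ the spanning subgraph. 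Deleting edges only merges complementary regions. A union of non-disk regions glued along an arc is again not a disk, so essentiality is preserved.

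\textbf{Vertex removal.} A deleted vertex $v$ must cease to be a singular point while the genus stays $g_n$. Since the sum of $(k_i-1)$ over the cone points equals $2g_n-2$, the angle excess at $v$ must be absorbed elsewhere. I would first delete, via the edge step, all edges at $v$ (and at any other vertex to be removed). Then $v$ is isolated in $\Gamma_S$ and all its saddle connections have length $>\ell$.

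Next I would collide $v$ with a retained vertex $w$. Concretely, I would use a path inside a complementary region along which the two zeros can be merged by the standard local surgery that reverses "breaking up a zero". This yields a single zero of order $k_v+k_w-1$ at $w$ and keeps the translation structure and the genus. Since the surgery is supported in a small disk around the merging path, away from the edges of $H$, the remaining shortest saddle connections persist.

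The main obstacle I expect is this vertex-merging step. The merging path must have length bounded below so that no saddle connection of length $<\ell$ appears during the collision. The standard surgery shrinks the connection $vw$ to zero, so I would instead perform the merge at a fixed scale inside an annular complementary region (which exists because the embedding is essential). After the merge I would re-verify that all saddle connections emanating from the enlarged cone point at $w$ have length $\ge \ell$, with equality exactly for the edges of $H$ at $w$. If this proves delicate, a fallback is to realize $H$ on $m$ vertices as a subgraph of $K_{m'}$ with $m'\le n$ odd, use $S_{g_{m'}}$, and restore genus $g_n$ by translation-surgery handle additions. However, that route must then avoid introducing new singular points, which I suspect is the harder route.
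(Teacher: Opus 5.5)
Your edge-deletion stage is the paper's entire proof. The paper makes the rectangle $Q_{ij}$ in the cylinder carrying $e=b_{ij}*b'_{ml}$ taller, so that $\ell(b_{ij})=\ell(b'_{ij})>\ell_0$, and repeats this for each deleted edge. Your checks (disjoint supports, unchanged cone angles, continuity of short saddle connections, merged non-disk regions stay non-disks) supply details the paper omits.

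One caveat applies to both you and the paper: the step needs $H$ to keep at least one edge. Every translation surface has a shortest saddle connection, so $\Gamma_S$ is never edgeless. If you delete every edge, the systole simply jumps above $\ell$ and new systolic connections appear. Edgeless subgraphs therefore cannot be realized by any construction, and the statement has to be read with $E(H)\neq\emptyset$.

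The paper never removes vertices. It asserts that every subgraph arises by deleting edges, which yields only spanning subgraphs. You are right that a non-spanning $H$ forces some singular points to become regular. However, your vertex stage does not establish this, and the gap is in the merging step itself.
\begin{itemize}
\item Reversing the breaking up of a zero is a local surgery only when the connection $vw$ is short compared with its surroundings. A collision is, by definition, the relative period of $vw$ tending to $0$, so there is no merge \emph{at a fixed scale}.
\item On your surface every $v$--$w$ saddle connection is longer than $\ell$. The collapse is therefore a deformation at the systolic scale, where your small-perturbation continuity argument says nothing.
\item Concretely, collapse $vw$ by a rel deformation moving $v$ with absolute periods fixed. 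A connection from $v$ with holonomy $h$, if it survives, becomes a connection from the merged point with holonomy $h-h_{vw}$. Its length can be far below $\ell$, for instance when the two connections leave $v$ in directions differing by nearly $2\pi j$ in the cone angle and have nearly equal lengths. Other connections at $v$ may also degenerate before $vw$ does.
\end{itemize}
So neither the systole nor the resulting systolic graph is controlled. The issue is the final surface, not the intermediate surfaces you worried about.

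Essentiality also does not give you an annular complementary region. In the paper's construction the complement of $K_n$ is a single region of Euler characteristic $n-\binom{n}{2}$.

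Your fallback of adding genus at an existing zero (bubbling a handle) has the same unverified step. You would have to show that the boundary connections of the inserted cylinder, and all new connections through the enlarged zero, are longer than $\ell$.

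As it stands, what you and the paper actually prove is the case of spanning subgraphs with at least one edge.
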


We obtain such an embedding for any subgraph by appropriately modifying the embedding of $K_n$ without altering the genus. Since every finite graph $G$ can be realized as a subgraph of some complete graph $K_n$ with odd $n \geq 5$, Theorem~\ref{main theorem 1} together with Proposition~\ref{proposition 1} yields Theorem~\ref{Main Result}.

\vspace{0.3 cm}

Unlike essential-systolic embedding, not all graphs admit cellular-systolic embedding on translation surfaces. For example, the complete graph $K_3$ admits topological cellular embedding only on the sphere, which is not a translation surface (see Lemma \ref{Non example}). Thus, in the context of cellular systolic embedding, for response to Question (\ref{question 1}), it is natural to ask whether there exists a collection of graphs that can be embedded in this way. In this context, we have identified a family of graphs $\Sigma_n$, which denotes the wedge of $n$ circles for $n \geq 2$. In addition, we find another following collection of graphs, each with more than one vertex, that can also be embedded on a translation surface in this manner:

\begin{enumerate}
    \item $\mathcal{P} = \left\{\, G_{n,m} = (V_n, E_{n,m}) \mid n \geq 2,\; m \in \mathbb{N} \right\}$, where 
    \begin{align*}
        V_n &= \{ v_0, v_1, \dots, v_{n-1} \} \text{ and}\\  E_{n,m} &= \{ e_{ij} \mid 0 \leq i \leq n-1,\; 1 \leq j \leq 2m \}.
    \end{align*}
    
     Note that $e_{ij}$ are the $2m$ parallel edges between~$v_i$ and~$v_{i+1}$ (indices of $v$'s taken modulo $n$).

\item $\mathcal{Q} = \left\{\, G_{n,m}^\prime = (V_n, E_{n,m}) \;\mid \; n \geq 2,\; m \in \mathbb{N}\right\}$, where 
\begin{align*}
V_n &= \left\{ v_0, v_1, \dots, v_{n-1}\right \}\text{ and}\\ E_{n,m} &= L \cup P \text{ where}\\ L &= \{\, \ell_{ij} \mid 0 \leq i \leq n-1,\; 1 \leq j \leq m \} \text{ and}\\ P &= \{\, e_{ij} \mid 0 \leq i \leq n-1,\; 1 \leq j \leq m \},
\end{align*}
such that each $\ell_{ij}$ is a loop at vertex $v_i$ and each $e_{ij}$ connects vertex $v_i$ to vertex $v_{i+1}$ (indices taken modulo $n$). That is, every vertex $v_i$ has exactly $m$ loops, and every pair  $(v_i, v_{i+1})$ is connected by $m$ parallel edges.
    
 \end{enumerate} 

Then, we prove the following result.

\begin{theorem}\label{another rich collection} (\cite{MR4985436})
Every graph $G\in \mathcal{P} \sqcup \mathcal{Q}$ admits a cellular-systolic embedding on some translation surface.    
\end{theorem}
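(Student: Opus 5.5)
The plan is to give, for each $G\in\mathcal{P}\sqcup\mathcal{Q}$, an explicit translation surface built from congruent Euclidean polygons. We then check three things: the cone points are exactly the vertices $v_0,\dots,v_{n-1}$; every edge of the polygons is a shortest saddle connection and no other saddle connection has that length; and every complementary region is an open disk.

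\textbf{Model piece.} The building block is a unit rhombus with angles $\pi/3$ and $2\pi/3$, equivalently two equilateral triangles of side $1$. Inside such a polygon the only segments of length $\le 1$ joining vertices are the sides, since the short diagonal has length $1$. We therefore use parallelograms or hexagons made of unit equilateral triangles, but we glue along sides so that short diagonals never join two cone points with interior free of singularities.

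A cleaner option is to use unit squares. A square has sides of length $1$ and diagonals of length $\sqrt2$. Any saddle connection crossing the interior of a square must have length greater than $1$ unless it is a side. So if the surface is tiled by unit squares whose vertices are exactly the cone points, the shortest saddle connections are precisely the square edges. This holds because any other geodesic between vertices develops to a vector in $\mathbb{Z}^2\setminus\{(\pm1,0),(0,\pm1)\}$, hence has length at least $\sqrt2$. This gives systolicity essentially for free. The systolic graph is then the $1$-skeleton of the square tiling, and the embedding is automatically cellular.

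\textbf{Realising $\mathcal{P}$.} For $G_{n,m}$ we need a square-tiled surface with $n$ cone points and the prescribed edges. Each vertex has degree $4m$, so its cone angle is $4m\cdot\pi/2=2m\pi$. There are $nm$ squares, since the number of squares equals $E/2=2nm/2$. The edge pattern must join $v_i$ only to $v_{i\pm1}$.

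I would label the squares $Q_{i,j}$ with $0\le i\le n-1$ and $1\le j\le m$. Each $Q_{i,j}$ has its bottom-left and top-right corners at $v_i$ and its other two corners at $v_{i+1}$. I will write the side pairings explicitly so that horizontal sides go from $v_i$ to $v_{i+1}$ and vertical sides from $v_{i+1}$ to $v_i$. The right side of $Q_{i,j}$ is glued to the left side of $Q_{i,j+1}$, with $j$ cyclic. The top side of $Q_{i,j}$ is glued to the bottom of a square of the same row shifted by a permutation. Checking the vertex cycles then gives two things. First, every corner labelled $v_i$ comes only from rows $i-1$ and $i$. Second, each $v_i$ is a single point of angle $2m\pi$, which needs $m\ge1$; when $m=1$ the point is a marked point, which is allowed. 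The edges then come in exactly $2m$ parallel copies between $v_i$ and $v_{i+1}$.

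\textbf{Realising $\mathcal{Q}$.} For $G'_{n,m}$, loops at $v_i$ arise as square sides whose endpoints are identified to the same point. We count degree $2m+2m=4m$ again, with $nm$ squares. I would use squares with three corners at $v_i$ and one at $v_{i+1}$, or alternatively pairs of squares. The combinatorics should be arranged via cyclic shifts so that each $v_i$ receives $m$ loops and $m$ edges to $v_{i+1}$, using the same row-by-row gluing.

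\textbf{Main obstacle.} The hard step is the combinatorial bookkeeping of the side identifications. We must ensure that the corner identifications produce exactly $n$ points with the prescribed adjacency and multiplicities, that the surface is connected, and that it is orientable as a translation surface, with opposite sides glued. I would set up the gluing as a pair of permutations $(h,v)$ on the set of squares, as for origamis. Vertex classes then correspond to cycles of the commutator $[h,v]$. I would choose $h$ and $v$ (cyclic in $j$, shifting $i$) so that the commutator has exactly $n$ cycles of length $m$, and read off the edge labels.
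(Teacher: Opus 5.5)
\textbf{There is a genuine gap: the square-tiled approach provably cannot realise all of $\mathcal{P}\sqcup\mathcal{Q}$.} So what you call the main obstacle is not bookkeeping but an actual obstruction. Suppose $S$ is tiled by unit squares whose corners are exactly the vertices of $G$, and $\Gamma_S$ is the $1$-skeleton. Then around each vertex, edge-ends and right-angled corners alternate. Every vertex of $G_{n,m}$ and of $G'_{n,m}$ has degree $4m$, so it carries $4m$ corners and has cone angle $4m\cdot\frac{\pi}{2}=2m\pi$, i.e.\ $k_i=m-1$. The relation $\sum_i k_i=2g-2$ (Proposition~\ref{Gauss-Bonnet}) then forces $n(m-1)$ to be even. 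Equivalently, with your own count $F=E/2=nm$, one gets $\chi(S)=n-2nm+nm=n(1-m)$. When $n$ is odd and $m$ is even this is odd, which is impossible for a closed orientable surface. For instance, $G_{3,2}$ and $G'_{3,2}$ have no square-tiled realisation at all. In your origami language the same fact reads as follows: $[h,v]$ is always an even permutation, while a product of $n$ disjoint $m$-cycles on $nm$ letters has sign $(-1)^{n(m-1)}$. So no pair $(h,v)$ has the cycle type you need.

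\textbf{Even where parity permits, the gluing you write down is inconsistent.} Every $Q_{i,j}$ has $v_i$ at its bottom-left and top-right corners and $v_{i+1}$ at the other two. A direct check of the four possible side gluings shows that $Q_{i',j'}$ can be glued to $Q_{i,j}$ only if $i'\equiv i+1$ and $i'+1\equiv i \pmod{n}$, i.e.\ only if $n\mid 2$. For example, gluing the right side of $Q_{i,j}$ to the left side of $Q_{i,j+1}$ identifies its bottom-right corner ($v_{i+1}$) with the bottom-left corner of $Q_{i,j+1}$ ($v_i$). The case $\mathcal{Q}$ is not specified at all. Your rhombus remark is also self-contradictory: a short diagonal of length $1$ would itself be a systolic connection.

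\textbf{The missing idea is to use larger faces, so that $F$ is not forced to be $E/2$.} The paper takes one regular $4m$-gon $P_i$ per index $i$, with sides $E_1(i),\dots,E_{2m}(i),\bar E_1(i),\dots,\bar E_{2m}(i)$ and $E_k(i)\parallel\bar E_k(i)$. For $\mathcal{P}$ it glues $E_k(i)$ to $\bar E_k(\sigma(i))$ with $\sigma=(0\ 1\ \cdots\ n-1)$. For $\mathcal{Q}$ it glues $E_k(i)$ to $\bar E_k(i)$ when $k\le m$, and to $\bar E_k(\sigma(i))$ when $k>m$. Then $V=n$, $E=2mn$, $F=n$, so $\chi=2n(1-m)$ is always even and $g=n(m-1)+1$. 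Each vertex still has degree $4m$, but its cone angle is $(4m-2)\pi$ rather than $2m\pi$; this is what removes the parity problem. The complement of the graph is the $n$ open polygons, so the embedding is cellular. When $m=1$ these polygons are squares, and there your lattice-holonomy argument is a clean proof that the sides are exactly the systolic connections. For $m\ge 2$ you need non-square faces.
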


\vspace{0.4 cm}
Here, we obtain a complete answer to Question~\ref{question 1} in the case of essential-systolic embeddings, and a partial answer in the case of cellular-systolic embeddings. 

So far, we have studied the problem of the existence of systolic embeddings of finite graphs. Next, we focus on the problem of extremum genera. It is therefore natural to pose the following question.

\begin{question}\label{question 2}
What are the minimum and maximum possible genera of translation surfaces $S$ on which a given graph $G$ admits a systolic embedding?
\end{question}

If a graph admits an essential-systolic embedding on a translation surface of genus $g$, then there exists an integer $g'>g$ such that the graph admits such an embedding on a translation surface of genus $g'$. Therefore, the maximum-genus problem of such essential embeddings does not make sense, and hence we study the minimum-genus problem. We define $g^{\min}_G$ as the minimum possible genus of the surfaces on which the graph $G$ admits an essential-systolic embedding. To address Question~\ref{question 2}, we have established the following result.

\begin{theorem} \label{upper bound} (\cite{MR4985436})
For a graph \( G \) with \( n \) vertices, an upper bound on the minimum genus $g_G^{\min}$ of a translation surface that admits an essential-systolic embedding is given by
\[
g^{\min}_G \leq 
\begin{cases}
1 - n\left(1 - \frac{n - 1}{2}\right), & \text{if } n \text{ is odd and } n \geq 5, \\[0.5em]
1 - (n + 1)\left(1 - \frac{n}{2}\right), & \text{if } n \text{ is even and } n \geq 5, \\[0.5em]
6, & \text{if } n < 5.
\end{cases}
\]
\end{theorem}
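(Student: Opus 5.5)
The plan is to reduce everything to Theorem~\ref{main theorem 1} and Proposition~\ref{proposition 1}. The key observation is that $G$ with $n$ vertices is a subgraph of a complete graph $K_N$ on $N\ge n$ vertices with $N$ odd and $N\ge 5$, and Proposition~\ref{proposition 1} then gives an essential-systolic embedding of $G$ on a translation surface of genus $g_N=1-N\left(1-\frac{N-1}{2}\right)$. Hence $g^{\min}_G\le g_N$, and the bound follows by choosing $N$ as small as possible in each case.

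One point needs care: ``subgraph'' should allow the subgraph to have fewer vertices than $K_N$. The systolic graph must then be isomorphic to $G$ exactly, so the surplus vertices must be handled. I would inspect the modification in Proposition~\ref{proposition 1}. If deleting edges of $K_N$ leaves isolated vertices, these are still cone points on the surface. To remove them, I would merge them into a genuine vertex of $G$ or turn them into regular points, without changing the genus. Alternatively, I would read Proposition~\ref{proposition 1} as covering all subgraphs, including those on fewer vertices. This is the main obstacle. The cleanest route is to confirm that the construction realizes spanning subgraphs with isolated vertices, and that an isolated cone point of total angle $2\pi$ can simply be forgotten as a marked point. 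The only caveat is when the statement insists that every cone point be a vertex. If the paper's convention counts marked points only when there are no singular points, a point of angle $2\pi$ is not a vertex and can be deleted freely. I would rely on this convention. If it fails, the fallback is to add the missing vertices as isolated vertices of the target graph, which is harmless only if $G$ already has them.

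\textbf{Case $n$ odd, $n\ge5$.} Take $N=n$; the graph $G$ is a spanning subgraph of $K_n$, giving $g^{\min}_G\le 1-n\left(1-\frac{n-1}{2}\right)$.

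\textbf{Case $n$ even, $n\ge 5$.} Take $N=n+1$, which is odd and at least $5$. This gives $g^{\min}_G\le 1-(n+1)\left(1-\frac{n}{2}\right)$.

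\textbf{Case $n<5$.} Take $N=5$. Then $g_5=1-5(1-2)=6$, so $g^{\min}_G\le 6$.

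Thus the whole proof is a two-line reduction, with the only real work being the vertex-count bookkeeping described above.
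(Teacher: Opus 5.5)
Your reduction is exactly the paper's proof: view $G$ inside $K_n$, $K_{n+1}$ or $K_5$, and combine Theorem~\ref{main theorem 1} with Proposition~\ref{proposition 1} to get $g^{\min}_G\le g_n$, $g_{n+1}$ or $g_5=6$. The paper does no vertex bookkeeping at all. It simply applies Proposition~\ref{proposition 1} to subgraphs with fewer vertices than $K_N$. Your suspicion about that step is justified, since the proof of Proposition~\ref{proposition 1} only lengthens rectangles and therefore only produces spanning subgraphs. However, the patch you say you would rely on fails. In $S_{g_N}$ every vertex $v_i$ is a singular point of cone angle $2\pi(N-2)\ge 6\pi$, and lengthening rectangles changes no cone angle. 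So after all edges at a surplus vertex are deleted, it is still a singular point, hence still a vertex of $\Gamma_S$, and you obtain $\Gamma_S\cong G\sqcup K_1\not\cong G$. The marked-point convention never comes into play, because no surplus point has angle $2\pi$. \emph{Merging} or \emph{regularizing} such a point is also not free: by Proposition~\ref{Gauss-Bonnet} its excess $k=N-3$ must be absorbed by lowering the genus or enlarging other cone angles, after which all saddle connections have to be re-controlled.

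One repair inside the same construction is the following. Delete the polygon $P_v$ of a surplus vertex, and for each $r$ glue the two freed sides $E_r\bigl((\sigma_r')^{-1}(v)\bigr)$ and $\bar E_r\bigl(\sigma_r'(v)\bigr)$ to each other; they are parallel of length $s$, exactly as the construction requires. This is the same construction with every Hamiltonian cycle shortcut past $v$. The systolic graph becomes $K_{N-1}$ with the $\frac{N-1}{2}$ edges joining $(\sigma_r')^{-1}(v)$ and $\sigma_r'(v)$ doubled. Lengthening rectangles as in Proposition~\ref{proposition 1} then removes the duplicates and the edges not in $G$. Since the remaining cone angles are still $2\pi(N-2)$, the genus is $1+\frac{(N-1)(N-3)}{2}<g_N$, which is harmless for an upper bound. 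Repeat this if there are several surplus vertices, as happens for $n<5$. Finally, one case no argument can cover: an edgeless $G$ has $P_G=\emptyset$. Every translation surface has a shortest saddle connection, so $\Gamma_S$ always has an edge. The statement, like Proposition~\ref{proposition 1}, should therefore be read for graphs with at least one edge.
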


Now, to address Question~\ref{question 2}, in the context of cellular-systolic embeddings, we establish the following results for $\Sigma_n$, $n \geq 2$, which together provide answers to Questions~\ref{question 1} and~\ref{question 2}.

\begin{theorem} \label{cellular embedding} (\cite{MR4985436})
Let $\Sigma_n$ be the wedge of $n$ circles, where $n \geq 2$. Then we have the following:
\begin{enumerate}
    \item $\Sigma_n$ admits cellular-systolic embedding on a translation surface of genus $\left\lfloor \frac{n}{2} \right\rfloor$. Furthermore, this is the maximum genus for which such an embedding is possible.
    
    \item The minimum genus of a translation surface on which $\Sigma_n$ admits cellular-systolic embedding is $\left\lceil \frac{n+3}{6} \right\rceil$.
    
    \item For any integer $g$ satisfying $\left\lceil \frac{n+3}{6} \right\rceil \leq g \leq \left\lfloor \frac{n}{2} \right\rfloor$, the graph $\Sigma_n$ admits  cellular-systolic embedding on a translation surface of the genus $g$.
\end{enumerate}
\end{theorem}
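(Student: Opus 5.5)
Throughout, a cellular embedding of $\Sigma_n$ on a closed surface of genus $g$ has $V=1$ and $E=n$, so Euler's formula gives the number of faces $F = n+1-2g$. This one relation drives every part of Theorem~\ref{cellular embedding}: the maximum genus comes from $F\geq 1$, and the minimum genus comes from a lower bound on the perimeter of each face.

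For the upper bound in (1), cellularity forces $F\ge 1$, hence $2g\le n$ and $g\le\lfloor n/2\rfloor$. For the lower bound in (2) I would use flat geometry. All $n$ edges are shortest saddle connections of a common length $\ell$, and each face is a flat disk bounded by saddle connections. Its developed image is a Euclidean polygon whose interior angles are positive, and the angles sum to the cone-angle contribution at the single vertex. The total cone angle at the vertex is $2\pi(2g-1)$, since the sum of $(k_i-1)$ over cone points is $2g-2$. This angle is distributed among the corners of the faces, with $2n$ corners in total. A face with $m$ sides ($m\ge 3$, because a monogon or bigon cannot be bounded by geodesic segments in a flat disk) has angle sum $(m-2)\pi$. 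Summing over faces gives $\sum (m_f-2)\pi = 2n\pi - 2F\pi = 2\pi(2g-1)$, which is consistent with Euler's formula but gives no new information.

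The real constraint is $m_f\ge 3$, which yields $2n=\sum m_f\ge 3F = 3(n+1-2g)$, so $g\ge (n+3)/6$. Geometrically, I expect that a triangle face must be equilateral with all sides equal to $\ell$. One then has to check that no shorter saddle connection appears, e.g.\ a diagonal inside a face. Diagonals of convex faces with sides $\ell$ and angles $\ge\pi/3$ have length $\ge\ell$, but this needs care.

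For existence, and hence for (3), I would build explicit surfaces. For a given $g$ in the range, take $F=n+1-2g$ faces realized as equilateral triangles and regular or near-regular polygons of side $\ell=1$, and glue their sides in pairs by translations so that all vertices are identified to one point. Every face must have all its interior saddle connections of length $\geq 1$. To ensure this, use regular polygons with at least $3$ sides (whose diagonals exceed the side) and avoid short saddle connections crossing between faces; since all angles at corners are at least $\pi/3$, a geodesic leaving the vertex travels distance at least $1$ before hitting another vertex. A convenient base case is the genus-$\lfloor n/2\rfloor$ surface: a single regular $2n$-gon (for $n$ even) with opposite sides identified gives one vertex when $n$ is even, while for $n$ odd a $2n$-gon gluing yields genus $(n-1)/2$ with $F=2$ after adding a suitable extra face. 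To lower the genus by one while keeping $n$ fixed, cut faces into more triangles, trading genus for faces. Concretely, I would give an inductive surgery: replace a $2k$-gon face by an arrangement raising $F$ by $2$, then re-glue with translations, producing every $g$ from $\lfloor n/2\rfloor$ down to $\lceil (n+3)/6\rceil$. At the minimum, all faces are triangles except for at most two faces of size $4$ or $5$ (depending on $n \bmod 6$).

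The main obstacle is the existence part. The gluing must simultaneously be by translations (parallel opposite sides, which forces even-sided compatible directions), identify all vertices to a single point, and create no saddle connection shorter than the edges, including long geodesics wandering through several faces. I would first attempt concrete models built from equilateral triangles on the hexagonal lattice, where lengths are controlled by lattice vectors, since any saddle connection is then a nonzero lattice vector of length $\ge 1$. Then only the combinatorics of the single-vertex gluing remains to be verified.
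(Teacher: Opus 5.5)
Your two bounds are sound. The upper bound in (1) is the paper's Euler-characteristic count. Your lower bound in (2) is a valid alternative to the paper's appeal to the Boissy--Geninska maximum $3(2g-1)$ of shortest saddle connections with one singular point: Gauss--Bonnet for a flat face, $\sum_i(\pi-\theta_i)=2\pi$ with all $\theta_i>0$, forces $m_f\ge 3$, hence $2n\ge 3(n+1-2g)$, and this does not even use systolicity.

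The genuine gap is the existence half, which is most of the theorem: realizability at genus $\lfloor n/2\rfloor$, at $\lceil (n+3)/6\rceil$, and at every genus in between. Your proposal constructs none of it.
\begin{itemize}
\item For odd $n$ your base case is wrong as stated. The regular $2n$-gon with opposite sides glued has two vertex classes, and no face can be ``added'' because that polygon already uses all $2n$ half-edges. You need two $n$-sided faces, i.e.\ the double regular $n$-gon, which is the paper's model.
\item The genus-lowering surgery that raises $F$ by $2$ is never specified. You would have to say which faces replace which, how the translation side-pairings are re-chosen so that all corners still close up at one cone point, and why no new shortest saddle connection appears. Without this, (3) and the realizability in (2) are unproved.
\end{itemize}
The paper supplies explicit models instead. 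For minimal genus it takes genus-$2$ surfaces for $\Sigma_4,\dots,\Sigma_9$ and attaches $4(g-2)$ equilateral triangles. For intermediate genus it uses either $2k$ equilateral triangles plus one regular $(2n-6k)$-gon (when $F=2k+1$), or $2k-2$ triangles plus two regular $(n-3k+3)$-gons (when $F=2k$). Your face-count bookkeeping is compatible with these, but the gluings are the actual content.

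The tools you propose would not close the gap as they stand.
\begin{itemize}
\item On the hexagonal lattice the only unit-side quadrilateral is the $60^\circ$ rhombus, whose short diagonal is an extra unit saddle connection. So already $\Sigma_2$ on the torus (a single quadrilateral face) cannot be built that way.
\item Your prediction of at most two faces of size $4$ or $5$ at minimal genus fails for $n=6g-8$, where $\sum_f(m_f-3)=6g-3-n=5$; the paper uses an octagon there.
\item ``All corner angles $\ge\pi/3$'' yields at best a non-strict bound. But $\Gamma_S\cong\Sigma_n$ requires the edges to be the \emph{only} saddle connections of length $\ell$, including ones that cross several faces.
\end{itemize}
A clean sufficient criterion for systolicity is the following. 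Suppose every face is convex with sides $\ell$, non-adjacent vertices are at distance $>\ell$, and every vertex is at distance $>\ell/2$ from the sides not containing it. Then the open $\ell/2$-ball about the cone point is an embedded cone whose closure self-touches only at edge midpoints. Consequently every saddle connection has length $\ge\ell$, with equality exactly for the edges. This criterion covers the equilateral triangles and regular polygons used in the paper.
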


\vspace{0.4 cm}
In the rest of the chapter, our focus will be on Bolza-like surfaces in Thurston set.

Thurston has introduced an idea of deformation for increasing the systolic length, i.e., the length of a systolic geodesic of a surface which is not an element of the set $\chi_g$. Thurston's idea is the following: consider a simple closed geodesic that does not intersect any of the systolic geodesics. Now, cut the surface along this curve and attach two funnels along these boundaries. Take a bi-infinite geodesic arc that intersects all the systolic geodesics and insert a thin strip along this arc. If we delete the funnels and reglue the boundaries, then we get a hyperbolic surface of the same genus and with a larger systolic length. Thus, by deforming a hyperbolic surface that is not a member of the Thurston set, we obtain a new surface with a larger systole. Now, one can ask the question:

\begin{question}{\label{question 3}}
    Does there exist a systolic length increasing deformation on a surface $X\in\chi_g$?
\end{question}

 In this thesis, we introduce a deformation on hyperbolic surfaces in $\chi_g$ and show that this deformation is also systolic length increasing. So, we answer Question (\ref{question 3}) in the theorem below.

 \begin{theorem}\label{Bolza like surfaces} (\cite{DeySahaSankiToAppear})
   For every integer $g=kn+1$, where $k,n\in \N, k\geq 2, n\geq 4$, there is a continuous function $\mathcal{D}_g:\left[ 0, \frac{\pi}{12}\right] \to \mathcal{M}_g$ satisfying the following.
    \begin{enumerate}
        \item $\mathcal{D}_g(t)\in \chi_g$, for all $t\in \left[ 0, \frac{\pi}{12}\right]$, where $\chi_g$ is the Thurston set.
        \item $\mathrm{sys}(\mathcal{D}_g(t_1))<\mathrm{sys}(\mathcal{D}_g(t_2))$, for all $0\leq t_1 < t_2 \leq \frac{\pi}{12}.$
        \item The surface $\mathcal{D}_g\left(\frac{\pi}{12}\right)$ has $(6g-6)$ many systolic geodesics that give a triangle decomposition of the surface. In other words, $\mathcal{D}_g\left(\frac{\pi}{12}\right)$ is Bolza-like.
    \end{enumerate}
\end{theorem}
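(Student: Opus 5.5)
The plan is to build the surfaces explicitly from hyperbolic polygons with a large cyclic symmetry, and to deform all of them by a single parameter $t$ that controls one interior angle. Concretely, for $g=kn+1$ take a regular right-angled (or suitably symmetric) hyperbolic $2n$-gon $P_t$ as a building block. Glue $k$ copies of a basic piece in a ring so that the rotation of order $k$ (and the order-$n$ symmetry inside each piece) acts on the resulting closed surface. The Euler characteristic count $2-2g=-2kn$ fixes the combinatorics: we need $4g-4=4kn$ triangles, or equivalently $2g-2$ pieces, each made of two triangles.

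Along the family, the systolic geodesics are the images of the sides (or diagonals) of the building blocks. They are arranged so that at $t=0$ they already cut the surface into polygons, giving $\mathcal{D}_g(0)\in\chi_g$. At $t=\pi/12$ extra diagonals reach the same length, so the $6g-6$ curves triangulate the surface. The number $6g-6$ is exactly the edge count $3F/2$ of a triangulation with $F=4g-4$ triangles, which tells us to aim for triangles of type $(p,q,r)$ with vertex degrees matching $1/p+1/q+1/r<1$.

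The key steps, in order, are as follows.

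\textbf{Step 1.} Write down the polygon $P_t$ with angles depending on $t$, for instance one family of angles $\pi/2 - t$ and another compensating family. Check that the gluing produces a smooth closed hyperbolic surface. This requires the angle sum at each vertex cycle to be $2\pi$ for every $t$, and the area to stay equal to $4\pi(g-1)$.

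\textbf{Step 2.} Using hyperbolic trigonometry (right-angled hexagon and pentagon formulas, or the cosine rule), compute the length $L_1(t)$ of the curves formed by the glued sides. Compute also the length $L_2(t)$ of the competing diagonal curves. Show that $L_1$ is strictly increasing on $[0,\pi/12]$ and that $L_1(t)<L_2(t)$ for $t<\pi/12$, with equality exactly at $t=\pi/12$. Continuity of $\mathcal{D}_g$ into $\mathcal{M}_g$ follows from the continuous dependence of the polygon on $t$.

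\textbf{Step 3.} Prove that the candidate curves really are the systoles. Every other closed geodesic must be at least as long as $\min(L_1,L_2)$. I would argue by intersection pattern. A closed geodesic not in the candidate set must cross the filling collection $\{\gamma_i\}$. It therefore contains arcs crossing polygons between non-adjacent sides, and each such arc has length bounded below by the distance between those sides in $P_t$. The symmetry group reduces this to finitely many arc types in one polygon. Summing the arc lengths, using that a nontrivial closed curve needs at least two or three such arcs, gives the bound.

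\textbf{Step 4.} Deduce filling: the complementary regions are the polygons (or triangles), which are discs. This gives (1) and, at $t=\pi/12$, (3), with the triangle type read off from the angles.

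I expect Step 3, the lower bound for all other closed geodesics, to be the main obstacle. This is especially delicate near $t=\pi/12$, where the margin vanishes, and for short curves passing near vertices. There I would first try a distance-between-sides estimate in $P_t$ using the collar lemma (Theorem 4.1.1 of \cite{Buser}) as a fallback for curves running parallel to a systole.
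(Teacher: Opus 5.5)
\textbf{Gap: the construction is missing, and the target combinatorics is impossible on $S_g$.} Your outline has the right shape: a one-parameter family, two competing curve families whose lengths cross at $t=\pi/12$, and complementary regions that are first polygons and then triangles. But you never specify the polygon, the gluing, or which diagonals become systoles, so Steps 1--2 cannot actually be carried out.

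The combinatorics you aim for is also impossible on the surface itself. Suppose the $6g-6$ systoles were the $E=3F/2$ edges of a triangulation of $\mathcal{D}_g(\pi/12)$ with $F=4g-4$ triangles. Then Euler's formula gives $V=E-F+2-2g=0$. Equivalently, each triangle would have area $4\pi(g-1)/(4g-4)=\pi$, which no hyperbolic triangle has.

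Your numbers are in fact those of a torus, and that is the missing idea. The paper tiles a torus $T_\epsilon$ by an $m\times n$ grid of copies of the equilateral quadrilateral $Q_\epsilon$, whose angles alternate $\frac{\pi}{4}+\epsilon,\frac{\pi}{4}-\epsilon$. Each of the $mn$ grid vertices is then a cone point of angle $\pi$. The surface $S_g(\epsilon)$ is the double cover branched over these points. This needs $mn$ even and gives $g=\frac{mn}{2}+1$, so $m=2k$ yields $g=kn+1$; here $n$ and $2k$ are grid dimensions, not polygon data.

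Because the cone angle is $\pi$, every geodesic arc in $T_\epsilon$ between two cone points lifts to a closed geodesic of twice its length. The $4g-4$ sides and the $2g-2$ diagonals (those joining the $\frac{\pi}{4}+\epsilon$ corners) therefore give $6g-6$ closed geodesics. On $S_g(\pi/12)$ they cut out $8g-8$ triangles with all angles $\pi/6$, meeting with valence $12$ at the $2g-2$ fixed points.

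This also explains the value $\pi/12$, which you only assert. A diagonal cuts off a triangle with angles $\frac{\pi}{8}+\frac{\epsilon}{2},\frac{\pi}{4}-\epsilon,\frac{\pi}{8}+\frac{\epsilon}{2}$, whose two equal sides are sides of $Q_\epsilon$. Side length equals diagonal length exactly when this triangle is equiangular, i.e.\ $\epsilon=\pi/12$. The cosine rule then shows that the side length $l_\epsilon$ increases and the diagonal length $x_\epsilon$ decreases.

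\textbf{Step 3 fails as stated.} A closed geodesic can meet the filling system only in arcs joining \emph{adjacent} sides of tiles, as a diagonal line does in a square grid. Each such arc can be arbitrarily short, so summing distances between non-adjacent sides gives no lower bound. Nothing in your plan produces the hypotheses $k\ge 2$, $n\ge 4$ either.

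In the paper the involution $\sigma_\epsilon$ does this work, in two cases.
\begin{itemize}
\item A geodesic through a fixed point is $\sigma_\epsilon$-invariant. Hence it contains exactly two fixed points and is the union of two lifts of one arc between cone points, so its length is at least $2\min\{l_\epsilon,x_\epsilon\}$.
\item A geodesic avoiding the fixed points projects to a closed curve on $T_\epsilon$. There the paper uses the distance $y_\epsilon$ between opposite sides of $Q_\epsilon$, with $\cosh y_\epsilon=\sqrt{2}\cos\epsilon+1$ decreasing in $\epsilon$. The inequality $4y_{\pi/12}>2l_{\pi/12}$ then forces $m,n\ge 4$.
\end{itemize}
Without a quotient of this kind, in which the candidate systoles become arcs between cone points, your argument has no mechanism to control curves near the vertices.
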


\begin{remark}
Theorem~\ref{Bolza like surfaces} applies only to genera $g$ such that $g-1=kn$ for some
integers $k\geq 2$ and $n\geq 4$. Thus, for $g\geq 2$, the genera
not covered by this theorem are precisely those for which $g-1$ is
either $1$, a prime number, or one of $4,6,9$.
\end{remark}

In addition, we show that the distribution of the systolic geodesics is symmetric in the sense that any simple closed geodesic intersects the Bolza-like surface at an even number of points. We prove the following theorem for this.

\begin{theorem}\label{Distribution} (\cite{DeySahaSankiToAppear})
    Any simple closed geodesic on the Bolza-like surfaces $\mathcal{D}_g\left(\frac{\pi}{12} \right)$, as obtained in Theorem \ref{Bolza like surfaces}, intersects the collection $\mathrm{Sys}\left(\mathcal{D}_g\left(\frac{\pi}{12} \right)\right)$ of all systolic geodesics at an even number of points.
\end{theorem}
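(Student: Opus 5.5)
The key input is that at $t=\pi/12$ the systolic geodesics cut $X=\mathcal{D}_g(\pi/12)$ into hyperbolic triangles. So the union $\Gamma=\bigcup\mathrm{Sys}(X)$ is the $1$-skeleton of a triangulation of $X$. I will prove the theorem as a parity statement in $\mathbb{Z}/2$-homology, with one geometric step to handle the degenerate intersections.

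\textbf{Step 1: vertex degrees.} In the construction behind Theorem~\ref{Bolza like surfaces}, each triangle is of type $(p,q,r)$ with all angles of the form $\pi/m$. I will read off from the explicit triangle decomposition that every vertex $v$ of $\Gamma$ is a point where an even number $2k_v$ of geodesic germs meet. This holds because each systolic geodesic passing through $v$ contributes exactly two germs. Moreover, the angles around $v$ are $k_v$ pairs of opposite angles.

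\textbf{Step 2: 2-colouring the faces.} The sum of the $\mathbb{Z}/2$ fundamental classes of all systolic geodesics is a cycle $Z$. The plan is to show $[Z]=0$ in $H_1(X;\mathbb{Z}/2)$, equivalently that the faces of the triangulation admit a proper $2$-colouring, with adjacent triangles across any edge receiving different colours. Since all vertex degrees are even, the dual graph is bipartite locally around every vertex. The global obstruction to bipartiteness is exactly the class $[Z]\in H_1(X;\mathbb{Z}/2)$. I will check $[Z]=0$ directly from the construction. The surface is glued from $k$ (or $n$) congruent building blocks arranged cyclically, so I would exhibit the colouring on one block and verify it is compatible across the gluing boundaries. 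The choice of $k\ge 2$ and $n\ge 4$ should make the gluing pattern respect the colouring. This is the step I expect to be the main obstacle: it requires knowing precisely how the pieces in the construction are glued and which side pairings occur. If a direct colouring fails, I would instead compute $\sum_\gamma \gamma\cdot c \pmod 2$ for a symplectic basis $\{c\}$ of curves built from the blocks.

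\textbf{Step 3: the parity count.} Let $\alpha$ be a simple closed geodesic. If $\alpha$ is itself systolic, I treat it separately. It meets the other systoles transversally, and its self-contribution is not counted, or is counted consistently. Then the count follows from the same homological argument applied to $Z-\alpha$, using that $\alpha\cdot\alpha=0$.

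Otherwise $\alpha$ is transverse to $\Gamma$. Each intersection point of $\alpha$ with the collection $\mathrm{Sys}$ is one of two kinds:
\begin{enumerate}
\item an interior point of an edge, which changes the face colour once;
\item a vertex $v$ of $\Gamma$.
\end{enumerate}
At a vertex, $\alpha$ passes through between two opposite angular sectors. Since $\alpha$ is a geodesic, it bisects the $2k_v$ sectors into $k_v$ on each side. The colours of opposite sectors at a vertex of even degree differ exactly when $k_v$ is odd. So I must specify how points are counted: each point of $\alpha\cap\Gamma$ is counted once (``number of points''). With that convention, I will verify that every vertex has $k_v$ odd in this surface. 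In the construction, vertices have degree $6$, i.e.\ three systoles through each point, giving angles $\pi/3$; I expect this from the $\pi/12$ parameter and the count of $6g-6$ systoles.

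Thus each intersection point, edge or vertex, flips the colour. Since $\alpha$ is closed, it returns to its starting colour, so the number of flips, and hence the number of intersection points, is even.
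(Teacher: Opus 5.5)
\textbf{The gap is in Step 3.} It rests on a false geometric input, and without an idea you are missing the argument does not close.

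In $\mathcal{D}_g(\pi/12)$ the triangles are $(6,6,6)$-triangles, not triangles with angles $\pi/3$. The surface is the branched double cover, via the involution $\sigma_{\pi/12}$, of the torus $T_{\pi/12}$. That torus is tiled by $mn$ squares $Q_{\pi/12}$, each cut by one diagonal. Every cone point of $T_{\pi/12}$ has angle $\pi$, split by four grid edges and two diagonals into six sectors of angle $\pi/6$. So each of the $2g-2$ vertices upstairs (the fixed points of $\sigma_{\pi/12}$) has twelve germs and six systoles through it, i.e.\ $k_v=6$. Your guess of degree $6$ with angles $\pi/3$ is impossible anyway: such triangles are Euclidean, and a triangulation with all vertex degrees $6$ has Euler characteristic $0$.

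Since $k_v$ is even, a geodesic through $v$ leaves through a sector of the same colour as the one it entered. So vertex passages do \emph{not} flip the colour. Your argument then only shows that the number of transverse edge crossings is even. Hence the number of points of $\alpha\cap\mathrm{Sys}$ is congruent mod $2$ to the number of vertices on $\alpha$, and nothing in your proposal controls that number.

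\textbf{The missing ingredient is the involution.} If $\alpha$ passes through a fixed point $p$, then $d\sigma_p=-\mathrm{id}$ forces $\sigma(\alpha)=\alpha$. The restriction $\sigma|_\alpha$ is then an orientation-reversing involution of a circle, so $\alpha$ contains exactly two vertices. Equivalently, as in the paper's Case 1, the non-fixed points of $\alpha\cap\mathrm{Sys}$ are permuted freely by $\sigma$ and so come in pairs.

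\textbf{Your Step 2 is true, but not via cyclically glued blocks; the construction has none.} Colour the lower-left and upper-right triangles of each square of the grid torus differently. This is a proper colouring, and it pulls back through the branched cover. That is the homological content of the paper's Case 2, where $\gamma$ is pushed down to $T_{\pi/12}$ and the intersection count $2(pn+qm)$ is computed in the universal cover.

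With Step 2 supplied this way and vertex passages handled by $\sigma$, your colouring argument becomes a valid repackaging of the paper's two-case proof.
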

Note that in this result, we constract a one-parameter family of explicit examples of elements in the Thurston set $\chi_g$.
\section{Outline of the thesis}

In Chapter \ref{chap2}, we explore translation surfaces formed by gluing Euclidean polygons via translations. Originating from billiard dynamics in rational polygons, these surfaces connect deeply with flat geometry, complex analysis, and dynamical systems. After outlining the historical development, we define translation surfaces and discuss their singular and marked points. Key concepts such as cone angles, saddle connections, and systolic connections (shortest saddle connections) are introduced. The chapter concludes with results by Boissy-Geninska, which provide bounds on the length and number of systolic connections based on the surface's genus and singularities, and characterize optimal configurations via equilateral triangulations.

Chapter \ref{chap3} studies embeddings of graphs on translation surfaces with emphasis on systolic geometry. We define systolic, essential, and cellular embeddings, and prove in Theorem~\ref{Main Result} that every finite graph admits an essential-systolic embedding. We derive an upper bound on the minimal genus required for such embeddings (Theorem~\ref{upper bound}). For the wedge of $n$ circles $\Sigma_n$, we show the existence of cellular-systolic embeddings and compute the extremal genera (Theorem~\ref{cellular embedding}). Finally, Theorem~\ref{another rich collection} identifies additional graph families admitting similar embeddings.

Chapter~\ref{Chapter 4} introduces the foundational concepts of hyperbolic geometry. We begin with a brief overview of its historical development, followed by an introduction to various models of the hyperbolic plane, with particular emphasis on the Poincaré upper half-plane model. Using this model, we explore key geometric features, including geodesics, distances, isometries, areas, angles, and trigonometric relations.

In Chapter~\ref{Chapter 5}, we explore hyperbolic surfaces in depth. We begin by defining hyperbolic surfaces and present two fundamental theorems for constructing them: one via gluing hyperbolic polygons and another using group actions by isometries. We then examine geodesics on hyperbolic surfaces and state the Collar Lemma, which asserts that shorter curves have wider collars. The chapter further introduces the mapping class group—orientation-preserving self-homeomorphisms up to isotopy and highlights Dehn twists as its generators. We also briefly survey Teichmüller space and the moduli space of surfaces, concluding with a discussion on generalized Morse functions on the moduli space.

In Chapter~\ref{chapter 6}, we focus on the construction of Bolza-like surfaces within the Thurston set. We begin by defining Bolza-like surfaces and reviewing essential preliminaries. We then construct such surfaces for infinitely many genera $g$. An important application of these constructions is the realization of global maxima of the systole function. Furthermore, we show that the distribution of systolic geodesics on these surfaces is symmetric, in the sense that every simple closed geodesic intersects the surface in an even number of points.

\chapter[Translation surfaces]{Translation surfaces}\label{chap2}

In this chapter, we introduce the concept of \emph{translation surfaces}, which are constructed by pairwise identifying the sides of a given collection of Euclidean polygons via translations. These surfaces arise naturally in the study of billiards in rational polygons and are deeply connected to flat geometry, complex analysis, and dynamical systems. We begin with a historical overview, highlighting the foundational work of Riemann, followed by important developments by Masur, Veech, and others, who establish the connection between billiard dynamics and geodesic flows on flat surfaces with singularities. The formal definition of a translation surfaces is presented, along with a geometric interpretation using polygonal gluings. We describe how the angle at a point on the surface is defined by summing the interior angles at the corresponding preimages and note that this angle is always a multiple of $2\pi$. We introduce the notions of \emph{singular points}, points with a cone angle greater than $2\pi$, and \emph{marked points}, together with illustrative examples. A translation surface is thus characterized as a Euclidean surface with isolated singularities. We then define \emph{saddle connections}, which are geodesic segments that connect singular (or marked) points without passing through any other singularity in their interior. Special emphasis is given to \emph{systolic connections}, defined as the shortest saddle connections. The chapter concludes with two theorems from Boissy-Geninska that bound the \emph{maximum length} and \emph{number} of systolic connections on a translation surface in terms of its genus and number of singular or marked points. These results characterize optimal surfaces in terms of equilateral triangulations and maximal systolic configurations.

\section{Historical Background}
The origins of translation surfaces lie at the crossroads of classical geometry, complex analysis, and dynamical systems. The idea of equipping Riemann surfaces with flat structures coming from holomorphic $1$-forms can be traced back to the pioneering work of Bernhard Riemann in the mid-19th century~\cite{Riemann1857}, who studied Abelian differentials as part of his theory of complex algebraic curves. These structures implicitly carried the geometry of what we now call translation surfaces, although the terminology and dynamical applications only crystallized much later.

A decisive motivation for the modern theory arose from the study of billiards in polygons with rational angles. In the 1920s, George Birkhoff formulated fundamental questions about billiard dynamics, including the existence of periodic orbits and the ergodic properties of billiard flows~\cite{Birkhoff1927}. Despite this early vision, progress remained limited for several decades. A breakthrough occurred in the late 1970s and early 1980s, when Howard Masur and William Veech developed the analytic and dynamical framework that firmly connected billiard flows in rational polygons to geodesic flows on translation surfaces with conical singularities~\cite{Masur82,Veech89}. Their key insight was the unfolding construction: reflecting a billiard trajectory across the sides of a rational polygon produces a straight-line trajectory on a compact flat surface. This translation surface encodes the entire billiard dynamics, opening the door to the use of Teichmüller theory and ergodic theory.

Masur proved that almost every direction on a translation surface defines a uniquely ergodic straight-line flow~\cite{Masur82}, while Veech introduced the notion of Veech surfaces and showed that they exhibit striking dynamical rigidity: for these surfaces, directional flows are either completely periodic or uniquely ergodic~\cite{Veech89}. This class of surfaces, and the associated Veech groups, provided the first concrete examples where deep connections between geometry, dynamics, and arithmetic could be studied.

Building on these breakthroughs, subsequent research revealed a rich interplay between translation surfaces, interval exchange transformations, and the dynamics of moduli spaces of Abelian differentials. Masur and Tabachnikov provided a comprehensive account of rational billiards and flat structures~\cite{MasurTabachnikov02}, while Anton Zorich’s influential surveys highlighted the connections to ergodic theory and the geometry of moduli spaces~\cite{Zorich06}. Curtis McMullen further demonstrated that translation surfaces arising from Hilbert modular varieties yield Teichmüller curves, providing striking examples of rigidity phenomena at the intersection of number theory and dynamics~\cite{McMullen03}.

Through these developments, translation surfaces have become a central object in modern geometry and dynamical systems. They now play a prominent role in diverse areas, ranging from Teichmüller dynamics and interval exchanges to billiard models in physics and deep questions in algebraic geometry and number theory. The continuing growth of this field underscores the enduring significance of the geometric and dynamical ideas first glimpsed in Riemann’s 19th-century work and crystallized in the breakthroughs of Masur and Veech in the late 20th century.

\section{Definition of translation surfaces}
A translation surface admits several equivalent definitions~\cite{AR15}. In this thesis, we adopt the one given in \cite{Masur06}, as it is both simple and well-suited to our objectives.

\begin{definition}\label{Translation Surface} 
A translation surface is a finite union of Euclidean polygons $\{ P_1, P_2, \dots, P_n \}$, equipped with a side pairing,
 such that:
\begin{enumerate}
    \item The boundary of every polygon is oriented so that the interior of the polygon lies to the left.
    \item For every $1\leq j\leq n$ and every oriented side $s_j$ of $P_j$, there exists $k\in\{1,2,\cdots,n\}$ and an oriented side $s_k$ of $\triangle_k$ such that $s_j$ and $s_k$ are parallel and of the same length. They are glued together in the opposite orientation by a parallel translation.
\end{enumerate}
\end{definition}

Therefore, a translation surface can be described as
\[
S = \left(\bigsqcup_{i=1}^n P_i \right)\bigg/\sim,
\]
where the $P_i$ are polygons in the plane and $\sim$ denotes the identification of edges via translations. 

We now present some well-known examples of translation surfaces.
\begin{example}\label{example 1}
By identifying the opposite sides of a square, one obtains a translation surface of genus one, namely the torus (see Figure~\ref{Torus}).
\end{example}

\begin{figure}[htbp]
\begin{center}
\begin{tikzpicture}[scale=3]
\draw (0,0)--(1,0)--(1,1)--(0,1)--(0,0);
\draw[red] (0,0) to (1,0);
\draw[red] ((1,1) to (0,1);
\draw[green] (0,0) to (0,1) ;
\draw[green] (1,0) to (1,1);
\draw (0,0) node{\tiny$\bullet$};
\draw (1,0) node{\tiny$\bullet$};
\draw (0,1) node{\tiny$\bullet$};
\draw (1,1) node{\tiny$\bullet$};
\draw [-{Stealth[red]}]
(0.55,0)--(0.56,0);
\draw [-{Stealth[red]}]
(0.55,1)--(0.56,1);
\draw [-{Stealth[green]}]
(0,0.55)--(0,0.56);
\draw [-{Stealth[green]}]
(1,0.55)--(1,0.56);
\draw (0.5,-0.1) node{a};
\draw (0.5,1.1) node {a};
\draw (-0.1, 0.5) node{b};
\draw (1.1, 0.5) node{b};
\draw (2.5,0.5) ellipse (0.8cm and 0.4cm);
\draw[red] (2.5,0.5) ellipse (0.5cm and 0.2cm);
\draw [-{Stealth[red]}]
(2.5,0.7)--(2.51,0.7);
\draw (2.3,0.5)..controls(2.5,0.6)..(2.7,0.5);
\draw (2.37,0.6)..controls(2.5,0.4)..(2.64,0.6);
\draw[green] (2.5,0.40) arc
[
start angle =90,
end angle=270,
x radius =0.07cm,
y radius =.15cm
];
\draw[densely dotted, green] (2.5,0.1) arc
[
start angle =270,
end angle=450,
x radius =0.07cm,
y radius =.15cm,
];
\draw (2.45,0.30) node{\tiny$\bullet$};
\draw [-{Stealth[green]}]
(2.446,0.2)--(2.446,0.21);
\draw (2.5,0.82) node{a};
\draw (2.35,0.2) node{b};

\end{tikzpicture}
\end{center}
\caption{}
\label{Torus}
\end{figure}

\begin{example} \label{e2}
A translation surface of genus $2$ can be constructed by identifying the opposite sides of a regular octagon. In this construction, all vertices are identified to a single point (see Figure~\ref{octagonal surface}).
\end{example}

\begin{figure}[htbp]
 \begin{center}
\begin{tikzpicture}[scale=2]
\draw[red] (0,0)--(1,0);
\draw[blue] (1,0)--(1.707,0.707);
\draw[green] (1.707,0.707)--(1.707,1.707);
\draw  (1.707,1.707)--(1,2.414);
\draw[red] (0, 2.414)--(1,2.414);
\draw[blue] (0,2.414)--(-0.707,1.707);
\draw[green] (-0.707,1.707)--(-0.707,0.707);
\draw (-0.707,0.707)--(0,0);
\draw [-{Stealth}]
(0.5,0)--(0.51,0);
\draw [-{Stealth}]
(0.5,2.414)--(0.51,2.414);
\draw (0.5,2.6) node {$e_1$};
\draw (0.5,-0.2) node {$e_1$};

\draw [-{Stealth}]
(1.3535,0.3535)--(1.4,0.4);
\draw [-{Stealth}]
(-0.3535,2.0605)--(-0.34,2.074);
\draw (1.55,0.30) node{$e_4$};
\draw (-0.45,2.2) node{$e_4$};

\draw [-{Stealth}]
(1.707,1.207)--(1.707,1.1);
\draw [-{Stealth}]
(-0.707,1.207)--(-0.707,1.1);
\draw (1.9,1.207) node{$e_3$};
\draw (-0.9,1.207) node{$e_3$};

\draw [-{Stealth}]
(1.3535,2.0605)--(1.34,2.074);
\draw [-{Stealth}]
(-0.3535,0.3535)--(-0.36,0.36);
\draw (1.55,2.1) node{$e_2$};
\draw (-0.5,0.2) node{$e_2$};

\draw (0,0) node{\tiny$\bullet$};
\draw (1,0) node{\tiny$\bullet$};
\draw (1.707,0.707) node{\tiny$\bullet$};
\draw (1.707,1.707) node{\tiny$\bullet$};
\draw (0,2.414) node{\tiny$\bullet$};
\draw (1,2.414) node{\tiny$\bullet$};
\draw (-0.707,1.707) node{\tiny$\bullet$};
\draw (-0.707,0.707) node{\tiny$\bullet$};

\end{tikzpicture}
\end{center} 
\caption{}
\label{octagonal surface}
\end{figure}

\begin{example}\label{e3}
Another surface of interest is obtained from the double pentagon depicted in Figure~\ref{double pentagon}. From the combinatorial viewpoint, this surface is equivalent to the regular octagon. It has genus $2$, with all vertices identified into a single point.

\end{example}

\begin{figure}[htbp]
\begin{center}
\begin{tikzpicture}[scale=2]
\draw[red] (-0.587,-0.809)--(0.587,-0.809);
\draw[blue](0.587,-0.809)--(0.951,0.309);
\draw[green] (0.951,0.309)--(0,1);
\draw[yellow] (0,1)--(-0.951,0.309);
\draw (-0.951,0.309)--(-0.587,-0.809);

\draw[red] (2.587,2.809)--(1.413,2.809);
\draw[blue] (1.413,2.809)--(1.049,1.691);
\draw[green] (1.049,1.691)--(2,1);
\draw[yellow] (2,1)--(2.951,1.691);
\draw (2.951,1.691)--(2.587,2.809);

\draw (-0.587,-0.809) node{\tiny$\bullet$};
\draw (0.587,-0.809) node{\tiny$\bullet$};
\draw (0.951,0.309) node{\tiny$\bullet$};
\draw (0,1) node{\tiny$\bullet$};
\draw (-0.951,0.309) node{\tiny$\bullet$};

\draw (2.587,2.809) node{\tiny$\bullet$};
\draw (1.413,2.809) node{\tiny$\bullet$};
\draw (1.049,1.691) node{\tiny$\bullet$};
\draw (2,1) node{\tiny$\bullet$};
\draw (2.951,1.691) node{\tiny$\bullet$};

\draw [-{Stealth}] (0,-0.809)--(0.1,-0.809);
\draw [-{Stealth}] (2,2.809)--(2.1,2.809);
\draw [-{Stealth}] (0.769,-0.25)--(0.8,-0.155);
\draw [-{Stealth}] (1.231,2.25)--(1.3,2.462);
\draw [-{Stealth}] (0.475,0.654)--(0.5,0.637);
\draw [-{Stealth}] (1.524,1.345)--(1.6,1.290);
\draw [-{Stealth}] (-0.475,0.654)--(-0.5,0.637);
\draw [-{Stealth}] (2.475,1.345)--(2.4,1.290);
\draw [-{Stealth}] (-.769,-0.25)--(-.8,-0.19);
\draw [-{Stealth}] (2.769,2.25)--(2.72,2.38);

\draw (0.05,-.99) node{$e_1$};
\draw (2.05,2.99) node{$e_1$};
\draw (1,-0.3) node{$e_5$};
\draw (1,2.25) node{$e_5$};
\draw (0.53,0.85) node{$e_4$};
\draw (1.3,1.2) node{$e_4$};
\draw (-0.56,0.85) node{$e_3$};
\draw (2.7,1.345) node{$e_3$};
\draw (-1,-0.25) node{$e_2$};
\draw (2.95,2.25) node{$e_2$};
\end{tikzpicture}
\end{center}
\caption{}
\label{double pentagon}
\end{figure}

\begin{example} \label{e4}
A translation surface of genus $2$ can be obtained by identifying the parallel sides of three squares, as illustrated in Figure~\ref{3 square tiled}. Here also all the vertices identified into one point. This surface is called $L$-shaped surface.
\end{example}

\begin{figure}[htbp]
\begin{center}
\begin{tikzpicture}[scale=3]
\draw (0,0)--(2,0)--(2,1)--(1,1)--(1,2)--(0,2)--(0,0);
\draw[red] (1,2) to (0,2);
\draw[red] (0,0) to (1,0);
\draw[green] (0,2) to (0,1);
\draw[green] (1,2) to (1,1);
\draw[blue] (0,1) to (0,0);
\draw[blue] (2,0) to (2,1);
\draw[yellow] (2,1) to (1,1);
\draw[yellow] (2,0) to (1,0);
\draw[dashed] (0,1) to (1,1);
\draw[dashed] (1,1) to (1,0);
\draw (0,0) node{\tiny$\bullet$};
\draw (1,0) node{\tiny$\bullet$};
\draw (2,0) node{\tiny$\bullet$};
\draw (2,1) node{\tiny$\bullet$};
\draw (1,1) node{\tiny$\bullet$};
\draw (0,1) node{\tiny$\bullet$};
\draw (0,2) node{\tiny$\bullet$};
\draw (1,2) node{\tiny$\bullet$};
\draw [-{Stealth[red]}]
(0.5,0)--(0.51,0);
\draw [-{Stealth[red]}]
(0.5,2)--(0.51,2);
\draw (0.5,-0.15) node{a};
\draw (0.5,2.15) node{a};
\draw [-{Stealth[blue]}]
(0,0.5)--(0,0.51);
\draw [-{Stealth[blue]}]
(2,0.5)--(2,0.51);
\draw (-0.15,0.5) node{b};
\draw (2.15,0.5) node{b};
\draw [-{Stealth[green]}]
(0,1.5)--(0,1.51);
\draw [-{Stealth[green]}]
(1,1.5)--(1,1.51);
\draw (-0.15,1.5) node{c};
\draw (1.15,1.51) node{c};
\draw [-{Stealth}]
(1.5,0)--(1.51,0);
\draw [-{Stealth}]
(1.5,1)--(1.51,1);
\draw (1.5,-0.15) node{d};
\draw (1.5,1.15) node{d};
 \end{tikzpicture}
\end{center}    
\caption{}
\label{3 square tiled}
\end{figure}

\begin{example}\label{e5}
Another translation surface can be obtained by arranging four squares and identifying their parallel sides as shown in Figure~\ref{4 square tiled}. In this construction, all vertices are identified into two distinct points.
\end{example}

\begin{figure}[htbp]   
\begin{center}
\begin{tikzpicture}[scale=2]
\draw (0,0)--(2,0)--(2,2)--(1,2)--(1,0);
\draw (0,0)--(0,1)--(3,1)--(3,2)--(2,2);
\draw[red] (3,2)--(2,2);
\draw[red] (2,1)--(3,1);
\draw[green] (1,2)--(2,2);
\draw[green] (1,0)--(2,0);
\draw[blue] (0,1)--(1,1);
\draw[blue] (0,0)--(1,0);
\draw[brown] (0,0)--(0,1);
\draw[brown] (2,0)--(2,1);
\draw[yellow] (1,1)--(1,2);
\draw[yellow] (3,1)--(3,2);
\draw (0,0)  node{\tiny$\bullet$};
\draw  (0,1) node{\tiny$\bullet$};
\draw (2,2) node{\tiny$\bullet$};
\draw (2,1) node{\tiny$\bullet$};
\draw (2,0) node{\tiny$\bullet$};
\draw[red] (1,2) node{\tiny$\bullet$};
\draw[red] (1,1) node{\tiny$\bullet$};
\draw[red] (1,0) node{\tiny$\bullet$};
\draw[red] (3,1) node{\tiny$\bullet$};
\draw[red] (3,2) node{\tiny$\bullet$};
\draw [-{Stealth[green]}]
(1.5,0)--(1.51,0);
\draw [-{Stealth[green]}]
(1.5,2)--(1.51,2);
\draw [-{Stealth[red]}]
(2.5,1)--(2.51,1);
\draw [-{Stealth[red]}]
(2.5,2)--(2.51,2);
\draw [-{Stealth[blue]}]
(0.5,0)--(0.51,0);
\draw [-{Stealth[blue]}]
(0.5,1)--(0.51,1);
\draw [-{Stealth[brown]}]
(0,0.5)--(0,0.51);
\draw [-{Stealth[brown]}]
(2,0.5)--(2,0.51);
\draw [-{Stealth}]
(1,1.5)--(1,1.51);
\draw [-{Stealth}]
(3,1.5)--(3,1.51);
\draw (2.5,.85) node{a};
\draw (2.5,2.15) node{a};
\draw (3.15,1.5) node{b};
\draw (0.85,1.5) node{b};
\draw (0.5,-0.15) node{c};
\draw (0.5,1.15) node{c};
\draw (1.5,-0.15) node{d};
\draw (1.5,2.15) node{d};
\draw (-0.15,0.5) node{e};
\draw (2.15,0.5) node{e};

\end{tikzpicture}
\end{center}
\caption{}
\label{4 square tiled}
\end{figure}

\begin{lemma} (Non-example of a translation surface) \label{Non example}
The sphere is not a translation surface. 
\end{lemma}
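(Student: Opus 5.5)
We argue by contradiction using a Gauss--Bonnet / Euler characteristic count. Suppose the sphere $S^2$ is homeomorphic to a translation surface $S=\left(\bigsqcup_{i=1}^n P_i\right)/\sim$ as in Definition~\ref{Translation Surface}. The polygonal decomposition gives a cell structure on $S$: let $V$, $E$, $F$ be the numbers of vertex classes, edges (after the pairing), and faces. Then $F=n$, and $E=\frac12\sum_i k_i$, where $k_i$ is the number of sides of $P_i$, since every side is paired with exactly one other.

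The key step is to compute the total cone angle at the vertex classes in two ways. On one hand, as recalled above, each vertex class $v$ has total angle $2k_v\pi$ with $k_v\in\mathbb N$, $k_v\ge 1$. This holds because going once around $v$ by crossing sides via translations brings the direction back to itself, so the angle sum is a positive multiple of $2\pi$; I would state this briefly, since it is the fact noted in the definition of cone points. On the other hand, the interior angles of a Euclidean $k_i$-gon sum to $(k_i-2)\pi$. Summing over all polygons gives
\[
\sum_v 2k_v\pi=\sum_{i=1}^n (k_i-2)\pi=(2E-2F)\pi .
\]
Since $k_v\ge 1$, the left side is at least $2V\pi$, so $V\le E-F$, i.e.\ $\chi(S)=V-E+F\le 0$. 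Equivalently, $2-2g=\sum_v(1-k_v)\le 0$.

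Since $\chi(S^2)=2>0$, this is a contradiction, so the sphere is not a translation surface. The main point needing care is the justification that every cone angle is a nonzero multiple of $2\pi$, which is exactly the holonomy-is-trivial argument for translation gluings. The rest is the standard fact that polygon gluings yield CW structures computing $\chi$. One should also remark that degenerate cases, such as a single polygon glued into a sphere, are excluded by the same count, since polygons have $k_i\ge 3$ sides and the argument does not depend on the specific gluing.
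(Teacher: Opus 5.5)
Your argument is correct and is essentially the paper's proof: the paper applies Proposition~\ref{Gauss-Bonnet} ($\sum_i k_i = 2g-2$ with $k_i \ge 0$) and observes that the right-hand side would be $-2$ for the sphere, and that proposition's proof is the same two-way angle count over the polygonal cell decomposition that you carry out. The only difference is that you inline this count and conclude directly from the inequality $V-E+F\le 0$, rather than quoting the genus formula.
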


\begin{proof}
Suppose, for the sake of contradiction, that the sphere $S^2$ admits a translation surface structure. Let the singular points of $S^2$ be $p_1,\dots,p_s$, and let the cone angle at $p_i$ be $\theta_i=(k_i+1)2\pi$ for integers $k_i \ge 0$, $i=1,2,\dots,s$ by Lemma~\ref{angle at singularity}. 

Now, by Proposition~\ref{Gauss-Bonnet}, for a translation surface of genus $g$ we have
\[
\sum_{i=1}^s k_i = 2g - 2.
\]  
For the sphere, $g = 0$, so this formula gives
\[
\sum_{i=1}^s k_i = 2\cdot 0 - 2 = -2.
\]  
This is impossible, since each $k_i \ge 0$, and therefore their sum cannot be negative. Hence, the sphere cannot admit a translation surface structure.
\end{proof}

A proof of this lemma using a different approach is provided in Section $2.4$, page $13$ of \cite{Mas22}.

\section{Singular points}
Let $P$ be a polygon and let $x \in P$. The angle around $x$, denoted by $\measuredangle(x)$, is defined as follows:
\[
\measuredangle(x)=
\begin{cases} 
      2\pi, & x \in \operatorname{Int}(P), \\[6pt]
      \pi, & x \in \partial P \setminus \text{Vertices}, \\[6pt]
      \text{the interior angle at } x, & x \in \text{Vertices}.
\end{cases}
\]

\begin{example}
   In the regular octagon depicted in Figure~\ref{Regular octagon},  $\measuredangle(x_1)= \frac{3\pi}{4}$, $\measuredangle(x_2)=2\pi$ and $\measuredangle(x_3)=\pi.$
\end{example}

\begin{figure}[htbp]
\begin{center}
\begin{tikzpicture}[scale=2]
\draw (-2,0)--(-1,0);
\draw (-1,0)--(-0.293,0.707);
\draw (-0.293,0.707)--(-0.293,1.707);
\draw  (-0.293,1.707)--(-1,2.414);
\draw (-2, 2.414)--(-1,2.414);
\draw (-2,2.414)--(-2.707,1.707);
\draw(-2.707,1.707)--(-2.707,0.707);
\draw (-2.707,0.707)--(-2,0);

\draw[red] (-2,0) node{\tiny$\bullet$} ;
\draw  (-2, -0.2) node {$x_1$};
\draw[] (-2.2,1) node {\tiny$\bullet$};
\draw (-2,1) node {$x_2$};
\draw[green] (-1.5,0) node {\tiny$\bullet$};
\draw (-1.5,-0.2) node {$x_3$};
\end{tikzpicture}
\end{center}
\caption{}
\label{Regular octagon}
\end{figure}

Now, let $S=\displaystyle{\left(\bigsqcup\limits_{i=1}^nP_i\right)/\sim}$ be a translation surface and $\bar{x} = \{x_1, x_2, \dots, x_n\} \in S$, represented by its pre-images $x_i$ in the polygons. The angle at $\bar{x}$ is defined as follow:
\[
\measuredangle(\bar{x}) := \sum_{i=1}^n \measuredangle(x_i).
\]

\begin{lemma} \label{angle at singularity}\cite{Mas22}
On a translation surface $S$, the total angle at any point $\bar{x} \in S$ is an integer multiple of $2\pi$. More precisely, 
$\measuredangle(\bar{x}) = 2k\pi$ for some $k \in \mathbb{N}$.
\end{lemma}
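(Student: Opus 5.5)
The plan is to compute the total angle at $\bar{x}$ by a holonomy argument: follow a small loop around $\bar{x}$ through the polygons and track the direction of a tangent vector as it crosses glued sides. There are three cases. If a preimage $x_i$ lies in the interior of some $P_i$, then it is the only preimage, and the angle is $2\pi$. If $x$ lies in the interior of a side, then it has exactly two preimages, one on $s_j$ and one on the paired side $s_k$; the angle is $\pi+\pi=2\pi$. The substantive case is when $\bar{x}$ is the class of a vertex.

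For a vertex class, order its preimages cyclically: $v_1,\dots,v_m$. Each $v_r$ is a vertex of some polygon, with an incoming side $a_r$ and an outgoing side $b_r$. By condition (2) of Definition~\ref{Translation Surface}, $b_r$ is glued by a translation to a side that is the incoming side $a_{r+1}$ at the next preimage $v_{r+1}$. This chain closes up, since there are finitely many vertices and the side pairing is a bijection.

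Fix a unit direction vector $u$ along $b_1$, pointing out of $v_1$. Rotating counterclockwise by the interior angle $\theta_1$ at $v_1$ carries $u$ onto the direction of $a_1$ (or onto the reverse, depending on the orientation convention). Translations preserve directions, so after passing through $v_2$ we have rotated by $\theta_1+\theta_2$, and so on.

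After going once around, we return to the starting side $b_1$ with the same outward direction $u$, because all the gluings are translations and therefore contribute no rotation. It follows that
\[
\sum_{r=1}^m \theta_r \equiv 0 \pmod{2\pi},
\]
and since each $\theta_r>0$, the sum equals $2k\pi$ with $k\ge 1$.

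The main obstacle is making the bookkeeping precise. First, one must check that the chain of preimages really closes up into a single cycle, rather than running to a boundary. Second, one must check that at each step the direction arrives exactly along the matching side, with consistent orientation. For the latter I would use condition (1), that interiors lie to the left, together with the requirement that the paired sides are glued with opposite orientations. Combined, these guarantee that "outgoing at $v_r$" matches "incoming at $v_{r+1}$" with the same direction vector, so that the accumulated angle is a genuine full turn of directions.
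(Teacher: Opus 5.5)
Your holonomy argument is correct in substance and is the standard proof of this fact. The paper gives no proof of its own; it only cites \cite{Mas22}. So there is nothing to compare, beyond noting that your route is the expected one. There is one bookkeeping slip, exactly where you anticipated trouble.

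Your gluing claim is right under the paper's conventions. If $b_r$ leaves $v_r$ in direction $u$, its partner is oriented in direction $-u$ and ends at $v_{r+1}$. Viewed as a ray emanating from $v_{r+1}$, it therefore again points in direction $u$, so emanating directions match exactly.

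However, at every corner the interior angle is swept counterclockwise from the outgoing side to the incoming side, that is, from $b_r$ to $a_r$. By your own chain rule, $a_r$ is glued to $b_{r-1}$, not to a side at $v_{r+1}$. As written, the step ``after passing through $v_2$ we have rotated by $\theta_1+\theta_2$'' therefore does not connect. There are two fixes:
\begin{enumerate}
\item traverse the cycle in the order $v_1,v_m,v_{m-1},\dots,v_2$, rotating counterclockwise from $b_r$ to $a_r$ at each corner; or
\item traverse $v_1,v_2,\dots,v_m$, rotating clockwise from $a_r$ to $b_r$ at each corner.
\end{enumerate}
Either way each corner is crossed exactly once and you return to $b_1$ with direction $u$, giving $\sum_r\theta_r\in 2\pi\mathbb{Z}$ and hence $k\ge 1$.

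For the closing-up step, it is cleaner to argue as follows. The rule $v_r\mapsto v_{r+1}$ is a permutation of the finite set of polygon corners; its inverse is defined using incoming sides. A corner is identified only with its image and preimage under this permutation, so the class $\bar{x}$ is exactly one orbit. This guarantees that the cycle sum is the full sum over all preimages appearing in the definition of $\measuredangle(\bar{x})$.
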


\begin{definition}

Let $S= \displaystyle{\left(\bigsqcup\limits_{i=1}^nP_i\right)/\sim}$ be a translation surface and $\bar{x}=\{x_1, x_2, \dots, x_m\}\in S$, where $x_j\in \displaystyle\left(\bigsqcup\limits_{i=1}^nP_i\right) $ is  a vertex of some $P_j$. Then $\bar{x}$ is called a singular point of $S$ if $\measuredangle{\bar{x}}>2\pi$. Otherwise, it is called a marked point.
\end{definition}

\begin{example}
    In Example~\ref{example 1}, there is no singular point but one marked point.
\end{example}

\begin{example} In Examples~\ref{e2}, \ref{e3} and \ref{e4} the translation surfaces has only one singular point with cone angle $6\pi$.
\end{example}

\begin{example}
In the Example~\ref{e5}, the translation surface has two singular points, each with cone angle $4\pi$.
\end{example}

\begin{remark}
    A translation surface is an Euclidean surface with singular points.
\end{remark}

We now prove the following proposition, which plays a fundamental role.

\begin{proposition} \label{Gauss-Bonnet}
Let $S$ be a translation surface of genus $g$, with singularities $p_1, \dots, p_s$. 
If the cone angle at $p_i$ is $(k_i + 1) 2\pi$ for some integer $k_i \ge 0$, then
\[
\sum_{i=1}^{s} k_i = 2g - 2.
\]
\end{proposition}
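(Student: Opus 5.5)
The plan is to compute the Euler characteristic of $S$ from its polygonal cell structure and to compare it with the total angle at the vertices. Write $S=\left(\bigsqcup_{j=1}^n P_j\right)/\sim$. After possibly subdividing sides, and adding marked points where necessary, we may assume the gluing yields a genuine cell decomposition of the closed surface $S$. Let $V$ be the number of vertex classes in this decomposition, $E$ the number of edge classes, and $F=n$ the number of faces. Every side is paired with exactly one other side, so $E=\frac12\sum_j m_j$, where $m_j$ is the number of sides of $P_j$. Since $S$ is a closed orientable surface of genus $g$, we have
\[
V-E+F = 2-2g .
\]

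Next we sum interior angles in two ways. The interior angles of a Euclidean $m_j$-gon add up to $(m_j-2)\pi$. Summing over all polygons gives
\[
\sum_{j=1}^n (m_j-2)\pi = 2\pi E - 2\pi F .
\]
On the other side, the vertices of the polygons are partitioned into the vertex classes $\bar x$ of $S$. So the same total equals $\sum_{\bar x}\measuredangle(\bar x)$, the sum of the cone angles over all $V$ vertex classes. This is exactly how $\measuredangle(\bar x)$ was defined just before Lemma~\ref{angle at singularity}.

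By Lemma~\ref{angle at singularity}, each cone angle has the form $2\pi(k_{\bar x}+1)$ with $k_{\bar x}\ge 0$. We have $k_{\bar x}=0$ precisely at marked points, and at the singular points $p_i$ we have $k_{\bar x}=k_i$. Hence
\[
\sum_{\bar x} \measuredangle(\bar x) = 2\pi\Bigl(V+\sum_{i=1}^s k_i\Bigr).
\]
Equating the two expressions and dividing by $2\pi$ gives $V+\sum_i k_i = E-F$. Therefore
\[
\sum_i k_i = -(V-E+F) = 2g-2 .
\]
Non-vertex points, whose angle is $2\pi$, never appear in the sum, so they cause no trouble.

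The main obstacle is justifying that the quotient is a legitimate CW-complex to which Euler's formula applies. Several issues need care: a polygon side might be glued to another side of the same polygon, two vertices of one polygon might be identified, and non-convex polygons can occur. We also need every vertex class to be a single point whose link is a circle. The first thing to try is a barycentric-type subdivision: triangulate each polygon by diagonals, and subdivide edges by adding midpoints as marked points, which have $k=0$ and so do not change $\sum k_i$. This produces a simplicial triangulation, and the computation above runs verbatim on the triangles, using angle sum $\pi$ per triangle and $2E=3F$. The only extra check is that the added diagonals and midpoints introduce only points of cone angle $2\pi$ (interior midpoints) or $2\pi$ via $\pi+\pi$ on glued sides, so the formula is unchanged.
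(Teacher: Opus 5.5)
Your proof is correct and follows the paper's argument: count the total polygon angle as $\sum_j (m_j-2)\pi = 2\pi E - 2\pi F$ (using that each edge class accounts for two polygon sides), count it again as the sum of cone angles over vertex classes, and compare with $V-E+F=2-2g$. Your bookkeeping is slightly more careful than the paper's, which sets $V=s$ and so implicitly lets the $p_i$ include marked points with $k_i=0$; the subdivision in your last paragraph is harmless but unnecessary, since the side-pairing of the polygons already gives a finite CW structure with $V$ $0$-cells, $E$ $1$-cells and $F$ $2$-cells.
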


\begin{proof}
The proposition is established through the following steps.

\emph{Step 1: Polygonal decomposition and total angles.} \\
Let the polygonal decomposition consist of $F$ polygons. Denote by $E$ the number of edges in the resulting cell decomposition after identification, and let $V = s$ denote the number of vertices corresponding to the singular points $p_1, \dots, p_s$. Let $n_j$ be the number of sides of the $j$-th polygon, and define
\[
C = \sum_{j=1}^{F} n_j
\]
as the total number of polygon corners. Each polygon has interior angle sum $(n_j - 2)\pi$, so the total sum of all polygon angles is
\[
\sum_{j=1}^F (n_j - 2)\pi = (C - 2F)\pi.
\]

\emph{Step 2: Angles around singularities.} \\
After gluing, the polygon angles assemble around the singularities. By hypothesis, the total angle at $p_i$ is $\theta_i = (k_i + 1)2\pi$, so the sum of all angles around vertices is
\[
\sum_{i=1}^s \theta_i = \sum_{i=1}^s (k_i + 1) 2\pi = 2\pi \left( \sum_{i=1}^s k_i + s \right).
\]
Equating this with the total polygon angle sum gives
\[
2\pi \left( \sum_{i=1}^s k_i + s \right) = (C - 2F)\pi \quad \implies \quad 2 \left( \sum_{i=1}^s k_i + s \right) = C - 2F.
\]

\emph{Step 3: Relation between corners and edges.} \\
Each edge contributes two polygon corners, so $C = 2E$. Substituting this yields
\[
2 \left( \sum_{i=1}^s k_i + s \right) = 2E - 2F \quad \implies \quad \sum_{i=1}^s k_i + s = E - F \quad \implies \quad \sum_{i=1}^s k_i = E - F - s.
\]

\emph{Step 4: Euler characteristic.} \\
The Euler characteristic of $S$ is
\[
\chi(S) = V - E + F = s - E + F,
\]
which can be rearranged to give $E - F - s = -\chi(S)$. Hence
\[
\sum_{i=1}^s k_i = -\chi(S).
\]

\emph{Step 5: Expressing in terms of genus.} \\
For a closed orientable surface of genus $g$, $\chi(S) = 2 - 2g$. Substituting, we obtain
\[
\sum_{i=1}^s k_i = -(2 - 2g) = 2g - 2,
\]
which completes the proof.
\end{proof}

\begin{remark}
Proposition~\ref{Gauss-Bonnet} can be viewed as an application of the
Gauss--Bonnet theorem for surfaces with conical singularities.
Although a translation surface is flat away from its singularities,
the curvature is concentrated at the cone points. The curvature
contribution at $p_i$ is the angle defect
\[
2\pi-\theta_i=-2\pi k_i,
\]
where $\theta_i=2\pi(k_i+1)$. Thus, the Gauss--Bonnet theorem gives
\[
-2\pi\sum_{i=1}^{s}k_i
=2\pi\chi(S)=2\pi(2-2g),
\]
which yields $\sum_{i=1}^{s}k_i=2g-2$. This is analogous to the application of the Gauss--Bonnet theorem to hyperbolic triangles in Section~4.4.
\end{remark}

\section{Saddle connections}
\begin{definition}
    A saddle connection on a translation surface is a geodesic segment connecting two singular points (which may coincide) that contains no singular point in its interior.
\end{definition}
\begin{definition}
A shortest saddle connection is a minimum length saddle connection among all saddle connections on the translation surface.
\end{definition}
In this thesis, we refer shortest saddle connections as \emph{systolic connections}.
\begin{example}
In the translation surface described in Example~\ref{example 1}, $e_1, e_2, e_3$, and $e_4$ are both saddle connections and systolic connections.
\end{example}
\begin{note}
   When a translation surface has no singular points, a marked point replaces a singular point in these definitions.
\end{note}

For a given translation surface $S$, the length of its systolic connection is denoted by $\mathrm{Sys}(S).$
The following results have been developed concerning the maximum length and number of systolic connections on a given translation surface.

\begin{theorem} (\cite{Bo21}, Theorem 3.3)
    Let S be a genus $g\geq1$ translation surface of area one and $r\geq1$ singular points or marked points. Then 
$$\mathrm{Sys}(S)\le \left[\frac{\sqrt{3}}{2}\left(2g-2+r\right)\right]^{-\frac{1}{2}}$$.

This equality is obtained iff S is built with equilateral triangles with side saddle connections of length $\mathrm{Sys}(S)$.
\end{theorem}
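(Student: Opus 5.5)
The plan is to triangulate $S$ along saddle connections, then compare areas. Set $\ell=\mathrm{Sys}(S)$. It is a standard fact that $S$ admits a triangulation whose vertex set is the set $\Sigma$ of $r$ singular (or marked) points and whose edges are saddle connections. I would take a \emph{Delaunay} triangulation, so that each triangle has an empty circumscribed disk. By the Euler characteristic computation in the proof of Proposition~\ref{Gauss-Bonnet}, the triangulation has $V=r$ vertices and satisfies $3F=2E$. Hence
\[
F=2(2g-2+r),\qquad E=3(2g-2+r).
\]

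\emph{Step 1: lower bound on the area of each triangle.} Each edge of the triangulation is a saddle connection, so its length is at least $\ell$. I then need every Delaunay triangle $T$ to have area at least $\frac{\sqrt3}{4}\ell^2$, with equality exactly for an equilateral triangle of side $\ell$. Sides $\ge\ell$ alone do not imply this, since a triangle can be long and thin. The Delaunay condition is what rules this out:
\begin{itemize}
\item An open immersed disk containing no points of $\Sigma$ and centred at a vertex has radius at most the distance to the nearest other vertex. This gives the comparison in one direction.
\item The key inequality I would aim for is that in a Delaunay triangulation with all edges $\ge\ell$, the circumradius satisfies $R\ge \ell/\sqrt3$, and every angle is controlled.
\end{itemize}

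\emph{Step 1, alternative route.} I expect a cleaner argument via disk packing. The open disks of radius $\ell/2$ about the points of $\Sigma$ are embedded and pairwise disjoint, since two points joined by a geodesic of length $<\ell$ would give a shorter saddle connection. Inside each triangle, the sectors of these disks at the three corners have total angle $\pi$. The density of three corner sectors of radius $\ell/2$ in a Delaunay triangle with sides $\ge\ell$ is at most the equilateral density $\pi/(2\sqrt3)$. This is the classical Fejes T\'oth/Rogers-type local density bound.

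\emph{Step 2: summing over triangles.} The disks of radius $\ell/2$ about the singular points have total area equal to the total cone angle times $\ell^2/8$. By Proposition~\ref{Gauss-Bonnet} the total cone angle is $2\pi(2g-2+r)$, so the disks have total area $\pi(2g-2+r)\ell^2/4$. The density bound of Step 1 then gives
\[
1=\mathrm{Area}(S)\ge \frac{2\sqrt3}{\pi}\cdot\frac{\pi(2g-2+r)\ell^2}{4}=\frac{\sqrt3}{2}(2g-2+r)\ell^2.
\]
This is exactly the claimed bound.

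\emph{Step 3: the equality case.} Equality forces equality in the local density bound for every triangle, so every triangle is equilateral with side $\ell$. Conversely, a surface glued from equilateral triangles whose sides are saddle connections of length $\mathrm{Sys}(S)$ has area $F\cdot\frac{\sqrt3}{4}\ell^2$. With $F=2(2g-2+r)$ this area is $\frac{\sqrt3}{2}(2g-2+r)\ell^2=1$, so equality holds.

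The main obstacle is the local density inequality in Step 1 at cone points of angle $>2\pi$. There, Delaunay triangles may be obtuse and the sectors are not planar disks. I would handle this by working triangle by triangle in the developed plane, where each triangle is genuinely Euclidean. In that setting I would apply the planar lemma: three unit-distance-separated points with an empty circumcircle give corner-sector density at most that of the equilateral triangle. For an obtuse triangle I would use the standard trick of pairing it with its neighbour across the long edge.
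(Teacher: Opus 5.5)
Since the thesis quotes this statement from Boissy--Geninska without proof, I am judging your argument on its own. Your reduction is the right one. For a triangulation by saddle connections with vertex set $\Sigma$ one has $F=2(2g-2+r)$, so everything rests on the per-triangle inequality $\mathrm{Area}(T)\ge\frac{\sqrt3}{4}\ell^2$, or equivalently corner-sector density at most $\pi/(2\sqrt3)$. Your bookkeeping in Step 2 is correct.

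\emph{The gap.} You never prove this per-triangle inequality, and for Delaunay triangles with all sides $\ge\ell$ it is false. The Delaunay condition says the circumdisk is \emph{empty}, not that it is \emph{small}. For example, on the unit square torus take $N$ marked points equally spaced on a circle of radius $1/4$, so $\mathrm{Sys}=\ell=\frac12\sin(\pi/N)$.
\begin{itemize}
\item The inscribed $N$-gon is a Delaunay cell, so any triangulation of it is Delaunay, e.g.\ one containing the ears $P_{k-1}P_kP_{k+1}$.
\item These ears have sides $\ell,\ell,2\ell\cos(\pi/N)$ and area $\frac12\ell^2\sin(2\pi/N)$, negligible compared with $\ell^2$.
\item Their three corner sectors of radius $\ell/2$ nevertheless have total area $\pi\ell^2/8$.
\item Pushing every other point slightly outward makes the ears at the pushed points unique Delaunay cells, so this is not an artifact of degeneracy.
\end{itemize}
So neither version of your Step 1 holds triangle by triangle. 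The inequality you aim for, $R\ge\ell/\sqrt3$, points the wrong way: a lower bound on the circumradius cannot exclude thin triangles, and you need an \emph{upper} bound. The cone points are not the real difficulty, since each triangle is Euclidean and cone angles enter only through $F$. ``Pairing with the neighbour across the long edge'' is not an argument as it stands: that neighbour may itself be obtuse with its longest edge elsewhere, and you give neither a consistent grouping nor an estimate for the groups.

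\emph{The missing idea.} Force $R<\ell$ by saturating with marked points. The statement allows this, and it only strengthens the bound because it increases $r$ at fixed area. If some $p\in S$ has distance $\ge\ell$ from $\Sigma$, mark it; then $\mathrm{Sys}$ stays $\ell$:
\begin{itemize}
\item new saddle connections from $p$ to $\Sigma$ have length $\ge\ell$;
\item a saddle connection from $p$ to itself is a closed geodesic in a flat cylinder whose boundary consists of saddle connections of total length equal to its circumference, so it has length $\ge\ell$;
\item old saddle connections through $p$ split into pieces of length $\ge\ell$.
\end{itemize}
The disjoint $\ell/2$-disks show the process terminates. For a saturated set, the centre of the empty immersed circumdisk of a Delaunay triangle lies at distance exactly $R$ from $\Sigma$, so $R<\ell$.

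Now use the elementary lemma. If a triangle has sides $\ge\ell$ and $R\le\ell$, each angle satisfies $\sin\theta\ge\ell/(2R)$. Minimize $\mathrm{Area}=2R^2\sin A\sin B\sin C$: by concavity of $\log\sin$ the minimum over the angles is at $(\alpha,\alpha,\pi-2\alpha)$ with $\sin\alpha=\ell/(2R)$. This gives $\mathrm{Area}\ge\ell^3\sqrt{4R^2-\ell^2}/(4R^2)$, a unimodal function of $R$ equal to $\frac{\sqrt3}{4}\ell^2$ at both endpoints of $[\ell/\sqrt3,\ell]$.

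\emph{The equality case.} Equality holds for the equilateral triangle of side $\ell$, but also for the triangle with angles $\pi/6,\pi/6,2\pi/3$ and legs $\ell$ (where $R=\ell$). That triangle has exactly the equilateral corner-sector density. So your Step 3 needs one more observation. A surface attaining equality must already be saturated, since otherwise marking one more point would contradict the bound for $r+1$. Hence $R<\ell$ for every triangle, and only equilateral triangles of side $\ell$ survive.
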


\begin{proposition}\label{prop:maxsaddleconnections}
(Proposition~5.1~\cite{Bo21})
The greatest number of shortest saddle connections of a unit-area surface
in the stratum $\mathcal{H}(k_1,\dots,k_r)$ is
\[
\sum_{i=1}^r 3(k_i+1),
\]
and this number is realized if and only if the surface is a global
maximum for the function $\mathrm{Sys}$.
\end{proposition}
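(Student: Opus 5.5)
The plan has two parts: first an upper bound on the number of systolic connections of any unit-area surface $S\in\mathcal{H}(k_1,\dots,k_r)$, then a characterization of when that bound is attained. Write $N$ for the number of systolic connections and $\ell=\mathrm{Sys}(S)$ for their common length.

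\emph{Step 1: the systolic connections form an embedded graph.} Two shortest saddle connections cannot cross in their interiors. If they did, cutting at the crossing point and recombining halves would give a path between singular points of length at most $\ell$ that has a genuine corner. Straightening this path in its homotopy class, relative to the singular points, strictly shortens it. The straightened path is a concatenation of saddle connections, so some saddle connection has length less than $\ell$, a contradiction.

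Hence the systolic connections, together with the $r$ singular or marked points, form a graph $\Gamma$ embedded in $S$ with $V=r$ vertices and $E=N$ edges.

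\emph{Step 2: angle count at the vertices.} Let $p_i$ be a vertex with cone angle $2\pi(k_i+1)$. Consider two systolic connections leaving $p_i$ that are consecutive around $p_i$. The angle between them is at least $\pi/3$. If it were smaller, the Euclidean triangle they span (developed in the flat structure) would have third side shorter than $\ell$. That third side either is itself a saddle connection or, after straightening, contains a shorter one. A singular point inside the triangle would also produce a saddle connection shorter than $\ell$.

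It follows that $\deg(p_i)\le 6(k_i+1)$. Summing over the vertices,
\[
2N=\sum_i \deg(p_i)\le \sum_i 6(k_i+1),
\]
which gives the bound $N\le \sum_i 3(k_i+1)$.

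\emph{Step 3: equality forces an equilateral triangulation.} If $N=\sum 3(k_i+1)$, every consecutive angle at every vertex equals exactly $\pi/3$. Each complementary region is then bounded by systolic sides of length $\ell$ meeting at angles $\pi/3$. Such a flat region has no interior singularities, so it develops into the plane, and a polygon with all angles $\pi/3$ can only be an equilateral triangle of side $\ell$. By the preceding Boissy--Geninska theorem (\cite{Bo21}, Theorem~3.3), a surface built from equilateral triangles with systolic sides attains the maximum of $\mathrm{Sys}$.

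For the converse, suppose $S$ is a global maximum. The equality case of the same theorem says $S$ is built from equilateral triangles whose sides are systolic connections. By Proposition~\ref{Gauss-Bonnet} and an Euler characteristic count, $F=2(2g-2+r)$. The number of sides is then $E=3F/2=3(2g-2+r)=\sum 3(k_i+1)$.

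\emph{Main obstacle.} The delicate step is the angle bound in Step 2: proving the third side of a thin triangle yields a strictly shorter saddle connection, including the case where the developed triangle contains cone points or hits a singular point. My first attempt would use the developing map on the sector at $p_i$ and take the straight segment, or the first singular point met inside the triangle, as the witness.
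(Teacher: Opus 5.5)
The thesis gives no proof of this proposition. It is quoted from Boissy--Geninska, and the only part used later (in the proof of Theorem~\ref{cellular embedding}(2)) is the upper bound $n\le 3(2g-1)$. Your Steps~1 and~2 prove that upper bound correctly. The obstacle you flag resolves as you suggest: every point of the closed isosceles triangle other than the two base vertices lies at distance $<\ell$ from the apex. So if a ray from $p_i$ strictly inside the sector meets a singular point before the base, the initial segment up to the first such point is a saddle connection shorter than $\ell$. If no ray does, the triangle develops and its base is a saddle connection of length $2\ell\sin(\theta/2)<\ell$.

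The genuine gap is existence, and it affects two claims.
\begin{enumerate}
\item The statement asserts that $\sum 3(k_i+1)$ is the \emph{greatest} number, i.e.\ that it is attained in $\mathcal{H}(k_1,\dots,k_r)$. You only prove $N\le\sum 3(k_i+1)$.
\item Your converse is not valid as written. Theorem~3.3 of \cite{Bo21} is an inequality together with a description of its equality case; it does not assert that the bound $[\frac{\sqrt3}{2}(2g-2+r)]^{-1/2}$ is reached in every stratum. A global maximum of $\mathrm{Sys}$ attains equality only if some surface of the stratum does. Otherwise its systole lies strictly below the bound, and the equality case says nothing about it.
\end{enumerate}
The missing ingredient is that every stratum contains a unit-area surface tiled by equilateral triangles whose sides are systolic connections. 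One way to supply it: take a connected cover of degree $d=\sum(k_i+1)$ of the equilateral torus, branched only over its vertex with ramification profile $(k_1+1,\dots,k_r+1)$. Equivalently, find $\sigma,\tau\in S_d$ generating a transitive group whose commutator has that cycle type. Such covers exist because over a torus every branch datum satisfying Riemann--Hurwitz is realizable, and here that condition only asks that $\sum k_i$ be even, which holds since $\sum k_i=2g-2$. The pulled-back triangles all have the same side length. Every saddle connection of the cover projects to a nonzero vector of the hexagonal lattice, so none is shorter than a triangle side. With this in hand, both sharpness and your converse go through.

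A smaller point in Step~3: ``develops into the plane'' presupposes that each complementary region is a disk. You can justify this by Gauss--Bonnet. Let $R$ be a complementary region cut open along $\Gamma$. It has no cone points in its interior, geodesic sides, and $c\ge 1$ corners, each of interior angle $\pi/3$. Hence $c\cdot\frac{2\pi}{3}=2\pi\chi(R)$, which forces $\chi(R)=1$ and $c=3$.
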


\chapter[Systolic Embedding of graphs on translation surfaces]{Embedding of graphs on translation surfaces}\label{chap3}
In this chapter, we investigate the embedding of graphs on translation surfaces, with a particular emphasis on systolic embeddings. We begin by defining systolic cellular and essential embedding of graphs on translation surfaces. In the context of essential-systolic embedding, we prove Theorem~\ref{Main Result}, which states that any finite graph admits an essential-systolic embedding on a translation surface. Next, we compute an upper bound for the minimum genus of translation surfaces on which $G$ admits essential-systolic embedding, stated in Theorem~\ref{upper bound}. After that, we investigate that the wedge of $n$ circles, $\Sigma_n$ for every $n \ge 2$, admits cellular-systolic embedding on some translation surface and also computed the maximum and minimum genus of such translation surfaces in Theorem~\ref{cellular embedding}. In addition to this, we identify two other collections of graphs that can be embedded on translation surfaces in a similar way in Theorem~\ref{another rich collection}.

\section{Systolic Embedding Of Graphs}
First, we introduce the systolic graph of a translation surface. The systolic graph to a translation surface $X$, denoted by $\Gamma_X=(V, E)$, is a graph, where the set $V$ of vertices consists of all singular points (or marked points) of $X$, and the set $E$ of edges consists of all the shortest saddle connections of $X$. Now, we give the definition of systolic embedding of finite graphs below.
\begin{definition}
Let $X$ be a translation surface and $G=(V, E)$ be a graph.
\begin{enumerate}
\item An embedding of $G$ in $X$ is a function $f: G\to X$ where
\begin{enumerate}
\item For every $v\in V$, the image $f(v)$ is a singular point of $X$, 
\item For every edge $e\in E$, its image $f(e)$ is a saddle connection of $X$ and 
\item For $e_1\neq e_2\in E$, $f(e_1)$ and $f(e_2)$ can intersect at their endpoints only.
\end{enumerate}

\item We say that the graph $G$ admits a \emph{cellular embedding} on $X$ if there exists an embedding $f: G\to X$ such that 
\begin{enumerate}
\item $f(V)$ is the set of all singular points of $X$ and 
\item Every component of $X\setminus f(G)$ is a topological disk.
\end{enumerate}

\item An embedding $f: G\to X$ is called an \emph{essential embedding} of $G$ on $X$ if none of the components of $X\setminus f(G)$ is a topological disk.
\item  We say that the graph $G$ admits a \emph{systolic embedding} on $X$ if $G$ is isomorphic to $\Gamma_X$.
\end{enumerate}
\end{definition}

\begin{remark}
    If $f: G\to X$ is an embedding, then $G$ and $f(G)$ are two isomorphic graphs.  We shall not distinguish between $G$ and $f(G)$ whenever the context is clear.
\end{remark}
    
\begin{example}
  The graph formed by the wedge of two circles admits a cellular-systolic embedding on a translation surface of genus one (see Figure~\ref{example: embedding}).
\begin{figure}[htbp]
\begin{center}
\begin{tikzpicture}[scale=3]
\draw (0,0)--(1,0);
\draw[red] (1,0)--(1,1);
\draw (1,1)--(0,1);
\draw[red] (0,1)--(0,0);

\draw (0.5, 1.1) node{$e_2$};
\draw (0.5,-0.1) node {$e_2$};
\draw (-0.1,0.5) node{$e_1$};
\draw (1.1,0.5) node {$e_1$};

\draw [-{Stealth}]
(0.55,0)--(0.56,0);
\draw [-{Stealth}]
(0.55,1)--(0.56,1);
\draw [-{Stealth[red]}]
(0,0.55)--(0,0.56);
\draw [-{Stealth[red]}]
(1,0.55)--(1,0.56);
\draw (0,0) node {\tiny $\bullet$};
\draw (1,0) node {\tiny $\bullet$};
\draw (1,1) node {\tiny $\bullet$};
\draw (0,1) node {\tiny $\bullet$};
\end{tikzpicture}
\end{center}
\caption{Translation surface for wedge of $2$ circles }
\label{example: embedding}
\end{figure}
\end{example}

\begin{example}
    A loop admits an essential-systolic embedding on a translation surface of genus one illustrated in Figure~\ref{example: embedding 2}.
\end{example}

\begin{figure}[H]
\begin{center}
\begin{tikzpicture}[scale=2]
\draw (0,0)--(2,0);
\draw[red] (2,0)--(2,1);
\draw (2,1)--(0,1);
\draw[red] (0,1)--(0,0);

\draw (1, 1.2) node{$e_1$};
\draw (1,-0.3) node {$e_1$};
\draw (-0.3,0.5) node{$e_2$};
\draw (2.3,0.5) node {$e_2$};

\draw [-{Stealth}]
(1,0)--(1.01,0);
\draw [-{Stealth}]
(1,1)--(1.01,1);
\draw [-{Stealth[red]}]
(0,0.55)--(0,0.56);
\draw [-{Stealth[red]}]
(2,0.55)--(2,0.56);

\draw (0,0) node {\tiny $\bullet$};
\draw (2,0) node {\tiny $\bullet$};
\draw (2,1) node {\tiny $\bullet$};
\draw (0,1) node {\tiny $\bullet$};

\end{tikzpicture}
\end{center}
\caption{Translation surface for a loop}
\label{example: embedding 2}
\end{figure}

\section{Essential-Systolic Embedding}
In this section, our goal is to prove Theorem~\ref{Main Result}. In particular, we prove that every graph admits an essential-systolic embedding on a translation surface. We begin with the proofs of Theorem~\ref{main theorem 1} and Proposition~\ref{proposition 1}, which are essential for the proof of Theorem~\ref{Main Result}.

\begin{repthm}[\ref{main theorem 1}]({\cite{MR4985436}})
    Every complete graph $K_n$, with $n$ odd and $n \geq 5$, admits an essential-systolic embedding on a translation surface $S_{g_n}$, where the genus $g_n$ is given by $$ g_n = 1 - n \left( 1 - \frac{n-1}{2} \right).$$
\end{repthm}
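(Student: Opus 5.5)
The plan is to build $S_{g_n}$ explicitly from flat cylinders, using the fact that for odd $n$ the complete graph $K_n$ decomposes into $m=\frac{n-1}{2}$ edge-disjoint Hamiltonian cycles $C_1,\dots,C_m$ (Walecki's decomposition). First I record what the target genus forces. We have $g_n = 1-n+\binom{n}{2}$, which is the first Betti number of $K_n$. By Proposition~\ref{Gauss-Bonnet}, if the only cone points are the $n$ vertices, then $\sum k_i = 2g_n-2 = n(n-3)$. So the natural symmetric target is cone angle $2\pi(n-2)$ at every vertex, i.e.\ $k_i=n-3$. The construction should produce exactly this, with every polygon corner landing on one of the $n$ vertices so that there are no stray singular or marked points.

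\emph{Construction.} For each cycle $C_r$ take a Euclidean rectangle $R_r$ of width $n$ and height $h$, with $h$ large. Divide its bottom side into $n$ unit segments labelled, in cyclic order, by the edges of $C_r$, so that the division points are labelled by the vertices of $C_r$ in order. Divide its top side into $n$ unit segments in the same way. Glue the left and right sides of $R_r$ by a horizontal translation, which gives a flat cylinder whose two boundary circles are each a copy of the cycle $C_r$.

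Then glue these cylinders end to end: the top of cylinder $r$ to the bottom of cylinder $r+1$ (indices mod $m$), by a translation matching unit segments to unit segments. The labelling is chosen so that each edge of $K_n$ is realised as exactly one glued horizontal unit segment, and each vertex of $K_n$ is identified to a single point. One can relabel the Hamiltonian cycles, or insert short horizontal shears, so that the vertex labels agree across each interface. I then verify two things:
\begin{itemize}
\item each vertex collects total angle $2\pi(n-2)$, by counting its occurrences on the boundaries: each vertex occurs once on each of the $2m=n-1$ boundary circles;
\item the genus is $g_n$, by an Euler characteristic count of the resulting cell structure, cross-checked against Proposition~\ref{Gauss-Bonnet}.
\end{itemize}
If the angle count comes out wrong, I would instead pair the cycles differently, for instance by gluing each cylinder's top to its own bottom after a twist and then attaching extra handles. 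The Gauss--Bonnet count tells me exactly how much angle must be added at each vertex.

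\emph{Systolicity and essentiality.} Every edge of $K_n$ is a horizontal saddle connection of length $1$. I must show that every other saddle connection has length $>1$. Any saddle connection that is not horizontal crosses the interior of some cylinder of height $h$, so its length is at least $h>1$. A horizontal saddle connection inside a boundary circle runs between consecutive division points, so it is one of the unit edges. It remains to rule out horizontal segments that pass through a vertex, and ones with identical endpoints but a different route. The complement $S\setminus K_n$ is the disjoint union of the open cylinders, which are annuli and not disks, so the embedding is essential. Finally, the systolic graph has vertex set equal to the $n$ cone points, which are genuinely singular since $n\ge5$ gives angle $>2\pi$, and its edges are exactly the unit segments. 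Hence $\Gamma_{S_{g_n}}\cong K_n$.

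\emph{Main obstacle.} I expect the hardest step to be the combinatorial bookkeeping of the interface gluings. The cyclic vertex orders on the top of $C_r$ and the bottom of $C_{r+1}$ differ, so a single translation cannot match labels naively. I would first try exploiting Walecki's rotational structure: $C_{r+1}$ is obtained from $C_r$ by a fixed permutation of the vertices. I would then choose the unit segments on the top of $R_r$ to be labelled by $C_{r+1}$ rather than $C_r$, so that each cylinder interpolates between consecutive cycles. Alongside this I must check that each edge is used exactly once and that the angle and genus counts close up.
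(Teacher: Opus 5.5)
There is a genuine gap, and it lies in the architecture of your construction, not in the label bookkeeping you flag as the main obstacle.

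First, gluing the whole top circle of one cylinder to the whole bottom circle of the next by a single translation never identifies points on different interface circles. The copies of a vertex $v$ on the $m$ interfaces therefore stay distinct, and every point has cone angle $2\pi$. The result is a flat torus (a cyclic chain of cylinders) with $mn$ regular points. So the claim that each vertex is identified to a single point is false, and no relabelling of the cycles or shearing changes this.

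Second, suppose you instead glue unit segments individually so that exactly $n$ vertex classes survive. The angle count still cannot close. Each vertex meets each of the $2m=n-1$ boundary circles once and receives angle $\pi$ there, for a total of $(n-1)\pi$ rather than $2\pi(n-2)$; these agree only when $n=3$. Equivalently, your complement $S\setminus K_n$ is a union of open annuli, so $\chi(S)=\chi(K_n)=n-\binom{n}{2}$. That gives genus $1+\frac{n(n-3)}{4}$ instead of $g_n=1+\frac{n(n-3)}{2}$, and for $n=5$ it is not even an integer. The fallback of attaching extra handles is not a construction, and it would force you to redo the systolic analysis.

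The missing idea is the identity $\chi(S)=2-2g_n=2\chi(K_n)$. It forces the complement to have Euler characteristic $\chi(K_n)$, so it must look like a ribbon thickening of $K_n$ (one disk per vertex, one strip per edge), not a union of annuli. This is what the paper builds:
\begin{enumerate}
\item One regular $(n-1)$-gon $R_i$ of side $s$ is taken for each vertex.
\item On each side of $R_i$ a rectangle $Q_{ij}$ of height $<s/2$ is attached, and its two short sides $b_{ij},b'_{ij}$ are glued. This collapses all corners of $R_i$ to a single point $V_i$.
\item The outer sides are glued by $E_r(i)\sim\bar E_r(\sigma'_r(i))$, following the Walecki cycles. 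Each pair $\{i,i'\}$ then gets exactly one short flat cylinder joining a side of $R_i$ to a side of $R_{i'}$.
\end{enumerate}
The edges of $K_n$ are the transverse segments $b_{ij}*b'_{i'j'}$ crossing these cylinders, of length $<s$; they are not boundary segments. The angle at $V_i$ is $(n-3)\pi+(n-1)\pi=2\pi(n-2)$, as required. The complement is $n$ disks joined by $\binom{n}{2}$ strips, which is connected and not a disk. Your use of the Hamiltonian decomposition survives, since it guarantees that glued sides are parallel and each edge appears exactly once. What has to be replaced is the one-cylinder-per-cycle surface.
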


\begin{proof}
We prove the theorem by explicitly constructing a translation surface, of genus $g_n$ given in the above equation, on which the graph $K_n$ admits an essential-systolic embedding. We denote the vertices of $K_n$ by $v_0, v_1, \dots, v_{n-1}$.

To construct the desired translation surface, we begin by taking $n$ many regular Euclidean $(n-1)$-gons $R_0$, $R_1$, $\dots$, $R_{n-1}$, each with fixed area $a$ and side length $s$. Next, we introduce $n(n-1)$ rectangles $Q_{ij}$ (see Figure~\ref{Qij}), with sides labeled $a_{ij}, b_{ij}, a_{ij}^\prime, b_{ij}^\prime$, satisfying the condition:
\begin{equation}\label{eq:1}
    \ell(a_{ij}) = \ell(a_{ij}^\prime) = s \quad \text{and} \quad \ell(b_{ij}) = \ell(b_{ij}^\prime) < \frac{s}{2}, \quad \text{for } 0 \leq i \leq n-1,\ 1 \leq j \leq n-1.
\end{equation}

\begin{figure}[htbp]
\begin{center}
\begin{tikzpicture}[scale=2]
\draw (0,0)--(2,0);
\draw[red] (2,0)--(2,1);
\draw (2,1)--(0,1);
\draw[red] (0,1)--(0,0);

\draw (1, 1.2) node{$a_{ij}$};
\draw (1,-0.3) node {$a^\prime_{ij}$};
\draw (-0.3,0.5) node{$b_{ij}$};
\draw (2.3,0.5) node {$b^\prime_{ij}$};
\end{tikzpicture}
\end{center}
\caption{$Q_{ij}$ }
\label{Qij}
\end{figure}

We attach each rectangle $Q_{ij}$ to the polygon $R_i$ along the side $a_{ij}^\prime$ by a translation. Let $P_0, P_1, \dots, P_{n-1},$ be the resulting figures composed of $R_i$ and the corresponding rectangles. In each $P_i$, we identify the sides $b_{ij}$ with $b_{ij}^\prime$ by translation.

To complete the construction of the translation surface, we now glue the remaining sides $a_{ij}$ as described below:\\
We obtain ${\frac{n-1}{2}}$ edge-disjoint Hamiltonian cycles $C_l$ in $K_n$ using Walecki's construction (see~\cite{MR4089838} Lemma 18):
$$C_l= \left(v_0, v_l, v_{l+1}, v_{l-1}, v_{l+2},\ v_{l-2},\dots,  v_{l+\frac{n-1}{2}}\right), \text{ for }1 \leq l \leq \frac{n-1}{2},$$ where subscripts of $v$'s are taken modulo $n-1$. Now, consider the following permutations in $S_n$ corresponding to the above cycles: $$\sigma_l^\prime = \left(0,l, l+1, l-1, l+2, l-2, \dots, l+\frac{n-1}{2} \right) ~mod~(n-1), \text{ for }1 \leq l \leq \frac{n-1}{2}.$$
 For each  $P_i$, re-label the sides $a_{ij}$ as $E_1(i), E_2(i), \dots, E_{\frac{n-1}{2}}(i),\bar{E}_1(i), \bar{E}_2(i), \dots, \bar{E}_{\frac{n-1}{2}}(i)$ in a fixed order such that each \( E_r(i) \) is parallel to \( \bar{E}_r(i) \). Moreover, for any \( 0 \leq i, i' \leq n-1 \) and \( 1 \leq r \leq \frac{n-1}{2} \), we have $E_r(i)$ is parallel to $\bar{E}_r(i')$. 

We glue the sides $E_r(i)$ with $\bar{E}_r(\sigma_r^\prime(i))$, for $i = 0, 1, \dots, n-1$ and $r = 1, 2, \dots, \frac{n-1}{2}$. This process yields a translation surface $S_{g_n}$, with $n$ singular points, each having total angle $2\pi(n - 2)$. If $\sigma_r^\prime(i) = i'$, then the side $E_r(i) = a_{ij}$ is glued to $\bar{E}_r(i') = a_{i'j'}$ for some $j, j' \in \{1, 2, \dots, n-1\}$. Consequently, when the sides $a_{ij}$ and $a_{i'j'}$ are identified by translation, the edges $b_{ij}$ and $b_{i'j'}'$ align and join end-to-end, forming a single straight segment on the surface. We call this joined segment the \emph{concatenation} of 
$b_{ij}$ and $b_{i'j'}'$, denoted by $b_{ij} * b_{i'j'}'$. 
This concatenated segment $b_{ij} * b_{i'j'}'$ forms a saddle connection on $S_{g_n}$ joining the $i$-th singular point to the $i'$-th singular point. From condition~\eqref{eq:1}, the length of each such saddle connection is less than $s$, making it the systolic connection. For instance, in the case of complete graph $K_5$ (Figure~\ref{figure18}), Walecki's
construction decomposes its edges into two edge-disjoint Hamiltonian
cycles,
\[
C_1=(v_0,v_1,v_2,v_4,v_3)
\quad\text{and}\quad
C_2=(v_0,v_2,v_3,v_1,v_4).
\]
These cycles determine the permutations $\sigma_1'=(0\ 1\ 2\ 4\ 3)$
and $\sigma_2'=(0\ 2\ 3\ 1\ 4)$, respectively, which specify the side
identifications in the construction of the corresponding translation
surface shown in Figure~\ref{figure19}.

Therefore, all these $n(n-1)$ saddle connections are systolic connections, and each singular point connects to $(n-1)$ others. The systolic graph $\Gamma_{S_{g_n}}$ is a $(n-1)$-regular graph with $n$ vertices $V_0, V_1,\dots, V_{n-1}$. Now, we map $V_i$ to $v_i$ for each $0\le i\le n-1$ and hence the above construction ensures that $\Gamma_{S_{g_n}}\cong K_n$. Since $\Gamma_{S_{g_n}} \cong K_n$, and the components of $S_{g_n} \setminus K_n$ are not open topological disks, it follows that $K_n$ admits an essential-systolic embedding on $S_{g_n}$.

In this construction, the translation surface \(S_{g_n}\) is composed of \(n\) polygonal regions \(R_i\) together with the attached rectangles \(Q_{ij}\). Each \(R_i\) contributes one vertex and one face to the resulting cell decomposition. Again, by the construction, the rectangles \(Q_{ij}\) collectively contribute an additional \(\frac{n(n-1)}{2}\) vertices and \(n(n-1)\) faces to the resulting cell decomposition. The pairwise gluing of sides along the identifications described above contributes both connecting and internal edges, producing a total of 
\(\frac{n(n-1)}{2} + 2n(n-1)\) edges in the decomposition. Consequently,

\[
V = n + \frac{n(n-1)}{2}, \quad 
E = \frac{n(n-1)}{2} + 2n(n-1), \quad 
F = n + n(n-1).
\]
Therefore, the Euler characteristic of \(S_{g_n}\) is
\[
\chi(S_{g_n}) = V - E + F = 2n - n(n-1),
\]
and since \(\chi(S_{g_n}) = 2 - 2g_n\), it follows that
\[
g_n = \left[1 - \left(1 - \frac{n-1}{2}\right)n\right].
\]

\end{proof}

\begin{figure}[htbp]
\begin{center}
\begin{tikzpicture}[scale=2]
\draw (-0.587,-0.809)--(0.587,-0.809);
\draw(0.587,-0.809)--(0.951,0.309);
\draw (0.951,0.309)--(0,1);
\draw (0,1)--(-0.951,0.309);
\draw (-0.951,0.309)--(-0.587,-0.809);

\draw[red] (-0.587,-0.809) node{\tiny$\bullet$};
\draw (-0.75,-0.809) node{$v_0$};
\draw[blue] (0.587,-0.809) node{\tiny$\bullet$};
\draw (0.75,-0.809) node{$v_1$};
\draw[green] (0.951,0.309) node{\tiny$\bullet$};
\draw[] (1.1,0.309) node{$v_2$};
\draw[yellow] (0,1) node{\tiny$\bullet$};
\draw[] (0,1.1) node{$v_3$};
\draw (-0.951,0.309) node{\tiny$\bullet$};
\draw (-1.1,0.309) node{$v_4$};

\draw (-0.587,-0.809)--(0.951,0.309);
\draw (-0.587,-0.809)--(0,1);
\draw (0.951,0.309)--(-0.951,0.309)--(0.587,-0.809)--(0,1);

\end{tikzpicture}
\caption{$K_5$}
\label{figure18}
\end{center}
\end{figure}

\begin{figure}[htbp]
\begin{center}
\begin{tikzpicture}[scale=1.5]
% for $P_0$
\draw (0,0)--(1,0)--(1,1)--(0,1)--(0,0);
\draw (0,0)--(0,-0.45);
\draw[red] (0,-0.45)--(1,-0.45);
\draw (1,-0.45)--(1,0);
\draw (1,0)--(1.45,0);
\draw[lime] (1.45,0)--(1.45,1);
\draw (1.45,1)--(1,1);
\draw (0,1)--(0,1.45);
\draw[orange] (0,1.45)--(1,1.45);
\draw (1,1.45)--(1,1);
\draw (0,0)--(-0.45,0);
\draw[teal] (-0.45,0)--(-0.45,1);
\draw (-0.45,1)--(0,1);

\draw (-0.2,1.4) node{\tiny$b_{01}$};
\draw (1.2,1.4) node {\tiny$b^\prime_{01}$};
\draw (-0.3,-0.1) node{\tiny$b_{02}$};
\draw (-0.3,1.15) node{\tiny$b_{02}^\prime$};
\draw (1.2,1.15) node{\tiny$b_{04}$};
\draw (1.3,-0.1) node{\tiny$b_{04^\prime}$};
\draw (-0.2,-.4) node{\tiny$b_{03}^\prime$};
\draw (1.2,-.4) node{\tiny$b_{03}$};

\draw (0.5,1.68) node{\tiny$a_{01}$ $(\bar{E_1}(0))$};
\draw (-0.65,0.5) node{\tiny$a_{02}$};
\draw (-1.1,0.5) node {\tiny$(\bar{E_2}(0))$};
\draw (0.5,-0.7) node{\tiny$a_{03}  (E_1(0))$};
\draw (2,0.5) node{\tiny$a_{04} (E_2(0))$};
\draw (0.5,0.5)node{$P_0$};
\draw[red] (0,0) node{\tiny$\bullet$};
\draw[red] (1,0) node{\tiny$\bullet$};
\draw[red] (1,1) node{\tiny$\bullet$};
\draw[red] (0,1) node{\tiny$\bullet$};

% for $P_1$
\draw (5,0)--(6,0)--(6,1)--(5,1)--(5,0);
\draw (5,0)--(5,-0.45);
\draw[cyan] (5,-0.45)--(6,-0.45);
\draw (6,-0.45)--(6,0);
\draw (6,0)--(6.45,0);
\draw[magenta] (6.45,0)--(6.45,1);
\draw (6.45,1)--(6,1);
\draw (5,1)--(5,1.45);
\draw[red] (5,1.45)--(6,1.45);
\draw (6,1.45)--(6,1);
\draw (4.8,1.4) node{\tiny$b_{11}$};
\draw (6.2,1.4) node {\tiny$b^\prime_{11}$};
\draw (5,0)--(4.55,0);
\draw[yellow] (4.55,0)--(4.55,1);
\draw(4.55,1)--(5,1);
\draw (4.7,-0.1) node{\tiny$b_{12}$};
\draw (4.7,1.15) node{\tiny$b_{12}^\prime$};
\draw (6.2,1.15) node{\tiny$b_{14}$};
\draw (6.3,-0.1) node{\tiny$b_{14^\prime}$};
\draw (4.8,-.4) node{\tiny$b_{13}^\prime$};
\draw (6.2,-.4) node{\tiny$b_{13}$};

\draw (5.5,1.7) node{\tiny$a_{11} (\bar{E_1}(1))$};
\draw (4,0.5) node{\tiny$(\bar{E_2}(2)) a_{12}$};

\draw (5.5,-0.7) node{\tiny$a_{13} (E_1(1))$};
\draw (6.65,0.6) node{\tiny$a_{14}$};
\draw (6.8,0.35) node{\tiny$ (E_2(1))$};
\draw (5.5,0.5)node{$P_1$};
\draw[blue] (5,0) node{\tiny$\bullet$};
\draw[blue] (6,0) node{\tiny$\bullet$};
\draw[blue] (6,1) node{\tiny$\bullet$};
\draw[blue] (5,1) node{\tiny$\bullet$};

% for $P_2$
\draw (0,-4)--(1,-4)--(1,-3)--(0,-3)--(0,-4);
\draw (0,-4)--(0,-4.45);
\draw[blue] (0,-4.45)--(1,-4.45);
\draw(1,-4.45)--(1,-4);
\draw (1,-4)--(1.45,-4);
\draw[teal] (1.45,-4)--(1.45,-3);
\draw (1.45,-3)--(1,-3);
\draw (0,-3)--(0,-2.55);
\draw[cyan] (0,-2.55)--(1,-2.55);
\draw(1,-2.55)--(1,-3);
\draw (0,-4)--(-0.45,-4);
\draw[violet] (-0.45,-4)--(-0.45,-3);
\draw[] (-0.45,-3)--(0,-3);
\draw (-0.2,-2.6) node{\tiny$b_{21}$};
\draw (1.2,-2.6) node {\tiny$b^\prime_{21}$};
\draw (-0.3,-4.1) node{\tiny$b_{22}$};
\draw (-0.3,-2.85) node{\tiny$b_{22}^\prime$};
\draw (1.2,-2.85) node{\tiny$b_{24}$};
\draw (1.3,-4.1) node{\tiny$b_{24^\prime}$};
\draw (-0.2,-4.4) node{\tiny$b_{23}^\prime$};
\draw (1.2,-4.4) node{\tiny$b_{23}$};

\draw (0.5,-2.3) node{\tiny$a_{21} (\bar{E_1}(2))$};
\draw (-1,-3.5) node{\tiny$(\bar{E_2}(2)) a_{22} $};
\draw (0.5,-4.7) node{\tiny$a_{23} (E_1(2))$};
\draw (2,-3.5) node{\tiny$a_{24}(E_2(2))$};
\draw (0.5,-3.5)node{$P_2$};
\draw[green] (0,-4) node{\tiny$\bullet$};
\draw[green] (1,-4) node{\tiny$\bullet$};
\draw[green] (1,-3) node{\tiny$\bullet$};
\draw[green] (0,-3) node{\tiny$\bullet$};

%for $P_3$
\draw (5,-4)--(6,-4)--(6,-3)--(5,-3)--(5,-4);
\draw (5,-4)--(5,-4.45);
\draw[orange] (5,-4.45)--(6,-4.45);
\draw(6,-4.45)--(6,-4);
\draw (6,-4)--(6.45,-4);
\draw[violet] (6.45,-4)--(6.45,-3);
\draw (6.45,-3)--(6,-3);
\draw (5,-3)--(5,-2.55);
\draw[green] (5,-2.55)--(6,-2.55);
\draw(6,-2.55)--(6,-3);
\draw (5,-4)--(4.55,-4);
\draw[magenta] (4.55,-4)--(4.55,-3);
\draw(4.55,-3)--(5,-3);
\draw (4.8,-2.6) node{\tiny$b_{31}$};
\draw (6.2,-2.6) node {\tiny$b^\prime_{31}$};
\draw (4.7,-4.1) node{\tiny$b_{32}$};
\draw (4.7,-2.85) node{\tiny$b_{32}^\prime$};
\draw (6.2,-2.85) node{\tiny$b_{34}$};
\draw (6.3,-4.1) node{\tiny$b_{34^\prime}$};
\draw (4.8,-4.4) node{\tiny$b_{33}^\prime$};
\draw (6.2,-4.4) node{\tiny$b_{33}$};

\draw (5.5,-2.3) node{\tiny$a_{31} (\bar{E_1}(3))$};
\draw (4.3,-3.35) node{\tiny$a_{32}$};
\draw (3.8,-3.35) node{\tiny$(\bar{E_2}(3))$};
\draw (5.5,-4.7) node{\tiny$a_{33}$ $ (E_1(3))$};
\draw (6.7,-3.5) node{\tiny$a_{34}$};
\draw (6.8,-3.7) node{\tiny$(E_2(3))$};

\draw (5.5,-3.5)node{$P_3$};
\draw[yellow] (5,-4) node{\tiny$\bullet$};
\draw[yellow] (6,-4) node{\tiny$\bullet$};
\draw[yellow] (6,-3) node{\tiny$\bullet$};
\draw[yellow] (5,-3) node{\tiny$\bullet$};

% for $P_4$
\draw (0,-8)--(1,-8)--(1,-7)--(0,-7)--(0,-8);
\draw (0,-8)--(0,-8.45);
\draw[green] (0,-8.45)--(1,-8.45);
\draw (1,-8.45)--(1,-8);
\draw (1,-8)--(1.45,-8);
\draw[yellow] (1.45,-8)--(1.45,-7);
\draw (1.45,-7)--(1,-7);
\draw (0,-7)--(0,-6.55);
\draw[blue] (0,-6.55)--(1,-6.55);
\draw (1,-6.55)--(1,-7);
\draw (0,-8)--(-0.45,-8);
\draw[lime] (-0.45,-8)--(-0.45,-7);
\draw (-0.45,-7)--(0,-7);
\draw (-0.2,-6.6) node{\tiny$b_{41}$};
\draw (1.2,-6.6) node {\tiny$b^\prime_{41}$};
\draw (-0.3,-8.1) node{\tiny$b_{42}$};
\draw (-0.3,-6.85) node{\tiny$b_{42}^\prime$};
\draw (1.2,-6.85) node{\tiny$b_{44}$};
\draw (1.3,-8.1) node{\tiny$b_{44^\prime}$};
\draw (-0.2,-8.4) node{\tiny$b_{43}^\prime$};
\draw (1.2,-8.4) node{\tiny$b_{43}$};

\draw (0.5,-6.3) node{\tiny$a_{41}$ $(\bar{E_1}(4))$};
\draw (-0.7,-7.5) node{\tiny$a_{42}$};
\draw (-1.2,-7.5) node {\tiny$(\bar{E_2}(4))$};
\draw (0.5,-8.65) node{\tiny$a_{43}$ $ (E_1(4))$};
\draw (2.1,-7.5) node{\tiny$a_{44}$ $(E_2(4))$};
\draw (0.5,-7.5)node{$P_4$};
\draw[] (0,-8) node{\tiny$\bullet$};
\draw[] (1,-8) node{\tiny$\bullet$};
\draw[] (1,-7) node{\tiny$\bullet$};
\draw[] (0,-7) node{\tiny$\bullet$};

\draw[red] (5,-6) node{$\bullet$ $V_0$};
\draw[blue] (5,-6.4) node{$\bullet$ $V_1$};
\draw[green] (5,-6.8) node{$\bullet$ $V_2$};
\draw[yellow] (5,-7.2) node{$\bullet$ $V_3$};
\draw (5,-7.6) node{$\bullet$ $V_4$};
\end{tikzpicture}
\caption{Translation surface for $K_5$}
\label{figure19}
\end{center}
\end{figure}

Next, we prove the proposition \ref{proposition 1}.

\begin{repprop}[\ref{proposition 1}](\cite{MR4985436})
  For odd $n$ with $n\geq 5$, every subgraph of $K_n$ admits an essential-systolic embedding on a translation surface of genus $g_n$.
\end{repprop}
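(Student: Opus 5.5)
We start from the surface $S_{g_n}$ built in the proof of Theorem~\ref{main theorem 1} and modify it locally without changing the genus. Let $H$ be a subgraph of $K_n$ with vertex set $\{v_{i}\}_{i\in I}$ and edge set $E(H)$. In $S_{g_n}$, each edge $v_iv_{i'}$ of $K_n$ corresponds to exactly one systolic connection $b_{ij}*b'_{i'j'}$. This connection lies inside a single cylinder, namely the rectangles $Q_{ij}$ and $Q_{i'j'}$ glued along $a_{ij}\sim a_{i'j'}$, with the sides $b$ and $b'$ identified. The plan is to keep the cylinders of edges in $E(H)$ unchanged and to lengthen the $b$-sides of all other cylinders. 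Concretely, for every edge $v_iv_{i'}\notin E(H)$ we replace $Q_{ij}$ and $Q_{i'j'}$ by rectangles with the same side $a$ of length $s$ but with $\ell(b_{ij})=\ell(b'_{ij})$ larger, chosen so that the concatenated segment has length strictly greater than that of the retained connections.

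The gluing pattern is unchanged, so the combinatorial cell decomposition is the same. The counts $V,E,F$ from the proof of Theorem~\ref{main theorem 1} therefore still apply, and the genus stays $g_n$. The cone angles are also unchanged, since only rectangle heights were altered. Hence every singular point still has angle $2\pi(n-2)>2\pi$ for $n\ge5$.

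The key step is to check that the systolic graph of the new surface is exactly $H$. Let $\delta$ be the common length of the retained connections. We choose it, together with the new lengths $L$, so that $\delta<L$, $\delta<s$, and $\delta$ is also smaller than every other saddle connection. The candidate saddle connections are the following.
\begin{enumerate}
\item Sides of the polygons $R_i$ and the $a$-sides, all of length $s$.
\item Diagonals of $R_i$, of length at least $s$.
\item Segments crossing a cylinder obliquely, which are longer than its height.
\item Segments passing from $R_i$ through a cylinder into $R_{i'}$, which are longer than the cylinder height.
\end{enumerate}
The main obstacle is to make this enumeration rigorous. The approach is to use the geometric facts that every saddle connection either lies in some $R_i$ or in a cylinder, or crosses a cylinder, and that any segment entering a cylinder through one boundary and leaving through the other has length at least its height. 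The shortest connections are then precisely the heights of the shortest cylinders, which are the connections in $E(H)$. The height bound $\ell(b)<s/2$ in \eqref{eq:1} is what keeps the short cylinders shorter than $s$, so the enlarged cylinders should be given heights in $(\delta, s/2)$, which still preserves $\delta<s$.

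Finally, we address vertices $v_i\notin I$ and degenerate cases. The singular points corresponding to isolated or missing vertices still exist on the surface. If $H$ has fewer vertices than $K_n$, we regard $H$ as a spanning subgraph with isolated vertices, and this is consistent with the definition of the systolic graph. If the intended meaning requires the vertex set to match exactly, one would instead argue that a vertex of degree zero is harmless or merge it away. When $E(H)=\emptyset$ the systolic graph would have no edges, which cannot happen, so we assume $H$ has at least one edge. The complementary regions still contain the handles formed by the cylinders and the $R_i$, so none of them is a disk and the embedding is essential.
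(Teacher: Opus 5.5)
Your proposal is correct and is essentially the paper's argument. The paper likewise keeps the gluing pattern of $S_{g_n}$, hence the genus $g_n$, and for each edge $b_{ij}*b'_{ml}$ to be deleted it lengthens the $b$-sides of $Q_{ij}$ so that this concatenation is no longer shortest, then iterates over the deleted edges. Your cylinder-crossing enumeration of saddle connections fills in the verification the paper leaves implicit; restricting the enlarged heights to $(\delta,s/2)$ is unnecessary, since any enlarged cylinder height exceeding $\delta$ works. The caveats you raise about edgeless and non-spanning $H$ are genuine, but the paper does not treat them either, since its edge-deletion argument also only produces spanning subgraphs with at least one edge.
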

\begin{proof}
Let $e$ be an edge of $K_n$. We show that the graph $K_n \setminus \{e\}$ admits an essential-systolic embedding on a translation surface of the same genus $g_n$.

Consider the embedding of $K_n$ on $S_{g_n}$ as given in Theorem~\ref{main theorem 1}. Assume that the edge $e$ is formed by the concatenation of $b_{ij}$ and $b_{ml}^\prime$, denoted by $b_{ij} * b_{ml}^\prime$, for some fixed indices $i, j, m, l$ (as described in Theorem~\ref{main theorem 1}), with $\ell(b_{ij}) = \ell(b_{ml}^\prime) = \ell_0.$ Now, we modify the rectangle $Q_{ij}$ in such a way that
$
\ell(b_{ij}) = \ell(b_{ij}^\prime) > \ell_0$. This ensures that the concatenation $b_{ij} * b_{ml}^\prime$ is no longer a systolic connection.

By performing this modification on $Q_{ij}$ and keeping the rest of the construction unchanged, we obtain a new translation surface $S_{g_n}^*$, of the genus $g_n$, on which $K_n \setminus \{e\}$ is essentially and systolically embedded.

Any subgraph $H$ of $K_n$ can be obtained by deleting finitely many edges from $K_n$, i.e., $$H = K_n \setminus \{e_1, e_2, \dots, e_k\},$$ where $e_i$'s are edges of $K_n$. Hence, by performing the above process finitely many times, it follows that any subgraph $H \subseteq K_n$ admits an essential-systolic embedding on a translation surface of the genus $g_n$.

\end{proof}
In Figure~\ref{figure21}, we show the construction for $K_5\setminus\{e_{v_0v_1}\}$ (Figure~\ref{figure20}). 
\begin{figure}[htbp]
\begin{center}
\begin{tikzpicture}[scale=2]

\draw(0.587,-0.809)--(0.951,0.309);
\draw (0.951,0.309)--(0,1);
\draw (0,1)--(-0.951,0.309);
\draw (-0.951,0.309)--(-0.587,-0.809);

\draw[red] (-0.587,-0.809) node{\tiny$\bullet$};
\draw (-0.75,-0.809) node{$v_0$};
\draw[blue] (0.587,-0.809) node{\tiny$\bullet$};
\draw (0.75,-0.809) node{$v_1$};
\draw[green] (0.951,0.309) node{\tiny$\bullet$};
\draw[] (1.1,0.309) node{$v_2$};
\draw[yellow] (0,1) node{\tiny$\bullet$};
\draw[] (0,1.1) node{$v_3$};
\draw (-0.951,0.309) node{\tiny$\bullet$};
\draw (-1.1,0.309) node{$v_4$};

\draw (-0.587,-0.809)--(0.951,0.309);
\draw (-0.587,-0.809)--(0,1);
\draw (0.951,0.309)--(-0.951,0.309)--(0.587,-0.809)--(0,1);

\end{tikzpicture}
\caption{$K_5\setminus \{e_{v_0 v_1}\}$}
\label{figure20}
\end{center}
\end{figure}

\begin{figure}[htbp]
\begin{center}
\begin{tikzpicture}[scale=1.5]
% for $P_0$
\draw (0,0)--(1,0)--(1,1)--(0,1)--(0,0);
\draw (0,0)--(0,-0.8);
\draw[red] (0,-0.8)--(1,-0.8);
\draw (1,-0.8)--(1,0);

\draw (1,0)--(1.45,0);
\draw[lime] (1.45,0)--(1.45,1);
\draw (1.45,1)--(1,1);
\draw (0,1)--(0,1.45);
\draw[orange] (0,1.45)--(1,1.45);
\draw (1,1.45)--(1,1);
\draw (0,0)--(-0.45,0);
\draw[teal] (-0.45,0)--(-0.45,1);
\draw (-0.45,1)--(0,1);

\draw (-0.2,1.4) node{\tiny$b_{01}$};
\draw (1.2,1.4) node {\tiny$b^\prime_{01}$};
\draw (-0.3,-0.1) node{\tiny$b_{02}$};
\draw (-0.3,1.15) node{\tiny$b_{02}^\prime$};
\draw (1.2,1.15) node{\tiny$b_{04}$};
\draw (1.3,-0.1) node{\tiny$b_{04^\prime}$};
\draw (-0.2,-0.6) node{\tiny$b_{03}^\prime$};
\draw (1.2,-0.6) node{\tiny$b_{03}$};

\draw (0.5,1.68) node{\tiny$a_{01}$ $(\bar{E_1}(0))$};
\draw (-0.65,0.5) node{\tiny$a_{02}$};
\draw (-1.1,0.5) node {\tiny$(\bar{E_2}(0))$};
\draw (0.5,-1) node{\tiny$a_{03}  (E_1(0))$};
\draw (2,0.5) node{\tiny$a_{04} (E_2(0))$};
\draw (0.5,0.5)node{$P_0$};
\draw[red] (0,0) node{\tiny$\bullet$};
\draw[red] (1,0) node{\tiny$\bullet$};
\draw[red] (1,1) node{\tiny$\bullet$};
\draw[red] (0,1) node{\tiny$\bullet$};

% for $P_1$
\draw (5,0)--(6,0)--(6,1)--(5,1)--(5,0);
\draw (5,0)--(5,-0.45);
\draw[cyan] (5,-0.45)--(6,-0.45);
\draw (6,-0.45)--(6,0);
\draw (6,0)--(6.45,0);
\draw[magenta] (6.45,0)--(6.45,1);
\draw (6.45,1)--(6,1);
\draw (5,1)--(5,1.45);
\draw[red] (5,1.45)--(6,1.45);
\draw (6,1.45)--(6,1);
\draw (4.8,1.4) node{\tiny$b_{11}$};
\draw (6.2,1.4) node {\tiny$b^\prime_{11}$};
\draw (5,0)--(4.55,0);
\draw[yellow] (4.55,0)--(4.55,1);
\draw(4.55,1)--(5,1);
\draw (4.7,-0.1) node{\tiny$b_{12}$};
\draw (4.7,1.15) node{\tiny$b_{12}^\prime$};
\draw (6.2,1.15) node{\tiny$b_{14}$};
\draw (6.3,-0.1) node{\tiny$b_{14^\prime}$};
\draw (4.8,-.4) node{\tiny$b_{13}^\prime$};
\draw (6.2,-.4) node{\tiny$b_{13}$};

\draw (5.5,1.7) node{\tiny$a_{11} (\bar{E_1}(1))$};
\draw (4,0.5) node{\tiny$(\bar{E_2}(2)) a_{12}$};

\draw (5.5,-0.8) node{\tiny$a_{13} (E_1(1))$};
\draw (6.65,0.6) node{\tiny$a_{14}$};
\draw (6.8,0.35) node{\tiny$ (E_2(1))$};
\draw (5.5,0.5)node{$P_1$};
\draw[blue] (5,0) node{\tiny$\bullet$};
\draw[blue] (6,0) node{\tiny$\bullet$};
\draw[blue] (6,1) node{\tiny$\bullet$};
\draw[blue] (5,1) node{\tiny$\bullet$};

% for $P_2$
\draw (0,-4)--(1,-4)--(1,-3)--(0,-3)--(0,-4);
\draw (0,-4)--(0,-4.45);
\draw[blue] (0,-4.45)--(1,-4.45);
\draw(1,-4.45)--(1,-4);
\draw (1,-4)--(1.45,-4);
\draw[teal] (1.45,-4)--(1.45,-3);
\draw (1.45,-3)--(1,-3);
\draw (0,-3)--(0,-2.55);
\draw[cyan] (0,-2.55)--(1,-2.55);
\draw(1,-2.55)--(1,-3);
\draw (0,-4)--(-0.45,-4);
\draw[violet] (-0.45,-4)--(-0.45,-3);
\draw[] (-0.45,-3)--(0,-3);

\draw (-0.2,-2.6) node{\tiny$b_{21}$};
\draw (1.2,-2.6) node {\tiny$b^\prime_{21}$};
\draw (-0.3,-4.1) node{\tiny$b_{22}$};
\draw (-0.3,-2.85) node{\tiny$b_{22}^\prime$};
\draw (1.2,-2.85) node{\tiny$b_{24}$};
\draw (1.3,-4.1) node{\tiny$b_{24^\prime}$};
\draw (-0.2,-4.4) node{\tiny$b_{23}^\prime$};
\draw (1.2,-4.4) node{\tiny$b_{23}$};

\draw (0.5,-2.3) node{\tiny$a_{21} (\bar{E_1}(2))$};
\draw (-1,-3.5) node{\tiny$(\bar{E_2}(2)) a_{22} $};
\draw (0.5,-4.6) node{\tiny$a_{23} (E_1(2))$};
\draw (2,-3.5) node{\tiny$a_{24}(E_2(2))$};
\draw (0.5,-3.5)node{$P_2$};
\draw[green] (0,-4) node{\tiny$\bullet$};
\draw[green] (1,-4) node{\tiny$\bullet$};
\draw[green] (1,-3) node{\tiny$\bullet$};
\draw[green] (0,-3) node{\tiny$\bullet$};

%for $P_3$
\draw (5,-4)--(6,-4)--(6,-3)--(5,-3)--(5,-4);
\draw (5,-4)--(5,-4.45);
\draw[orange] (5,-4.45)--(6,-4.45);
\draw(6,-4.45)--(6,-4);
\draw (6,-4)--(6.45,-4);
\draw[violet] (6.45,-4)--(6.45,-3);
\draw (6.45,-3)--(6,-3);
\draw (5,-3)--(5,-2.55);
\draw[green] (5,-2.55)--(6,-2.55);
\draw(6,-2.55)--(6,-3);
\draw (5,-4)--(4.55,-4);
\draw[magenta] (4.55,-4)--(4.55,-3);
\draw(4.55,-3)--(5,-3);

\draw (4.8,-2.6) node{\tiny$b_{31}$};
\draw (6.2,-2.6) node {\tiny$b^\prime_{31}$};
\draw (4.7,-4.1) node{\tiny$b_{32}$};
\draw (4.7,-2.85) node{\tiny$b_{32}^\prime$};
\draw (6.2,-2.85) node{\tiny$b_{34}$};
\draw (6.3,-4.1) node{\tiny$b_{34^\prime}$};
\draw (4.8,-4.4) node{\tiny$b_{33}^\prime$};
\draw (6.2,-4.4) node{\tiny$b_{33}$};

\draw (5.5,-2.3) node{\tiny$a_{31} (\bar{E_1}(3))$};
\draw (4.3,-3.35) node{\tiny$a_{32}$};
\draw (3.8,-3.35) node{\tiny$(\bar{E_2}(3))$};
\draw (5.5,-4.6) node{\tiny$a_{33}$ $ (E_1(3))$};
\draw (6.7,-3.5) node{\tiny$a_{34}$};
\draw (6.8,-3.7) node{\tiny$(E_2(3))$};

\draw (5.5,-3.5)node{$P_3$};
\draw[yellow] (5,-4) node{\tiny$\bullet$};
\draw[yellow] (6,-4) node{\tiny$\bullet$};
\draw[yellow] (6,-3) node{\tiny$\bullet$};
\draw[yellow] (5,-3) node{\tiny$\bullet$};

% for $P_4$
\draw (0,-8)--(1,-8)--(1,-7)--(0,-7)--(0,-8);
\draw (0,-8)--(0,-8.45);
\draw[green] (0,-8.45)--(1,-8.45);
\draw (1,-8.45)--(1,-8);
\draw (1,-8)--(1.45,-8);
\draw[yellow] (1.45,-8)--(1.45,-7);
\draw (1.45,-7)--(1,-7);
\draw (0,-7)--(0,-6.55);
\draw[blue] (0,-6.55)--(1,-6.55);
\draw (1,-6.55)--(1,-7);
\draw (0,-8)--(-0.45,-8);
\draw[lime] (-0.45,-8)--(-0.45,-7);
\draw (-0.45,-7)--(0,-7);

\draw (-0.2,-6.6) node{\tiny$b_{41}$};
\draw (1.2,-6.6) node {\tiny$b^\prime_{41}$};
\draw (-0.3,-8.1) node{\tiny$b_{42}$};
\draw (-0.3,-6.85) node{\tiny$b_{42}^\prime$};
\draw (1.2,-6.85) node{\tiny$b_{44}$};
\draw (1.3,-8.1) node{\tiny$b_{44^\prime}$};
\draw (-0.2,-8.4) node{\tiny$b_{43}^\prime$};
\draw (1.2,-8.4) node{\tiny$b_{43}$};

\draw (0.5,-6.3) node{\tiny$a_{41}$ $(\bar{E_1}(4))$};
\draw (-0.7,-7.5) node{\tiny$a_{42}$};
\draw (-1.2,-7.5) node {\tiny$(\bar{E_2}(4))$};
\draw (0.5,-8.65) node{\tiny$a_{43}$ $ (E_1(4))$};
\draw (2.1,-7.5) node{\tiny$a_{44}$ $(E_2(4))$};
\draw (0.5,-7.5)node{$P_4$};
\draw[] (0,-8) node{\tiny$\bullet$};
\draw[] (1,-8) node{\tiny$\bullet$};
\draw[] (1,-7) node{\tiny$\bullet$};
\draw[] (0,-7) node{\tiny$\bullet$};

\draw[red] (5,-6) node{$\bullet$ $V_0$};
\draw[blue] (5,-6.4) node{$\bullet$ $V_1$};
\draw[green] (5,-6.8) node{$\bullet$ $V_2$};
\draw[yellow] (5,-7.2) node{$\bullet$ $V_3$};
\draw (5,-7.6) node{$\bullet$ $V_4$};
\end{tikzpicture}
\caption{Translation surface for $K_5\setminus\{e_{v_0v_1}\}$}
\label{figure21}
\end{center}
\end{figure}

\begin{repthm}[\ref{Main Result}](\cite{MR4985436})
  Every finite graph admits an essential-systolic embedding on a translation surface.  
\end{repthm}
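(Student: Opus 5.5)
The plan is to derive Theorem~\ref{Main Result} directly from Theorem~\ref{main theorem 1} and Proposition~\ref{proposition 1}, by realising an arbitrary finite graph $G$ as a subgraph of a complete graph $K_n$ with $n$ odd and $n\geq 5$. Let $G=(V,E)$ have $m=|V|$ vertices. Set
\[
n=\begin{cases} 5, & m\leq 5,\\ m, & m\geq 6 \text{ odd},\\ m+1, & m\geq 6 \text{ even}.\end{cases}
\]
Then $n$ is odd, $n\geq 5$ and $n\geq m$. If $G$ is simple, choose an injection $\iota\colon V\to\{v_0,\dots,v_{n-1}\}$. It sends every edge $uw\in E$ to the edge $\iota(u)\iota(w)$ of $K_n$, so $\iota(G)$ is a subgraph of $K_n$.

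Next I apply Theorem~\ref{main theorem 1} to get the translation surface $S_{g_n}$ with $\Gamma_{S_{g_n}}\cong K_n$, embedded essentially. Then I apply Proposition~\ref{proposition 1} to $\iota(G)$. Concretely, for each edge $b_{ij}*b'_{ml}$ of $K_n$ that is not in $\iota(G)$, I lengthen the corresponding sides $b_{ij},b'_{ij}$ of the rectangle $Q_{ij}$ so that this concatenation is no longer shortest. The remaining edges stay systolic. I will check two things at this point. First, the modification creates no new saddle connection of length $\leq \ell_0$: every other saddle connection either crosses a region $R_i$ or runs along an $a$-side, so its length is at least $s>2\ell_0$. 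Second, every complementary region is still not a disk, since deleting edges only merges complementary regions, and merging non-disk regions of a closed surface of positive genus cannot produce a disk. This yields a translation surface of genus $g_n$ on which $\iota(G)$ is essentially and systolically embedded.

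I expect the main obstacle to be matching the vertex set exactly. A systolic embedding requires $G\cong\Gamma_S$, and every singular point of $S$ is a vertex of $\Gamma_S$. When $n>m$, the $n-m$ unused singular points appear in $\Gamma_S$ as isolated vertices. So the argument as it stands embeds $G$ together with $n-m$ isolated vertices, that is, $G$ regarded as a spanning subgraph of $K_n$. This is exactly the convention implicit in Proposition~\ref{proposition 1}, which proceeds only by deleting edges, and in the even case of Theorem~\ref{upper bound}. My first remedy is to read ``subgraph'' in this spanning sense, which is what the stated bound on $g^{\min}_G$ suggests. If exact matching is required, I would instead try to replace the regions $R_i$ of unused vertices by flat pieces whose vertices have cone angle $2\pi$. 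They would then become regular points rather than singular points, and I would recompute the genus afterwards.

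A second, lesser issue arises if ``finite graph'' allows loops or multiple edges, since then $G$ is not a subgraph of any simple $K_n$. For these I would adapt the rectangle trick locally. A loop at $v_i$ is obtained from an extra rectangle attached to $R_i$ whose $a$-side is glued back to another side of $P_i$, making $b*b'$ a closed saddle connection at $v_i$. A multiple edge is obtained by adding further rectangle pairs between the same $P_i,P_{i'}$, with their $b$-sides of the common short length $\ell_0$. These additions are still essential, and they increase the genus but keep it finite.
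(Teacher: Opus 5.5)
Your route is the paper's, and you have correctly spotted a defect that the paper's one-line proof shares. Proposition~\ref{proposition 1} only ever deletes edges. So whenever $n>m$ (that is, $m\le 4$ or $m$ even), the unused singular points survive, and $\Gamma_S$ is $G$ together with $n-m$ isolated vertices rather than $G$.

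Your proposal leaves this gap open. Reading ``subgraph'' as ``spanning subgraph'' proves the theorem only for odd $m\ge 5$. Remedy (b) is not yet a construction, and it cannot be realised by a single flat polygon. Any flat replacement for $R_i$ must be glued along the $n-1\ge 4$ inner loops $a'$ of the half-cylinders at $V_i$. If no cone angle may exceed $2\pi$, its boundary must be geodesic, so by Gauss--Bonnet each component has $\chi=0$ and is an annulus pairing two parallel loops.

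The missing step is therefore a splice. Delete $R_i$ and, for each $r$, glue the inner side of the rectangle carrying $E_r(i)$ directly to the inner side of the rectangle carrying $\bar E_r(i)$; these lie on opposite, parallel sides of $R_i$. Then:
\begin{itemize}
\item $V_i$ disappears, since each new vertex has angle $\pi+\pi$;
\item all other cone angles stay $2\pi(n-2)$;
\item each Walecki cycle $C_r$ still passes through every remaining vertex;
\item the link of $C_r$ through $v_i$ becomes a cylinder of height $4\ell_0>2\ell_0$ joining the two neighbours of $v_i$, so it contributes no systolic connection;
\item the cylinders between remaining vertices are untouched.
\end{itemize}
After splicing out all $n-m$ unused vertices you get a surface of genus $1+\frac{m(n-3)}{2}$ with $\Gamma_S\cong K_m$. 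Its complement is connected with Euler characteristic $\frac{m(m-2n+3)}{2}<0$, hence the embedding is essential, and your edge-deletion step then finishes.

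There are three further points. First, no edgeless graph can occur at all. Every translation surface has a shortest saddle connection, so $\Gamma_S$ always has an edge, and the statement needs $E(G)\neq\emptyset$. Your sentence ``the remaining edges stay systolic'' is exactly where the argument fails otherwise: lengthening the last short cylinder just promotes another saddle connection to a systole, such as an $a'$-loop of length $s$ or the shortest lengthened crossing.

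Second, your multigraph sketch needs free sides of $R_i$, but every side of the regular $(n-1)$-gon already carries a rectangle. You must enlarge all pieces uniformly, e.g.\ to regular $N$-gons with a common even $N$. Each parallel class of sides then again defines a gluing permutation of the pieces: a fixed point gives a loop, and each further matching of the same pair gives a parallel edge. Then lengthen the surplus cylinders. Taking exactly one piece per vertex of $G$ also disposes of the vertex-count problem.

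Third, two small slips. The threshold in your first check should be $2\ell_0$, the length of $b_{ij}*b'_{ml}$, not $\ell_0$. Your list of competing saddle connections also omits the oblique crossings of the cylinders, but these have length greater than $s$ and are harmless.
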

\begin{proof}
Any graph $G$ with $m$ vertices is a subgraph of the complete graph $K_n$, for an odd integer $n$. Now, the theorem follows from Proposition~\ref{proposition 1} and Theorem~\ref{main theorem 1}.
\end{proof}

 We define $P_G$, the set of all genera $g$ such that $G$ admits an essential-systolic embedding on a translation surface of the genus $g$, which is nonempty by Theorem \ref{Main Result}.
 
 \begin{lemma} \label{no maxima} (\cite{MR4985436})
If a graph $G$ admits an essential-systolic embedding on a translation surface of genus $g$, then there exists $g^\prime > g$ such that $G$ also admits such an embedding on a translation surface of genus $g^\prime$. 

 \end{lemma}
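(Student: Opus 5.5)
The idea is to raise the genus by a local surgery placed far from the embedded graph, so the systolic graph does not change. Let $f\colon G\to X$ be an essential-systolic embedding on a translation surface $X$ of genus $g$, and let $s_0=\mathrm{Sys}(X)$. Since the embedding is essential, no component of $X\setminus f(G)$ is a disk. Choose a component $U$ and a straight segment $\gamma$ in the interior of $U$, away from every singular and marked point. We may take $\gamma$ in a generic direction and so short that its distance from $f(G)$ and from all cone points is larger than some $\delta>0$.

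The surgery is the standard slit construction. Take a flat torus $T$ with no marked point, made as a parallelogram with opposite sides identified. All its side lengths and all its closed geodesics should be much longer than $s_0$; scaling $T$ up achieves this. Cut a slit in $T$ parallel to and of the same length as $\gamma$. Cut $X$ along $\gamma$, and glue the two sides of the slit in $X$ to the opposite sides of the slit in $T$ by translations. The result $X'$ is a translation surface. The two endpoints of $\gamma$ become cone points of angle $4\pi$ each, so by Proposition~\ref{Gauss-Bonnet} the genus is $g'=g+1>g$.

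Next, verify that $\Gamma_{X'}\cong G$ and that the embedding is still essential, which is the main obstacle. The new singular points $p,q$ (the endpoints of $\gamma$) create new saddle connections. These run from $p$ or $q$ to old singularities, between $p$ and $q$, or through the handle. The segment from $p$ to $q$ along the slit has length $\ell(\gamma)$, so we must choose $\ell(\gamma)>s_0$. Then $U$ must contain a straight segment longer than $s_0$ at distance more than $s_0$ from all cone points. If $U$ is too thin for this, we first enlarge it. The preferred way is to rescale, making all lengths of $X$ huge, and then shrink the graph edges by an operation like the modification in Proposition~\ref{proposition 1}. An alternative is to choose $\gamma$ inside a long cylinder in $U$, which exists because $U$ is not a disk: it contains an essential closed curve, and we can insert a long flat cylinder along a closed geodesic there by cut-and-reglue. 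Once $\gamma$ lies in a region of $U$ with width larger than $2s_0+\ell(\gamma)$ about it, every saddle connection ending at $p$ or $q$ has length larger than $s_0$. Saddle connections passing through the handle are longer than the systoles of $T$, so also longer than $s_0$. Old saddle connections of $X$ either avoid $\gamma$, and keep their lengths, or cross $\gamma$ and become longer paths. Hence the shortest saddle connections of $X'$ are exactly the old ones, $\mathrm{Sys}(X')=s_0$, and $\Gamma_{X'}\cong G$.

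Finally, the complement of $f(G)$ in $X'$ has the same components as before, except that $U$ is replaced by $U$ with a handle attached. That component is still not a disk, and the others are unchanged. So the embedding is essential and systolic on a surface of genus $g'=g+1$. One can also repeat the surgery to obtain every larger genus.
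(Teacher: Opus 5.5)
Your slit construction is a different route from the paper's, but the conclusion $\Gamma_{X'}\cong G$ fails regardless of the length estimates. By definition, the vertex set of $\Gamma_{X'}$ is the set of \emph{all} singular points of $X'$. Your slit creates two new ones, $p$ and $q$, each of angle $4\pi$. You deliberately make every saddle connection at $p$ or $q$ longer than $s_0$, so $p$ and $q$ are isolated vertices, and you get $\Gamma_{X'}\cong G\sqcup 2K_1\not\cong G$.

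Sharper estimates cannot fix this. By Proposition~\ref{Gauss-Bonnet}, each new handle costs $4\pi$ of extra cone angle. If the vertex set is to remain $V(G)$, that angle must go to existing vertices of $G$, not to new points. For $g=1$ it is worse: the vertices of $\Gamma_X$ are then marked points, and once $X'$ has genuine singular points they no longer count as vertices, so $\Gamma_{X'}$ would have vertex set $\{p,q\}$.

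You can keep the surgery idea if you slit along a saddle connection instead of a segment between regular points. Since $U$ is not a disk, there is an arc with interior in $U$ and endpoints at vertices of $G$ that is not homotopic rel endpoints into the frontier of $U$. Tighten it in the surface obtained by cutting $X$ along $f(G)$. This gives a chain of saddle connections, at least one of which, say $\sigma$, has its interior in $U$. Then $\ell(\sigma)>s_0$ automatically, because $\sigma$ is not an edge of $\Gamma_X$.

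Now glue a large flat torus to $X$ along $\sigma$ and a parallel slit of length $\ell(\sigma)$. This has the following effects:
\begin{itemize}
\item the genus rises by one, and $2\pi$ is added at each endpoint of $\sigma$ (or $4\pi$ if $\sigma$ is a loop), with no new singular point;
\item every edge of $f(G)$ stays intact, and every new saddle connection is either one of the two copies of $\sigma$ or runs through the large handle;
\item the region containing $U$ gains a handle, so the embedding stays essential.
\end{itemize}
Hence $\Gamma_{X'}\cong G$. For $g=1$ you must in addition make every vertex of $G$ singular.

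This also removes the need for your fattening step, which as written is unsupported. Rescaling $X$ rescales $s_0$ too. The modification in Proposition~\ref{proposition 1} lengthens rather than shrinks edges, and it is specific to the surfaces $S_{g_n}$. Nothing guarantees a closed regular geodesic inside $U$ along which to insert a cylinder.

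The paper, by contrast, uses no surgery. It views $G$ as a subgraph of $K_{n_0}$ with $g_{n_0}>g$ and applies Proposition~\ref{proposition 1}. Note that this route runs into the same vertex-count issue whenever $G$ has fewer than $n_0$ vertices: Proposition~\ref{proposition 1} only deletes edges, and those surfaces keep all $n_0$ singular points.
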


 \begin{proof}
   Let the graph \(G\) be a subgraph of a complete graph \(K_n\) for some odd \(n \ge 5\). Hence, \(G\) can also be viewed as a subgraph of the complete graphs \(K_{n+2}, K_{n+4}, K_{n+6}, \dots\). Since the sequence \(\{g_n, g_{n+2}, g_{n+4}, g_{n+6} \dots\}\) is strictly increasing and unbounded, by the Archimedean property of real numbers, there exists \(n_0 \in \{n, n+2, n+4, n+6 \dots\}\) such that \(g_{n_0} > g\). Setting \(g' = g_{n_0}\), we conclude from Proposition~\ref{proposition 1} that \(G\), viewed as a subgraph of \(K_{n_0}\), admits an essential-systolic embedding on a translation surface of the genus \(g' > g\).

\end{proof}

 Consequently, the notion of a maximum genus does not make sense, and therefore we focus on determining the minimum genus for such embeddings. Now, we define
$g^{\min}_G$ := $\min \left\{ g \mid g \in P_G \right\}$. We prove the following theorem that provides an upper bound for $g^{\min}_G$.

\begin{repthm}[\ref{upper bound}] (\cite{MR4985436})
For a graph \( G \) with \( n \) vertices, an upper bound on the minimum genus of a translation surface that admits such an embedding is given by
 \[
g^{\min}_G \leq 
\begin{cases}
1 - n\left(1 - \frac{n - 1}{2}\right), & \text{if } n \geq 5 \text{ and } n \text{ is odd}, \\[0.5em]
1 - (n + 1)\left(1 - \frac{n}{2}\right), & \text{if } n \geq 5 \text{ and } n \text{ is even and} \\[0.5em]
6, & \text{if } n < 5.
\end{cases}
\]

\end{repthm}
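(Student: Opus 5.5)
The plan is to reduce everything to Proposition~\ref{proposition 1} by embedding $G$ as a subgraph of a suitable complete graph $K_m$ with $m$ odd and $m\ge 5$, and then reading off the genus $g_m = 1 - m\left(1-\frac{m-1}{2}\right)$. Since $g^{\min}_G$ is the minimum of $P_G$, any genus at which $G$ admits an essential-systolic embedding is an upper bound. I will split into the three cases of the statement according to the number $n$ of vertices of $G$.

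If $n\ge 5$ is odd, I first pass from $G$ to its underlying simple graph. Then $G$ is a subgraph of $K_n$ on the same vertex set. Proposition~\ref{proposition 1} gives an essential-systolic embedding on a surface of genus $g_n$, so $g_n\in P_G$ and $g^{\min}_G\le 1-n\left(1-\frac{n-1}{2}\right)$.

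If $n\ge 5$ is even, I add one isolated vertex. This realizes $G$ as a subgraph of $K_{n+1}$ with $n+1$ odd and $n+1\ge 5$. Proposition~\ref{proposition 1} then yields genus $g_{n+1}=1-(n+1)\left(1-\frac{n}{2}\right)$.

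If $n<5$, I view $G$ as a subgraph of $K_5$ by adding $5-n$ vertices. This gives genus $g_5 = 1-5(1-2)=6$, matching the third case.

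The one point needing care is the word \emph{subgraph} when extra vertices are added. The surface built for a subgraph $H\subseteq K_m$ in Proposition~\ref{proposition 1} still has $m$ singular points, so its systolic graph has $m$ vertices. Hence the realized graph is $G$ together with isolated vertices, not $G$ itself.

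I expect this vertex-count mismatch to be the main obstacle, together with the analogous issue that Proposition~\ref{proposition 1} handles only simple subgraphs, so $G$ with loops or multiple edges is not covered. I would follow the paper's convention in the proof of Theorem~\ref{Main Result}, where containment in some $K_m$ is taken to suffice, so that extra isolated singular points are permitted. If a strict isomorphism $G\cong\Gamma_S$ is required, I would try to remove a superfluous singular point $v_i$ by deleting its polygon $P_i$. Doing so disrupts the Walecki gluing and changes the genus, so this repair is not straightforward. I would therefore state the bound under the paper's convention and note the restriction to simple graphs on the vertex set.
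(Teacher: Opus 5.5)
Your proposal is the paper's proof: the same three cases, realizing $G$ inside $K_n$, $K_{n+1}$ or $K_5$, and reading off $g_n$, $g_{n+1}$ and $g_5=6$ from Theorem~\ref{main theorem 1} together with Proposition~\ref{proposition 1}. The caveats you raise are well founded but come from Proposition~\ref{proposition 1}, not from your argument: its proof only deletes edges, so it covers only spanning (hence simple) subgraphs, yet the paper applies it to non-spanning ones in the even and $n<5$ cases; there is also one problem you did not mention, namely that an edgeless $G$ can never be realized, since every translation surface has a shortest saddle connection, so $\Gamma_S$ always has an edge and the bound should be restricted to graphs with at least one edge.
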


\begin{proof}

For $n \geq 5$, we consider the graph $G$ as a subgraph of the complete graph $K_n$ when $n$ is odd, or $K_{n+1}$ when $n$ is even. Then, Theorem~\ref{main theorem 1} and Proposition~\ref{proposition 1} together imply that $G$ admits essential-systolic embedding on a translation surface of genus $g_n$ or $g_{n+1}$, depending on whether $n$ is odd or even, respectively. Therefore, we have: $$
g^{\min}_G \leq g_n \quad \text{if } n \text{ is odd,} \quad \text{and} \quad g^{\min}_G \leq g_{n+1} \quad \text{if } n \text{ is even}.$$
Now, by Theorem~\ref{main theorem 1}, $$g_n = 1 - n\left(1 - \frac{n - 1}{2}\right), \quad \text{and} \quad g_{n+1} = 1 - (n + 1)\left(1 - \frac{n}{2}\right).$$

Similarly, for $n < 5$, we take $G$ as a subgraph of $K_5$. Using the same arguments, we conclude that:$$g^{\min}_G \leq g_{5} = 6.$$

\end{proof}
 
\section{Cellular-systolic embedding}

In this section, we study the cellular-systolic embedding of graphs on translation surfaces. Consider the graph $\Sigma_n$, the wedge of $n$-circles. Note that for $n=1$, the graph $\Sigma_1$ does not admit cellular systolic embedding on any translation surface. To see this, if $\Sigma_1$ is cellularly embedded on a closed surface, then that surface is topologically a sphere, which is not a translation surface (see Lemma~\ref{Non example}). Theorem~\ref{cellular embedding} shows that for every $n \geq 2$, the graph $\Sigma_n$ admits a cellular-systolic embedding on some translation surface.  
We state the result below for clarity.

\begin{repthm}[\ref{cellular embedding}] (\cite{MR4985436})
Let \( \Sigma_n \) denote the wedge of \( n \) circles, where \( n \geq 2 \). Then we have the following:
\begin{enumerate}
    \item $\Sigma_n$ admits a cellular-systolic embedding on a translation surface of genus $\left\lfloor \frac{n}{2} \right\rfloor$. Furthermore, this is the maximum possible genus of such a translation surface.
    
    \item The minimum genus of a translation surface on which $\Sigma_n$ admits a cellular-systolic embedding is \( \left\lceil \frac{n+3}{6} \right\rceil \).
    
    \item For any integer \( g \) satisfying $\left\lceil \frac{n+3}{6} \right\rceil \leq g \leq \left\lfloor \frac{n}{2} \right\rfloor$, the graph \( \Sigma_n \) admits cellular-systolic embedding on a translation surface of genus \( g \).
\end{enumerate}
\end{repthm}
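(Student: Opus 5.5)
The plan has two halves: Euler-characteristic bounds on the genus, and explicit constructions showing every genus in the resulting range is attained.

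\emph{Bounds.} Suppose $\Sigma_n$ is cellularly and systolically embedded on a translation surface $X$ of genus $g$. By definition of a cellular embedding, the single vertex is the only singular (or marked) point, and every complementary component is an open disk. Let $F$ be the number of faces. Euler's formula $1-n+F=2-2g$ gives $F=n+1-2g$, and since $F\ge 1$ we get $g\le\lfloor n/2\rfloor$.

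For the lower bound, note that each face is a flat disk whose boundary is a closed chain of saddle connections of common length $s$. By Proposition~\ref{Gauss-Bonnet} (or Gauss--Bonnet for the flat disk), such a face cannot be a $1$-gon or a $2$-gon: a geodesic monogon or bigon in a flat disk with no interior cone points would force interior angle sum $-\pi$ or $0$, which is impossible. Hence every face has at least $3$ sides. Counting sides, each edge bounds faces twice, so $3F\le 2n$. Therefore $3(n+1-2g)\le 2n$, i.e.\ $g\ge\lceil (n+3)/6\rceil$.

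\emph{Constructions at the extremes.} For the maximal genus, take $n$ even and a regular $2n$-gon of side $s$ with opposite sides glued, as in Example~\ref{e2}. I expect this to give a single vertex when $n/2$ is even; otherwise I will permute the side pairing so that all vertices are identified. I then check that every side has length $s$, every diagonal is longer than $s$, and hence every other saddle connection is longer. For $n$ odd, I use two faces, for instance a regular $(n+1)$-gon and a regular $(n-1)$-gon with all sides of length $s$, glued in the double-pentagon style of Example~\ref{e3}. This gives $F=2$ and $g=(n-1)/2$, and again I must check that only one vertex class arises.

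For the minimal genus, the extremal case $n\equiv 3 \pmod 6$ forces all faces to be triangles. Here I take a surface built from equilateral triangles with one singular point, which is maximal in the sense of Boissy--Geninska: its systolic connections are exactly the triangle sides, and the count of $3(2g-1)=n$ edges matches Proposition~\ref{prop:maxsaddleconnections}. For the other residues of $n$ mod $6$, I replace a few triangles by equilateral-sided rhombi or pentagons whose diagonals are all longer than $s$.

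\emph{Intermediate genera.} For $\lceil (n+3)/6\rceil\le g\le\lfloor n/2\rfloor$, I use $F=n+1-2g$ faces with side counts $k_i\ge 3$ and $\sum k_i=2n$. I realize each face as an equilateral convex $k_i$-gon of side $s$ with all angles in $(\pi/3,\pi)$, so that all diagonals exceed $s$. The faces are glued along parallel sides by a pairing chosen to produce one vertex. An alternative inductive route is to insert a diagonal of length $s$ into a quadrilateral or larger face of a genus-$g$ embedding of $\Sigma_{n-1}$; this raises $n$ by one and keeps $g$ fixed.

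The main obstacle is the geometric realizability in this construction. The side directions must be chosen so that paired sides are parallel with opposite orientation, the gluing must yield exactly one vertex class, and no saddle connection crossing several faces may have length $\le s$. I will first try to handle this with polygons inscribed in a common direction set, namely multiples of $\pi/3$, which are combinations of equilateral triangles. Then every saddle connection is a segment in a triangular lattice, and its length is at least $s$ with equality only along edges. The cone angle $2\pi(2g-1)$ at the vertex is automatic from Proposition~\ref{Gauss-Bonnet}.
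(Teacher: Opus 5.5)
The two bounds are correct, and your lower bound is reached by a more elementary route than the paper's. Instead of invoking the Boissy--Geninska maximum $3(2g-1)$ for the number of shortest saddle connections on a one-vertex surface, you apply Gauss--Bonnet to each face (a flat disk with geodesic sides) to exclude monogons and bigons, then count $3F\le 2n$. This is self-contained and holds for any cellular embedding of $\Sigma_n$ by saddle connections.

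The maximal-genus models are fine up to two slips.
\begin{itemize}
\item Opposite-side gluing of a regular $2n$-gon gives one vertex class exactly when $n$ (not $n/2$) is even, so your even case always works. Your fallback of permuting the pairing does not exist anyway, since each side of a regular polygon has a unique parallel partner.
\item Your regular $(n+1)$-gon plus $(n-1)$-gon does give a single vertex for odd $n\ge 5$: after aligning one side direction it is the only common one, and cross-gluing that pair merges the vertex classes. But it is undefined for $n=3$; the paper's double $(2m+1)$-gon covers all odd $n$ uniformly.
\end{itemize}

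The genuine gap is the existence half of items 2 and 3. You do not carry it out, and the uniform mechanism you propose cannot work. Suppose all side directions lie in $\frac{\pi}{3}\mathbb{Z}$ and all sides have length $s$. Then each face is a union of lattice triangles. Any lattice edge in the interior of a face joining two of its corners is a saddle connection of length $s$, since both ends are the unique singular point. That is an extra loop in $\Gamma_X$.
\begin{itemize}
\item A convex equilateral lattice quadrilateral is necessarily the $60^\circ$--$120^\circ$ rhombus, whose short diagonal is such an edge.
\item An equilateral lattice pentagon with angles at most $\pi$ is the three-triangle trapezoid, which contains two such edges.
\end{itemize}
Yet $\Sigma_8$ at its minimal genus $2$ needs $F=5$ faces with $\sum k_i=16$, i.e.\ four triangles and a quadrilateral. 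Likewise $\Sigma_7$ at genus $2$ needs two quadrilaterals or a pentagon. In general the excess $\sum(k_i-3)=6g-3-n$ must be carried by non-triangular faces, and convex lattice faces cannot carry a total excess of $1$ or $2$. Your angle window $(\pi/3,\pi)$ is also compatible with lattice directions only for the regular hexagon, and it excludes the equilateral triangles themselves.

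Your alternatives do not avoid this. Rhombi or pentagons with all diagonals longer than $s$, or inserting a length-$s$ diagonal into a face of a systolic $\Sigma_{n-1}$ model (where every face diagonal is already longer than $s$), all force the side directions of the modified face to change. You would then have to specify which sides are glued to which, and re-verify both the single vertex class and the absence of short saddle connections crossing several faces. That is exactly the step you leave open. Even for $n=6g-3$ you assert, but do not construct, a one-vertex equilateral triangulation in every genus.

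The paper closes these gaps with explicit models:
\begin{itemize}
\item genus-$2$ polygons for $\Sigma_4,\dots,\Sigma_9$: the octagon, the double pentagon, stacked squares, squares combined with equilateral triangles (the squares' vertical sides glued only to each other), and six equilateral triangles;
\item the addition of $4(g-2)$ equilateral triangles to these to obtain $\Sigma_{6g-8},\dots,\Sigma_{6g-3}$ in genus $g$;
\item for intermediate genera, $2k$ triangles with one regular $(2n-6k)$-gon, or $2k-2$ triangles with two regular $(n-3k+3)$-gons.
\end{itemize}
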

\begin{proof}
\begin{enumerate}
\item We prove this result by explicitly building a translation surface on which $\Sigma_n$ admits a cellular-systolic embedding. If the graph $\Sigma_n$ admits such an embedding with $k\geq 1$ complementary disks, then the Euler characteristic formula gives $$ 1 - n + k = 2 - 2g \implies g = \frac{n + 1 - k}{2}.$$ Therefore, $g$ is maximized when $k = 1$ or $k = 2$, depending on whether $n$ is even or odd, respectively. In the either cases, the maximum genus is given by $\left\lfloor \frac{n}{2} \right\rfloor$.

To prove item (1), it suffices to construct a translation surface of the genus $\left\lfloor \frac{n}{2} \right\rfloor$ on which $\Sigma_n$ admits cellular-systolic embedding. We proceed by considering the following two cases.

Case 1. We assume that $n$ is an even integer, i.e., $n = 2m$, for some $m \in \mathbb{N}$. In this case, we construct the translation surface by identifying opposite parallel sides of a regular $2n=4m$-gon $\mathcal{R}_{2n}$. This yields a translation surface with a single singular point of cone angle $(4m-2)\pi$ and $n$ systolic connections, one for each pair of opposite sides. Each side-pair gives a saddle connection of length equal to the side length of $\mathcal{R}_{2n}$. Moreover, in the polygonal unfolding, any saddle connection lifts to a straight segment joining vertices, and the minimum nonzero distance between such vertices is the side length. Hence, no shorter saddle connection exists. This gives the required cellular-systolic embedding of $\Sigma_n$. The genus of the resulting surface is $m = \left\lfloor \frac{n}{2} \right\rfloor$. For example, the construction of the translation surface for $\Sigma_4$ is shown in Figure~\ref{figure1}(a).

Case 2. In this case, we assume $n$ is an odd integer, i.e., $n = 2m + 1$, where $m \in \mathbb{N}$. To construct the desired translation surface, we begin with a regular $(2m+1)$-gon $P_n$, and label its sides by $s_0, s_1, \dots, s_{2m},$ in counterclockwise order. Then, take the reflection of $P_n$ on the side $s_0$, and denote this reflected polygon by $P_n'$. Let $s_i'$ denote the image of side $s_i$ under the reflection, for $i = 0, 1, \dots, 2m$.
We identify the sides of $P_n$ and $P'_n$ as follows: for each $i = 1, 2, \dots, 2m$, identify $s_i$ with $s_{2m+1 - i}'$ by translation. This yields a translation surface with a single singular point, where all the vertices of the polygons are identified, cone angle $(4m - 2)\pi$. The surface has exactly $2m+1$ systolic connections corresponding to the identified edges. The genus of the resulting surface is $g = m=\left\lfloor \frac{n}{2} \right\rfloor$ and provides a cellular-systolic embedding for $\Sigma_n$. This completes the proof of item (1). 

In the case $n=5$, the construction of the translation surface $\Sigma_5$ is illustrated in Figure~\ref{figure1}(b).

Next, we prove item (2).

 \begin{figure}[htbp]
\begin{center}
\begin{tikzpicture}[scale=1.5]
\draw[red] (-2,0)--(-1,0);
\draw[blue] (-1,0)--(-0.293,0.707);
\draw[green] (-0.293,0.707)--(-0.293,1.707);
\draw  (-0.293,1.707)--(-1,2.414);
\draw[red] (-2, 2.414)--(-1,2.414);
\draw[blue] (-2,2.414)--(-2.707,1.707);
\draw[green] (-2.707,1.707)--(-2.707,0.707);
\draw (-2.707,0.707)--(-2,0);

\draw (-2,0) node{\tiny$\bullet$};
\draw (-1,0) node{\tiny$\bullet$};
\draw (-0.293,0.707) node{\tiny$\bullet$};
\draw (-0.293,1.707) node{\tiny$\bullet$};
\draw (-2,2.414) node{\tiny$\bullet$};
\draw (-1,2.414) node{\tiny$\bullet$};
\draw (-2.707,1.707) node{\tiny$\bullet$};
\draw (-2.707,0.707) node{\tiny$\bullet$};
\draw (-1.5,-0.2) node{\tiny$e_1$};
\draw (-0.4,0.3) node{\tiny$e_2$};
\draw (-0.05,1.3) node{\tiny$e_3$};
\draw (-0.4,2.2) node{\tiny$e_4$};
\draw (-1.5,2.6) node{\tiny$e_1$};
\draw (-2.6,2.2) node{\tiny$e_2$};
\draw (-3,1.2) node{\tiny$e_3$};
\draw (-2.6,0.3) node{\tiny$e_4$};
\draw (-1.5,1.3) node {$\mathcal{R}_{2n}$};
\draw (-1.5,-1) node {(a)};

\draw[orange] (3,1)--(4.174,1);
\draw[green] (4.174,1)--(4.538,2.118);
\draw[yellow] (4.538,2.118)--(3.587,2.809);
\draw[red] (3.587,2.809)--(2.636,2.118);
\draw[blue] (2.636,2.118)--(3,1);
\draw[blue] (4.174,1)--(4.538,-.118);
\draw[red] (4.538,-.118)--(3.587,-0.809);
\draw[yellow] (3.587,-0.809)--(2.636,-0.118);
\draw[green] (2.636,-0.118)--(3,1);
\draw (3,1) node{\tiny$\bullet$};
\draw (4.174,1) node{\tiny$\bullet$};
\draw (4.538,2.118) node{\tiny$\bullet$};
\draw (3.587,2.809) node{\tiny$\bullet$};
\draw (2.636,2.118) node{\tiny$\bullet$};
\draw (4.538,-0.118) node{\tiny$\bullet$};
\draw (3.587,-0.809) node{\tiny$\bullet$};
\draw (2.636,-0.118) node{\tiny$\bullet$};
\draw (3.5,-1.3) node {(b)};
\draw (3.7,1.2 ) node {\tiny$s_0$};
\draw (3.7,2.1) node {$P_n$};
\draw (3.7,-0.1) node {$P_n^\prime$};
\draw (3.7,0.75 ) node {\tiny$s_0^\prime$};
\draw (4.6,1.559 ) node {\tiny$s_1$};
\draw (4.2,2.6 ) node {\tiny$s_2$};
\draw (2.9,2.6) node {\tiny$s_3$};
\draw (2.6,1.559) node {\tiny$s_4$};
\draw (4.6,.441) node {\tiny$s_1^\prime$};
\draw (4.2,-0.6) node {\tiny$s_2^\prime$};
\draw (3,-0.7) node {\tiny$s_3^\prime$};
\draw (2.5,.441) node {\tiny$s_4^\prime$};
\end{tikzpicture}
\end{center}
\caption{Translation Surfaces for (a) $\Sigma_4$ and (b) $\Sigma_5.$}
\label{figure1}
\end{figure}

\item To establish this result, we first recall the following proposition.

\begin{repprop}[Restatement of Proposition~\ref{prop:maxsaddleconnections}]
The greatest number of shortest saddle connections of a unit-area surface
in the stratum $\mathcal{H}(k_1,\dots,k_r)$ is
\[
\sum_{i=1}^r 3(k_i+1),
\]
and this number is realized if and only if the surface is a global
maximum for the function $\mathrm{Sys}$.
\end{repprop}

Now, if a translation surface in \( \mathcal{H}(k_1, \dots, k_r) \) has genus \(g\), then 
\(\sum_{i=1}^r k_i = 2g - 2\) (see Section 3.1 in~\cite{Mas22}). Consequently, the maximum number of shortest saddle connections for a translation surface of genus \(g\) with \(r\) singular points is 
\[
\sum_{i=1}^r 3(k_i + 1) = 3(2g - 2 + r).
\]

So, the maximum number of systolic connections on a translation surface of genus $g$ and a single singular point is $3(2g-2+1)=3(2g-1)$. If $\Sigma_n$ admits a cellular-systolic embedding on a translation surface of genus $g$, then we have $$ n \leq 3(2g - 1)  \implies \frac{n + 3}{6}\leq g.$$ To prove the result, it is sufficient to construct a translation surface of genus $\lceil \frac{n + 3}{6} \rceil$ on which $\Sigma_n$ admits a cellular-systolic embedding. 

For $n = 2, 3$, we have $\lceil \frac{n + 3}{6} \rceil = 1$. The desired translation surfaces for $\Sigma_2$ and $\Sigma_3$ are obtained from the square torus and the equilateral torus (formed by two equilateral triangles), respectively (see Figure~\ref{figure3}). Now, for $n \geq 4$, let $g = \lceil \frac{n + 3}{6} \rceil$. Then $g \geq 2$, and we have $6g - 9 < n \leq 6g - 3.$
Hence, $n \in P_g$ where $$P_g = \{6g - 8, 6g - 7, 6g - 6, 6g - 5, 6g - 4, 6g - 3\}.$$

%Observe that:
%\begin{equation}\label{eq:2}
%    \{n \in \mathbb{N} : n \geq 4\} = \bigcup_{g \ge 2} P_g. 
%\end{equation}
Our objective is to construct, for every integer $g \geq 2$, a translation surface of genus $g$ that realises the graph $\Sigma_k$ for each $k \in P_g$.  
In the case of genus $g = 2$, we have $P_2 = \{4, 5, 6, 7, 8, 9\}$. The corresponding translation surfaces of genus $2$ for $\Sigma_k$, with $k \in P_2$, are depicted in Figures~\ref{figure sigma4} and~\ref{figure Sigma8}.

For any $g\geq 3$, to construct translation surfaces for $\Sigma_{6g - 8}, \Sigma_{6g - 7}, \Sigma_{6g - 6}, \dots,$ and $ \Sigma_{6g - 3}$, we take the polygons used in the construction of translation surfaces for $\Sigma_4, \Sigma_5, \Sigma_6, \dots,$ and $\Sigma_9$, respectively, and attach $4(g - 2)$ equilateral triangles as shown in Figures~\ref{figure Sigma(6g-8)} to \ref{figure Sigma(6g-3)}. This gives the desired translation surfaces.
In these constructions, each side of the polygons represents a systolic connection of the surface.
Therefore, for every $k \in P_g$ and for all $g \geq 2$, we obtain translation surfaces of genus $g$ on which $\Sigma_k$ admits a cellular-systolic embedding. %Hence, by the observation~\ref{eq:2} we conclude the proof.

\begin{figure}[htbp]
\begin{center}
\begin{tikzpicture}[scale=2]
\draw[red] (0,0)--(1,0);
\draw[blue] (1,0)--(1,1);
\draw[red] (1,1)--(0,1);
\draw[blue]  (0,0)--(0,1);

\draw (0,0) node{\tiny$\bullet$};
\draw (1,0) node{\tiny$\bullet$};
\draw (1,1) node{\tiny$\bullet$};
\draw (0,1) node{\tiny$\bullet$};
\draw (0.5,-0.1) node{$e_1$};
\draw (0.5,1.1) node{$e_1$};
\draw (-0.15,0.5) node{$e_2$};
\draw (1.15,0.5) node{$e_2$};

\draw[red] (3,0)--(4,0);
\draw[blue] (3,0)--(3.5,0.866);
\draw[green] (3.5,0.866)--(4,0);
\draw[red] (3.5,0.866)--(4.5,0.866);
\draw[blue] (4,0)--(4.5,0.866);
\draw (3,0) node{\tiny$\bullet$};
\draw (4,0) node{\tiny$\bullet$};
\draw (3.5,0.866) node{\tiny$\bullet$};
\draw (4.5,0.866) node{\tiny$\bullet$};
\draw (3.5,-0.1) node{$e_1$};
\draw (4,1) node{$e_1$};
\draw (3.1,0.433) node{$e_2$};
\draw (4.4,0.433) node{$e_2$};
\draw (3.9,0.433) node{$e_3$};
\end{tikzpicture}
\caption{Translation surfaces for $\Sigma_2$ and $\Sigma_3$.}
\label{figure3}
\end{center}
\end{figure}

\begin{figure}[htbp]
\begin{center}
\begin{tikzpicture}[scale=1.5]
\draw[red] (0,0)--(1,0);
\draw[blue] (1,0)--(1.707,0.707);
\draw[green] (1.707,0.707)--(1.707,1.707);
\draw[yellow]  (1.707,1.707)--(1,2.414);
\draw[red] (0, 2.414)--(1,2.414);
\draw[blue] (0,2.414)--(-0.707,1.707);
\draw[green] (-0.707,1.707)--(-0.707,0.707);
\draw[yellow] (-0.707,0.707)--(0,0);
\draw (0,0) node{\tiny$\bullet$};
\draw (1,0) node{\tiny$\bullet$};
\draw (1.707,0.707) node{\tiny$\bullet$};
\draw (1.707,1.707) node{\tiny$\bullet$};
\draw (1,2.414) node{\tiny$\bullet$};
\draw (0,2.414) node{\tiny$\bullet$};
\draw (-0.707,1.707) node{\tiny$\bullet$};
\draw (-0.707,0.707) node{\tiny$\bullet$};
\draw (0.5, -1) node{(a)};
\draw (3.5,-1.5) node{(b)};
\draw (0.5,-0.2) node{\tiny$e_1$};
\draw (1.6,0.3) node{\tiny$e_2$};
\draw (1.95,1.3) node{\tiny$e_3$};
\draw (1.6,2.2) node{\tiny$e_4$};
\draw (0.5,2.6) node{\tiny$e_1$};
\draw (-0.6,2.2) node{\tiny$e_2$};
\draw (-1,1.2) node{\tiny$e_3$};
\draw (-0.6,0.3) node{\tiny$e_4$};

\draw[orange] (3,1)--(4.174,1);
\draw[green] (4.174,1)--(4.538,2.118);
\draw[yellow] (4.538,2.118)--(3.587,2.809);
\draw[red] (3.587,2.809)--(2.636,2.118);
\draw[blue] (2.636,2.118)--(3,1);
\draw[blue] (4.174,1)--(4.538,-.118);
\draw[red] (4.538,-.118)--(3.587,-0.809);
\draw[yellow] (3.587,-0.809)--(2.636,-0.118);
\draw[green] (2.636,-0.118)--(3,1);
\draw (3,1) node{\tiny$\bullet$};
\draw (4.174,1) node{\tiny$\bullet$};
\draw (4.538,2.118) node{\tiny$\bullet$};
\draw (3.587,2.809) node{\tiny$\bullet$};
\draw (2.636,2.118) node{\tiny$\bullet$};
\draw (4.538,-0.118) node{\tiny$\bullet$};
\draw (3.587,-0.809) node{\tiny$\bullet$};
\draw (2.636,-0.118) node{\tiny$\bullet$};
\draw (3.7,1.2 ) node {\tiny$e_5$};
\draw (4.6,1.559 ) node {\tiny$e_1$};
\draw (4.2,2.6 ) node {\tiny$e_2$};
\draw (2.9,2.6) node {\tiny$e_3$};
\draw (2.6,1.559) node {\tiny$e_4$};
\draw (4.6,.441) node {\tiny$e_4$};
\draw (4.2,-0.6) node {\tiny$e_3$};
\draw (3,-0.7) node {\tiny$e_2$};
\draw (2.5,.441) node {\tiny$e_1$};

\draw[red] (6,0)--(7,0);
\draw[blue] (7,0)--(7,1);
\draw[green] (7,1)--(7,2);
\draw[yellow] (7,2)--(7,3);
\draw[red] (7,3)--(6,3);
\draw[blue] (6,3)--(6,2);
\draw[green] (6,2)--(6,1);
\draw[yellow](6,1)--(6,0);
\draw[orange]{} (6,2)--(7,2);
\draw[violet] (6,1)--(7,1);
\draw (6.5,-0.15) node {\tiny$e_1$};
\draw (7.2,0.5) node {\tiny$e_2$};
\draw (7.2,1.5) node {\tiny$e_3$};
\draw (7.2,2.5) node {\tiny$e_4$};
\draw (6.5,3.15) node {\tiny$e_1$};
\draw (5.8,2.5) node {\tiny$e_2$};
\draw (5.8,1.5) node {\tiny$e_3$};
\draw (5.8,0.5) node {\tiny$e_4$};
\draw (6.5,2.15) node {\tiny$e_5$};
\draw (6.5,1.15) node {\tiny$e_6$};

\draw (6,0) node{\tiny$\bullet$};
\draw (7,0) node{\tiny$\bullet$};
\draw (7,1) node{\tiny$\bullet$};
\draw (7,2) node{\tiny$\bullet$};
\draw (7,3) node{\tiny$\bullet$};
\draw (6,3) node{\tiny$\bullet$};
\draw (6,2) node{\tiny$\bullet$};
\draw (6,1) node{\tiny$\bullet$};
\draw (6.5,-0.8) node {(c)};

\draw[yellow] (9.5,2)--(9.5,3);
\draw[red] (9.5,3)--(8.5,3);
\draw[blue] (8.5,3)--(8.5,2);
\draw[violet] (8.5,2)--(9.5,2);
\draw[orange] (8.5,2)--(9,1.134);
\draw[green] (9,1.134)--(9.5,2);
\draw (9,1.134)--(8,1.134);
\draw[green] (8,1.134)--(8.5,2);
\draw[yellow] (8,1.134)--(8,0.134);
\draw[red] (8,0.134)--(9,0.134);
\draw[blue] (9,0.134)--(9,1.134);

\draw (8.5,0) node {\tiny$e_1$};
\draw (9.2,0.634) node {\tiny$e_2$};
\draw (8.08,1.6) node {\tiny$e_3$};
\draw (9.4,1.5) node {\tiny$e_3$};
\draw (9.7,2.5) node {\tiny$e_4$};
\draw (9,3.2) node {\tiny$e_1$};
\draw (8.3,2.5) node {\tiny$e_2$};
\draw (7.8,0.6) node {\tiny$e_4$};
\draw (9,2.1) node {\tiny$e_5$};
\draw (8.95,1.56) node {\tiny$e_6$};
\draw (8.5,1) node {\tiny$e_7$};

\draw (9.5,2) node{\tiny$\bullet$};
\draw (9.5,3) node{\tiny$\bullet$};
\draw (8.5,3) node{\tiny$\bullet$};
\draw (8.5,2) node{\tiny$\bullet$};
\draw (9,1.134) node{\tiny$\bullet$};
\draw (8,1.134) node{\tiny$\bullet$};
\draw (9,0.134) node{\tiny$\bullet$};
\draw (8,0.134) node{\tiny$\bullet$};
\draw (8.5,-1) node {(d)};

\end{tikzpicture}

\caption{Translation surfaces for (a) $\Sigma_4$, (b) $\Sigma_5$, (c) $\Sigma_6$ and (d) $\Sigma_7.$}
\label{figure sigma4}
\end{center}
\end{figure}

\begin{figure}[htbp]
\begin{center}
\rotatebox{0}{
\begin{tikzpicture}[scale=2]
\draw[] (0,1)--(0.5,0.134);
\draw[blue] (0.5,.134)--(1,1);
\draw[red] (0.5,0.134)--(-0.5,0.134);
\draw[yellow] (-0.5,0.134)--(0,1);
\draw[green] (1,1)--(1,2);
\draw[blue] (0,2)--(0.5,2.866);
\draw[violet] (0.5,2.866)--(1,2);
\draw[red] (0.5,2.866)--(1.5,2.866);
\draw[yellow] (1.5,2.866)--(1,2);
\draw[green] (0,2)--(0,1);
\draw[orange]{} (0,2)--(1,2);
\draw[violet] (0,1)--(1,1);
\draw (0,0) node {\tiny$e_1$};
\draw (0.9,0.5) node {\tiny$e_2$};
\draw (1.2,1.5) node {\tiny$e_3$};
\draw (1.5,2.433) node {\tiny$e_4$};
\draw (1,3) node {\tiny$e_1$};
\draw (0,2.433) node {\tiny$e_2$};
\draw (-0.2,1.5) node {\tiny$e_3$};
\draw (-0.5,0.6) node {\tiny$e_4$};
\draw (0.95,2.433) node {\tiny$e_5$};
\draw (0.5,2.1) node {\tiny$e_6$};
\draw (0.5,1.1) node {\tiny$e_7$};
\draw (0.15,0.4) node {\tiny$e_8$};

\draw (0,1) node{\tiny$\bullet$};
\draw (0.5,0.134) node{\tiny$\bullet$};
\draw (1,1) node{\tiny$\bullet$};
\draw (-0.5,0.134) node{\tiny$\bullet$};
\draw (0,2) node{\tiny$\bullet$};
\draw (1,2) node{\tiny$\bullet$};
\draw (0.5,2.866) node{\tiny$\bullet$};
\draw (1.5,2.866) node{\tiny$\bullet$};
\draw (0,-0.8) node {(e)};

\draw[orange] (3,1.732)--(3.5,0.866);
\draw[green] (3.5,0.866)--(4,1.732);
\draw[] (3.5,0.866)--(2.5,0.866);
\draw[green] (2.5,0.866)--(3,1.714);
\draw[magenta] (2.5,0.866)--(3,0);
\draw[blue] (3,0)--(3.5,0.866);
\draw[red] (3,0)--(2,0);
\draw[yellow] (2,0)--(2.5,0.866);
\draw[blue] (3,1.714)--(3.5,2.58);
\draw[violet] (3.5,2.58)--(4,1.714);
\draw[red] (3.5,2.58)--(4.5,2.58);
\draw[yellow] (4.5,2.58)--(4,1.714);
\draw[cyan] (3,1.714)--(4,1.714);

\draw (2.5,-0.2) node {\tiny$e_1$};
\draw (3.4,0.3) node {\tiny$e_2$};
\draw (4,1.299) node {\tiny$e_3$};
\draw (4.5,2.2) node {\tiny$e_4$};
\draw (4,2.7) node {\tiny$e_1$};
\draw (3,2.2) node {\tiny$e_2$};
\draw (2.5,1.3) node {\tiny$e_3$};
\draw (2,0.433) node {\tiny$e_4$};
\draw (3.9,2.2) node {\tiny$e_5$};
\draw (3.5,1.6) node {\tiny$e_6$};
\draw (3.15,1.2) node {\tiny$e_7$};
\draw (3,0.7) node {\tiny$e_8$};
\draw (2.6,0.3) node {\tiny$e_9$};

\draw (3,1.714) node{\tiny$\bullet$};
\draw (3.5,0.866) node{\tiny$\bullet$};
\draw (4,1.714) node{\tiny$\bullet$};
\draw (2.5,0.866) node{\tiny$\bullet$};
\draw (3,0) node{\tiny$\bullet$};
\draw (3.5,0.866) node{\tiny$\bullet$};
\draw (2,0) node{\tiny$\bullet$};
\draw (3.5,2.58) node{\tiny$\bullet$};
\draw (4.5,2.58) node{\tiny$\bullet$};
\draw (2.5,-1) node {(f)};

\end{tikzpicture}
}
\caption{Translation surfaces for (e) $\Sigma_8$ and (f) $\Sigma_9.$}
\label{figure Sigma8}
\end{center}
\end{figure}
%%%%%%%%%%%%%%%%%%%%%%%%%%%%%%%%%%%%%%%%%%%%%%%%%%%%%%%%%%%%%%%%%%%%%%%%%%%%
\begin{figure}[htbp]
\begin{center}
\rotatebox{0}{
\begin{tikzpicture}[scale=1]
\draw[] (0,0)--(1,0);
\draw[blue] (1,0)--(1.707,0.707);
\draw[green] (1.707,0.707)--(1.707,1.707);
\draw[red]  (1.707,1.707)--(1,2.414);
\draw[] (0, 2.414)--(1,2.414);
\draw[blue] (0,2.414)--(-0.707,1.707);
\draw[green] (-0.707,1.707)--(-0.707,0.707);
\draw[red] (-0.707,0.707)--(0,0);
\draw (1.5,0.3) node{\tiny$e_1$};
\draw (1.9,1.1) node{\tiny$e_2$};
\draw (1.2,2) node{\tiny$e_3$};
\draw (-0.5,2.2) node{\tiny$e_1$};
\draw (-0.9,1.1) node{\tiny$e_2$};
\draw (-0.1,0.35) node{\tiny$e_3$};
\draw (0,2.414)--(0.5,3.28)--(1,2.414);
\draw (0.5,3.28)--(1.5,3.28)--(1,2.414);
\draw (1,2.414)--(2,2.414)--(1.5,3.28);
\draw (1.5,3.28)--(2.5,3.28)--(2,2.414);
\draw[dashed] (2,2.414)--(3,2.414);
\draw[dashed] (2.5,3.28)--(3.5,3.28);
\draw (3.5,3.28)--(4.5,3.28)--(4,2.414)--(3.5,3.28);
\draw (3.5,3.28)--(3,2.414)--(4,2.414);
\draw (1,3.4) node{\tiny$a_1$};
\draw (2,3.4) node{\tiny$a_2$};
\draw (4,3.4) node{\tiny$a_{g-2}$};
\draw (1.7,2.25) node{\tiny$b_1$};
\draw (3.5,2.28) node{\tiny$b_{g-3}$};

\draw (0,0)--(0.5,-0.866)--(1,0);
\draw (0.5,-0.866)--(-0.5,-0.866)--(0,0);
\draw (0,0)--(-1,0)--(-0.5,-0.866);
\draw(-1,0)--(-1.5,-0.866)--(-0.5,-0.866);
\draw (-1,0)--(-2,0)--(-1.5,-0.866);
\draw (0.1,2.914) node{\tiny$c$};
\draw (0.9,-0.5) node {\tiny$c$};
\draw (4.3,2.7) node{\tiny$d$};
\draw (-4.4,-0.366) node{\tiny$d$};
\draw[dashed] (-2,0)--(-3,0);
\draw[dashed] (-1.5,-0.866)--(-3.5,-0.866);
\draw (-3.5,-0.866)--(-4.5,-0.866)--(-4,0)--(-3.5,-0.866);
\draw (-3,0)--(-4,0);
\draw (-3,0)--(-3.5,-0.866);
\draw (0,-1) node{\tiny$a_1$};
\draw (-1,-1) node {\tiny$a_2$};
\draw (-4,-1) node {\tiny$a_{g-2}$};
\draw (-0.5,0.2) node {\tiny$b_1$};
\draw (-1.5,0.2) node {\tiny$b_2$};
\draw (-3.5,0.2) node {\tiny$b_{g-3}$};
\end{tikzpicture}
}
\caption{Translation surface for $\Sigma_{6g-8}$}
\label{figure Sigma(6g-8)}
\end{center}
\end{figure}

\begin{figure}[htbp]
\begin{center}
\begin{tikzpicture}[scale=1]
\draw[orange] (0,0)--(1.174,0);
\draw[green] (1.174,0)--(1.538,1.118);
\draw[] (1.538,1.118)--(0.587,1.809);
\draw[red] (0.587,1.809)--(-0.364,1.118);
\draw[blue] (-0.364,1.118)--(0,0);
\draw[blue] (1.174,0)--(1.538,-1.118);
\draw[red] (1.538,-1.118)--(0.587,-1.809);
\draw[] (0.587,-1.809)--(-0.364,-1.118);
\draw[green] (-0.364,-1.118)--(0,0);

\draw (1.3,-1.6) node{\tiny$e_1$};
\draw (1.56, -0.55) node{\tiny$e_2$};
\draw (0.1,1.7) node{\tiny$e_1$};
\draw (-0.4,0.55) node{\tiny$e_2$};
\draw (-.4,-0.55) node{\tiny$e_3$};
\draw (1.55,0.5) node{\tiny$e_3$};

\draw (0.587,1.809)--(1.5003,2.2161)--(1.538,1.118)--(2.309,1.634)--(1.5003,2.2161);
\draw[dashed] (2.309,1.634)--(3.118,1.041);
\draw[dashed] (1.538,1.118)--(2.205,0.634);
\draw (3.118,1.041)--(2.205,0.634);
\draw (2.205,0.634)--(3.014,0.046);
\draw (3.118,1.041)--(3.928,0.454)--(3.014,0.046)--(3.118,1.041);

\draw(-0.364,-1.118)--(-0.467,-2.112)--(0.587,-1.809);
\draw (-0.467,-2.112)--(-1.277,-1.525)--(-0.364,-1.118);
\draw[dashed] (-0.364,-1.118)--(-1.173,-0.530);
\draw[dashed] (-1.277,-1.525)--(-2.086,-0.937);
\draw (-1.173,-0.530)--(-2.086,-0.937);
\draw (-1.173,-0.530)--(-1.982,0.056);
\draw (-2.086,-0.937)--(-2.895,-0.350);
\draw (-2.895,-0.350)--(-1.982,0.056)--(-2.086,-0.937);

\draw (2,2) node{\tiny$a_1$};
\draw (-0.9,-2) node{\tiny$a_1$};
\draw (3.75,0.9) node{\tiny$a_{g-2}$};
\draw (-2.8,-0.8) node {\tiny$a_{g-2}$};
\draw (2.5,0.1) node {\tiny$b_{g-3}$};
\draw (-1.4,0) node {\tiny$b_{g-3}$};
\draw (1,2.2) node {\tiny$c$};
\draw (0,-2.2) node {\tiny$c$};
\draw (3.6,0.15) node {\tiny$d$};
\draw (-2.6,0) node {\tiny$d$};

\end{tikzpicture}

\caption{Translation surface for $\Sigma_{6g-7}$}
\label{figure Sigma(6g-7)}
\end{center}
\end{figure}

\begin{figure}[htbp]
\begin{center}
\begin{tikzpicture}[scale=1]
\draw[] (0,0)--(1,0);
\draw[blue] (1,0)--(1,1);
\draw[green] (1,1)--(1,2);
\draw[yellow] (1,2)--(1,3);
\draw[] (1,3)--(0,3);
\draw[blue] (0,3)--(0,2);
\draw[green] (0,2)--(0,1);
\draw[yellow](0,1)--(0,0);
\draw[orange]{} (0,2)--(1,2);
\draw[violet] (0,1)--(1,1);

\draw (1.2,.5) node{\tiny$e_1$};
\draw (1.2,1.5) node{\tiny$e_2$};
\draw (1.2,2.4) node{\tiny$e_3$};
\draw (-0.2,.5) node{\tiny$e_3$};
\draw (-0.2,1.5) node{\tiny$e_2$};
\draw (-0.2,2.4) node{\tiny$e_1$};

\draw (0,0)--(0.5,-0.866)--(1,0);
\draw (0.5,-0.866)--(-0.5,-0.866)--(0,0);
\draw (0,0)--(-1,0)--(-0.5,-0.866);
\draw (-1,0)--(-1.5,-0.866)--(-0.5,-0.866);
\draw[dashed] (-1,0)--(-2,0);
\draw[dashed] (-1.5,-0.866)--(-2.5,-0.866);
\draw (-3,0)--(-2.5,-0.866)--(-3.5,-0.866)--(-3,0);
\draw (-3,0)-- (-2,0)--(-2.5,-0.866);

\draw (0,3)--(0.5,3.866)--(1,3);
\draw (0.5,3.866)--(1.5,3.866)--(1,3);
\draw (1,3)--(2,3)--(1.5,3.866)--(2.5,3.866)--(2,3);
\draw[dashed] (2.5,3.866)--(3.5,3.866);
\draw[dashed] (2,3)--(3,3);
\draw (3.5,3.866)--(4,3)--(4.5,3.866)--(3.5,3.866);
\draw (4,3)--(3,3)--(3.5,3.866);
\draw (1,4) node {\tiny$a_1$};
\draw (2,4) node {\tiny$a_2$};
\draw (4,4) node {\tiny$a_{g-2}$};

\draw (0,-1) node{\tiny$a_1$};
\draw (-1,-1) node{\tiny$a_2$};
\draw (-3,-1) node{\tiny$a_{g-2}$};
\draw (1.5,2.8) node{\tiny$b_1$};
\draw (3.5,2.8) node{\tiny$b_{g-3}$};
\draw (-0.5,0.2) node {\tiny$b_1$};
\draw (-2.5,0.2) node{\tiny$b_{g-3}$};
\draw (0.1,3.5) node {\tiny$c$};
\draw (0.8,-.5) node {\tiny$c$};
\draw (4.35,3.4) node {\tiny$d$};
\draw (-3.38,-.35) node {\tiny$d$};
\end{tikzpicture}
\caption{ Translation surface for $\Sigma_{6g-6}$}
\label{figure sigma(6g-6)}
\end{center}
\end{figure}

\begin{figure}[htbp]
\begin{center}
\begin{tikzpicture}[scale=1]
\draw[red] (1,2)--(1,3);
\draw[] (1,3)--(0,3);
\draw[blue] (0,3)--(0,2);
\draw[violet] (0,2)--(1,2);
\draw[orange] (0,2)--(0.5,1.134);
\draw[green] (0.5,1.134)--(1,2);
\draw (0.5,1.134)--(-.5,1.134);
\draw[green] (-.5,1.134)--(0,2);
\draw[red] (-0.5,1.134)--(-0.5,0.134);
\draw[] (-0.5,0.134)--(0.5,0.134);
\draw[blue] (0.5,0.134)--(0.5,1.134);

\draw (1.2,2.4) node{\tiny$e_3$};
\draw (-.7,0.4) node{\tiny$e_3$};
\draw (-0.4,1.7) node{\tiny$e_2$};
\draw (1,1.5) node{\tiny$e_2$};
\draw (-0.2,2.6) node{\tiny$e_1$};
\draw (0.7,0.5) node{\tiny$e_1$};

\draw (0,3)--(0.5,3.866)--(1.5,3.866)--(1,3)--(0.5,3.866);
\draw (1,3)--(2,3)--(1.5,3.866)--(2.5,3.866)--(2,3);
\draw[dashed] (2,3)--(4,3);
\draw[dashed] (2.5,3.866)--(4.5,3.866);
\draw (4.5,3.866)--(5.5,3.866)--(5,3)--(4,3)--(4.5,3.866)--(5,3);
\draw (0.5,0.134)--(0,-0.732)--(-1,-0.732)--(-0.5,0.134)--(0,-0.732);
\draw (-0.5,0.134)--(-1.5,0.134)--(-1,-0.732)--(-2,-0.732)--(-1.5,0.134);
\draw[dashed] (-1.5,0.134)--(-3.5,0.134);
\draw[dashed] (-2,-0.732)--(-4,-0.732);
\draw (-4,-0.732)--(-3.5,0.134)--(-4.5,0.134)--(-4,-0.732)--(-5,-0.732)--(-4.5,0.134);

\draw (0.9,4) node {\tiny$a_1$};
\draw (1.9,4) node {\tiny$a_2$};
\draw (4.9,4) node {\tiny$a_{g-2}$};
\draw (4.7,2.8) node {\tiny$b_{g-3}$};
\draw (1.6,2.8) node {\tiny$b_1$};
\draw (0.1,3.5) node{\tiny$c$};
\draw (5.5,3.5) node{\tiny$d$};
\draw (0.3,-0.5) node{\tiny$c$};
\draw (-5,-0.3) node{\tiny$d$};

\draw (-0.5,-1) node {\tiny$a_1$};
\draw (-1.5,-1) node {\tiny$a_2$};
\draw (-4.5,-1) node {\tiny$a_{g-2}$};
\draw (-4.3,0.35) node {\tiny$b_{g-3}$};
\draw (-1.3,0.35) node {\tiny$b_{1}$};

\end{tikzpicture}
\caption{Translation surface for $\Sigma_{6g-5}$}
\label{figure Sigma 6g-5}
\end{center}
\end{figure}

\begin{figure}[htbp]
\begin{center}
\begin{tikzpicture}[scale=1]
\draw[] (0,1)--(0.5,0.134);
\draw[blue] (0.5,.134)--(1,1);
\draw[] (0.5,0.134)--(-0.5,0.134);
\draw[red] (-0.5,0.134)--(0,1);
\draw[green] (1,1)--(1,2);
\draw[blue] (0,2)--(0.5,2.866);
\draw[violet] (0.5,2.866)--(1,2);
\draw[] (0.5,2.866)--(1.5,2.866);
\draw[red] (1.5,2.866)--(1,2);
\draw[green] (0,2)--(0,1);
\draw[orange]{} (0,2)--(1,2);
\draw[violet] (0,1)--(1,1);

\draw (0.5,2.866)--(1,3.866)--(2,3.866)--(1.5,2.866)--(1,3.866);
\draw (1.5,2.866)--(2.5,2.866)--(3,3.866)--(2,3.866)--(2.5,2.866);
\draw[dashed] (3,3.866)--(5,3.866);
\draw[dashed] (2.5,2.866)--(4.5,2.866);
\draw (5,3.866)--(4.5,2.866)--(5.5,2.866)--(6,3.866)--(5,3.866)--(5.5,2.866);

\draw (0.5,0.134)--(0,-0.732)--(-1,-0.732)--(-0.5,0.134)--(0,-0.732);
\draw (-0.5,0.134)--(-1.5,0.134)--(-1,-0.732)--(-2,-0.732)--(-1.5,0.134);
\draw[dashed] (-1.5,0.134)--(-3.5,0.134);
\draw[dashed] (-2,-0.732)--(-4,-0.732);
\draw (-4,-0.732)--(-3.5,0.134)--(-4.5,0.134)--(-4,-0.732)--(-5,-0.732)--(-4.5,0.134);

\draw (1.5,4) node{\tiny$a_1$};
\draw (2.5,4) node{\tiny$a_2$};
\draw (5.5,4) node{\tiny$a_{g-2}$};

\draw (5.1,2.7) node{\tiny$b_{g-3}$};
\draw (2.1,2.7) node{\tiny$b_{1}$};
\draw (-1,0.3) node{\tiny$b_{1}$};
\draw (-4,0.3) node{\tiny$b_{g-3}$};

\draw (-0.5,-0.9) node{\tiny$a_{1}$};
\draw (-1.5,-0.9) node{\tiny$a_{2}$};
\draw (-4.5,-0.9) node{\tiny$a_{g-2}$};

\draw (0.4,-0.4) node{\tiny$c$};
\draw (-5,-0.3) node{\tiny$d$};
\draw (0.6,3.5) node{\tiny$c$};
\draw (5.9,3.4) node{\tiny$d$};

\draw (0.9,0.5) node{\tiny$e_1$};
\draw (1.2,1.5) node{\tiny$e_2$};
\draw (1.5,2.5) node{\tiny$e_3$};

\draw (0.1,2.5) node{\tiny$e_1$};
\draw (-0.2,1.5) node{\tiny$e_2$};
\draw (-0.5,0.6) node{\tiny$e_3$};
\end{tikzpicture}
\caption{Translation surface for $\Sigma_{6g-4}.$}
\label{figure8}
\end{center}
\end{figure}

\begin{figure}[htbp]
\begin{center}
\rotatebox{0}{
\begin{tikzpicture}[scale=1]
\draw[orange] (0,2)--(0.5,1.134);
\draw[green] (0.5,1.134)--(1,2);
\draw[] (0.5,1.134)--(-0.5,1.134);
\draw[green] (-0.5,1.134)--(0,2);
\draw[magenta] (-0.5,1.134)--(0,0.268);
\draw[blue] (0,0.268)--(0.5,1.134);
\draw  (0,0.268)--(-1,0.268);
\draw[yellow] (-1,0.268)--(-0.5,1.134);
\draw[blue] (0,2)--(0.5,2.866);
\draw[violet] (0.5,2.866)--(1,2);
\draw[] (0.5,2.866)--(1.5,2.866);
\draw[yellow] (1.5,2.866)--(1,2);
\draw[cyan] (0,2)--(1,2);

\draw (0.4,0.65) node{\tiny$e_1$};
\draw (1,1.5) node{\tiny$e_2$};
\draw (1.5,2.43) node{\tiny$e_3$};
\draw (0.1,2.5) node{\tiny$e_1$};
\draw (-0.4,1.7) node{\tiny$e_2$};
\draw (-1,0.65) node{\tiny$e_3$};

\draw (0.5,2.866)--(1,3.732)--(1.5,2.866);
\draw (1,3.732)--(2,3.732)--(1.5,2.866);
\draw (1.5,2.866)--(2.5,2.866)--(2,3.732);
\draw (2.5,2.866)--(3,3.732)--(2,3.732);
\draw[dashed] (2.5,2.866)--(3.5,2.866);
\draw[dashed] (3,3.732)--(4,3.732);
\draw (4,3.732)--(4.5,2.866)--(5,3.732)--(4,3.732);
\draw (4.5,2.866)--(3.5,2.866)--(4,3.732);

\draw (-1,0.268)--(-0.5,-0.598)--(0,0.268);
\draw (-1,0.268)--(-1.5,-.598)--(-0.5,-.598);
\draw (-1,0.268)--(-2,0.268)--(-1.5,-.598);
\draw (-2,0.268)--(-2.5,-.598)--(-1.5,-.598);
\draw[dashed] (-2,0.268)--(-3,0.268);
\draw[dashed] (-2.5,-.598)--(-3.5,-.598);
\draw (-3.5,-.598)--(-4,.268)--(-4.5,-.598)--(-3.5,-.598);
\draw (-4,0.268)--(-3,0.268)--(-3.5,-.598);

\draw (1.5,3.866) node{\tiny$a_1$};
\draw (2.5,3.866) node{\tiny$a_2$};
\draw (4.5,3.866) node{\tiny$a_{g-2}$};
\draw (-1,-0.8) node{\tiny$a_1$};
\draw (-2,-0.8) node{\tiny$a_2$};
\draw (-4,-0.8) node{\tiny$a_{g-2}$};

\draw (0.65,3.366) node{\tiny$c$};
\draw (-0.1,-0.2) node{\tiny$c$};

\draw (4.8,3.1) node{\tiny$d$};
\draw (-4.4,-0.1) node{\tiny$d$};

\draw (2,2.7) node{\tiny$b_1$};
\draw (4,2.7) node{\tiny$b_{g-3}$};
\draw (-1.5,0.45) node{\tiny$b_1$};
\draw (-3.5,0.45) node{\tiny$b_{g-3}$};
\end{tikzpicture}
}
\caption{Translation surface for $\Sigma_{6g-3}.$}
\label{figure Sigma(6g-3)}
\end{center}
\end{figure}

\item
Let $g$ be an integer that satisfies $\left\lceil \frac{n+3}{6} \right\rceil\le g\le\left\lfloor \frac{n}{2} \right\rfloor$. We aim to show that \( \Sigma_n \) admits cellular-systolic embedding on a translation surface of genus $g$.
To construct such a translation surface, we require \( (n - 2g + 1) \) polygonal faces. This follows from the Euler characteristic formula:
$$\chi = V - E + F = 2 - 2g,$$
where \( V \), \( E \), and \( F \) denote the numbers of vertices, edges, and faces, respectively. Now, \( \Sigma_n \) has \( 1 \) vertex and an $n$ edges to yield the desired genus. From that we find \( F \) = \(n-2g+1\).
We consider two cases based on the parity of \( (n - 2g + 1) \):

Case 1. Assume \( n - 2g + 1 = 2k + 1 \), for some \( k \in \mathbb{N} \). In this case, we construct the translation surface using \( 2k \) equilateral triangles and one regular \( (2n - 6k) \)-gon. These polygons are glued together placing \( k \) equilateral triangles on each of the two opposite ends of the \( (2n - 6k) \)-gon, as shown in Figure~\ref{intermediate genus 1}. This configuration yields a translation surface of genus \( g \) that supports a cellular and systolic embedding of \( \Sigma_n \).

Case 2. Assume \( n - 2g + 1 = 2k \), for some \( k \in \mathbb{N} \). In this case, we use \( 2k - 2 \) equilateral triangles and two regular \( (n - 3k + 3) \)-gons. These polygons are glued to form the desired surface, as illustrated in Figure~\ref{intermediate genus 2}. Again, the resulting translation surface supports a cellular-systolic embedding of \( \Sigma_n \).

In both cases, the number of polygonal faces is exactly \( n - 2g + 1 \), and the resulting surface has genus \( g \), as required by the Euler characteristic. The embeddings are constructed to satisfy the cellular and systolic conditions. Hence, for every integer \( g \) satisfying $\left\lceil \frac{n+3}{6} \right\rceil\le g\le\left\lfloor \frac{n}{2} \right\rfloor$, such a translation surface exists.
\end{enumerate}
\end{proof}

\begin{figure}[htbp]
\begin{center}
\begin{tikzpicture}[scale=1.5]

% --- Original figure ---
\begin{scope}[xshift=0 cm]
\draw[blue] (0,1)--(-0.5,0.866);
\draw[green] (-0.5,0.866)--(-0.866,0.5);
\draw[brown] (-0.866,-0.5)--(-0.5,-0.866);
\draw(-0.5,-0.866)--(0,-1);
\draw[dashed] (-0.866,0.5)--(-0.866,-0.5);
\draw[dashed] (0.866,-0.5)--(0.866,0.5);
\draw[blue] (0,-1)--(0.5,-0.866);
\draw[green] (0.5,-0.866)--(0.866,-0.5);
\draw[brown] (0.866,0.5)--(0.5,0.866);

\draw (-0.3,1.05) node{\tiny$e_1$};
\draw (-0.8,0.8) node{\tiny$e_2$};
\draw (0.3,-1.1) node{\tiny$e_1$};
\draw (0.8,-0.8) node{\tiny$e_2$};
\draw (-0.2,-0.7) node{\tiny\tiny$e_{n-3k-1}$};
\draw (0.2,0.6) node{\tiny\tiny$e_{n-3k-1}$};

\draw[] (0.5,0.866)--(0,1);
\draw[] (0,1)--(0.365,1.365)--(0.5,0.866);
\draw (0.365,1.365)--(0.864,1.231)--(0.5,0.866);
\draw (-0.5,-0.866)--(-0.366,-1.366)--(0,-1);
\draw (-0.5,-0.866)--(-0.865,-1.231)--(-0.366,-1.366);
\draw (0.6,1.5) node{\tiny$a_1$};
\draw (-0.6,-1.5) node {\tiny$a_1$};
\draw[dashed] (0.5,0.866)--(1.498,0.598);
\draw[dashed] (0.864,1.231)--(1.363,1.097);
\draw (1.363,1.097)--(1.498,0.598)--(1.863,0.963)--(1.363,1.097);
\draw (1.7, 1.2) node{\tiny$a_{\scriptstyle\frac{k}{2}}$};
\draw (-1.7, -1.2) node{\tiny$a_{\scriptstyle\frac{k}{2}}$};
\draw (1.8,0.79) node{\tiny$c$};
\draw (0.05,1.3) node{\tiny$d$};
\draw (-1.8,-0.79) node{\tiny$c$};
\draw (-0.2,-1.3) node{\tiny$d$};
\draw (-0.6,-2) node {$(i)$};
\draw[dashed] (-0.5,-0.866)--(-1.498,-0.598);
\draw[dashed] (-0.864,-1.231)--(-1.363,-1.097);
\draw (-1.363,-1.097)--(-1.498,-0.598)--(-1.863,-0.963)--(-1.363,-1.097);
\end{scope}

% --- Shifted copy ---
\begin{scope}[xshift=5 cm]
\draw[blue] (0,1)--(-0.5,0.866);
\draw[green] (-0.5,0.866)--(-0.866,0.5);
\draw[brown] (-0.866,-0.5)--(-0.5,-0.866);
\draw(-0.5,-0.866)--(0,-1);
\draw[dashed] (-0.866,0.5)--(-0.866,-0.5);
\draw[dashed] (0.866,-0.5)--(0.866,0.5);
\draw[blue] (0,-1)--(0.5,-0.866);
\draw[green] (0.5,-0.866)--(0.866,-0.5);
\draw[brown] (0.866,0.5)--(0.5,0.866);

\draw (-0.3,1.05) node{\tiny$e_1$};
\draw (-0.8,0.8) node{\tiny$e_2$};
\draw (0.3,-1.1) node{\tiny$e_1$};
\draw (0.8,-0.8) node{\tiny$e_2$};
\draw (-0.2,-0.7) node{\tiny\tiny$e_{n-3k-1}$};
\draw (0.2,0.6) node{\tiny\tiny$e_{n-3k-1}$};

\draw[] (0.5,0.866)--(0,1);
\draw[] (0,1)--(0.365,1.365)--(0.5,0.866);
\draw (0.365,1.365)--(0.864,1.231)--(0.5,0.866);
\draw (-0.5,-0.866)--(-0.366,-1.366)--(0,-1);
\draw (-0.5,-0.866)--(-0.865,-1.231)--(-0.366,-1.366);
\draw (0.6,1.5) node{\tiny$a_1$};
\draw (-0.6,-1.5) node {\tiny$a_1$};
\draw[dashed] (0.5,0.866)--(1.498,0.598);
\draw[dashed] (0.864,1.231)--(1.363,1.097);
\draw (1.363,1.097)--(1.498,0.598)--(1.863,0.963)--(1.363,1.097);

\draw (1.498,0.598)--(1.997,0.464)--(1.863,0.963);
\draw (1.7475,0.45) node{\tiny$e$};
\draw (-1.7475,-0.45) node{\tiny$e$};
\draw (2.05,0.7) node{\tiny$c$};
\draw (-2.05,-0.7) node{\tiny$c$};
\draw (1.7, 1.2) node{\tiny$a_{\scriptstyle\frac{k-1}{2}}$};
\draw (-1.7, -1.2) node{\tiny$a_{\scriptstyle\frac{k-1}{2}}$};

\draw (0.05,1.3) node{\tiny$d$};
\draw (-0.2,-1.3) node{\tiny$d$};

\draw (-0.6,-2) node {$(ii)$};
\draw[dashed] (-0.5,-0.866)--(-1.498,-0.598);
\draw[dashed] (-0.864,-1.231)--(-1.363,-1.097);
\draw (-1.363,-1.097)--(-1.498,-0.598)--(-1.863,-0.963)--(-1.363,-1.097);
\draw (-1.498,-0.598)--(-1.997,-0.464)--(-1.863,-0.963);

\end{scope}

\end{tikzpicture}
\caption{$(i)$ For $k$ is even \quad $(ii)$ For $k$ is odd}
\label{intermediate genus 1}
\end{center}
\end{figure}

\begin{figure}[htbp]
\begin{center}
\begin{tikzpicture}[scale=1.2]

% --- Original Figure (i) ---
\begin{scope}[xshift=0 cm]
\draw (4.623,0.781)--(3.788,0.947);
\draw[red] (3.788,0.947)--(3.1,0.434);
\draw[dashed] (3.1,0.434)--(3.099,-0.433);
\draw[blue] (3.099,-0.433)--(3.777,-0.974);
\draw[cyan] (3.777,-0.974)--(4.622,-0.782);
\draw[yellow] (4.622,-0.782)--(4.999,0.001);
\draw[green] (4.999,0.001)--(4.623,0.781);
\draw[red] (5.295,-2.190)--(4.606,-2.703);
\draw (4.606,-2.703)--(3.775,-2.533);
\draw[green]  (3.775,-2.533)--(3.4,-1.753);
\draw[yellow] (3.4,-1.753)--(3.777,-0.974);
\draw[dashed] (5.298,-1.325)--(5.295,-2.190);
\draw[blue] (5.298,-1.325)--(4.622,-0.782);

\draw (3.4,0.85) node{\tiny$e_1$};
\draw (5,-2.6) node{\tiny$e_1$};
\draw (5.3,0.37) node{\tiny$e_{n-3k+1}$};
\draw (3.1,-2.2) node{\tiny$e_{n-3k+1}$};
\draw (5.3,-0.4) node{\tiny$e_{n-3k}$};
\draw (3,-1.4) node{\tiny$e_{n-3k}$};
\draw (2.9,-0.7) node{\tiny$e_{n-3k-1}$};
\draw (5.65,-1.1) node{\tiny$e_{n-3k-1}$};
\draw (3.778,0.947)--(4.340,1.751)--(4.623,0.781)--(5.171,1.621)--(4.340,1.751);
\draw (3.775,-2.533)--(4.043,-3.467)--(4.606,-2.703);
\draw (3.775,-2.533)--(3.213,-3.337)--(4.043,-3.467);
\draw[dashed] (4.623,0.781)--(6.312,0.449);
\draw[dashed] (5.171,1.621)--(6.015,1.455);

\draw (6.015, 1.455)--(6.860,1.289)--(6.312,0.449)--(6.015,1.455);
\draw[dashed] (3.775,-2.533)--(2.086,-2.201);
\draw[dashed] (3.213,-3.337)--(2.369, -3.171);
\draw (2.369,-3.171)--(2.086,-2.201)--(1.524,-3.005)--(2.369,-3.171);
\draw (6.4375, 1.6) node {\tiny$a_{\scriptstyle\frac{k-1}{2}}$};
\draw (1.9465,-3.3) node {\tiny$a_{\scriptstyle\frac{k-1}{2}}$};
\draw (6.75,0.869) node {\tiny$c$};
\draw (1.64, -2.603) node {\tiny$c$};
\draw (4.01,1.5) node {\tiny$d$};
\draw (4.45,-3.085) node {\tiny$d$};
\draw (4.8,1.9) node {\tiny$a_1$};
\draw (3.6,-3.6) node{\tiny$a_1$};
\draw (3.6,-4) node{$(i)$};
\end{scope}

% --- Shifted Copy (ii) ---
\begin{scope}[xshift=6 cm]
\draw (4.623,0.781)--(3.788,0.947);
\draw[red] (3.788,0.947)--(3.1,0.434);
\draw[dashed] (3.1,0.434)--(3.099,-0.433);
\draw[blue] (3.099,-0.433)--(3.777,-0.974);
\draw[cyan] (3.777,-0.974)--(4.622,-0.782);
\draw[yellow] (4.622,-0.782)--(4.999,0.001);
\draw[green] (4.999,0.001)--(4.623,0.781);
\draw[red] (5.295,-2.190)--(4.606,-2.703);
\draw (4.606,-2.703)--(3.775,-2.533);
\draw[green]  (3.775,-2.533)--(3.4,-1.753);
\draw[yellow] (3.4,-1.753)--(3.777,-0.974);
\draw[dashed] (5.298,-1.325)--(5.295,-2.190);
\draw[blue] (5.298,-1.325)--(4.622,-0.782);

\draw (3.4,0.85) node{\tiny$e_1$};
\draw (5,-2.6) node{\tiny$e_1$};
\draw (5.3,0.37) node{\tiny$e_{n-3k+1}$};
\draw (3.1,-2.2) node{\tiny$e_{n-3k+1}$};
\draw (5.3,-0.4) node{\tiny$e_{n-3k}$};
\draw (3,-1.4) node{\tiny$e_{n-3k}$};
\draw (2.9,-0.7) node{\tiny$e_{n-3k-1}$};
\draw (5.65,-1.1) node{\tiny$e_{n-3k-1}$};
\draw (3.778,0.947)--(4.340,1.751)--(4.623,0.781)--(5.171,1.621)--(4.340,1.751);
\draw (3.775,-2.533)--(4.043,-3.467)--(4.606,-2.703);
\draw (3.775,-2.533)--(3.213,-3.337)--(4.043,-3.467);
\draw[dashed] (4.623,0.781)--(6.312,0.449);
\draw[dashed] (5.171,1.621)--(6.015,1.455);
\draw (6.015, 1.455)--(6.860,1.289)--(6.312,0.449)--(6.015,1.455);

\draw (6.312,0.449)--(7.155,0.282)--(6.860,1.289);

\draw[dashed] (3.775,-2.533)--(2.086,-2.201);
\draw (2.086,-2.201)--(1.245,-2.034)--(1.524,-3.005);
\draw[dashed] (3.213,-3.337)--(2.369, -3.171);
\draw (2.369,-3.171)--(2.086,-2.201)--(1.524,-3.005)--(2.369,-3.171);

\draw (6.7335,0.25) node {\tiny$e$};
\draw (1.6655,-2) node {\tiny$e$};
\draw (7.1,0.9) node {\tiny$c$};
\draw (1.2,-2.5) node {\tiny$c$};
\draw (6.4375, 1.6) node {\tiny$a_{\scriptstyle\frac{k-2}{2}}$};
\draw (1.9465,-3.3) node {\tiny$a_{\scriptstyle\frac{k-2}{2}}$};

\draw (4.01,1.5) node {\tiny$d$};
\draw (4.45,-3.085) node {\tiny$d$};
\draw (4.8,1.9) node {\tiny$a_1$};
\draw (3.6,-3.6) node{\tiny$a_1$};
\draw (3.6,-4) node{$(ii)$};
\end{scope}

\end{tikzpicture}

\caption{$(i)$ For $k$ is odd \quad $(ii)$ For $k$ is even }
\label{intermediate genus 2}
\end{center}
\end{figure}

%As for now we find a class of graphs, \emph{wedge of $n$ circles}, $n\geq2$ that are embedded in such kind of configuration on translation surfaces and also give a complete picture on this.
%Now the question is whether there are some other class of graphs more than one vertex. We have obtained this answer and found a nice collection of non simple graphs. We find two such special classes as follows:

As of now, we have identified that the graphs $\Sigma_n$, for $n \geq 2$, admit cellular-systolic embedding on translation surfaces. It is natural to ask whether there exist further classes of graphs with more than one vertex that admit such kind of embedding on some translation surface. We answer this affirmatively and find another rich collection of non-simple graphs admitting cellular-systolic embedding. In particular, we define two special classes as follows:

\begin{enumerate}
    \item $\mathcal{P} = \left\{\, G_{n,m} = (V_n, E_{n,m}) \mid n \geq 2,\; m \in \mathbb{N} \right\}$, where 
    \begin{align*}
        V_n &= \{ v_0, v_1, \dots, v_{n-1} \} \text{ and}\\  E_{n,m} &= \{ e_{ij} \mid 0 \leq i \leq n-1,\; 1 \leq j \leq 2m \}.
    \end{align*}
    
     Note that $e_{ij}$ are the $2m$ parallel edges between~$v_i$ and~$v_{i+1}$, $j=1,2, \dots, 2m$ (indices of $v$'s taken modulo $n$). For example, Figure~\ref{P_family} illustrates the graphs $G_{3,1}$, $G_{4,1}$, $G_{5,2}$, and $G_{6,3}$.
\item $\mathcal{Q} = \left\{\, G_{n,m}^\prime = (V_n, E_{n,m}) \;\mid \; n \geq 2,\; m \in \mathbb{N}\right\}$, where 
\begin{align*}
V_n &= \left\{ v_0, v_1, \dots, v_{n-1}\right \}\text{ and}\\ E_{n,m} &= L \cup P, \text{ where}\\ L &= \{\, \ell_{ij} \mid 0 \leq i \leq n-1,\; 1 \leq j \leq m \} \text{ and}\\ P &= \{\, e_{ij} \mid 0 \leq i \leq n-1,\; 1 \leq j \leq m \},
\end{align*}
such that each $\ell_{ij}$ is a loop at vertex $v_i$ and each $e_{ij}$ connects vertex $v_i$ to vertex $v_{i+1}$ (indices taken modulo $n$). That is, every vertex $v_i$ has exactly $m$ loops and every pair $(v_i, v_{i+1})$ is connected by $m$ parallel edges. In Figure~\ref{Q_family}, $G_{3,1}^\prime, G_{4,2}^\prime, G_{5,3}^\prime$ and $G_{6,4}^\prime$ are depicted.
    
 \end{enumerate} 

\begin{figure}[htbp]
\begin{center}
\resizebox{\textwidth}{!}{%
\begin{tikzpicture}[thick, every node/.style={circle, fill=black, inner sep=2pt}]

% ---------------- (a) G_{3,1} ----------------
\begin{scope}[shift={(0,0)}]
  \foreach \i in {1,2,3} {
    \node (v\i) at (90+\i*120:2.5) {};
  }
  \foreach \i/\j in {1/2, 2/3, 3/1} {
    \draw (v\i) to[bend left=15] (v\j);
    \draw (v\i) to[bend right=15] (v\j);
  }
  \node[fill=none, inner sep=0pt] at (0,-4) {$(a)~G_{3,1}$};
\end{scope}

% ---------------- (b) G_{4,1} ----------------
\begin{scope}[shift={(7.5,0)}]
  \foreach \i in {1,2,3,4} {
    \node (v\i) at (45+\i*90:2.5) {};
  }
  \foreach \i/\j in {1/2, 2/3, 3/4, 4/1} {
    \draw (v\i) to[bend left=15] (v\j);
    \draw (v\i) to[bend right=15] (v\j);
  }
  \node[fill=none, inner sep=0pt] at (0,-4) {$(b)~G_{4,1}$};
\end{scope}

% ---------------- (c) G_{5,2} ----------------
\begin{scope}[shift={(0,-9)}]
  \foreach \i in {1,...,5} {
    \node (v\i) at (90+\i*72:2.5) {};
  }
  \foreach \i/\j in {1/2, 2/3, 3/4, 4/5, 5/1} {
    \draw (v\i) to[bend left=15] (v\j);
    \draw (v\i) to[bend right=15] (v\j);
    \draw (v\i) to[bend left=45] (v\j);
    \draw (v\i) to[bend right=45] (v\j);
  }
  \node[fill=none, inner sep=0pt] at (0,-4) {$(c)~G_{5,2}$};
\end{scope}

% ---------------- (d) G_{6,3} ----------------
\begin{scope}[shift={(7.5,-9)}]
  \foreach \i in {1,...,6} {
    \node (v\i) at (\i*60:2.5) {};
  }
  \foreach \i/\j in {1/2, 2/3, 3/4, 4/5, 5/6, 6/1} {
    \draw (v\i) to[bend left=15] (v\j);
    \draw (v\i) to[bend right=15] (v\j);
    \draw (v\i) to[bend left=35] (v\j);
    \draw (v\i) to[bend right=35] (v\j);
    \draw (v\i) to[bend left=55] (v\j);
    \draw (v\i) to[bend right=55] (v\j);
  }
  \node[fill=none, inner sep=0pt] at (0,-4) {$(d)~G_{6,3}$};
\end{scope}

\end{tikzpicture}%
}
\caption{Graphs $(a)~G_{3,1}$, $(b)~G_{4,1}$, $(c)~G_{5,2}$, and $(d)~G_{6,3}$ from the family $\mathcal{P}$.}
\label{P_family}
\end{center}
\end{figure}

\begin{figure}[htbp]
\begin{center}
\resizebox{\textwidth}{!}{%
\begin{tikzpicture}[thick, every node/.style={circle, fill=black, inner sep=2.5pt}]

% ---------------- (a) G'_{3,1} ----------------
\begin{scope}[shift={(0,0)}]
  \foreach \i in {1,2,3} {
    \node (v\i) at (90+\i*120:2.5) {};
  }
  \foreach \i/\j in {1/2, 2/3, 3/1} {
    \draw (v\i) -- (v\j);
  }
  \foreach \i in {1,2,3} {
    \pgfmathsetmacro{\ang}{90+\i*120}
    \draw (v\i) to[out=\ang-25, in=\ang+25, distance=1.5cm] (v\i);
  }
  \node[fill=none, inner sep=0pt] at (0,-4.5) {$(a)~G_{3,1}^\prime$};
\end{scope}

% ---------------- (b) G'_{4,2} ----------------
\begin{scope}[shift={(9,0)}]
  \foreach \i in {1,2,3,4} {
    \node (v\i) at (45+\i*90:2.5) {};
  }
  \foreach \i/\j in {1/2, 2/3, 3/4, 4/1} {
    \draw (v\i) to[bend left=15] (v\j);
    \draw (v\i) to[bend right=15] (v\j);
  }
  \foreach \i in {1,2,3,4} {
    \pgfmathsetmacro{\ang}{45+\i*90}
    \draw (v\i) to[out=\ang-20, in=\ang+20, distance=1.0cm] (v\i);
    \draw (v\i) to[out=\ang-30, in=\ang+30, distance=2.0cm] (v\i);
  }
  \node[fill=none, inner sep=0pt] at (0,-5.5) {$(b)~G_{4,2}^\prime$};
\end{scope}

% ---------------- (c) G'_{5,3} ----------------
\begin{scope}[shift={(0,-12)}]
  \foreach \i in {1,...,5} {
    \node (v\i) at (90+\i*72:2.5) {};
  }
  \foreach \i/\j in {1/2, 2/3, 3/4, 4/5, 5/1} {
    \draw (v\i) to[bend left=20] (v\j);
    \draw (v\i) -- (v\j);
    \draw (v\i) to[bend right=20] (v\j);
  }
  \foreach \i in {1,...,5} {
    \pgfmathsetmacro{\ang}{90+\i*72}
    \draw (v\i) to[out=\ang-15, in=\ang+15, distance=1.0cm] (v\i);
    \draw (v\i) to[out=\ang-25, in=\ang+25, distance=2.0cm] (v\i);
    \draw (v\i) to[out=\ang-35, in=\ang+35, distance=3.0cm] (v\i);
  }
  \node[fill=none, inner sep=0pt] at (0,-6.5) {$(c)~G_{5,3}^\prime$};
\end{scope}

% ---------------- (d) G'_{6,4} ----------------
\begin{scope}[shift={(9,-12)}]
  \foreach \i in {1,...,6} {
    \node (v\i) at (\i*60:2.5) {};
  }
  \foreach \i/\j in {1/2, 2/3, 3/4, 4/5, 5/6, 6/1} {
    \draw (v\i) to[bend left=15] (v\j);
    \draw (v\i) to[bend right=15] (v\j);
    \draw (v\i) to[bend left=45] (v\j);
    \draw (v\i) to[bend right=45] (v\j);
  }
  \foreach \i in {1,...,6} {
    \pgfmathsetmacro{\ang}{\i*60}
    \draw (v\i) to[out=\ang-15, in=\ang+15, distance=1.0cm] (v\i);
    \draw (v\i) to[out=\ang-25, in=\ang+25, distance=2.0cm] (v\i);
    \draw (v\i) to[out=\ang-35, in=\ang+35, distance=3.0cm] (v\i);
    \draw (v\i) to[out=\ang-45, in=\ang+45, distance=4.0cm] (v\i);
  }
  \node[fill=none, inner sep=0pt] at (0,-7.5) {$(d)~G_{6,4}^\prime$};
\end{scope}

\end{tikzpicture}%
}
\caption{Graphs $(a)~G_{3,1}^\prime$, $(b)~G_{4,2}^\prime$, $(c)~G_{5,3}^\prime$, and $(d)~G_{6,4}^\prime$ from the family $\mathcal{Q}$.}
\label{Q_family}
\end{center}
\end{figure}

   These graphs are nothing but $4m$-regular multigraphs. Now, we prove the following result.

\begin{repthm}[\ref{another rich collection}] (\cite{MR4985436})
Every graph $G\in \mathcal{P} \sqcup \mathcal{Q}$ admits a cellular-systolic embedding on some translation surface.   
\end{repthm}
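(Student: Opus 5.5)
The plan is to prove Theorem~\ref{another rich collection} by an explicit polygonal construction, in the same spirit as the proof of Theorem~\ref{cellular embedding}. We build a translation surface from \emph{equilateral} convex Euclidean polygons of common side length $s$, and impose three requirements. First, each polygon vertex is labelled by a vertex $v_i$ of $G$. Second, each polygon side is a segment joining two labelled vertices and is glued by translation to exactly one other side. Third, the resulting one-skeleton is a copy of $G$ whose complementary regions are the open polygons. Cellularity is then automatic. It remains to check three things: the singular (or marked) points are exactly the images of $v_0,\dots,v_{n-1}$; the sides are saddle connections of length $s$; and there are no further saddle connections of length $\le s$.

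For $G_{n,m}\in\mathcal P$ I would use faces whose boundary reads $v_0v_1\cdots v_{n-1}$ cyclically, so that each face uses one edge between every consecutive pair. Counting sides, $E=2mn$ and each edge borders two faces, so I need $4m$ such $n$-gons: $2m$ copies $P_1,\dots,P_{2m}$ of a fixed equilateral convex $n$-gon $P$, and $2m$ copies $P'_1,\dots,P'_{2m}$ of its mirror image across a line, as in Case~2 of Theorem~\ref{cellular embedding}(1). The mirror image reverses the cyclic order of the labels, which is what makes the endpoint labels match under a translation gluing. Side $k$ of $P_a$ is glued to the parallel side of $P'_{\sigma_k(a)}$, where each $\sigma_k$ is a permutation. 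Each edge $e_{kj}$ between $v_k$ and $v_{k+1}$ is then one identified side pair, giving exactly $2m$ parallel edges per consecutive pair. For $G'_{n,m}\in\mathcal Q$ I would proceed analogously, but with faces whose boundary also traverses loops: insert at each $v_i$ a side beginning and ending at $v_i$. This is realisable by letting the polygon have two consecutive corners labelled $v_i$, with those corners identified by the gluing. I would then choose the polygons and gluings so that each vertex receives $m$ loops and $m$ parallel edges to each neighbour.

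The Euler characteristic $n-E+F=2-2g$ then fixes $g$. I must also ensure that, after gluing, the vertex labelled $v_i$ is a single point; I would arrange this by choosing the permutations $\sigma_k$ so that the corner orbits close up. For the Euclidean structure, the angles at all corners labelled $v_i$ must sum to a multiple of $2\pi$, as required by Lemma~\ref{angle at singularity}. The Gauss--Bonnet relation of Proposition~\ref{Gauss-Bonnet} then provides a consistency check on the total angle.

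The main obstacle is this angle and shortness bookkeeping. For regular polygons, the total angle $4m(n-2)\pi/n$ at each vertex is generally not a multiple of $2\pi$. I would therefore first try non-regular equilateral polygons, for instance built from unit rhombi and equilateral triangles, so that every corner angle is a multiple of $\pi/3$ and the totals can be tuned vertex by vertex. To show no saddle connection is shorter than $s$, I would use the argument of Theorem~\ref{cellular embedding}, Case~1. Every saddle connection develops to a straight segment between vertices of the developed tiling. Since the tiling is built from unit rhombi with angles at least $\pi/3$ and from equilateral triangles, distinct developed vertices are at distance at least $s$. Hence the systolic connections are exactly the glued sides, and $\Gamma_S\cong G$.
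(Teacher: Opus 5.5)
\textbf{The construction for $\mathcal P$ does not work as described.}

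\emph{Parity obstruction.} You take $V=n$, $E=2mn$ and $F=4m$ faces, each an $n$-gon. Then $\chi=n-2mn+4m\equiv n \pmod 2$. So for odd $n$ no cellular embedding of this combinatorial type exists on any closed orientable surface. For example, $G_{3,1}$ would be four triangles with $\chi=1$. For $n=2$ the faces would be bigons.

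\emph{The gluing is not by translations, even for even $n$.} Reflecting $P$ in a line $L$ sends side $k$ to a segment parallel to side $k$ only when that side is parallel or perpendicular to $L$. The analogy with Case~2 of Theorem~\ref{cellular embedding}(1) breaks down here. There $s_i$ is paired with $s'_{2m+1-i}$, which works because reflecting a regular odd-gon in a side coincides with rotating it by $\pi$. That pairing does not respect vertex labels, which is harmless only because all corners become one point.

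No other choice of partner face helps. Suppose side $k$ of every $P_a$ is the same vector $u_k$ from $v_k$ to $v_{k+1}$. Then a face glued to these along the reversed labels must have counterclockwise side vectors $-u_{n-1},\dots,-u_0$. Its vertices are then exactly those of $P$ traversed clockwise, which is impossible. The non-integrality of $4m(n-2)\pi/n$ that you noticed is a symptom of this. For a genuine translation gluing, Lemma~\ref{angle at singularity} makes every cone angle a multiple of $2\pi$ automatically, so there is nothing to tune.

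\textbf{The proposed remedies do not close the gap.}

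\emph{The $\pi/3$-polygons.} A convex equilateral polygon with all angles in $\{\pi/3,2\pi/3\}$ has side directions that are multiples of $\pi/3$, each used at most once. The corresponding unit vectors must sum to zero, which forces $n\in\{3,4,6\}$.

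\emph{The shortness argument.} A $60^\circ$ rhombus has a short diagonal of length exactly $s$ joining two cone points. Your bound ``distance at least $s$'' therefore produces extra shortest saddle connections rather than $\Gamma_S\cong G$. You need every saddle connection other than the glued sides to be strictly longer than $s$.

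\emph{The $\mathcal Q$ case.} This is only gestured at; no polygons or gluings are specified.

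\textbf{The missing idea} is to swap the roles of $n$ and $4m$. The paper takes $n$ regular $4m$-gons $P_0,\dots,P_{n-1}$ and glues $E_k(i)$ to $\bar E_k(i+1)$. For $\mathcal Q$ it instead glues $E_k(i)$ to $\bar E_k(i)$ when $k\le m$. Opposite sides of a regular $4m$-gon are parallel, so these are honest translation gluings. The shortest diagonal $2s\cos(\pi/(4m))$ exceeds $s$. Finally, $V=F=n$ and $E=2mn$ give genus $n(m-1)+1$.
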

\begin{proof}
We explicitly construct the corresponding translation surface for $G\in\mathcal{P} \sqcup \mathcal{Q}$.
Now, there are two cases here.

Case 1.
Assume $G\in \mathcal{P}$.
So, $G=G_{n,m}$ for some $n\ge 2, m\in \N$. Then, the vertices of $G$ are labeled $v_0, v_1, v_2, \dots, v_{n-1}$ so that there are $2m$ parallel edges joining the pair of vertices $(v_i, v_{i+1})$ mod $n$, $i= 0,1,2, \dots, n-1$. Consider the permutation $\sigma$ = ($0$ $1$ $2$ \dots $n-1$) in $S_n$. Now, we form our desired translation surface as follows:

First, we take $n$ numbers of regular $4m$ gons denoted by $P_0$, $P_1$, $P_2$, \dots, $P_{n-1}$. Then we label the sides of $P_i$ as $E_1(i), E_2(i), \dots, E_{2m}(i), \bar{E_1}(i), \bar{E_2}(i), \dots \bar{E}_{2m}(i)$ such that $E_k(i)$ is parallel to $\bar{E_k}(i)$, $1\leq k\leq 2m$ for each $i\in \{0,1,2 \dots, n-1\}$. Now for a fixed $i\in\{0, 1, 2, \dots, n-1\}$ we glue the sides $E_k(i)$ and $\bar{E_k}(\sigma(i))$, $1\leq k\leq 2m$  by translation. 

In this way we get a translation surface $S$ with $n$ singular points (or marked points) and each side of each $P_i$ becomes a systolic connection of $S$. So, it is clear that $\Gamma_S \cong G$. Also by the construction $S\setminus G$ is nothing but union of the $n$ number of regular $4m$ gons i.e. topological disks and hence $G$ admits cellular-systolic embedding on $S$.
 For example, Figure~\ref{figure16} illustrates the construction of a translation surface for $G_{4,1}$.

 \begin{figure}[htbp]
\begin{center}
\begin{tikzpicture}[scale=1]

\draw (-3,-4)--(-1,-4)--(-1,-2)--(-3,-2)--(-3,-4);
\draw (3,-4)--(1,-4)--(1,-2)--(3,-2)--(3,-4);

\draw (-3,-8)--(-1,-8)--(-1,-6)--(-3,-6)--(-3,-8);
\draw (3,-8)--(1,-8)--(1,-6)--(3,-6)--(3,-8);

\draw (-2,-4.2) node{\tiny$E_1(0)$};
\draw (-0.6, -3 ) node{\tiny$E_2(0)$};
\draw (-2,-1.8) node{\tiny$\bar{E_1}(0)$};
\draw (-3.4,-3) node{\tiny$\bar{E_2}(0)$};

\draw (2,-1.8) node{\tiny$\bar{E_1}(1)$};
\draw (0.59,-3) node{\tiny$\bar{E_2}(1)$};
\draw (-2,-8.2) node{\tiny$E_1(3)$};
\draw (-0.6,-7) node{\tiny$E_2(3)$};

\draw (2,-4.2) node{\tiny$E_1(1)$};
\draw (3.4,-3) node{\tiny$E_2(1)$};
\draw (2,-5.8) node{\tiny$\bar{E_1}(2)$};
\draw (0.6,-7) node{\tiny$\bar{E_2}(2)$};
\draw (2,-8.2) node{\tiny$E_1(2)$};
\draw (3.4,-7) node{\tiny$E_2(2)$};
\draw (-2,-5.8) node{\tiny$\bar{E_1}(3)$};
\draw (-3.4,-7) node{\tiny$\bar{E_2}(3)$};

\draw[green] (-3,-4) node{\tiny$\bullet$};
\draw[green] (-1,-2) node{\tiny$\bullet$};
\draw[green] (1,-2) node{\tiny$\bullet$};
\draw[green] (-1,-8) node{\tiny$\bullet$};

\draw[blue] (-1,-4) node{\tiny$\bullet$};
\draw[blue] (1,-4) node{\tiny$\bullet$};
\draw[blue] (3,-2) node{\tiny$\bullet$};
\draw[blue] (1,-6) node{\tiny$\bullet$};

\draw (-3,-2) node{\tiny$\bullet$};
\draw (-3,-8) node{\tiny$\bullet$};
\draw (-1,-6) node{\tiny$\bullet$};
\draw (3,-8) node{\tiny$\bullet$};

\draw[red] (-3,-6) node{\tiny$\bullet$};
\draw[red] (1,-8) node{\tiny$\bullet$};
\draw[red] (3,-6) node{\tiny$\bullet$};
\draw[red] (3,-4) node{\tiny$\bullet$};

\end{tikzpicture}
\caption{Translation surface for $G_{4,1}$.}
\label{figure16}
\end{center}
\end{figure}

Case 2.
Assume $G\in \mathcal{Q}$.
This implies that $G=G_{n,m}^\prime$ for some $n\ge2, m\in \N$.
 So, label the vertices of $G$ as $v_0$, $v_1$, $v_2$, \dots, $v_{n-1}$ such that,
 (1) each vertex $v_i$, $i= 0,1,\dots, n-1$ has $m$ loops and  
 (2) there are $m$ parallel edges joining the pair of vertices ($v_i$, $v_{i+1}$) mod $n$, $0\le i\le n-1$.
Also, in this case we  consider the permutation $\sigma$ = ($0$ $1$ $2$ \dots $n-1$) from $S_n$. Similarly to case $1$, we take $n$ numbers of regular $4m$ gons denoted by $P_0$, $P_1$, $P_2$, \dots, $P_{n-1}$. Label the sides of $P_i$ as $E_1(i), E_2(i), \dots, E_{2m}(i), \bar{E_1}(i), \bar{E_2}(i), 
\dots \bar{E}_{2m}(i)$ such that $E_k(i)$ is parallel to $\bar{E_k}(i)$, $1\leq k\leq 2m$ for each $i\in \{0,1,2 \dots, n-1\}$. Here, we form the translation surface as follows:

For a fixed $i\in \{0,1,2 \dots, n-1\}$ we identify the sides $E_k(i)$ and $\bar{E}_k(i)$  for $1\le k\le m$ and the sides $E_k(i)$ and $\bar{E}_k(\sigma(i))$ for $m+1\le k\le 2m$ via translation. So, we get a translation surface $S$ from this with $n$ number of singular points (or marked points) and each side of $P_i$'s is systolic connection of $S$.
\begin{figure}[htbp]
\begin{center}
\begin{tikzpicture}[scale=1.5]
\draw (0,-5)--(1,-5);
\draw (1,-5)--(1.707,-4.293);
\draw (1.707,-4.293)--(1.707,-3.293);
\draw  (1.707,-3.293)--(1,-2.586);
\draw (0, -2.586)--(1,-2.586);
\draw (0,-2.586)--(-0.707,-3.293);
\draw (-0.707,-3.293)--(-0.707,-4.293);
\draw (-0.707,-4.293)--(0,-5);

\draw (0.5,-5.2) node{\tiny$E_1(0)$};
\draw (1.8,-4.7) node{\tiny$E_2(0)$};
\draw (2.1,-3.7) node{\tiny$E_3(0)$};
\draw (1.7,-2.8) node{\tiny$E_4(0)$};
\draw (0.5,-2.4) node{\tiny$\bar{E_1}(0)$};
\draw (-0.7,-2.8) node{\tiny$\bar{E_2}(0)$};
\draw (-1.1,-3.8) node{\tiny$\bar{E_3}(0)$};
\draw (-0.7,-4.8) node{\tiny$\bar{E_4}(0)$};

\draw[green] (0,-5) node{\tiny$\bullet$};
\draw[green] (1,-5) node{\tiny$\bullet$};
\draw[green] (1.707,-4.293) node{\tiny$\bullet$};
\draw[blue] (1.707,-3.293) node{\tiny$\bullet$};
\draw[green] (1,-2.586) node{\tiny$\bullet$};
\draw[green] (0,-2.586) node{\tiny$\bullet$};
\draw[green] (-0.707,-3.293) node{\tiny$\bullet$};
\draw (-0.707,-4.293) node{\tiny$\bullet$};

\draw (5,-5)--(6,-5);
\draw (6,-5)--(6.707,-4.293);
\draw (6.707,-4.293)--(6.707,-3.293);
\draw  (6.707,-3.293)--(6,-2.586);
\draw (5, -2.586)--(6,-2.586);
\draw (5,-2.586)--(4.293,-3.293);
\draw (4.293,-3.293)--(4.293,-4.293);
\draw (4.293,-4.293)--(5,-5);

\draw (5.5,-5.2) node{\tiny$E_1(1)$};
\draw (6.8,-4.7) node{\tiny$E_2(1)$};
\draw (7.1,-3.7) node{\tiny$E_3(1)$};
\draw (6.7,-2.8) node{\tiny$E_4(1)$};
\draw (5.5,-2.4) node{\tiny$\bar{E_1}(1)$};
\draw (4.3,-2.8) node{\tiny$\bar{E_2}(1)$};
\draw (3.9,-3.8) node{\tiny$\bar{E_3}(1)$};
\draw (4.3,-4.8) node{\tiny$\bar{E_4}(1)$};

\draw[blue] (5,-5) node{\tiny$\bullet$};
\draw[blue] (6,-5) node{\tiny$\bullet$};
\draw[blue] (6.707,-4.293) node{\tiny$\bullet$};
\draw[red] (6.707,-3.293) node{\tiny$\bullet$};
\draw[blue] (6,-2.586) node{\tiny$\bullet$};
\draw[blue] (5,-2.586) node{\tiny$\bullet$};
\draw[blue] (4.293,-3.293) node{\tiny$\bullet$};
\draw[green] (4.293,-4.293) node{\tiny$\bullet$};

\draw (5,-10)--(6,-10);
\draw (6,-10)--(6.707,-9.293);
\draw (6.707,-9.293)--(6.707,-8.293);
\draw  (6.707,-8.293)--(6,-7.586);
\draw (5, -7.586)--(6,-7.586);
\draw (5,-7.586)--(4.293,-8.293);
\draw (4.293,-8.293)--(4.293,-9.293);
\draw (4.293,-9.293)--(5,-10);

\draw (5.5,-10.2) node{\tiny$E_1(2)$};
\draw (6.8,-9.7) node{\tiny$E_2(2)$};
\draw (7.1,-8.7) node{\tiny$E_3(2)$};
\draw (6.7,-7.8) node{\tiny$E_4(2)$};
\draw (5.5,-7.4) node{\tiny$\bar{E_1}(2)$};
\draw (4.3,-7.8) node{\tiny$\bar{E_2}(2)$};
\draw (3.9,-8.8) node{\tiny$\bar{E_3}(2)$};
\draw (4.3,-9.8) node{\tiny$\bar{E_4}(2)$};

\draw[red] (5,-10) node{\tiny$\bullet$};
\draw[red] (6,-10) node{\tiny$\bullet$};
\draw[red] (6.707,-9.293) node{\tiny$\bullet$};
\draw (6.707,-8.293) node{\tiny$\bullet$};
\draw[red] (6,-7.586) node{\tiny$\bullet$};
\draw[red] (5,-7.586) node{\tiny$\bullet$};
\draw[red] (4.293,-8.293) node{\tiny$\bullet$};
\draw[blue] (4.293,-9.293) node{\tiny$\bullet$};

\draw (0,-10)--(1,-10);
\draw (1,-10)--(1.707,-9.293);
\draw (1.707,-9.293)--(1.707,-8.293);
\draw  (1.707,-8.293)--(1,-7.586);
\draw (0, -7.586)--(1,-7.586);
\draw (0,-7.586)--(-0.707,-8.293);
\draw (-0.707,-8.293)--(-0.707,-9.293);
\draw (-0.707,-9.293)--(0,-10);

\draw (0.5,-10.2) node{\tiny$E_1(3)$};
\draw (1.8,-9.7) node{\tiny$E_2(3)$};
\draw (2.1,-8.7) node{\tiny$E_3(3)$};
\draw (1.7,-7.8) node{\tiny$E_4(3)$};
\draw (0.5,-7.4) node{\tiny$\bar{E_1}(3)$};
\draw (-0.7,-7.8) node{\tiny$\bar{E_2}(3)$};
\draw (-1.1,-8.8) node{\tiny$\bar{E_3}(3)$};
\draw (-0.7,-9.8) node{\tiny$\bar{E_4}(3)$};

\draw (0,-10) node{\tiny$\bullet$};
\draw (1,-10) node{\tiny$\bullet$};
\draw (1.707,-9.293) node{\tiny$\bullet$};
\draw[green] (1.707,-8.293) node{\tiny$\bullet$};
\draw (1,-7.586) node{\tiny$\bullet$};
\draw (0,-7.586) node{\tiny$\bullet$};
\draw (-0.707,-8.293) node{\tiny$\bullet$};
\draw[red] (-0.707,-9.293) node{\tiny$\bullet$};

\end{tikzpicture}
\caption{Translation surface for $G_{4,2}^\prime$.}
\label{figure17}
\end{center}
\end{figure}

So, $G$ admits cellular-systolic embedding on $S$. In Figure \ref{figure17}, a translation surface is explicitly constructed for $G_{4,2}^\prime$. 

The genus of the translation surface \(S\) constructed above can be computed from its induced cell decomposition. In both Cases~$1$ and $2$, we have  \(V = n\),  polygonal faces \(F = n\), and since each vertex has degree \(4m\), the number of edges is \(E =\frac{4mn}{2} =2mn\). Hence, the Euler characteristic is 
\[
\chi(S) = V - E + F = 2n(1 - m).
\]
Using \(\chi(S) = 2 - 2g\) for a closed orientable surface, we obtain 
\[
g = n(m - 1) + 1.
\]
So, the genus of the translation surface for both cases is $ [n(m - 1) + 1]$.

Therefore, from both the cases we conclude that any $G\in \mathcal{P} \sqcup \mathcal{Q}$ admits cellular-systolic embedding on a translation surface.

\end{proof}

\chapter[The Geometry of the Hyperbolic Plane]{The Geometry of the Hyperbolic Plane}

\label{Chapter 4}

This chapter presents a concise overview of hyperbolic geometry, which lays the groundwork for the topics developed in the subsequent chapters. We begin by outlining the historical background and foundational development of hyperbolic geometry. Various equivalent formulations, referred to as models of hyperbolic geometry, have been proposed over time. Among these, we introduce a selection of standard models, with particular emphasis on the upper half-plane model. 
Following this, we examine fundamental geometric notions within this model, including the arc-length function, the hyperbolic distance, the hyperbolic area, and the Gauss-Bonnet theorem. We also explore the isometry group associated with the upper half-plane model. The chapter concludes with a summary of important identities from hyperbolic trigonometry, which will be instrumental in later discussions.

\section{Origins and Evolution of Hyperbolic Geometry}

In antiquity, Euclidean geometry played a central role in various practical domains such as land surveying, architectural design, boundary demarcation, and navigation. Around 300 BCE, the Greek mathematician Euclid was the first to systematically develop planar geometry using a rigorous axiomatic approach. His influential treatise \emph{Elements} formalized geometric principles through a set of five foundational postulates (see~\cite{euclid2002elements}):

\begin{enumerate}
    \item Any two distinct points can be connected by a unique straight line segment.
    \item A straight line segment can be extended indefinitely in both directions.
    \item Given any point and any radius, a circle can be constructed with that point as the center.
    \item All right angles are congruent to one another.
    \item If a straight line intersects two other straight lines $L$ and $L'$ such that the sum of the interior angles $\alpha$ and $\beta$ on the same side is less than $\pi$, then the lines $L$ and $L'$ will intersect on that side when extended sufficiently.
\end{enumerate}

Among Euclid’s five postulates, the fifth—commonly known as the \emph{parallel postulate}—has historically been regarded as the most intricate and least self-evident. Over the centuries, it has been reformulated in various equivalent ways. One well-known equivalent statement, derived from the first four postulates, is the following:

\begin{enumerate}
    \item[5$'$.] Given a line and a point not lying on it, there exists exactly one line through the point that does not intersect the given line; in other words, there is a unique line parallel to the given one through the external point.
\end{enumerate}

A significant portion of Euclidean plane geometry can be developed without reference to the parallel postulate. In fact, this postulate does not appear until Proposition 29 in Book I of Euclid’s \emph{Elements}~\cite{euclid2002elements}. This observation led many to suspect that the fifth postulate might be deducible from the other four. For centuries, countless mathematicians pursued such a derivation—though all attempts ultimately failed.

A major turning point came in the nineteenth century, when mathematicians began to consider the possibility of rejecting the parallel postulate altogether. Rather than trying to prove it, they examined the consequences of its negation. This novel perspective led to the birth of entirely new geometric systems, now referred to as \emph{non-Euclidean geometries}.

Carl Friedrich Gauss (1777–1855), Nikolai Lobachevsky (1793–1856), and János Bolyai (1802–1860) were instrumental in this development. In a letter dated 1824 to F. A. Taurinus, Gauss noted that assuming the sum of the angles in a triangle is less than $180^\circ$ results in a geometry distinct from the Euclidean framework. Indeed, the parallel postulate is logically equivalent to the statement that the sum of the angles in any triangle equals $180^\circ$. Although Gauss explored these ideas in private, he chose not to publish his findings.

Independently, Lobachevsky (in 1829) and Bolyai (in 1832) succeeded in constructing a consistent axiomatic system that excluded the parallel postulate. Their formulations gave rise to what we now call \emph{hyperbolic geometry}—a name derived from one of its canonical models, the \emph{hyperboloid model} (see~\cite{CannonHypGeom} for details).

%Hyperbolic geometry was created in the first half of the nineteenth century amid attempts to understand Euclid’s axiomatic basis for geometry.

\section{Standard Models of Hyperbolic Plane}

Hyperbolic geometry, an example of non-Euclidean geometry characterized by constant negative curvature, admits multiple distinct but equivalent models that facilitate its visualization and analysis. In this section, we explore four classical representations: the upper half-plane model, the Poincar\'e disk model, the Klein disk model, and the hyperboloid model. For each, we examine the underlying structure, the associated metric, the nature of geodesics, and the relationships between the models.

\subsection{The Upper Half-Plane Model}

Consider the upper half of the complex plane, defined below:
\[
\mathbb{H} = \{ z = x + iy \in \mathbb{C} \mid y > 0 \}.
\]
Equipped with the Riemannian metric
\[
\mathrm{d}s^2 = \frac{\mathrm{d}x^2 + \mathrm{d}y^2}{y^2}=\frac{4dz d\bar{z}}{|z-\bar{z}|^2},
\]
the set $\mathbb{H}$ defines the \emph{upper half-plane model} of the hyperbolic plane. This metric endows $\mathbb{H}$ with the structure of a complete, simply connected Riemannian surface of constant Gaussian curvature equal to $-1$.

In the upper half-plane model, geodesics are represented by vertical lines and semicircles orthogonal to the real axis.  The boundary at infinity is given by $\partial \mathbb{H} = \mathbb{R} \cup \{\infty\}$. A fundamental distinction from Euclidean geometry arises in the behavior of parallel lines: given a point $P$ not lying on a geodesic $L$, there exist infinitely many geodesics through $P$ that do not intersect $L$, illustrating the breakdown of Euclidean parallelism in hyperbolic geometry. Hence, the hyperbolic geometry is a non-Euclidean geometry.

\begin{figure}[htbp]
    \centering
    \begin{tikzpicture}
         \draw (-4,0)--(4,0);
         \draw [->](-2,0)--(-2,3);
         \draw (3,0) arc [start angle=0, end angle=180, x radius=1 cm, y radius=1 cm];
         \draw (2,0) arc [start angle=0, end angle=180, x radius=1.7 cm, y radius=1.7 cm];
         \draw (-2,3.2) node {$I$};
         \draw (3,0.8) node {$L_2$}; 
         \draw (0,2) node {$L_1$};
         \draw [red] (1.7, 0.94) node {\tiny $\bullet$}; 
         \draw [red] (1.8, 0.97) node [above] {$P$};
    \end{tikzpicture}
    \caption{Geodesics $L_1$ and $L_2$ in $\mathbb{H}$ are parallel to the vertical geodesic $I$ through $P$.}
\end{figure}

\subsection{The Poincar\'e Disk Model}

The \emph{Poincar\'e disk model} represents hyperbolic geometry inside the unit disk
\[
\mathbb{D} = \{ z \in \mathbb{C} : |z| < 1 \},
\]
endowed with the metric
\[
\mathrm{d}s = \frac{2 |\mathrm{d}z|}{1 - |z|^2}.
\]
Here, geodesics are either diameters or circular arcs intersecting the boundary circle orthogonally. 

The upper half-plane model and the Poincar\'e disk model are isometric. 
An explicit isometry 
\[
f:\left(\mathbb{H},\,\frac{4\,\mathrm{d}z\, \mathrm{d}\bar{z}}{|z-\bar{z}|^2}\right)\longrightarrow 
\left(\mathbb{D},\,\frac{4\,\mathrm{d}z\, \mathrm{d}\bar{z}}{(1 - |z|^2)^2}\right)
\]

is given by
\[
f(z) = \frac{z - i}{\,z + i\,}, \quad \forall z \in \mathbb{H}.
\]

To prove this claim, it suffices to show that 
\begin{equation}\label{eqn 4.2.1}
    \frac{\mathrm{d}z\, \mathrm{d}\bar{z}}{|z-\bar{z}|^2}
    = \frac{\mathrm{d}f(z)\, \overline{\mathrm{d}f(z)}}{(1 - |f(z)|^2)^2}.
\end{equation}

\vspace{0.2cm}

Now, observe that
\[
\mathrm{d}f(z)=\frac{2i}{(z+i)^2}\,\mathrm{d}z, 
\qquad 
\overline{\mathrm{d}f(z)}=-\frac{2i}{(\bar{z}-i)^2}\,\mathrm{d}\bar{z},
\qquad \text{and} \qquad
1 - |f(z)|^2=\frac{2|z-\bar{z}|}{|z+i|^2}.
\]
Substituting these expressions into the right-hand side of 
\eqref{eqn 4.2.1} yields the desired identity.

\begin{figure}[htbp]
    \centering
    \begin{tikzpicture}
        \draw (0,0) circle [radius=2cm];
        \draw ({2*cos(140)},{2*sin(140)})--({-2*cos(140)},{-2*sin(140)});
        \draw ({2*cos(0)},{2*sin(0)}) to [bend left=55] ({2*cos(70)},{2*sin(70)});
        \draw ({2*cos(40)},{2*sin(40)}) to [bend left=60] ({2*cos(120)},{2*sin(120)});
        \draw [red] (.76,.85) node {\tiny$\bullet$};
        \draw [red] (.6,.6) node {$P$};
        \draw (1.1,.1) node {$L_1$}; 
        \draw (-.3,.8) node {$L_2$}; 
        \draw (-.3,0) node {$L$};
    \end{tikzpicture}
    \caption{Geodesics in the Poincar\'e disk: arcs and diameters orthogonal to the boundary.}
\end{figure}

\subsection{The Klein Disk Model}

Like the Poincar\'e model, the Klein model is defined on the open unit disk $\mathbb{D}$ and is denoted by $\mathbb{D}_K$. We define the Klein metric on this model by pushing forward the metric of Poincar\'e disk model via the homeomorphism $
K : \mathbb{D} \to \mathbb{D}_K$ given by $$ K(z) = \frac{2z}{1+|z|^2}, \quad \forall z\in\mathbb{D}.
$$
that is, 
\begin{align}
ds_K^2 &= \frac{4\,dK(z)\,d\overline{K(z)}}{\bigl(1-|K(z)|^2\bigr)^2},  \\
 &= \frac{(1-|z|^2)\,dz\,d\bar z + \frac{1}{4}(\bar z\,dz + z\,d\bar z)^2}{(1-|z|^2)^2}.
\end{align}

In this model, distances are computed using the cross-ratio. Let us first recall the definition of cross-ratio as follow.

\begin{definition}[Cross ratio on the Riemann sphere]
Let $z_1,z_2,z_3,z_4 \in \widehat{\C} = \C \cup \{\infty\}$ be four distinct points. 
The \emph{cross ratio} of $(z_1,z_2;z_3,z_4)$ is defined by
\[
[z_1,z_2;z_3,z_4] = 
\begin{cases}
\dfrac{(z_1-z_4)(z_2-z_3)}{(z_1-z_3)(z_2-z_4)}, & \text{if } z_i \in \C, \\[2ex]
\dfrac{z_1-z_4}{z_1-z_3}, & \text{if } z_2 = \infty, \\[2ex]
\dfrac{z_2-z_3}{z_2-z_4}, & \text{if } z_1 = \infty, \\[2ex]
\dfrac{z_2-z_3}{z_1-z_3}, & \text{if } z_4 = \infty, \\[2ex]
\dfrac{z_1-z_4}{z_2-z_4}, & \text{if } z_3 = \infty.
\end{cases}
\]
\end{definition}

\begin{lemma} (Proposition $6.2$  in \cite{bridson1999metric})
Given two points $z_1, z_2$ in $\mathbb{D}_K$, let $w_1$ and $w_2$ be the ideal endpoints of the chord through them (in order $w_1, z_1,z_2$ and $w_2).$ Then the distance between $z_1$ and $z_2$ is
\[
d_{\mathbb{K}}(z_1, z_2) = \frac{1}{2} \log \left(| [z_1,z_2;w_1,w_2]| \right).
\]
\end{lemma}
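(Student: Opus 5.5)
The plan is to reduce to a single model configuration by isometries, and then compute directly using the Poincar\'e disk metric and the map $K$.

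\textbf{Step 1 (invariance).} Both sides of the formula are invariant under the isometries of $\mathbb{D}_K$. For the left side this is automatic. For the right side, I will use the standard description of $\mathbb{D}_K$ as the projectivization of the interior of the light cone in $\mathbb{R}^{2,1}$. The map $K(z)=2z/(1+|z|^2)$ is exactly the composite of the inverse stereographic map $\mathbb{D}\to$ hyperboloid with radial projection to the plane $x_3=1$. Consequently every isometry of $\mathbb{D}_K$ is the restriction of a projective transformation induced by $\mathrm{SO}^+(2,1)$. Such a map
\begin{itemize}
\item sends chords to chords,
\item sends ideal endpoints to ideal endpoints, preserving their order along the chord,
\item preserves the cross-ratio of four collinear points.
\end{itemize}
The cross-ratio here means the cross-ratio of the four points along the real affine parameter of the chord. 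I will note that this coincides with $|[z_1,z_2;w_1,w_2]|$ computed in $\mathbb{C}$, since for collinear points every difference is a real multiple of one fixed complex direction, and these directions cancel in the quotient.

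\textbf{Step 2 (normalization).} The isometry group acts transitively on pairs of points at a given distance. I will therefore move $z_1$ to $0$, and then rotate so that $z_2=r\in(0,1)$. The chord through them becomes the diameter $[-1,1]$, with $w_1=-1$ and $w_2=1$ in the stated order.

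\textbf{Step 3 (computation).} The Klein point $r$ is $K(s)$ for the Poincar\'e point $s\in(0,1)$ with $r=2s/(1+s^2)$. Since $K$ is by definition an isometry from $\mathbb{D}$ onto $\mathbb{D}_K$,
\[
d_{\mathbb{K}}(0,r)=d_{\mathbb{D}}(0,s)=\int_0^s\frac{2\,dt}{1-t^2}=\log\frac{1+s}{1-s}.
\]
On the other side, the definition of the cross-ratio gives
\[
[0,r;-1,1]=\frac{(0-1)(r+1)}{(0+1)(r-1)}=\frac{1+r}{1-r}.
\]
Substituting $r=2s/(1+s^2)$ yields
\[
\frac{1+r}{1-r}=\frac{(1+s)^2}{(1-s)^2}.
\]
Taking $\tfrac12\log$ of this recovers $\log\frac{1+s}{1-s}$, which proves the formula.

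\textbf{Main obstacle.} I expect Step 1 to be the main difficulty: justifying that isometries of $\mathbb{D}_K$ act projectively on chords. I would first try conjugating a Poincar\'e-disk M\"obius isometry $\varphi$ by $K$ and checking, via the hyperboloid picture, that $K\circ\varphi\circ K^{-1}$ is the projective map induced by the corresponding Lorentz transformation. If that identification becomes cumbersome, the fallback is to skip the reduction entirely. In that case I would parametrize a general chord linearly, pull its Euclidean segment back through $K^{-1}$, and integrate the metric $ds_K$ directly. This computation is messier, but it is elementary.
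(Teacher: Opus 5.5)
Your proof is correct, and it supplies something the paper does not: the thesis gives no argument for this lemma, only the citation to Bridson--Haefliger.

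\textbf{Step 1 (invariance).} This step holds once made explicit. Write $h^{-1}(z)=\frac{1}{1-|z|^2}\bigl(2z,\,1+|z|^2\bigr)$ and let $\pi$ be radial projection onto $\{x_3=1\}$. Then $K=\pi\circ h^{-1}$. Since $h$ is an isometry and the Klein metric is defined as a pushforward under $K$, the map $\pi$ is an isometry from the hyperboloid onto $\mathbb{D}_K$. Consequently a Lorentz map $A$ conjugates to the projective map $[x]\mapsto[Ax]$, which has three properties:
\begin{itemize}
\item it preserves the closed disk $\{\langle x,x\rangle\le 0\}$, so no point is sent to infinity;
\item it restricts to a homeomorphism between closed chords, so the order $w_1,z_1,z_2,w_2$ is kept;
\item on each line it acts by a real fractional linear map, so the cross-ratio is kept.
\end{itemize}

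\textbf{Step 3 (computation).} Your normalized computation is right: for $r=2s/(1+s^2)$ one has $[0,r;-1,1]=\frac{1+r}{1-r}=\bigl(\frac{1+s}{1-s}\bigr)^2$, matching $d_{\mathbb{D}}(0,s)=\log\frac{1+s}{1-s}$. So the fallback direct integration of $ds_K$ is unnecessary.

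\textbf{What your route buys.} Because $K$ is radial, the distance along a diameter is read off directly from the Poincar\'e metric, with no work in the Klein metric itself. The cost is the projective-invariance input. That needs the hyperboloid picture and the fact, asserted in the paper, that its isometries are linear.

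\textbf{One small slip.} The full isometry group corresponds to $O^+(2,1)$, not $SO^+(2,1)$, since reflections come from elements of determinant $-1$. Your normalization uses only a disk automorphism followed by a rotation, so this changes nothing.
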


In this model, geodesics are straight-line segments (chords) (see Figure~\ref{Klein model}).
\begin{figure}[H]
    \centering
    \begin{tikzpicture}
        \draw (0,0) circle [radius=2cm];
        \draw ({2*cos(320)},{2*sin(320)})--({2*cos(170)},{2*sin(170)});
        \draw (.6,-.85) node {\tiny$\bullet$}; 
        \draw (-1.22,0) node {\tiny$\bullet$};
        \draw (.83,-.65) node {$z_1$}; 
        \draw (-1,.2) node {$z_2$}; 
        \draw ({2.3*cos(320)},{2.3*sin(320)}) node {$w_1$}; 
        \draw ({2.35*cos(170)},{2.35*sin(170)}) node {$w_2$};
    \end{tikzpicture}
    \caption{Straight-line geodesics in the Klein model.}
    \label{Klein model}
\end{figure}

\subsection{The Hyperboloid Model}

Let $\mathbb{R}^{2,1}$ be $\mathbb{R}^3$ with the Lorentzian inner product
\[
\langle \mathbf{x}, \mathbf{y} \rangle =  x_1 y_1 + x_2 y_2- x_3 y_3.
\]
Define the upper sheet of the hyperboloid:
\[
\mathcal{H} = \{ \mathbf{x} = (x_1, x_2, x_3) \in \mathbb{R}^3 : \langle \mathbf{x}, \mathbf{x} \rangle = -1,\ x_3 > 0 \}.
\]
Equipped with the induced metric
\[
\mathrm{d}s^2 = \mathrm{d}x_1^2 + \mathrm{d}x_2^2 - \mathrm{d}x_3^2,
\]
this forms the hyperboloid model of the hyperbolic plane. Geodesics are intersections of $\mathcal{H}$ with planes through the origin.

The map $h\colon\mathcal{H}  \to \mathbb{D}$ defined by

\[
h(x_1, x_2, x_3) = \left( \frac{x_1}{1 + x_3}, \frac{x_2}{1 + x_3} \right)
\]
defines an isometry from the hyperboloid to the Poincar\'e disk (see Theorem 3.7.5 in \cite{Beardon}). The isometry group of this model is the subgroup of $\mathrm{GL}(3, \mathbb{R})$ preserving the Lorentz form and the upper sheet.

\begin{figure}[htbp]
    \centering
    \begin{tikzpicture}
        \fill[gray!20!white] (-3-1,-1.5)--(1+1,-1.5)--(3+1,1.5)--(-1-1,1.5)--cycle;
        \draw (-3-1,-1.5)--(1+1,-1.5)--(3+1,1.5)--(-1-1,1.5)--cycle;
        \draw[very thick] [->-=1] (0,0)--(0,5);
        \draw[->-=1] [very thick] (0,0)--(4,0);
        \draw [->-=1, very thick] (0,0)--(-2,-2);
        \draw (-2.8,3.5)..controls(0,.5)..(2.8,3.5);
        
        % \draw (-2.5,3) arc
        % [
        %     start angle=180,
        %     end angle=360,
        %     x radius=2.5cm,
        %     y radius =2cm
        % ] ;
        \draw  (-2.3,3) arc
        [
            start angle=180,
            end angle=360,
            x radius=2.3cm,
            y radius =.25cm
        ] ;
        \draw [ dashed] (2.3,3) arc
        [
            start angle=0,
            end angle=180,
            x radius=2.3cm,
            y radius =.25cm
        ] ;
        \draw (3.5,-.3) node {$x_2$} (-1.5,-1.9) node {$x_1$} (.35,4.7) node {$x_3$} (.7,.8) node {$(0,0,1)$};
    \end{tikzpicture}
    
    \caption{Hyperboloid model}
    \label{fig: Hyperboloid model}
\end{figure}

\section{Insights into the Upper Half-Plane Model}

This section presents a comprehensive study of the upper half-plane model of hyperbolic geometry. We begin by defining the hyperbolic arc length, followed by a classification of isometries and an analysis of their actions. Building on the notion of arc length, we then introduce the hyperbolic distance function. Finally, we describe the geodesics, the curves that minimize hyperbolic distance, and provide explicit formulas for calculating distances between points.

\subsection{Measuring Arc-Length in Hyperbolic Geometry}

Let $I=[a,b]$ be a closed interval in $\mathbb{R}.$ 
The hyperbolic length of a piecewise smooth arc \( \alpha : I \to \mathbb{H}\) where $\alpha(t)=x(t)+iy(t), \forall t\in I$, is denoted by \( l_\mathbb{H}(\alpha) \) or simply \( l(\alpha) \) and is defined by
\[
l_\mathbb{H}(\alpha) = \int_a^b \frac{\sqrt{\left( \frac{dx}{dt} \right)^2 + \left( \frac{dy}{dt} \right)^2}}{y(t)}\, dt.
\]

\begin{lemma}
This length is invariant under reparameterization, meaning that if \( \alpha: [a, b] \to \mathbb{H} \) and \( \beta: [a', b'] \to \mathbb{H} \) are such that \( \alpha = \beta \circ h \) for a diffeomorphism \( h: [a, b] \to [a', b'] \), then \( l_\mathbb{H}(\alpha) = l_\mathbb{H}(\beta) \).
\end{lemma}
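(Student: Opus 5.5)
The plan is to reduce the statement to the one-variable change of variables formula for integrals, after recognising the integrand as a pointwise quantity attached to the curve. For a piecewise smooth arc $\gamma(t)=x(t)+iy(t)$, set
\[
\rho(\gamma,t)=\frac{|\gamma'(t)|}{\operatorname{Im}\gamma(t)}=\frac{\sqrt{x'(t)^2+y'(t)^2}}{y(t)}.
\]
Then $l_{\mathbb{H}}(\gamma)=\int \rho(\gamma,t)\,dt$.

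First, assume $\alpha=\beta\circ h$ with $h$ a diffeomorphism of $[a,b]$ onto $[a',b']$. By the chain rule,
\[
\alpha'(t)=\beta'(h(t))\,h'(t), \qquad \operatorname{Im}\alpha(t)=\operatorname{Im}\beta(h(t)).
\]
This gives the pointwise identity $\rho(\alpha,t)=\rho(\beta,h(t))\,|h'(t)|$.

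Second, since $h$ is a diffeomorphism of intervals, $h'$ never vanishes, so it has constant sign. I split into two cases. If $h'>0$, then $h(a)=a'$ and $h(b)=b'$, and substituting $s=h(t)$ gives
\[
\int_a^b \rho(\beta,h(t))\,h'(t)\,dt=\int_{a'}^{b'}\rho(\beta,s)\,ds=l_{\mathbb{H}}(\beta).
\]
If $h'<0$, then $|h'|=-h'$ and the endpoints are swapped. The same substitution produces $\int_{b'}^{a'}$ with a minus sign, which again equals $\int_{a'}^{b'}\rho(\beta,s)\,ds$.

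The only point needing care is piecewise smoothness. I partition $[a',b']$ at the finitely many points where $\beta$ fails to be smooth, and pull this partition back by $h$. On each subinterval both $\alpha$ and $\beta$ are smooth, so the argument above applies there, and summing over the subintervals gives $l_{\mathbb{H}}(\alpha)=l_{\mathbb{H}}(\beta)$.

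I do not expect a genuine obstacle. The step to get right is the orientation-reversing case: the absolute value $|h'|$ coming from $|\gamma'|$ must be matched correctly against the reversed limits of integration.
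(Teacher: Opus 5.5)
Your proposal is correct and follows the same route as the paper: the chain rule gives $|\alpha'(t)|=|\beta'(h(t))|\,|h'(t)|$ with $\operatorname{Im}\alpha(t)=\operatorname{Im}\beta(h(t))$, and the substitution $s=h(t)$ finishes the argument. Your version is more careful than the paper's computation. The paper writes $\frac{dh(t)}{dt}$ where $|h'(t)|$ belongs, so it tacitly treats only increasing $h$, and it does not address the break points of a piecewise smooth arc; your proposal handles both.
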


\begin{proof}
Suppose  $\alpha(t) = x(t) + i y(t), \quad \forall\, t \in [a,b],$ and $\beta(t) = x^\prime(t) + i y^\prime(t), \quad \forall\, t \in [a^\prime, b^\prime].$
It is given that $\alpha(t) = (\beta \circ h)(t), \quad \forall\, t \in [a,b].$
This implies 
\[
x(t) = x^\prime(h(t)) \quad \text{and} \quad y(t) = y^\prime(h(t)).
\]

Now,
\begin{align*} l_\mathbb{H}(\alpha) &= \int_a^b \frac{\sqrt{\left( \frac{dx}{dt} \right)^2 + \left( \frac{dy}{dt} \right)^2}}{y(t)}\, dt \\ &=\int_a^b \frac{\sqrt{\left( \frac{dx^\prime(h(t)}{dt} \right)^2 + \left( \frac{dy^\prime(h(t))}{dt} \right)^2}}{y^\prime(h(t))}\, \frac{dh(t)}{dt}dt \\ &= \int_a^b \frac{\sqrt{\left( \frac{dx^\prime(h(t)}{dt} \right)^2 + \left( \frac{dy^\prime(h(t))}{dt} \right)^2}}{y^\prime(h(t))}\, dh(t)\\ &= \int_{a^\prime}^{b^\prime} \frac{\sqrt{\left( \frac{dx^\prime}{dt} \right)^2 + \left( \frac{dy^\prime}{dt} \right)^2}}{y^\prime(t)}\, dt\\ &= l_\mathbb{H}(\beta). \end{align*}
\end{proof}

\subsection{Isometries in \texorpdfstring{$\mathbb{H}$}{H}}

\begin{definition}
An \emph{isometry} of $\mathbb{H}$ is a map $f: \mathbb{H} \to \mathbb{H}$ such that
\[
d_{\mathbb{H}}(f(x), f(y)) = d_{\mathbb{H}}(x, y), \quad \text{for all } x, y \in \mathbb{H}.
\]
\end{definition}

\begin{definition}
A differentiable map $f: \mathbb{H} \to \mathbb{H}$ is said to be \emph{orientation-preserving} if
\[
\det(df_p) > 0 \quad \text{for all } p \in \mathbb{H}.
\]
\end{definition}

The set of all isometries of $\mathbb{H}$ forms a group, denoted by $\mathrm{Isom}(\mathbb{H})$. The subgroup consisting of orientation-preserving isometries is denoted by $\mathrm{Isom}^+(\mathbb{H})$. 

Next, we characterize the structure of these groups. In particular, we show that the group $\mathrm{PSL}(2,\mathbb{R})$ describes all orientation-preserving isometries of the upper half-plane, while the full isometry group includes both orientation-preserving and orientation-reversing transformations.

We first recall the definition of the special linear group:
\[
\mathrm{SL}(2,\mathbb{R}) \;=\; \left\{ 
\begin{pmatrix}
a & b \\ c & d
\end{pmatrix} : a,b,c,d \in \mathbb{R}, \; ad-bc=1
\right\}.
\]

Every element 
\[
g = \begin{pmatrix} a & b \\ c & d \end{pmatrix} \in \mathrm{SL}(2,\mathbb{R})
\]
acts on the upper half-plane 
\[
\mathbb{H} = \{ z \in \mathbb{C} : \Im(z) > 0 \}
\]
via a Möbius transformation,
\[
g(z) \;=\; \frac{az+b}{cz+d}, \qquad \forall z \in \mathbb{H}.
\]

This action is not faithful as the matrices $g$ and $-g$ induce the same transformation of $\mathbb{H}$. 
The kernel of the action is therefore the subgroup $\{\pm I_2\}$, where $I_2$ is the $2 \times 2$ identity matrix. 
Passing to the quotient removes this redundancy, and one defines the projective special linear group
\[
\mathrm{PSL}(2,\mathbb{R}) \;=\; \mathrm{SL}(2,\mathbb{R}) / \{\pm I_2\},
\]
which is precisely the group of distinct Möbius transformations with real coefficients.

 Every Möbius transformation corresponds to a pair of matrices $\pm g \in \mathrm{SL}(2,\mathbb{R})$, and composition of transformations corresponds to matrix multiplication.

Hence, $\mathrm{PSL}(2,\mathbb{R})$ is isomorphic to the group of all orientation-preserving isometries of the upper half-plane $\mathbb{H}$.

\begin{theorem}
    $\p=\mathrm{Isom}^{+}(\mathbb{H})$.
\end{theorem}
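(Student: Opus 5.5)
The statement amounts to two inclusions. First, every element of $\p$ is an orientation-preserving isometry of $\mathbb{H}$. Second, every orientation-preserving isometry of $\mathbb{H}$ arises from some element of $\p$.

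\textbf{Möbius maps are orientation-preserving isometries.} Let $g(z)=\frac{az+b}{cz+d}$ with $ad-bc=1$. The plan has three small checks.
\begin{itemize}
\item $g$ preserves $\mathbb{H}$, because $\Im g(z)=\frac{\Im z}{|cz+d|^2}$.
\item $g$ is holomorphic with $g'(z)=\frac{1}{(cz+d)^2}$, so it is conformal and $\det(dg_z)=|g'(z)|^2>0$.
\item Combining these gives $\frac{|g'(z)|}{\Im g(z)}=\frac{1}{\Im z}$. This is exactly invariance of the metric $\frac{|dz|}{\Im z}$.
\end{itemize}
So for any piecewise smooth arc $\alpha$ we get $l_{\mathbb{H}}(g\circ\alpha)=l_{\mathbb{H}}(\alpha)$. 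Since $d_{\mathbb{H}}$ is the infimum of lengths of arcs joining two points, $g$ preserves $d_{\mathbb{H}}$. The action factors through $\{\pm I_2\}$, so $\p$ embeds in $\mathrm{Isom}^+(\mathbb{H})$.

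\textbf{Every orientation-preserving isometry is Möbius.} Let $f\in\mathrm{Isom}^+(\mathbb{H})$. I would first normalize using transitivity of $\p$.
\begin{itemize}
\item $z\mapsto az+b$ with $a>0$ is in $\p$, and these maps act transitively on $\mathbb{H}$. So some $h_1\in\p$ gives $h_1 f(i)=i$.
\item Isometries send geodesics to geodesics, since geodesics are characterized by equality in the triangle inequality. The imaginary axis $I$ is a geodesic, so $h_1 f(I)$ is a geodesic through $i$.
\item The stabilizer of $i$ in $\p$ consists of the rotations $\begin{pmatrix}\cos\theta&\sin\theta\\-\sin\theta&\cos\theta\end{pmatrix}$. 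These act transitively on unit tangent directions at $i$, so we can compose with one of them.
\end{itemize}
This yields $h\in\p$ such that $F=h f$ fixes $I$ pointwise, with upward orientation. The pointwise claim holds because $F$ fixes $i$, maps $I$ to $I$ preserving direction, and preserves distance along $I$.

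\textbf{Main step: $F$ is the identity.} Take $z\notin I$. Then $F(z)$ satisfies $d(F(z),it)=d(z,it)$ for all $t>0$. I would use the explicit formula
\[
\cosh d(z,w)=1+\frac{|z-w|^2}{2\Im z\,\Im w}.
\]
With $w=it$, this formula shows that $\Re z$ is determined up to sign and $\Im z$ is determined exactly. Hence $F(z)\in\{z,-\bar z\}$.

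By continuity, and because $\mathbb{H}\setminus I$ has two connected half-planes, $F$ is either the identity or $z\mapsto-\bar z$ on each half. Fixing a point $z_0$ off $I$, continuity in the distance argument forces a single choice on each half. The map $z\mapsto-\bar z$ is orientation-reversing, so $F$ cannot agree with it on an open set. Therefore $F=\mathrm{id}$, and $f=h^{-1}\in\p$.

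I expect this last step to be the main obstacle. It needs the distance formula, which is presumably derived later in the chapter, or an equivalent computation of $d(i,iy)=|\log y|$ combined with invariance. It also needs care to use orientation-preservation only through the Jacobian sign on an open set, since an isometry is not a priori assumed differentiable.
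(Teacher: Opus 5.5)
Your proof is correct, but the final step takes a different route from the paper's. The first half, metric invariance via $\Im g(z)=\Im z/|cz+d|^2$ and $g'(z)=(cz+d)^{-2}$, is the same computation as in the paper.

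For the converse, both arguments normalize to a map $F$ fixing the imaginary axis $I$ pointwise, in a different order. The paper first uses some $A\in\p$ to carry $\phi(I)$ onto $I$, then a dilation $z\mapsto z/\lambda$, and if necessary $z\mapsto -1/z$ to restore the orientation of $I$. You instead fix $i$ by an affine map and then use the rotation stabilizer of $i$.

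The real difference is the last step. The paper argues that $F$ is angle-preserving, so it maps the geodesic through $z$ perpendicular to $I$ to itself, and concludes $F(z)=z$. This tacitly uses that isometries preserve angles, which is asserted rather than proved there. It also leaves implicit that the perpendicular contains two points at the right distance from its foot, namely $z$ and $-\bar z$, and that orientation selects $z$.

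Your argument replaces conformality by a purely metric computation. From $\cosh d(z,it)=1+|z-it|^2/(2t\,\Im z)$ for varying $t$ you get $F(z)\in\{z,-\bar z\}$. Connectedness of each half-plane of $\mathbb{H}\setminus I$, together with the Jacobian sign, then rules out the reflection; the paper's definition of orientation-preserving already presupposes differentiability, so this step is legitimate.

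What your route buys is that it makes explicit exactly the dichotomy the paper passes over, and it never needs angle preservation. What it costs is reliance on the distance formula, which the paper only states later (Theorem 1.2.6 of Katok). As you note, this is not circular: that formula is derived using only the inclusion $\p\subseteq\mathrm{Isom}^+(\mathbb{H})$ and $d(i,iy)=|\log y|$.
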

\begin{proof}
    The hyperbolic metric for the upper half-plane can be expressed as 
    $$ds^2=\frac{4dzd\overline{z}}{|z-\overline{z}|^2}.$$
    Thus, to prove that a transformation $T\in \p$ is an isometry of $\mathbb{H}$, it is sufficient to show that
    \begin{equation}\label{eqn 2.3.1}
        \frac{dT(z)d\overline{T(z)}}{|T(z)-\overline{T(z)}|^2}=\frac{dzd\overline{z}}{|z-\overline{z}|^2}.
    \end{equation}
    Let $T(z) = \frac{az+b}{cz+d}, 
    \quad a,b,c,d \in \mathbb{R} \quad \text{and}\quad \; ad-bc=1.$
 Then  
\begin{align*}
\overline{T(z)} &= \frac{a\overline{z}+b}{c\overline{z}+d}, \\[6pt]
dT(z) &= \frac{dz}{(cz+d)^2}, \\[6pt]
d\overline{T(z)} &= \frac{d\overline{z}}{(c\overline{z}+d)^2} \quad \text{and} \\[6pt]
T(z) - \overline{T(z)} &= \frac{z-\overline{z}}{|cz+d|^2}.
\end{align*}

    Using these relations, Equation \eqref{eqn 2.3.1} now easily follows.

    For the reverse direction, let $\phi \in \mathrm{Isom}^{+}(\mathbb{H})$ and $I=\{ti:t>0\}$. Then $\phi(I)$ is a geodesic in the upper half plane. Take $A\in \p$ such that $A\circ \phi(I)=I$. Suppose $A\circ \phi(i)=\lambda i$, for some $\lambda>0$. Then $B\circ A \circ \phi(i)=i$, where $B\in \p$ is defined by $B(z)=\frac{1}{\lambda}z$. Suppose $B\circ A \circ \phi$ fixes $0,i,\infty$. If not, we compose it by the map $T\in \p$, where $T(z) =-\frac{1}{z}$ , for all $z\in \h$. As $B\circ A\circ\phi$ is an isometry, it fixes $I$ pointwise. Now, we show that $B\circ A\circ\phi$ is the identity map. Let $z$ be an arbitrary point of $\h$. Consider the geodesic $\gamma$ passing through $z$, which meets $I$ perpendicularly. As $B\circ A\circ\phi$ is angle-preserving and orientation-preserving, $B\circ A\circ\phi(\gamma)=\gamma$ and in particular $B\circ A\circ\phi(z)=z$. Therefore, $B\circ A\circ\phi=id\implies \phi= A^{-1}B^{-1} \in \p$.
\end{proof}

\begin{theorem}[Theorem 7.4.1 \cite{Beardon}]
    The group $\mathrm{Isom}(\h)$ of the upper half-plane model is generated by $\p$ and $z\mapsto -\overline{z}$.
\end{theorem}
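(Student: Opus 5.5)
The statement is that $\mathrm{Isom}(\h)$ is generated by $\p$ and the reflection $R(z)=-\overline{z}$. One inclusion is immediate. We already know $\p\subset\mathrm{Isom}^+(\h)\subset\mathrm{Isom}(\h)$. So it only remains to check that $R$ is an isometry. For this we verify directly that $R$ preserves the metric: if $w=-\overline z$, then $dw\,d\overline w=dz\,d\overline z$ and $|w-\overline w|=|z-\overline z|$. Hence $R$ preserves $\frac{4dzd\bar z}{|z-\bar z|^2}$, and therefore preserves lengths of arcs and the distance $d_{\h}$.

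For the reverse inclusion, let $\phi\in\mathrm{Isom}(\h)$ be arbitrary. We use the standard fact that a distance isometry of a Riemannian manifold is a smooth Riemannian isometry (Myers--Steenrod). Alternatively, we can avoid smoothness by working only with geodesics and angles, as in the proof of $\p=\mathrm{Isom}^+(\h)$ above. The plan is to normalize $\phi$ by elements of $\p$ and $R$ until it fixes enough structure to force it to be the identity.

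\textbf{Step 1.} Since $\phi$ maps geodesics to geodesics, $\phi(I)$ is a geodesic, where $I=\{ti:t>0\}$. Choose $A\in\p$ with $A\phi(I)=I$, and then rescale by $B(z)=z/\lambda$ so that $BA\phi(i)=i$. If necessary, also compose with $T(z)=-1/z$ so that the endpoints $0$ and $\infty$ are fixed. The resulting isometry $\psi$ preserves $I$ and fixes $i$, $0$ and $\infty$. Because it preserves distances along $I$ from $i$ and fixes the direction toward $\infty$, it fixes $I$ pointwise.

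\textbf{Step 2.} For any $z\in\h$, let $\gamma_z$ be the geodesic through $z$ meeting $I$ orthogonally at a point $p$. The image $\psi(\gamma_z)$ is again a geodesic through $p$ orthogonal to $I$, so it equals $\gamma_z$. Also $\psi(z)$ lies at the same distance from $p$ as $z$, which leaves two choices: $z$ itself or its mirror image $-\overline z$ across $I$.

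\textbf{Step 3.} This is the main obstacle: we must show the choice is made consistently over all of $\h$. The sets $\{z:\psi(z)=z\}$ and $\{z:\psi(z)=-\overline z\}$ are both closed, since both maps and $\psi$ are continuous. They cover $\h$ and intersect only along $I$. Consider the connected half-plane $\{\Re z>0\}$; it is covered by the two disjoint relatively closed sets, so one of them contains it entirely. The same holds on $\{\Re z<0\}$.

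To rule out mixed behaviour, suppose $\psi$ fixes one half-plane pointwise and reflects the other. Take a geodesic crossing $I$ transversally but not orthogonally. Its image would be a broken curve, which is not a geodesic. This contradicts the fact that isometries send geodesics to geodesics, or equivalently it breaks distance-minimization between a point on each side.

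Hence $\psi=\mathrm{id}$ or $\psi=R$. Unwinding the normalization, $\phi$ is a composition of elements of $\p$, possibly with one factor of $R$. Therefore $\mathrm{Isom}(\h)$ is generated by $\p$ and $z\mapsto-\overline z$.
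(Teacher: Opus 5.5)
The paper does not prove this statement; it quotes it from Beardon. The natural comparison is therefore with the paper's proof that $\mathrm{Isom}^+(\mathbb{H})=\mathrm{PSL}(2,\mathbb{R})$. Your argument is correct and follows that template: the same normalization by $A$, $B$ and $T(z)=-1/z$ to get $\psi$ fixing $I$ pointwise, and the same use of the geodesic $\gamma_z$ through $z$ orthogonal to $I$.

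The genuinely new ingredient is the one the full group requires. In the paper, orientation-preservation is what rules out $-\overline{z}$ and gives $\psi(z)=z$ directly. You only get $\psi(z)\in\{z,-\overline{z}\}$ pointwise, so the choice has to be made globally consistent. Your closed-set and connectedness argument on the two half-planes, together with the exclusion of the mixed case, does exactly this.

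Two points could be tightened.

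\begin{enumerate}
\item In Step~2, the orthogonality of $\psi(\gamma_z)$ to $I$ needs angle preservation. Myers--Steenrod supplies it, but your ``alternatively, avoid smoothness'' remark is not yet an argument. The elementary justification is the cosine rule $\cosh c=\cosh a\cosh b-\sinh a\sinh b\cos\gamma$: it shows the angles of a geodesic triangle are determined by its side lengths, so a distance-preserving map preserves them. You can also bypass angles entirely. The point $\psi(z)$ has the same distances as $z$ to two distinct points of $I$, and the only points of $\mathbb{H}$ with those two distances are $z$ and $-\overline{z}$.
\item The mixed case can be dismissed at once by injectivity. Suppose $\psi$ is the identity on $\{\mathrm{Re}\, z>0\}$ and $z\mapsto -\overline{z}$ on $\{\mathrm{Re}\, z<0\}$. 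Then for $\mathrm{Re}\, z<0$ we get $\psi(z)=-\overline{z}=\psi(-\overline{z})$, although $d_{\mathbb{H}}(z,-\overline{z})>0$. Your broken-geodesic argument is also valid.
\end{enumerate}
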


\subsection{Classification of isometries}
In this section, we study the behavior of the action of the elements $\mathrm{Isom}^+(\h)$ on $\h$. Our initial classification is based on the fixed points of the elements of $\mathrm{Isom}^+(\h)$. 

Let $T\in \mathrm{Isom}^+(\h)$, where $T(z)=\frac{az+b}{cz+d}$. We can extend $T$ continuously to $\overline{\h}$ , where $\overline{\h}= \h\cup \partial\h$  by following:

When $c=0$,
\[ T(z)=\begin{cases} 
      \frac{a}{d}z+\frac{c}{d}, & \text{ for }z\in \overline{\h}\setminus \{\infty\} \quad \text{and} \\
      \infty, & \text{ for }z=\infty, 
   \end{cases}
\]
and in case $c\neq 0$, we have 
\[ T(z)=\begin{cases} 
      \frac{az+b}{cz+d}, & \text{ for }z\in \overline{\h}\setminus \{\infty, -\frac{d}{c}\}, \\
      \infty, & \text{ for }z=-\frac{d}{c} \quad \text{and} \\
      \frac{a}{c}, & \text{ for }z=\infty.
   \end{cases}
\]

A point $z\in \overline{\h}$ is a fixed point of $T$ if $T(z)=z$. By Brouwer's fixed point theorem (\cite{hatcher2002}, Theorem $1.9)$, it follows that $T$ always has at least one fixed point in $\overline{\h}$.

Let $T\in \mathrm{Isom}^+(\h)$ with $T(z)=\frac{az+b}{cz+d}, ad-bc=1$. Then $z_0=\infty$ is a fixed point of $T$ if and only if $c=0$. In this case, $T(z)=\frac{a}{d}z+\frac{b}{d}$. If $a=d$, then $z_0=\infty$ is the only fixed point of $T$. If $a\neq d$, then $T$ has two fixed points in $\overline{\h}$, namely $\infty$ and $-\frac{c}{a}$.

If $c\neq 0$, the fixed points of $T$ are the solution of the equation $T(z)=z$ or equivalently $cz^2-(a-d)z-b=0$ which are given below:
$$z_0=\frac{a-d+\sqrt{tr(T)^2-4}}{2c},\  \frac{a-d-\sqrt{tr(T)^2-4}}{2c},$$
where $tr(T)=|a+d|$. 

\begin{definition}
     Two transformations $T_1, T_2$ are called conjugate in $\mathrm{Isom}^+(\h)$, if there exists an element $p\in\mathrm{Isom}^+(\h)$ such that $T_2=pT_1p^{-1}$.
\end{definition}
\begin{theorem}[Theorem 2.31 \cite{Anderson}]
\begin{enumerate}
\item $T$ has exactly one fixed point in $\partial\h$ and none in $\h$ if and only if $tr(T)=2$. In this case, $T$ is conjugate to a translation $z\mapsto z+b$, where $b\in \{1, -1\}$

\item $T$ has exactly one fixed point in $\h$ and none in $\partial\h$ if and only if $tr(T)<2.$ In this case, $T$ is conjugate to a hyperbolic rotation $z\mapsto \frac{\cos\theta z-\sin\theta}{\sin\theta z+\cos\theta}$. 
    
\item The transformation $T$ has exactly two fixed point in $\partial\h$ and none in $\h$ if and only if $tr(T)>2.$ In this case, $T$ is conjugate to a dilation $z\mapsto \lambda^2z$, for some $\lambda\in \mathbb{R}_{>1}.$
    \end{enumerate}
\end{theorem}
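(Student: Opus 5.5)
Throughout, $T(z)=\frac{az+b}{cz+d}$ with $ad-bc=1$, normalized so that $\operatorname{tr}(T)=|a+d|$ (replacing $(a,b,c,d)$ by $(-a,-b,-c,-d)$ if needed). I would first settle the fixed-point statements by pure algebra, and then prove the conjugacy statements by conjugating the fixed points into standard position.

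The key computation is the discriminant of the fixed-point equation $cz^2-(a-d)z-b=0$ from the discussion preceding the theorem. It is
\[
(a-d)^2+4bc=(a+d)^2-4ad+4bc=\operatorname{tr}(T)^2-4 .
\]
If $c\neq 0$, the fixed points are the two roots. They are two distinct real points (in $\partial\h$) exactly when $\operatorname{tr}(T)>2$. There is one double real root exactly when $\operatorname{tr}(T)=2$. When $\operatorname{tr}(T)<2$, the roots are a complex-conjugate pair, so exactly one of them lies in $\h$ and none lies in $\partial\h$; note that $\infty$ is not fixed here because $c\neq0$.

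If $c=0$, then $ad=1$ and $\operatorname{tr}(T)=|a+1/a|\ge 2$, with equality iff $a=d=\pm1$. Equality gives the single fixed point $\infty$, while $\operatorname{tr}(T)>2$ gives the two fixed points $\infty$ and $b/(d-a)$. Since the three trace conditions are mutually exclusive and exhaustive, these computations yield all three "if and only if" statements simultaneously.

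For the conjugacy statements, I would use that $\p$ acts transitively on points of $\h$ and on ordered pairs of distinct points of $\partial\h$, which can be checked with explicit Möbius maps.

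In the case $\operatorname{tr}(T)>2$, choose $p$ sending the repelling and attracting fixed points to $0$ and $\infty$. Then $pTp^{-1}$ fixes $0$ and $\infty$, so it has the form $z\mapsto \mu z$ with $\mu>0$. Trace is conjugation invariant, so $\mu^{1/2}+\mu^{-1/2}=\operatorname{tr}(T)>2$; ordering the fixed points gives $\mu>1$, and we set $\lambda=\mu^{1/2}$.

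In the case $\operatorname{tr}(T)=2$, send the unique fixed point to $\infty$. The conjugate then has the form $z\mapsto z+\beta$ with $\beta\neq 0$. A further conjugation by a dilation $z\mapsto z/|\beta|$ normalizes this to $z\mapsto z\pm1$; the sign cannot be changed, because $z\mapsto -z$ is not in $\p$.

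In the case $\operatorname{tr}(T)<2$, send the fixed point to $i$. The stabilizer of $i$ in $\p$ consists exactly of the rotations $\frac{\cos\theta z-\sin\theta}{\sin\theta z+\cos\theta}$. To see this, solve $(ai+b)=i(ci+d)$, which gives $a=d$ and $b=-c$, and then use $a^2+c^2=1$.

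The main obstacle I expect is bookkeeping rather than ideas. One issue is handling the $c=0$ and $c\neq0$ cases uniformly. The other is the orientation and sign constraints in the normalizations, namely why $\lambda>1$ and why $b\in\{1,-1\}$ cannot be reduced further. I would resolve both by making the conjugating maps explicit and by using the invariance of the trace.
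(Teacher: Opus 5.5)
The paper does not prove this theorem. It is quoted from Anderson, and the only supporting material is the fixed-point computation just before the statement, built on the same quadratic $cz^2-(a-d)z-b=0$ you start from. Your argument is correct and is the standard one. The following steps all check out:
\begin{itemize}
\item the discriminant identity $(a-d)^2+4bc=\operatorname{tr}(T)^2-4$;
\item the separate $c=0$ case, where your second fixed point $b/(d-a)$ is right and the paper's $-\frac{c}{a}$ is a typo;
\item the deduction of the converses, using that the three trace conditions and the three fixed-point configurations are each mutually exclusive;
\item the three normalizations by conjugation.
\end{itemize}

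You must state the hypothesis $T\neq\mathrm{Id}$ explicitly; the paper only imposes it in the definition that follows. The identity has trace $2$ and fixes every point, so the ``if and only if'' in item~1 is false for it. Your proof uses $T\neq\mathrm{Id}$ tacitly twice. First, $c=0$ with $a=d$ gives the single fixed point $\infty$ only if $b\neq0$. Second, your assertion $\beta\neq0$ needs the same justification.

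Two minor remarks. The undefined ``repelling/attracting'' ordering is unnecessary: conjugating $z\mapsto\mu z$ by $z\mapsto -1/z$, which lies in $\mathrm{PSL}(2,\mathbb{R})$, gives $z\mapsto\mu^{-1}z$, so you can always arrange $\mu>1$. Your aside that $z\mapsto z+1$ and $z\mapsto z-1$ are not conjugate is not part of the statement. Moreover, the observation $z\mapsto -z\notin\mathrm{PSL}(2,\mathbb{R})$ does not by itself prove it. You would need to note that any conjugator must fix $\infty$, hence has the form $z\mapsto\alpha z+\gamma$ with $\alpha>0$, and that it carries $z\mapsto z+1$ to $z\mapsto z+\alpha$.
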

Now, we have the following classification.

\begin{definition}
    Let $T\in \mathrm{Isom}^+(\h)\setminus \{\mathrm{Id}\}$, where $T(z)=\frac{az+b}{cz+d}$, $a, b, c, d\in \mathbb{R}$, with $ad-bc=1$. Then
    \begin{enumerate}
        \item $T$ is called elliptic if $tr(T)<2.$
        \item $T$ is called parabolic if $tr(T)=2.$
        \item $T$ is called hyperbolic if $tr(T)>2.$
    \end{enumerate}
\end{definition}

\subsection{ Hyperbolic Distance Function}
Based on the arc length, the hyperbolic distance between two points is defined as follows.

\begin{definition}
The hyperbolic distance \( d_\mathbb{H}(z, w) \) between two points \( z, w \in \mathbb{H} \) is defined by
\[
d_\mathbb{H}(z, w) = \inf \left\{ l_\mathbb{H}(\alpha) \,\middle|\, \alpha: [0,1] \to \mathbb{H} \text{ is a piece-wise smooth path from } z \text{ to } w \right\}.
\]
\end{definition}

\begin{lemma}
    $d_\mathbb{H}$ is a distance function on $\mathbb{H}.$ 
\end{lemma}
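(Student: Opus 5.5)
We must verify the three metric axioms for $d_\mathbb{H}$ on $\mathbb{H}$: non-negativity with $d_\mathbb{H}(z,w)=0$ iff $z=w$, symmetry, and the triangle inequality. Finiteness is immediate, since the straight Euclidean segment from $z$ to $w$ lies in $\mathbb{H}$ and has finite hyperbolic length. Non-negativity holds because the integrand in $l_\mathbb{H}$ is non-negative. Symmetry follows by reversing a path $\alpha$ via $t\mapsto 1-t$. This is a reparameterization by an orientation-reversing diffeomorphism. The earlier reparameterization lemma is stated for a diffeomorphism $h$, and its computation goes through once the absolute value $|dh/dt|$ is used. So $l_\mathbb{H}(\bar\alpha)=l_\mathbb{H}(\alpha)$, and the two infima coincide.

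For the triangle inequality, take $\varepsilon>0$ and piecewise smooth paths $\alpha$ from $z$ to $u$ and $\beta$ from $u$ to $w$, with lengths within $\varepsilon$ of $d_\mathbb{H}(z,u)$ and $d_\mathbb{H}(u,w)$. The concatenation $\alpha*\beta$, reparameterized to $[0,1]$, is again piecewise smooth. By additivity of the integral and the reparameterization lemma, its length is $l_\mathbb{H}(\alpha)+l_\mathbb{H}(\beta)$. Hence $d_\mathbb{H}(z,w)\le d_\mathbb{H}(z,u)+d_\mathbb{H}(u,w)+2\varepsilon$, and we let $\varepsilon\to 0$.

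The main step is positivity: $z\neq w$ should imply $d_\mathbb{H}(z,w)>0$. I would argue by a local comparison with the Euclidean metric. Write $z=x_0+iy_0$ and let $r=\min\{|z-w|,\,y_0/2\}>0$. Any path $\alpha$ from $z$ to $w$ must leave the closed Euclidean disk $\overline{B}(z,r)$, which is contained in $\mathbb{H}$. Consider the initial portion of $\alpha$ up to the first time it meets the circle $|\zeta-z|=r$. On this portion $y(t)\le y_0+r\le \tfrac{3}{2}y_0$, and its Euclidean length is at least $r$. Therefore
\[
l_\mathbb{H}(\alpha)\ \ge\ \frac{1}{\tfrac32 y_0}\int \sqrt{x'(t)^2+y'(t)^2}\,dt\ \ge\ \frac{2r}{3y_0}>0.
\]
This lower bound is independent of $\alpha$, so $d_\mathbb{H}(z,w)\ge \tfrac{2r}{3y_0}>0$.

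Alternatively, one could bound from below using the vertical component alone, $l_\mathbb{H}(\alpha)\ge\left|\int y'/y\,dt\right|=|\log(\mathrm{Im}\,w/\mathrm{Im}\,z)|$. This only works when the imaginary parts differ, and otherwise would need the isometries $z\mapsto z+b$ and a rotation to reduce to that case. I would use the disk argument, since it is self-contained and does not rely on any isometry theory.
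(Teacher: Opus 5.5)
Your proposal is correct. For non-negativity, symmetry and the triangle inequality it follows the paper's proof; you also note finiteness and the orientation-reversing case of the reparameterization lemma, which the paper passes over.

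The genuine difference is in definiteness. The paper moves $z$ to $i$ by an element of $\mathrm{PSL}(2,\mathbb{R})$. It then uses a rotation in the stabilizer of $i$ to arrange $\Im w''\neq 1$, and finally applies the vertical estimate $\ell_{\mathbb H}(\gamma)\ge|\ln \Im w''|$. That is essentially the alternative you describe and set aside.

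You instead compare the density $1/y$ with the Euclidean one on $\overline{B}(z,r)$, where $1/y\ge 2/(3y_0)$, and use the fact that every path from $z$ to $w$ must reach the boundary circle. Your route is self-contained. It needs neither the invariance of the line element under Möbius maps nor the transitivity of the stabilizer of $i$ on hyperbolic circles, which the paper asserts with only an informal justification. It is also more general: the same argument shows that any conformal metric $\rho\,|dz|$ with continuous positive $\rho$ on a domain induces a genuine distance.

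The paper's route, in exchange, exploits the homogeneity of $\mathbb{H}$. Its lower bound is tied to the hyperbolic geometry: it is the estimate that is sharp along the imaginary axis, as in the geodesic example that follows. Your constant $2r/(3y_0)$ is merely some positive number.
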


\begin{proof}
Here, $
\ell_{\mathbb H}(\alpha) \;=\int_0^1 \frac{|\alpha'(t)|}{\Im(\alpha(t))}\,dt$, where $\Im(\alpha(t)$ is the imaginary part of $\alpha(t).$

Now, we verify the metric axioms.

\medskip
\noindent\textbf{(i) Non-negativity.}
For every path $\alpha$ we have $|\alpha'(t)|\ge 0$ and $\Im(\alpha(t))>0$, hence
$\ell_{\mathbb H}(\alpha)\ge 0$. Taking the infimum gives $d_{\mathbb H}(z,w)\ge 0$.

\medskip
\noindent\textbf{(ii) Symmetry.}
If $\alpha$ joins $z$ to $w$, then the reversed path $\tilde\alpha(t)=\alpha(1-t)$
joins $w$ to $z$ and satisfies $\ell_{\mathbb H}(\tilde\alpha)=\ell_{\mathbb H}(\alpha)$.
Thus $d_{\mathbb H}(z,w)=d_{\mathbb H}(w,z)$.

\medskip
\noindent\textbf{(iii) Triangle inequality.}
Let $z,w,u\in\mathbb H$ and fix $\varepsilon>0$.
Choose paths $\alpha$ from $z$ to $w$ and $\beta$ from $w$ to $u$ with
\[
\ell_{\mathbb H}(\alpha) \le d_{\mathbb H}(z,w)+\tfrac\varepsilon2,
\qquad
\ell_{\mathbb H}(\beta) \le d_{\mathbb H}(w,u)+\tfrac\varepsilon2.
\]
Concatenating $\alpha$ and $\beta$ gives a path $\gamma$ from $z$ to $u$ with
\[
\ell_{\mathbb H}(\gamma)
= \ell_{\mathbb H}(\alpha)+\ell_{\mathbb H}(\beta)
\le d_{\mathbb H}(z,w)+d_{\mathbb H}(w,u)+\varepsilon.
\]
Taking the infimum and letting $\varepsilon\to 0$ yields
\[
d_{\mathbb H}(z,u) \le d_{\mathbb H}(z,w)+d_{\mathbb H}(w,u).
\]

\medskip
\noindent\textbf{(iv) Definiteness.}
 Suppose $z,w\in\mathbb H$ and $z\neq w$. We show $d_{\mathbb H}(z,w)>0$.

The group $\mathrm{PSL}(2,\mathbb R)$ of real Möbius transformations acts transitively on $\mathbb H$ by isometries for the Poincaré line element $|dz|/\Im z$. Hence there exists $T\in\mathrm{PSL}(2,\mathbb R)$ with $T(z)=i$. Put $w':=T(w)$. Since $T$ is an isometry,
\[
d_{\mathbb H}(z,w)=d_{\mathbb H}(T(z),T(w))=d_{\mathbb H}(i,w').
\]

If $w'=i$ then $w=z$ (because $T$ is injective), contrary to $z\neq w$. Thus $w'\neq i$.
Consider the stabilizer subgroup of $i$ in $\mathrm{PSL}(2,\mathbb R)$; this subgroup is isomorphic to $\mathrm{SO}(2)$ and acts by rotations about $i$ (in the hyperbolic sense). In particular it preserves the hyperbolic circle
\[
\{u\in\mathbb H:\ d_{\mathbb H}(u,i)=d_{\mathbb H}(w',i)\},
\]
and acts transitively on that circle. Therefore we can find $R$ in the stabilizer of $i$ such that the image $w'':=R(w')$ satisfies $\Im w''\neq 1$. (Intuitively: the orbit of $w'$ under the stabilizer is a nontrivial circle centered at $i$, and some point of that circle has imaginary part different from $1$.)

Now $S:=R\circ T\in\mathrm{PSL}(2,\mathbb R)$ is an isometry sending $z$ to $i$ and $w$ to $w''$, so
\[
d_{\mathbb H}(z,w)=d_{\mathbb H}(i,w'').
\]

Let $\gamma:[0,1]\to\mathbb H$ be any piecewise-smooth path from $i$ to $w''$. Writing $\gamma(t)=x(t)+i y(t)$ with $y(t)=\Im\gamma(t)>0$, we have
\[
|\gamma'(t)| \;\ge\; |y'(t)| \quad\text{for all }t,
\]
hence
\[
\frac{|\gamma'(t)|}{\Im\gamma(t)} \;\ge\; \frac{|y'(t)|}{y(t)}.
\]
Integrating and using the triangle inequality for integrals,
\[
\ell_{\mathbb H}(\gamma)
=\int_0^1\frac{|\gamma'(t)|}{\Im\gamma(t)}\,dt
\ge \int_0^1\frac{|y'(t)|}{y(t)}\,dt
\ge \Big|\int_0^1\frac{y'(t)}{y(t)}\,dt\Big|
= \big|\ln y(1)-\ln y(0)\big|.
\]
Since $\gamma(0)=i$ and $\gamma(1)=w''$ we get
\[
\ell_{\mathbb H}(\gamma) \ge \big|\ln(\Im w'')-\ln(\Im i)\big|
= \big|\ln(\Im w'')\big|.
\]
By choice $\Im w''\neq 1$, so the right-hand side is strictly positive. Because the inequality holds for every admissible path $\gamma$ from $i$ to $w''$, taking the infimum yields
\[
d_{\mathbb H}(i,w'') \ge \big|\ln(\Im w'')\big| >0.
\]
Thus $d_{\mathbb H}(z,w)=d_{\mathbb H}(i,w'')>0$. This proves that if $z\neq w$ then $d_{\mathbb H}(z,w)>0$.

Conversely, if $z=w$ the constant path shows $d_{\mathbb H}(z,z)=0$. Combining both directions, we obtain
\[
d_{\mathbb H}(z,w)=0 \iff z=w,
\]
as required.

\medskip
Combining (i)–(iv), we conclude that $d_{\mathbb H}$ is a metric on $\mathbb H$.
\end{proof}

Now, it is straightforward to see that \( d_\mathbb{H} \) satisfies the properties of a metric: positive definiteness, symmetry, and the triangle inequality. Moreover, the space \( (\mathbb{H}, d_\mathbb{H}) \) is a path metric space. In fact, for any two points \( z, w \in \mathbb{H} \), there exists a unique path \( \alpha: [0,1] \to \mathbb{H} \) connecting them such that \( l_\mathbb{H}(\alpha) = d_\mathbb{H}(z, w) \). Such a path is called a geodesic segment joining \( z \) and \( w \), and \( (\mathbb{H}, d_\mathbb{H}) \) is thus a uniquely geodesic metric space.

\subsection{Geodesics in \texorpdfstring{$\mathbb{H}$}{H}}
We now describe the geodesics in \( \mathbb{H} \), as stated in Theorem~\ref{geod}. We begin with an illustrative example.

\begin{example}
Let \( z = ia \) and \( w = ib \in \mathbb{H} \) with \( b \geq a>0 \). Define the path \( \sigma: [a, b] \to \mathbb{H} \) by \( \sigma(t) = it \), for all $t\in[a,b]$. This path connects \( z \) and \( w \), and its hyperbolic length is
\[
l_{\mathbb{H}}(\sigma) = \int_a^b \frac{1}{t} \, dt = \ln\left( \frac{b}{a} \right).
\]
Now let \( \gamma: [0, 1] \to \mathbb{H} \), be any path from \( z \) to \( w \) given by \( \gamma(t) = x(t) + i y(t) \), $\forall t\in[0,1]$ i.e., \( \gamma(0) = z \), \( \gamma(1) = w \). Then
\begin{align*}
l_\mathbb{H}(\gamma) &= \int_0^1 \frac{\sqrt{x'(t)^2 + y'(t)^2}}{y(t)} \, dt \\
&\geq \int_0^1 \frac{|y'(t)|}{y(t)} \, dt \\
&\geq \left| \int_0^1 \frac{y'(t)}{y(t)} \, dt \right| = \left| \ln\left( \frac{b}{a} \right) \right| = \ln\left( \frac{b}{a} \right).
\end{align*}

Hence, \( d_\mathbb{H}(z, w) = \ln\left( \frac{b}{a} \right) \). Equality holds if and only if \( x'(t) = 0 \) and \( y'(t) > 0 \), which implies \( \gamma \) and \( \sigma \) represent the same vertical path (up to reparameterization). Therefore, the positive imaginary axis is a geodesic in \( \mathbb{H} \).
\end{example}

Now consider a transformation \( T: \mathbb{H} \to \mathbb{H} \) defined by
\[
T(z) = \frac{az + b}{cz + d}, \quad \text{where } a, b, c, d \in \mathbb{R} \text{ and } ad - bc = 1.
\]
Such transformations preserve hyperbolic lengths, that is, \( l_\mathbb{H}(\alpha) = l_\mathbb{H}(T \circ \alpha) \). Thus, they also preserve hyperbolic distances and map geodesics to geodesics.

The group of such transformations acts transitively on the set of vertical lines and semicircles orthogonal to the real axis. Consequently, the complete set of geodesics in \( \mathbb{H} \) is described by the following result.

\begin{theorem}[Theorem 1.2.1, \cite{Katok}] \label{geod}
The geodesics in \( \mathbb{H} \) are precisely the vertical straight lines and the semicircles orthogonal to the real axis.
\end{theorem}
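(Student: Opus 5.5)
The plan is to show two inclusions: every vertical line and every semicircle orthogonal to $\partial\mathbb{H}=\mathbb{R}$ is a geodesic, and every geodesic is of one of these two forms. The tools are the example above, which shows that the positive imaginary axis $I=\{it: t>0\}$ is the unique length-minimizing path between any two of its points (up to reparameterization), and the fact that elements of $\p=\mathrm{Isom}^+(\mathbb{H})$ preserve hyperbolic length and hence send geodesics to geodesics.

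For the first inclusion, let $L$ be a vertical line or a semicircle orthogonal to $\mathbb{R}$, with ideal endpoints $\alpha,\beta\in\mathbb{R}\cup\{\infty\}$. I will write down an element $T\in\p$ with $T(\alpha)=0$ and $T(\beta)=\infty$:
\begin{itemize}
\item if $\beta=\infty$, take a real translation, $z\mapsto z-\alpha$;
\item otherwise, take $z\mapsto \frac{z-\alpha}{z-\beta}$, rescaled by $(\beta-\alpha)^{-1/2}$ to have determinant one, and composed with $z\mapsto -1/z$ if needed to make the determinant positive.
\end{itemize}
Real Möbius maps send generalized circles to generalized circles and preserve angles and $\mathbb{R}\cup\{\infty\}$. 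So $T(L)$ is a generalized circle meeting $\mathbb{R}\cup\{\infty\}$ orthogonally at $0$ and $\infty$, and it therefore lies on $I$. Given $z,w\in L$, their images $T(z),T(w)$ lie on $I$, and by the example the segment of $I$ between them is the unique shortest path. Pulling back by $T^{-1}$ shows that the arc of $L$ between $z$ and $w$ is the unique length-minimizing path. Hence $L$ is a geodesic.

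For the converse, take any two points $z\neq w$ in $\mathbb{H}$. There is exactly one vertical line or orthogonal semicircle $L$ through them:
\begin{itemize}
\item if $\mathrm{Re}\,z=\mathrm{Re}\,w$, it is the vertical line;
\item otherwise, it is the semicircle centred at the point where the perpendicular bisector of the segment $zw$ meets $\mathbb{R}$.
\end{itemize}
By the first part, the arc of $L$ joining $z$ and $w$ is the unique length-minimizer. So any geodesic segment between $z$ and $w$ coincides with this arc. Since a geodesic is determined locally by its segments, every geodesic is contained in, and by completeness equals, such an $L$.

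The main obstacle is the uniqueness claim transported from the example, namely that equality in the length estimate forces $x'\equiv0$ and $y'\geq0$. This needs the argument to handle piecewise smooth paths carefully, and it also needs the fact that isometries carry minimizers to minimizers bijectively. The explicit construction of $T$ with positive determinant is routine, as is checking that Möbius maps preserve generalized circles.
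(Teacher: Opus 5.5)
Your proposal is correct and follows the same route as the paper (and Katok): the positive imaginary axis is the unique length-minimizer between any two of its points, and $\mathrm{PSL}(2,\mathbb{R})$ preserves length while acting transitively on vertical lines and orthogonal semicircles. You also spell out the converse, using the unique such curve through two points, which the paper leaves implicit. One small slip: composing with $z\mapsto -1/z$ cannot fix the sign of the determinant, because that map itself has determinant $1$; instead take $T(z)=\pm\frac{z-\alpha}{z-\beta}$ with the sign chosen so that the determinant $\pm(\alpha-\beta)$ is positive, and rescale by $|\alpha-\beta|^{-1/2}$.
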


The theorem follows from the fact that the group of all transformations $z\mapsto \frac{az+b}{cz+d}$, where $a,b,c,d\in \mathbb{R}$ satisfying $ad-bc=1$ acts transitively on the set of all vertical lines and semi-circles with center on real line and the positive imaginary axis is a hyperbolic geodesic.

\subsection{Hyperbolic Distance Formulas}

The Theorem below provides several explicit formulas for computing the distance between two points in the upper half-plane model of hyperbolic geometry.

\begin{theorem}[Theorem 1.2.6 \cite{Katok}]
Let $z, w \in \mathbb{H}$. Then the hyperbolic distance $d_{\mathbb{H}}(z, w)$ satisfies the following identities:
\begin{enumerate}
    \item $d_{\mathbb{H}}(z, w) = \ln\left( \frac{|z - \overline{w}| + |z - w|}{|z - \overline{w}| - |z - w|} \right)$.
    \item $\cosh d_{\mathbb{H}}(z, w) = 1 + \dfrac{|z - w|^2}{2\, \mathrm{Im}(z)\, \mathrm{Im}(w)}$.
    \item $\sinh\left( \dfrac{1}{2} d_{\mathbb{H}}(z, w) \right) = \dfrac{|z - w|}{2\sqrt{\mathrm{Im}(z)\, \mathrm{Im}(w)}}$.
    \item $\cosh\left( \dfrac{1}{2} d_{\mathbb{H}}(z, w) \right) = \dfrac{|z - \overline{w}|}{2\sqrt{\mathrm{Im}(z)\, \mathrm{Im}(w)}}$.
    \item $\tanh\left( \dfrac{1}{2} d_{\mathbb{H}}(z, w) \right) = \left| \dfrac{z - w}{z - \overline{w}} \right|$.
\end{enumerate}
\end{theorem}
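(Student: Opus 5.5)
The approach is to use the isometry invariance of $d_{\mathbb H}$ under $\mathrm{PSL}(2,\mathbb R)$ to reduce everything to points on the imaginary axis. There the distance is already known from the example above: $d_{\mathbb H}(ia,ib)=\ln(b/a)$.

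First, I would show that each right-hand side is invariant under every $T\in\mathrm{PSL}(2,\mathbb R)$. For $T(z)=\frac{az+b}{cz+d}$ with $ad-bc=1$, the computations already made in the proof that $\mathrm{PSL}(2,\mathbb R)=\mathrm{Isom}^+(\mathbb H)$ give the needed identities:
\[
T(z)-T(w)=\frac{z-w}{(cz+d)(cw+d)}, \qquad \mathrm{Im}\,T(z)=\frac{\mathrm{Im}\,z}{|cz+d|^2}.
\]
Replacing $w$ by $\bar w$ gives $|T(z)-\overline{T(w)}|=\frac{|z-\bar w|}{|cz+d||cw+d|}$, using $\overline{cw+d}=c\bar w+d$. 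The factors $|cz+d||cw+d|$ therefore cancel in every expression $\frac{|z-w|^2}{\mathrm{Im}z\,\mathrm{Im}w}$, $\frac{|z-\bar w|^2}{\mathrm{Im}z\,\mathrm{Im}w}$ and $\left|\frac{z-w}{z-\bar w}\right|$. The left-hand sides are invariant because $T$ is an isometry.

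Next, I would reduce to the imaginary axis. Given $z\neq w$, the geodesic through them (Theorem~\ref{geod}) can be mapped by some $T\in\mathrm{PSL}(2,\mathbb R)$ onto the imaginary axis, by transitivity on geodesics. Composing with $z\mapsto -1/z$ if needed, I may assume $z=ia$ and $w=ib$ with $b>a>0$; the case $z=w$ is trivial.

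It then suffices to verify one formula directly. Formula (5) is easiest: $\left|\frac{ia-ib}{ia+ib}\right|=\frac{b-a}{b+a}$, and with $d=\ln(b/a)$ we get $\tanh(d/2)=\frac{e^{d}-1}{e^{d}+1}=\frac{b-a}{b+a}$. Similarly, (1) becomes $\ln\frac{(a+b)+(b-a)}{(a+b)-(b-a)}=\ln(b/a)$, so (1) can also be checked directly. For (3) and (4), I would compute $\sinh(d/2)=\frac{b-a}{2\sqrt{ab}}$ and $\cosh(d/2)=\frac{b+a}{2\sqrt{ab}}$ from $e^{d/2}=\sqrt{b/a}$. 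For (2), I would use $\cosh d=1+2\sinh^2(d/2)$, which gives $1+\frac{(b-a)^2}{2ab}$.

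The main (mild) obstacle is the rigorous reduction step: producing an explicit $T$ that sends the geodesic through $z,w$ to the imaginary axis. If the geodesic is the semicircle with endpoints $\alpha<\beta$, I would take $T(z)=\frac{z-\beta}{z-\alpha}$, rescaled by $(\beta-\alpha)^{-1/2}$ to get determinant $1$. This sends $\beta\mapsto 0$ and $\alpha\mapsto\infty$. If the geodesic is a vertical line, a translation does the job.
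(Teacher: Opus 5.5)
Your proposal is correct. The invariance computation, the normalized map $z\mapsto (z-\beta)/(z-\alpha)$, and the five direct checks at $ia,ib$ all go through, and invariance also covers the positivity of $|z-\bar w|-|z-w|$ (the case $z=w$ you check separately). The paper gives no proof of its own here; it only cites Katok, and your argument ($\mathrm{PSL}(2,\mathbb{R})$-invariance of both sides, reduction to the imaginary axis, verification using $d_{\mathbb H}(ia,ib)=\ln(b/a)$) is essentially the standard proof of that result. The only cosmetic difference is that the standard proof usually checks one formula and derives the rest from identities such as $\cosh d=1+2\sinh^2(d/2)$ and $|z-\bar w|^2-|z-w|^2=4\,\mathrm{Im}(z)\,\mathrm{Im}(w)$, whereas you verify each directly, so your remark that ``it suffices to verify one formula'' is unnecessary.
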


\section{Area of hyperbolic triangles}
In this section, our objective is to compute the area of hyperbolic triangles, which will serve as a foundation for determining the area of more general hyperbolic polygons. We start by introducing the formal definition of area for a region in the hyperbolic plane. 
\begin{definition}
    The area of a region $A\subset \h$, denoted by $\mathrm{Area}_\h(A)$, is defined by
    $$\mathrm{Area}_\h(A)=\int_A \frac{dxdy}{y^2},$$
    if the integral exists.
\end{definition}
\begin{theorem}[Theorem 1.4.2 \cite{Katok}]\label{thm: Gauss Bonnet}
    Let $\Delta$ be a hyperbolic triangles with angles $\alpha, \beta$ and $\gamma$. Then 
    $$\mathrm{Area}_\h(\Delta)=\pi -(\alpha+\beta+\gamma).$$
\end{theorem}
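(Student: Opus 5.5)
The statement is the hyperbolic Gauss--Bonnet formula for a geodesic triangle, $\mathrm{Area}_\h(\Delta)=\pi-(\alpha+\beta+\gamma)$. The plan is to reduce to triangles with one ideal vertex and compute those directly with the area integral.

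\emph{Step 1: triangles with one ideal vertex.} Let $\Delta$ have vertices $A,B$ in $\h$ and a third vertex at $\infty$, with interior angles $\alpha,\beta$ at $A,B$ and $0$ at $\infty$. Using an element of $\p$, which preserves $\mathrm{Area}_\h$ because $dx\,dy/y^2$ is invariant under the same computation used to show $\p\subseteq\mathrm{Isom}^+(\h)$, we may assume $A,B$ lie on the unit semicircle $|z|=1$. By Theorem~\ref{geod} the sides of $\Delta$ are then two vertical rays and an arc of that semicircle. Write $A=e^{i(\pi-\alpha)}$ and $B=e^{i\beta}$; the elementary fact used here is that the angle between the vertical side and the arc at a point $e^{i\theta}$ equals the angle between the radius and the horizontal. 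Then
\[
\mathrm{Area}_\h(\Delta)=\int_{\cos(\pi-\alpha)}^{\cos\beta}\int_{\sqrt{1-x^2}}^{\infty}\frac{dy}{y^2}\,dx=\int_{\cos(\pi-\alpha)}^{\cos\beta}\frac{dx}{\sqrt{1-x^2}}.
\]
Substituting $x=\cos\theta$ gives $\int_{\beta}^{\pi-\alpha}d\theta=\pi-\alpha-\beta$.

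\emph{Step 2: the general triangle.} Let $\Delta$ have vertices $A,B,C$ in $\h$. Apply an isometry so that $C$ is positioned with the ray from $C$ upward forming the extension of side $AC$ beyond $C$; concretely, arrange that $A$ and $C$ lie on a common vertical geodesic with $C$ above $A$. Extend side $AC$ through $C$ to the ideal point $\infty$. Then $\Delta_1=AB\infty$ is a one-ideal-vertex triangle, and it decomposes as $\Delta\cup\Delta_2$ with $\Delta_2=CB\infty$ sharing only side $CB$ with $\Delta$.

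The angles are as follows. At $A$, $\Delta_1$ has angle $\alpha$. At $B$, $\Delta_1$ has angle $\beta+\delta$, where $\delta$ is the angle of $\Delta_2$ at $B$. At $C$, $\Delta_2$ has angle $\pi-\gamma$, since the angles at $C$ are supplementary along the geodesic $AC$. By Step 1 and additivity of the area integral,
\[
\mathrm{Area}_\h(\Delta)=\bigl(\pi-\alpha-\beta-\delta\bigr)-\bigl(\pi-(\pi-\gamma)-\delta\bigr)=\pi-\alpha-\beta-\gamma.
\]
Triangles with more ideal vertices follow either by the same subtraction or by a limiting argument, with monotone convergence of the integral.

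\emph{Main obstacle.} The delicate point is the configuration in Step 2. The extension of $AC$ past $C$ must place $B$ so that $\Delta_2\subset\Delta_1$ with $\Delta_1=\Delta\cup\Delta_2$, and the angle at $B$ must genuinely split as $\beta+\delta$. This holds because the ray $C\infty$ lies on the opposite side of line $CB$ from $A$, but it needs a short convexity argument. The angle identification in Step 1 also needs care and is best checked with a picture.
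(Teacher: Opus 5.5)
Your proof is correct and is essentially the standard argument of Katok's Theorem 1.4.2, which the paper only cites. You integrate $dx\,dy/y^2$ directly over a triangle with one vertex at $\infty$ and the other two on the unit semicircle, then write a general triangle as the difference of two such triangles by extending a side to the ideal boundary; Katok extends $AC$ to a real point $D$ instead of first moving that endpoint to $\infty$, and in Step 1 the normalizing isometry should be an affine map $z\mapsto \lambda z+\mu$, $\lambda>0$, so that it fixes $\infty$.
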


\begin{corollary}
    Let $P$ be an $n$-sided hyperbolic polygon with interior angles $\theta_1, \dots, \theta_n$. Then 
    $$\mathrm{Area}_\h(P)=(n-2)\pi-(\theta_1+ \dots + \theta_n).$$
\end{corollary}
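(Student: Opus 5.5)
The plan is to triangulate $P$ by geodesic diagonals and then sum the triangle areas given by Theorem~\ref{thm: Gauss Bonnet}. I will assume, as is implicit in the statement, that $P$ is a convex hyperbolic polygon, i.e.\ all interior angles $\theta_i$ are less than $\pi$, and that all vertices lie in $\h$. Label the vertices $v_1,\dots,v_n$ in cyclic order so that $\theta_i$ is the interior angle at $v_i$.

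First, by convexity, the geodesic segments $[v_1,v_j]$ for $3\le j\le n-1$ lie in $P$. Since $(\h,d_\h)$ is uniquely geodesic, these diagonals meet only at $v_1$. They cut $P$ into $n-2$ hyperbolic triangles $\Delta_j=(v_1,v_j,v_{j+1})$ for $2\le j\le n-1$. The triangles have disjoint interiors and cover $P$, and their common boundaries are geodesic segments, which have zero area. So area is additive:
\[
\mathrm{Area}_\h(P)=\sum_{j=2}^{n-1}\mathrm{Area}_\h(\Delta_j).
\]

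Second, apply Theorem~\ref{thm: Gauss Bonnet} to each $\Delta_j$. Each triangle contributes $\pi$ minus the sum of its three angles, so summing over all $n-2$ triangles gives $(n-2)\pi$ minus the sum of all triangle angles. It remains to check that this total angle sum equals $\theta_1+\dots+\theta_n$.

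\begin{itemize}
\item At $v_1$, the angles of $\Delta_2,\dots,\Delta_{n-1}$ sit side by side, separated by the diagonals. Their sum is exactly $\theta_1$.
\item At $v_j$ for $3\le j\le n-1$, the diagonal $[v_1,v_j]$ splits $\theta_j$ into an angle of $\Delta_{j-1}$ and an angle of $\Delta_j$. Their sum is $\theta_j$.
\item The vertices $v_2$ and $v_n$ each lie in exactly one triangle, $\Delta_2$ and $\Delta_{n-1}$ respectively, and the triangle's angle there is the full angle $\theta_2$ or $\theta_n$.
\end{itemize}

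Adding these up gives
\[
\mathrm{Area}_\h(P)=(n-2)\pi-(\theta_1+\dots+\theta_n).
\]
As a cross-check, one can also argue by induction on $n$: cut off the triangle $(v_1,v_{n-1},v_n)$ and apply the inductive hypothesis to the remaining $(n-1)$-gon.

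The main obstacle is the geometric justification of the triangulation: showing that the diagonals from $v_1$ stay inside $P$ and split the angles additively. For convex $P$ this follows from geodesic convexity in $\h$, which I would verify first.

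For a non-convex simple polygon, I would instead invoke the standard fact that every simple polygon has an interior diagonal. Cutting along such a diagonal produces two polygons with $n_1+n_2=n+2$ sides. At each endpoint of the diagonal, the angle splits additively, so the formula is preserved under this decomposition and the induction on $n$ goes through unchanged.
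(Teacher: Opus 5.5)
Your proposal is correct and is essentially the paper's argument, whose entire proof is the instruction to decompose $P$ into triangles and apply Theorem~\ref{thm: Gauss Bonnet}. Your fan triangulation, the vertex-by-vertex angle bookkeeping, and the diagonal-cutting induction for non-convex polygons simply make that line rigorous; in the non-convex case, the existence of an interior diagonal can be imported from the Euclidean case via the Klein model, where geodesics are straight chords.
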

\begin{proof}
    Decompose the polygon into triangles and use Theorem \ref{thm: Gauss Bonnet}.
\end{proof}

\section{Trigonometry in Hyperbolic Geometry}
In this section, we study hyperbolic trigonometric identities for hyperbolic triangles which will be frequently used in the upcoming chapters. We begin with the definition of hyperbolic trigonometric functions. Then we discuss some identities satisfied by the hyperbolic functions which are easy to prove. 
\subsection{Hyperbolic trigonometric functions}
The hyperbolic cosine, sine and tan functions are defined as follows:
\begin{align*}
\cosh{x}&=\frac{e^x+e^{-x}}{2},\\  \sinh{x}&=\frac{e^x-e^{-x}}{2} \quad\text{and}\\
\tanh{x}&=\frac{\sinh{x}}{\cosh{x}}=\frac{e^x-e^{-x}}{e^x+e^{-x}}
\end{align*}
As in the Euclidean geometry other hyperbolic trigonometric functions $\csch, \sech, \coth$ and their inverses are defined in a similar manner.  
\subsection{Hyperbolic angles}
Let  $\alpha:(-\epsilon,\epsilon)\to \h$ and $\beta:(-\epsilon,\epsilon)\to \h$ be two differentiable curves in $\h$ with $\alpha(0)=\beta(0)=p$ and $\dot{\alpha}(0), \dot{\beta}(0)\neq0$, where $p$ is a point of $\h$. The angle between $\alpha$ and $\beta$ at the point $p$ is defined by
$$\angle_p(\alpha,\beta)=\frac{\langle\dot{\alpha}(0),\dot{\beta}(0)\rangle_p}{||\dot{\alpha}(0)||_p\cdot ||\dot{\beta}(0)||_p},$$ where $\langle(u_1, v_1)),(u_2, v_2)\rangle_p= \frac{1}{\mathrm{Im}(p)}(u_1v_1+u_2v_2)$, for $(u_1, v_1)),(u_2, v_2)\in T_p(\h)\cong \mathbb{R}^2$, is the hyperbolic inner product. 
It is easy to see that the angle between two curves in the upper half-plane model is same as the Euclidean angle.

\subsection{Hyperbolic trigonometric formula}
Consider a hyperbolic triangle with interior angles $\alpha,\beta$ and $\gamma$. Let the lengths of the side opposite to $\alpha,\beta$ and $\gamma$ be $a,b$ and $c$, respectively (see Figure~\ref{fig:triangle}$(i)$)

\begin{figure}[htbp]
      \centering
      \begin{tikzpicture}
          \draw (-.5,1) [bend left=20] to (-1.9,-1);
          \draw (-1.9,-1) [bend left=20] to (1.9,-1);
          \draw (1.9,-1) [bend left=20] to (-.5,1);
          \draw (-.45,.55) node {\tiny$\gamma$};
          \draw (-1.2,-.6) node {\tiny$\alpha$};
          \draw (.5,-.53) node {\tiny$\beta$};
          \draw (-.5,-.85) node {\small$c$};
          \draw (-1.2,-0) node {\small$b$};
          \draw (.4,.1) node {\small$a$};

          \draw (3+1.5,-1)--(5.5+1.5,-1);
          \draw (3+1.5,1)--(3+1.5,-1);
          \draw (3+1.5,1) to [bend right] (5.5+1.5,-1);
          \draw (2.8+1.5,0) node {\small$a$} (4.25+1.5,-1.2) node {\small$b$} (3.85+1.5,0) node{\small$c$} (4.65,.3) node {\tiny$\beta$} (5.9,-.85) node {\tiny$\alpha$};

          \draw (0,-1.8) node {$(i)$} (5.75,-1.8) node {$(ii)$};
      \end{tikzpicture}
      \caption{Hyperbolic triangles}
      \label{fig:triangle}
  \end{figure}

 %(for more details, see Section 2.2 \cite{buser2010geometry}).

 Analogous to Euclidean geometry, we have the hyperbolic sine and cosine rule in the hyperbolic plane.
\begin{theorem}[Theorem 2.2.1 \cite{buser2010geometry}] \label{Cosine Rule}
    \begin{enumerate}
        \item The Sine Rule: $\frac{\sinh{a}}{\sin{\alpha}}=\frac{\sinh{b}}{\sin{\beta}}=\frac{\sinh{c}}{\sin{\gamma}}$.
        \item The Cosine Rule I: $\cosh c= \cosh a \cosh b-\sinh a \sinh b \cos \gamma$
        \item The Cosine Rule II: $\cos \gamma=\sin \alpha \sin \beta \cosh c-\cos \alpha \cos \beta$
    \end{enumerate}
\end{theorem}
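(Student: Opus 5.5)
The statement to prove is Theorem~\ref{Cosine Rule}: the sine rule and the two cosine rules for a hyperbolic triangle. The plan is to reduce to a normalized position in the upper half-plane using the isometry group $\p$. Then I derive Cosine Rule I from the explicit distance formula $\cosh d_{\h}(z,w)=1+\frac{|z-w|^2}{2\,\mathrm{Im}(z)\mathrm{Im}(w)}$. The sine rule and Cosine Rule II then follow algebraically.

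\emph{Step 1 (normalization).} Since $\p$ acts transitively on $\h$ and the stabilizer of $i$ acts by rotations, I would move the vertex with angle $\gamma$ to $i$. I would then rotate so that the side of length $b$ lies along the imaginary axis. The other two vertices become $z_1=ie^{b}$ and $z_2=$ the point at distance $a$ from $i$ along the geodesic through $i$ making angle $\gamma$ with the imaginary axis.

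To get explicit coordinates, it is cleaner to pass to the Poincar\'e disk via $f(z)=\frac{z-i}{z+i}$, which sends $i$ to $0$. There the vertices are $0$, $\tanh(b/2)$ and $\tanh(a/2)e^{i\gamma}$, since radial geodesics are diameters and $d_{\mathbb D}(0,r)=2\operatorname{artanh} r$.

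\emph{Step 2 (Cosine Rule I).} I would use the disk version of the distance formula: $\cosh d(u,v)=1+\frac{2|u-v|^2}{(1-|u|^2)(1-|v|^2)}$, obtained by transporting item (2) of the distance-formula theorem through $f$. Set $u=\tanh(b/2)$ and $v=\tanh(a/2)e^{i\gamma}$, and expand $|u-v|^2=u^2+|v|^2-2u|v|\cos\gamma$. Then apply the half-angle identities $\frac{2t}{1-t^2}=\sinh x$ and $\frac{1+t^2}{1-t^2}=\cosh x$ for $t=\tanh(x/2)$. This gives $\cosh c=\cosh a\cosh b-\sinh a\sinh b\cos\gamma$. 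This identity manipulation is the main computational step, but it is routine.

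\emph{Step 3 (Sine Rule and Cosine Rule II).} Solve Cosine Rule I for $\cos\gamma$. Then compute
\[
\sin^2\gamma=\frac{1-\cosh^2a-\cosh^2b-\cosh^2c+2\cosh a\cosh b\cosh c}{\sinh^2a\sinh^2b}.
\]
The numerator is symmetric in $a,b,c$, so $\frac{\sinh^2 c}{\sin^2\gamma}$ is symmetric. Since all the quantities are positive, this yields the sine rule.

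For Cosine Rule II, I would substitute the expressions for $\cos\alpha,\cos\beta$ (from Cosine Rule I applied at the other vertices) and $\sin\alpha\sin\beta$ (from the symmetric numerator above) into $\sin\alpha\sin\beta\cosh c-\cos\alpha\cos\beta$. Simplifying should yield $\cos\gamma$. The main obstacle is keeping this final algebraic simplification organized. I would streamline it by writing $x=\cosh a$, $y=\cosh b$, $z=\cosh c$ and $\Delta$ for the symmetric numerator, then clearing the common denominator $\sinh^2 c\,\sinh a\sinh b$.
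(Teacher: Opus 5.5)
Your proposal is correct, and each step goes through as planned. With $s=\tanh(b/2)$ and $t=\tanh(a/2)$, the disk formula gives $\cosh c=\frac{(1+s^2)(1+t^2)-4st\cos\gamma}{(1-s^2)(1-t^2)}$, which is Cosine Rule I after the half-angle identities. Solving for $\cos\gamma$ gives $\sin^2\gamma\,\sinh^2 a\,\sinh^2 b=1-x^2-y^2-z^2+2xyz=:\Delta$. The final simplification for Cosine Rule II collapses to the identity $\Delta z-(yz-x)(xz-y)=(z^2-1)(xy-z)$, so the right-hand side equals $(xy-z)/(\sinh a\sinh b)=\cos\gamma$.

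The paper does not prove the statement at all. It imports the three formulas from Buser (Theorem 2.2.1) and uses them only as tools later, for instance Cosine Rule II when computing $\cosh l_\epsilon$ and $\cosh x_\epsilon$. Your route buys self-containedness, since it uses only facts the paper already records: transitivity of $\mathrm{PSL}(2,\mathbb{R})$ on $\mathbb{H}$ with a rotational stabilizer of $i$, the isometry $f(z)=\frac{z-i}{z+i}$ onto the disk, and the $\cosh$ distance formula. It also makes clear that the sine rule and Cosine Rule II are purely algebraic consequences of Cosine Rule I applied at all three vertices. The citation buys brevity.

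When you write it up, make two points explicit. First, Cosine Rule I at the other two vertices follows by repeating the normalization with the labels permuted. Second, you take positive square roots, $\sin\alpha=\sqrt{\Delta}/(\sinh b\sinh c)$ and so on, because all angles lie in $(0,\pi)$; this also shows $\Delta>0$ for a non-degenerate triangle.
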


  Next, we consider a right-angled hyperbolic triangle, where $\gamma=\frac{\pi}{2}$ in Figure \ref{fig:triangle}$(ii)$ (for details, see Theorem 2.2.2 \cite{buser2010geometry}). Then the following hold.
  \begin{theorem}\label{pythagorus}
      \begin{enumerate}
          \item $\cosh c=\cosh a \cosh b$.
          \item $\sinh a=\sin \alpha \sinh c$.
          \item $\sinh{a}= \cot \beta \tanh b.$ 
      \end{enumerate}
  \end{theorem}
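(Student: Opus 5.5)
The plan is to place a right-angled triangle in a normalized position in the upper half-plane and compute everything from the explicit distance formulas of Theorem 1.2.6 of \cite{Katok}. Since elements of $\mathrm{PSL}(2,\mathbb{R})$ are isometries and preserve angles, we may assume the right angle sits at the vertex $i$. The leg $a$ then lies along the imaginary axis from $i$ to $ie^{a}$. The leg $b$ lies on the unit semicircle from $i$ to the point $w=\tanh b+i\,\mathrm{sech}\, b$, because the unit semicircle is the geodesic orthogonal to the imaginary axis at $i$. One checks that this parametrization satisfies $d_{\mathbb{H}}(i,w)=b$ via formula (2): $|i-w|^2=2-2\,\mathrm{sech}\, b$, so $\cosh d=1+(\cosh b-1)=\cosh b$.

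\emph{Step 1 (Pythagoras).} I will apply formula (2) to $z=ie^{a}$ and $w$. This gives $|z-w|^2=\tanh^2 b+(e^a-\mathrm{sech}\, b)^2$. Simplifying with $\tanh^2 b+\mathrm{sech}^2 b=1$ yields $\cosh c=1+\frac{1+e^{2a}-2e^a\mathrm{sech}\, b}{2e^a\mathrm{sech}\, b}=\cosh a\cosh b$.

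\emph{Step 2 (sine relations).} For (2), rather than compute the angle at $w$ directly, I will use the hyperbolic Sine Rule from Theorem~\ref{Cosine Rule} with $\gamma=\pi/2$. This gives $\sinh a/\sin\alpha=\sinh c/1$, i.e. $\sinh a=\sin\alpha\sinh c$.

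\emph{Step 3 (cotangent relation).} For (3), I will take Cosine Rule II at the vertex with angle $\alpha$ opposite $a$: $\cos\alpha=\sin\beta\sin\gamma\cosh a-\cos\beta\cos\gamma=\sin\beta\cosh a$. Symmetrically, $\cos\beta=\sin\alpha\cosh b$. I then combine these with Cosine Rule I, which reads $\cosh a=\cosh b\cosh c-\sinh b\sinh c\cos\alpha$.

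Substituting $\cosh c=\cosh a\cosh b$ gives $\cosh a\sinh^2 b=\sinh b\sinh c\cos\alpha$, hence $\cos\alpha=\cosh a\sinh b/\sinh c$. Dividing by $\sin\alpha=\sinh a/\sinh c$ yields $\cot\alpha=\cosh a\sinh b/\sinh a$. Here $\alpha$ is the angle opposite $a$, which is the labelling of Theorem~\ref{Cosine Rule}. In the labelling of Figure~\ref{fig:triangle}(ii), the angle adjacent to leg $a$ is $\beta$, so the intended identity is $\sinh a=\cot\beta\tanh b$, with $\beta$ the angle between $a$ and $c$ and opposite $b$. The analogous computation with the roles of the legs swapped gives $\cos\beta=\cosh b\sinh a/\sinh c$ and $\sin\beta=\sinh b/\sinh c$. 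Therefore $\cot\beta=\cosh b\sinh a/\sinh b=\sinh a/\tanh b$, which is exactly (3).

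The main obstacle is not the algebra but keeping the labelling consistent between the figure and the general triangle formulas. I will fix the convention explicitly before Step 2: $a,b$ are the legs, $c$ is the hypotenuse, and $\beta$ is the angle opposite $b$. If one prefers to avoid Theorem~\ref{Cosine Rule}, the fallback is to compute the tangent direction at $w$ of the geodesic to $ie^a$ in the normalized model. That calculation is routine but messier.
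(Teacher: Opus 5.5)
Your argument is correct. Note that the thesis does not prove this statement itself: it quotes it from Theorem~2.2.2 of \cite{buser2010geometry}, just as it quotes the general sine and cosine rules of Theorem~\ref{Cosine Rule}. So you are supplying a derivation where the paper gives only a citation. It is non-circular within the thesis, because Theorem~\ref{Cosine Rule} is stated earlier and taken as known.

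Once you allow Theorem~\ref{Cosine Rule}, all three items are specializations at $\gamma=\pi/2$, and a few of your steps can be shortened:

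\begin{enumerate}
\item Item (1) is Cosine Rule~I directly. Your upper half-plane computation is therefore an independent check, not a needed step. It would only be essential if you wanted to avoid Theorem~\ref{Cosine Rule} entirely, and then (2) and (3) would also require the angle computation at $w$, as you yourself note.
\item Item (3) has a one-line proof from identities you already wrote down. You have $\cos\beta=\sin\alpha\cosh b$ from Cosine Rule~II, and $\sin\alpha=\sinh a/\sinh c$, $\sin\beta=\sinh b/\sinh c$ from the Sine Rule. Together they give $\cot\beta=\cosh b\,\sinh a/\sinh b=\sinh a/\tanh b$. As written, the Cosine Rule~II identities you derive are never used, and the detour through Cosine Rule~I and $\cot\alpha$ is longer than necessary.
\item There is no labelling conflict to resolve. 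Figure~\ref{fig:triangle}(ii) uses the same convention as part~(i) and Theorem~\ref{Cosine Rule}: $\alpha$ is opposite $a$, $\beta$ is opposite $b$, and the right angle is opposite $c$. So $\beta$ being the angle between $a$ and $c$ is automatic.
\item In the normalization, putting leg $a$ on the imaginary axis needs a rotation about $i$ as well as a map sending the vertex to $i$. You may also need $z\mapsto-\bar z$ to put $w$ to the right of the axis. Both are harmless, since they preserve distances.
\end{enumerate}
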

  The formula in item 1 of Theorem~\ref{pythagorus} is known as the hyperbolic Pythagoras formula. 

%\section{Hyperbolic triangles of type \texorpdfstring{$(p,q,r)$}{}}
%\begin{definition}
 %   Given integers $p,q,r\geq 3$, a triangle of type $(p,q,r)$ is a hyperbolic triangle with angles $\frac{\pi}{p},\frac{\pi}{q}$ and $\frac{\pi}{r}$.
%\end{definition}

\chapter[Hyperbolic Surfaces]{Hyperbolic Surfaces}

\label{Chapter 5}
In this chapter, we study both the topology and geometry of surfaces. We begin by introducing hyperbolic structures on surfaces and analyzing their geometric features. 
We then review the mapping class group, which provides a framework for classifying self-homeomorphisms of a surface. 
Next, we turn to Teichm\"uller space, describing how it parametrizes hyperbolic structures and allows for their comparison. We conclude with a brief discussion of the moduli space and the role of Morse functions defined on it.

\section{Hyperbolic Surfaces}

A \emph{topological surface} is defined as a two-dimensional topological manifold.  
By a \emph{closed surface} we mean a compact surface without boundary.
Every closed orientable surface is obtained by a connected sum of a sphere $S^{2}$ with $g$ many  tori $T^{2}$, where $g\ge0$.  
Conversely, for any closed surface $S$, there exists a non-negative integer $g$ for which
\[
S \cong S^{2} \,\#\, \underbrace{(T^{2} \,\#\, \cdots \,\#\, T^{2})}_{g \ \text{times}},
\]
where $g$ is referred to as the \emph{genus} of $S$.  

A compact surface with boundary can be obtained from a closed surface by removing $b \ge0$ open discs whose closures do not intersect one another.  
If, instead, one deletes $n \ge0$ points from the interior of a compact surface, the result is called a \emph{non-compact surface}.  
The triple $(g,b,n)$, $g\ge0, b\ge0$ and $n\ge0$ is called the \emph{signature} of such a surface, and a surface described in this way is said to be of \emph{finite type}.  
Its Euler characteristic is given by
\[
\chi(S) = 2 - 2g - b - n.
\]
In this thesis, by a surface we always mean orientable surface. 

\begin{definition} (\cite{buser2010geometry})
A hyperbolic structure on a closed surface $S$ consists of an atlas of charts $\{U_i,\phi_i\}_{i\in I}$ such that $\phi_i(U_i)\subset \mathbb{H}$ and the transition maps
\[
\phi_j \circ \phi_i^{-1} : \phi_i(U_i \cap U_j) \to \phi_j(U_i \cap U_j),~ \text{for}~ i, j\in I ~ \text{with}~ U_i\cap U_j \neq \emptyset
\]
are restrictions of orientation-preserving isometries of $\mathbb{H}$.
\end{definition}

Now to introduce hyperbolic structure on surfaces with boundary, we begin with some terminology.  
For $\delta > 0$, $\theta \in (0,2\pi)$, and $p_0 \in \mathbb{H}$, the \emph{circle sector} of radius $\delta$ and angle $\theta$ at $p_0$ is defined by
\[
V(p_0;\delta,\theta) := \{(p,\alpha)\in \mathbb{H} \mid 0 < p < \delta, \; 0 < \alpha < \theta \} \cup \{p_0\},
\]
where $(p,\alpha)$ are polar coordinates centered at $p_0$ ( see Section $1.1$ of \cite{buser2010geometry}).  
A \emph{half-disk} is a circle sector with angle $\pi$.  

Let $S$ be a surface with non-ejmpty piecewise smooth boundary $\partial S$. Its \emph{interior} is
\[
\operatorname{int}(S) := S \setminus \partial S.
\]
The boundary $\partial S$ is a union of smooth arcs, called \emph{sides}, each having zero, one, or two endpoints. A side with no endpoints is a closed boundary curve. Endpoints of sides are called \emph{vertices}, while points of $\partial S$ that are not vertices are \emph{ordinary boundary points}. We allow the boundary to be smooth at certain vertices, and the set of vertices is assumed locally finite.

\begin{definition} (\cite{buser2010geometry})
 A hyperbolic structure on a surface $S$ with non-empty boundary consists of an atlas $\phi_i(U_i)\subset \mathbb{H}$ satisfying the following:
 
\begin{enumerate}
    \item For each $p\in S$ there exists co-ordinate system $(U, \phi)$ in the atlas with $p \in U$ such that $\phi(U)\subset\mathbb{H}$ is
    \begin{itemize}
        \item  a circle sector of angle $\le \pi$, if $p$ is a vertex,

        \item a half-disk at $\phi(p)$, if $p$ is an ordinary boundary point,

        \item an open disk with center $\phi(p)$, if $p$ is an interior point.
    \end{itemize}

    \item The transition maps
\[
\phi_j \circ \phi_i^{-1} : \phi_i(U_i \cap U_j) \to \phi_j(U_i \cap U_j),~ \text{for}~ i, j\in I ~ \text{with}~ U_i\cap U_j \neq \emptyset
\]
are restrictions of orientation-preserving isometries of $\mathbb{H}$.
\end{enumerate}
\end{definition}

\begin{definition}
A surface endowed with a hyperbolic structure is called a  hyperbolic surface.
\end{definition}

\begin{note}
Throughout this thesis, all surfaces are assumed to be complete.
\end{note}

\begin{example}
The upper half-plane $\mathbb{H}$ itself carries a hyperbolic surface structure.  
Indeed, the atlas $\{(\mathbb{H}, \mathrm{id})\}$, where $\mathrm{id} : \mathbb{H} \to \mathbb{H}$ is the identity map, provides the required hyperbolic charts.
\end{example}

A more interesting family of hyperbolic surfaces can be obtained by gluing together hyperbolic polygons along their sides using isometries of $\mathbb{H}$.  
Let $P_1, \dots, P_m$ be hyperbolic polygons in the upper half-plane with total number of sides $2n,~ n\in\mathbb{N} $, and define $P = P_1 \cup \cdots \cup P_m$, equipped with a side-pairing.

Suppose $2n$ distinct sides of $P$ are labeled as $
\gamma_1, \overline{\gamma}_1, \dots, \gamma_n, \overline{\gamma}_n,$ where each pair $(\gamma_k, \overline{\gamma}_k)$ is assigned opposite orientations.  
For each $k = 1, \dots, n$, assume that both sides are parametrized by
\[
\gamma_k : I_k \to P, 
\qquad 
\overline{\gamma}_k : I_k \to P,
\]
with $I_k$ a closed interval.  
In particular, the parametrization guarantees that $\gamma_k$ and $\overline{\gamma}_k$ have the same hyperbolic length.

The identification rule
\[
\gamma_k(t) \sim \overline{\gamma}_k(t), 
\qquad t \in I_k, \quad k = 1, \dots, n,
\]
Glues each $\gamma_k$ to its partner $\overline{\gamma}_k$ via an orientation-preserving hyperbolic isometry, that is,h $\gamma_k$ is paired with $\gamma^\prime_k$.  
Carrying out these identifications produces an orientable surface $S$.  

The collection of vertices of $P$ that are mapped to a single point of $S$ under this gluing process is called a \emph{vertex cycle}.

\begin{theorem}[Theorem 1.3.5 \cite{buser2010geometry}] \label{thm: pasting}
    Assume that the following conditions hold for the above setting. 
    \begin{enumerate}
        \item The sum of the angles at each vertex cycle, which yields an interior point, is $2\pi$. 
        \item For each vertex cycle that yields a boundary point of $S$, the sum of the interior angles is $\leq \pi$. 
        \item $S$ is connected and any two non-adjacent sides of the polygons $P_1,\dots,P_m$ has positive distances.
    \end{enumerate}
    Then $S$ admits a hyperbolic structure, and the natural projection $\sigma: P_1\cup \dots \cup P_m \to S$ is a local isometry.
\end{theorem}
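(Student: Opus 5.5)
The plan is to build the hyperbolic structure one chart at a time and then check that the transition maps are isometries. Let $P=P_1\sqcup\dots\sqcup P_m$ and let $\sigma:P\to S$ be the quotient map. Each point $x\in S$ is of one of three kinds, and I will produce a chart $(U_x,\phi_x)$ with $\phi_x(U_x)\subset\mathbb{H}$ for each kind.

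\emph{Interior points.} If $x$ is the image of an interior point of some $P_j$, take a small open disk around the preimage that lies inside $\operatorname{int}(P_j)$. Its projection is the chart neighborhood, and $\phi_x$ is the inverse of $\sigma$ restricted to that disk.

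\emph{Interior points of sides.} If $x=\sigma(\gamma_k(t))=\sigma(\overline{\gamma}_k(t))$ with $t$ in the interior of $I_k$, there are two half-disks of radius $\delta$, one at $\gamma_k(t)$ and one at $\overline{\gamma}_k(t)$. Let $T_k\in\mathrm{Isom}^+(\mathbb{H})$ be the orientation-preserving isometry carrying $\overline{\gamma}_k$ onto $\gamma_k$ pointwise. Because the two sides carry opposite boundary orientations, $T_k$ sends the second half-disk to the half-disk on the \emph{other} side of the geodesic through $\gamma_k$. The union of the two half-disks is therefore a full disk. A side that is not glued becomes a boundary side, and there one half-disk serves as the chart.

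\emph{Vertex cycles.} This is the main obstacle. Let $v_1,\dots,v_r$ be the vertices in one cycle, with interior angles $\theta_1,\dots,\theta_r$. Around each $v_i$ take a circle sector $V(v_i;\delta,\theta_i)$. Move the sectors successively by compositions of the side-pairing maps $T_k$ so that they sit around a single point $p_0\in\mathbb{H}$, each consecutive pair sharing the glued side. If $\sum\theta_i=2\pi$ (condition 1), the sectors close up exactly into a disk of radius $\delta$ around $p_0$. If the cycle ends on boundary sides and $\sum\theta_i\le\pi$ (condition 2), they form a sector of angle at most $\pi$. In both cases the map from the union of sectors to this model region, defined piecewise by the isometries, descends to a homeomorphism onto its image; this gives the chart.

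Condition 3 is what makes all of this legitimate. Fix $\delta$ smaller than half the minimum distance between non-adjacent sides; finitely many polygons give $\delta>0$. Then each chosen sector or half-disk meets only the sides incident to its own center. Consequently the union of sectors maps injectively, and the resulting neighborhoods are open in the quotient topology. Connectedness of $S$ is only needed so that $S$ is a single surface.

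\emph{Transition maps.} On an overlap $U_x\cap U_y$, the map $\phi_y\circ\phi_x^{-1}$ is, on each polygon piece, a composition of inverse charts and side-pairings, hence the restriction of an element of $\mathrm{Isom}^+(\mathbb{H})$. I must also check that these pieces agree across the glued sides, so that one isometry covers the whole connected overlap. This holds because every identification along a side was made by the same $T_k$. Shrinking charts to make the overlaps connected finishes the atlas.

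Finally, $\sigma$ is a local isometry because every chart is built from isometric pieces of the polygons. This proves Theorem~\ref{thm: pasting}.
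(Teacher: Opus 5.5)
The thesis gives no proof of this statement; it only cites Buser. Your plan is the standard local-chart argument of that source, and the approach is right. You build model neighbourhoods from a disk, from two half-disks glued by the side-pairing isometry, or from the sectors of a vertex cycle closed up by conditions 1 and 2, and you check that the transition maps are isometric.

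One step is stated incorrectly and needs repair: the uniform choice of $\delta$.

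\begin{itemize}
\item \emph{Side points near a vertex.} Suppose $x$ lies on a side $s$ within distance $\delta$ of an endpoint $v$. Then the half-disk of radius $\delta$ at $x$ crosses the other side through $v$ and leaves the polygon. So your claim that each half-disk meets only the side through its centre fails.
\item \emph{Triangles.} In a triangle all sides are pairwise adjacent, so your bound controls neither the side lengths nor the distance from a vertex to the opposite side. The sectors of a cycle then need not be disjoint or contained in the triangle.
\end{itemize}

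The fix is to let the radii depend on the point:
\begin{itemize}
\item For $x$ in the interior of a side $s$ of $P_j$, take the radius smaller than the distance from $x$ to the union of the other sides of $P_j$.
\item At a vertex $v$, take $\delta$ smaller than half of every side length and smaller than half of $d(v,s)$ for every side $s$ not containing $v$. These quantities are positive.
\end{itemize}

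Two further points:
\begin{itemize}
\item With such radii, note explicitly that two distinct points of $S$ receive disjoint saturated neighbourhoods. This gives the Hausdorff property, which you left implicit.
\item You do not need to shrink charts to make overlaps connected. The transition condition is local, and two orientation-preserving isometries that agree on a geodesic segment coincide. So on each overlap the piecewise isometries match across glued sides automatically.
\end{itemize}
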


\begin{example}
    Consider a regular $4g$-gon in $\mathbb{H}$ whose interior angles are each $\pi/(2g)$. Identifying opposite sides of this polygon via hyperbolic isometries yields a hyperbolic surface of genus $g$. The case $g = 2$ is illustrated in Figure~\ref{fig:hyp surf example}.

    \begin{figure}[htbp]
    \centering
    \begin{tikzpicture}
           \draw [bend left, ->-=.5] ({2*cos(22.7)},{2*sin(22.7)}) to ({2*cos(67.5)},{2*sin(67.5)});
          \draw [bend left, ->-=.5] ({2*cos(67.5)},{2*sin(67.5)}) to ({2*cos(112.5)},{2*sin(112.5)});
          \draw [bend left, ->-=.5] ({2*cos(112.5)},{2*sin(112.5)}) to ({2*cos(157.5)},{2*sin(157.5)});
          \draw [bend left, ->-=.5] ({2*cos(157.5)},{2*sin(157.5)}) to ({2*cos(202.5)},{2*sin(202.5)});
          \draw [bend left, -<-=.5] ({2*cos(202.5)},{2*sin(202.5)}) to ({2*cos(247.5)}, {2*sin(247.5)});
          \draw [bend left, -<-=.5] ({2*cos(247.5)}, {2*sin(247.5)}) to ({2*cos(292.5)},{2*sin(292.5)});
          \draw [bend left, -<-=.5] ({2*cos(292.5)},{2*sin(292.5)}) to ({2*cos(337.5)},{2*sin(337.5)});
          \draw [bend left, -<-=.5] ({2*cos(337.5)},{2*sin(337.5)}) to ({2*cos(22.7)},{2*sin(22.7)});

          \draw ({1.93*cos(0)},{1.93*sin(0)}) node {$c$} ({1.9*cos(45)},{1.9*sin(45)}) node {$d$} ({1.9*cos(90)},{1.9*sin(90)}) node {$a$} ({1.9*cos(135)},{1.9*sin(135)}) node {$b$} ({1.9*cos(180)},{1.9*sin(180)}) node {$c$} ({1.9*cos(225)},{1.9*sin(225)}) node {$d$} ({1.9*cos(270)},{1.9*sin(270)}) node {$a$} ({1.93*cos(315)},{1.93*sin(315)}) node {$b$};
    \end{tikzpicture}
    \caption{Regular hyperbolic octagon with each angle $\pi/4$}
    \label{fig:hyp surf example}
\end{figure}
\end{example}
\subsection{Constructing Additional Hyperbolic Surfaces}
We now turn to another approach for producing hyperbolic surfaces.  
To this end, it is convenient to recall the notions of group actions that are free and properly discontinuous, and then to introduce the concept of Fuchsian groups.

\begin{definition}\label{def: prop discnt action}
Let $X$ be a topological space and let $G$ be a group of homeomorphisms of $X$ onto itself.  
The action of $G$ on $X$ is said to be \emph{properly discontinuous} if for every compact subset $K \subseteq X$, the intersection
\[
g(K) \cap K \neq \varnothing
\]
occurs for only finitely many elements $g \in G$.
\end{definition}

\begin{definition}
Consider a group action $G \times X \to X$.
\begin{enumerate}
    \item For each $x \in X$, the set
    \[
    \mathrm{Stab}(x) = \{ g \in G \mid g \cdot x = x \}
    \]
    is called the \emph{stabilizer} of $x$.
    \item The action is said to be \emph{free} if $\mathrm{Stab}(x) = \{\mathrm{id}\}$ for every $x \in X$.
\end{enumerate}
\end{definition}

\subsection*{Fuchsian Groups}
The group $\mathrm{SL}(2,\mathbb{R})$ inherits its topology from the standard embedding in $\mathbb{R}^{4}$.  
Consequently, the quotient space $\mathrm{SL}(2,\mathbb{R}) / \{\pm \mathrm{Id}\}$ is equipped with the corresponding quotient topology.  
Since we have 
\[
\mathrm{PSL}(2,\mathbb{R}) = \mathrm{SL}(2,\mathbb{R}) / \{\pm \mathrm{Id}\},
\]
this gives $\mathrm{PSL}(2,\mathbb{R})$ a natural topological structure.

\begin{definition}
A Fuchsian group is defined as a discrete subgroup of $\mathrm{PSL}(2,\mathbb{R}) \,(= \mathrm{Isom}^{+}(\mathbb{H}))$.
\end{definition}

Fuchsian groups act in a canonical way on the upper half-plane $\mathbb{H}$.  
They may also be described in terms of properly discontinuous actions:  
a subgroup $\Gamma \leq \mathrm{PSL}(2,\mathbb{R})$ is Fuchsian precisely when it acts properly discontinuously on $\mathbb{H}$ (see Theorem~2.2.6 in~\cite{Katok}).\label{thm:Fuchsian}

\begin{example}
Define $T : \mathbb{H} \to \mathbb{H}$ by $T(z) = z+1$.  
The infinite cyclic group $\langle T \rangle$ generated by this translation is an example of a Fuchsian group.
\end{example}

\begin{example}
The modular group $\mathrm{PSL}(2,\mathbb{Z}) \subseteq \mathrm{Isom}^{+}(\mathbb{H})$ is another classical example of a Fuchsian group.
\end{example}

To produce further examples of Fuchsian groups, one can use the celebrated Poincar\'e theorem (for details, see Theorem~9.8.4 in~\cite{Beardon}).  
Let $P$ be a hyperbolic polygon with $2r$ sides, and suppose that the sides are paired by isometries
\[
\{ g_1, \dots, g_r \} \subset \mathrm{Isom}^{+}(\mathbb{H}).
\]
Assume that at every vertex the total angle from the identified copies adds up to $2\pi$.  
Let
\[
G = \langle g_1, \dots, g_r \rangle
\]
denote the group generated by these side-pairing maps.

\begin{theorem}[Poincar\'e's Theorem]\label{thm:PoincareTheorem}
The group $G$ is a Fuchsian group.
\end{theorem}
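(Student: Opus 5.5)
We need to prove Poincaré's theorem: $G$ generated by the side-pairings of a polygon $P$ with angle sums $2\pi$ at each vertex cycle is a Fuchsian group, i.e.\ discrete in $\mathrm{PSL}(2,\mathbb{R})$.

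The plan is to go through the pasting construction of Theorem~\ref{thm: pasting} and then use covering space theory. First, glue the sides of $P$ via the pairings $g_1,\dots,g_r$. Since every vertex cycle has angle sum $2\pi$, Theorem~\ref{thm: pasting} applies, and the quotient $S=P/\!\sim$ is a hyperbolic surface with $\sigma\colon P\to S$ a local isometry. Here $S$ is compact, being a quotient of a compact polygon, and hence complete. Next, take the universal cover $\tilde S$ with its lifted hyperbolic structure, and build a developing map $D\colon \tilde S\to\mathbb{H}$ by analytic continuation of charts. Completeness of $S$ implies that $D$ is a local isometry from a complete simply connected hyperbolic surface. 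Such a map is a covering map onto $\mathbb{H}$, and hence an isometry. This identifies $\pi_1(S)$ with a group $\Gamma\le \mathrm{PSL}(2,\mathbb{R})$ of deck transformations (the holonomy group) acting freely and properly discontinuously on $\mathbb{H}$, with $S\cong \mathbb{H}/\Gamma$.

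The remaining step is to identify $\Gamma$ with $G$. Normalize $D$ so that one lift of $\mathrm{int}(P)$ maps to $P\subset\mathbb{H}$ itself. The lifts of $P$ to $\tilde S$ then map to the tiles $\gamma(P)$, $\gamma\in\Gamma$, and these tile $\mathbb{H}$. The neighbouring tile of $P$ across the side $\bar\gamma_k$ is $g_k(P)$, or $g_k^{-1}(P)$ depending on orientation convention. Hence each $g_k\in\Gamma$, and so $G\le\Gamma$. Conversely, any tile $\gamma(P)$ can be reached from $P$ by a finite chain of adjacent tiles, since $P$ is compact and the tiling is locally finite. Writing out this chain expresses $\gamma$ as a word in the $g_k^{\pm1}$, so $\Gamma\le G$. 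Therefore $G=\Gamma$, which acts properly discontinuously. By the characterization of Fuchsian groups as exactly those subgroups acting properly discontinuously on $\mathbb{H}$ (Theorem~2.2.6 in~\cite{Katok}), $G$ is Fuchsian.

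The main obstacle is the completeness/covering step, i.e.\ showing that the developing map is a covering, equivalently that the tiles $\gamma(P)$ tile $\mathbb{H}$ without overlap. This is exactly where the angle condition is used: it guarantees the tiles fit around each vertex cycle without gaps or overlaps. For the covering property I would use the standard fact that a local isometry from a complete Riemannian surface to a connected one is a covering map. Completeness of $\tilde S$ comes from compactness of $S$. Within the paper's framework, I would cite Theorem~\ref{thm: pasting} for the local structure and \cite{Beardon}, Theorem~9.8.4, for the global tiling statement if full detail is needed.
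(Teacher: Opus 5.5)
The paper does not prove this statement; it only refers to Theorem~9.8.4 of \cite{Beardon}. Your argument is a correct proof, but it is organized differently from the standard textbook proof.

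You factor everything through the closed surface $S=P/\!\sim$. Theorem~\ref{thm: pasting} turns the angle condition into a hyperbolic structure. Compactness of $S$ gives completeness for free; this is exactly what fails, and needs extra parabolic hypotheses, when $P$ has ideal vertices. The covering lemma for local isometries then yields $\tilde S\cong\mathbb{H}$, with holonomy group $\Gamma$ acting properly discontinuously. So in your route the angle condition is used up entirely by the pasting theorem, and the covering step needs nothing beyond completeness. This differs slightly from your own description of where the angle condition enters.

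The textbook proof instead works directly with $G$. It assembles the translates $g(P)$, $g\in G$, into a tessellation and checks the covering property by hand. What that buys is generality: it handles cycles with angle sum $2\pi/t$, where $G$ has elliptic elements and the quotient is an orbifold rather than a surface, so your route would need orbifold covering theory. Your route buys brevity by reusing machinery the paper already states.

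Three points to tighten.

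\begin{enumerate}
\item You do more than the statement needs. Once the pasting chart across $\overline{\gamma}_k$ is identified with $g_k$, you have $g_k\in\Gamma$, hence $G\le\Gamma$. A subgroup of a discrete group is discrete, so $G$ is Fuchsian. The chain-of-tiles argument for $\Gamma\le G$, which shows that $P$ is a fundamental polygon and $\mathbb{H}/G\cong S$, can be dropped.

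\item That identification uses a hypothesis that the paper's statement also leaves implicit: $g_k(P)\cap P=\overline{\gamma}_k$. Equivalently, gluing by $g_k$ must be the opposite-orientation gluing required in Theorem~\ref{thm: pasting}. This is what makes $g_k$ the transition map of the pasting chart, and you should state it explicitly. It, rather than the harmless choice between $g_k$ and $g_k^{-1}$, is what your phrase \emph{depending on orientation convention} is really standing in for.

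\item Your fallback of citing Theorem~9.8.4 of \cite{Beardon} for the tiling would be circular, since that is the theorem being proved. It is also unnecessary, because the covering lemma already gives the tiling.
\end{enumerate}
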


We next record a result that allows one to construct new hyperbolic surfaces from suitable group actions.  
Let $S$ be a hyperbolic surface, and let $\Gamma \leq \mathrm{Isom}^{+}(S)$ act freely and properly discontinuously on $S$.

\begin{theorem}[{\cite[Theorem~1.2.4]{buser2010geometry}}]\label{thm:FuchsianQuotient}
The quotient space $S/\Gamma$ inherits the structure of a hyperbolic surface, and the canonical projection
\[
\pi : S \to S/\Gamma
\]
is a local isometry.
\end{theorem}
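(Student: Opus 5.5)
The plan is to build the hyperbolic atlas on $S/\Gamma$ directly from the atlas of $S$, using freeness and proper discontinuity to find, around every point of $S$, a small disk that is mapped injectively onto its image in the quotient. First I show that $S/\Gamma$ is a Hausdorff surface and that $\pi$ is open. Openness is immediate: for $U\subset S$ open, $\pi^{-1}(\pi(U))=\bigcup_{\gamma\in\Gamma}\gamma(U)$ is open. For the Hausdorff property, take $x,y\in S$ with $y\notin\Gamma x$. I choose closed metric balls $\overline{B}(x,\varepsilon)$ and $\overline{B}(y,\varepsilon)$. Their union $K$ is compact, so by Definition~\ref{def: prop discnt action} only finitely many $\gamma$ satisfy $\gamma(K)\cap K\neq\varnothing$. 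Shrinking $\varepsilon$ finitely many times then separates $\Gamma x$ from $y$. Second countability passes to the quotient because $\pi$ is open.

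The key step, and the one I expect to need the most care, is the existence of \emph{evenly covered} neighbourhoods. For each $p\in S$ I claim there is an $\varepsilon>0$ with $\gamma(B(p,\varepsilon))\cap B(p,\varepsilon)=\varnothing$ for all $\gamma\neq \mathrm{id}$. I argue this as follows.
\begin{enumerate}
\item Apply proper discontinuity to the compact set $K=\overline{B}(p,\varepsilon_0)$. This gives a finite set $F=\{\gamma_1,\dots,\gamma_k\}$ of non-identity elements with $\gamma_i(K)\cap K\neq\varnothing$.
\item Since the action is free, $\gamma_i(p)\neq p$ for each $i$. Set $\delta=\min_i d(p,\gamma_i(p))>0$.
\item Take $\varepsilon<\min(\varepsilon_0,\delta/2)$. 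Since each $\gamma_i$ is an isometry, $\gamma_i(B(p,\varepsilon))=B(\gamma_i(p),\varepsilon)$, which is disjoint from $B(p,\varepsilon)$.
\item Every other $\gamma\neq\mathrm{id}$ already moves $K$ off itself, so it moves $B(p,\varepsilon)$ off itself as well.
\end{enumerate}
It follows that $\pi|_{B(p,\varepsilon)}$ is injective, continuous and open, hence a homeomorphism onto an open set. I also shrink $\varepsilon$ so that $B(p,\varepsilon)$ lies inside a chart domain $(U,\phi)$ of $S$ whose image is an open disk in $\mathbb{H}$.

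The charts on $S/\Gamma$ are then $\psi_p=\phi\circ(\pi|_{B(p,\varepsilon)})^{-1}$. To check the transition maps, let two such charts $\psi_p,\psi_q$ overlap at $\pi(x)$. Then $x$ lies in $B(p,\varepsilon)$ and some $\gamma x$ lies in $B(q,\varepsilon')$. Near $x$, the transition $\psi_q\circ\psi_p^{-1}$ equals $\phi'\circ\gamma\circ\phi^{-1}$. This is a composition of chart transitions of $S$, which are restrictions of elements of $\mathrm{PSL}(2,\mathbb{R})$, with the orientation-preserving isometry $\gamma$ read in charts. Hence it is locally a restriction of an element of $\mathrm{Isom}^+(\mathbb{H})$.

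By injectivity on the disks, the element $\gamma$ is locally constant on the overlap, so the transition map is well defined. I would state this carefully, since the overlap may be disconnected and different components may use different $\gamma$.

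Finally, with these charts, $\pi$ is expressed as $\phi$ followed by the identity in local coordinates, so $\pi$ is a local isometry by construction. If $S$ has boundary, the same argument works using circle-sector and half-disk charts, because isometries preserve the boundary stratification.
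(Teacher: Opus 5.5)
Your proof is correct. The paper gives no argument of its own (it simply quotes Buser, Theorem~1.2.4), and what you wrote is the standard proof behind that citation: freeness together with proper discontinuity produces balls $B(p,\varepsilon)$ with $\gamma(B(p,\varepsilon))\cap B(p,\varepsilon)=\varnothing$ for all $\gamma\neq\mathrm{id}$, so the maps $\phi\circ(\pi|_{B(p,\varepsilon)})^{-1}$ form an atlas whose transitions are locally of the form $\phi'\circ\gamma\circ\phi^{-1}$, and in which $\pi$ reads as the identity. The one convention worth stating explicitly is the one you already flagged: a transition map need only agree with an element of $\mathrm{PSL}(2,\mathbb{R})$ on each connected component of the overlap, which is the usual reading of the definition of a hyperbolic structure.
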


\begin{remark}
If $\Gamma$ is a Fuchsian group without elliptic elements, then by Theorem~\ref{thm:FuchsianQuotient} the quotient $\mathbb{H}/\Gamma$ is itself a hyperbolic surface.
\end{remark}

\section{Geodesics on Hyperbolic Surfaces}
On a hyperbolic surface, geodesics are defined locally by pulling back geodesics in $\mathbb{H}$ through the coordinate charts.  
A curve is called a geodesic if it is locally geodesic at each of its points.  
When a geodesic is topologically a circle, it is referred to as a \emph{simple closed geodesic}.  

In this section we focus on a fundamental property of simple closed geodesics, known as the \emph{Collar Lemma}.  
A \emph{collar} is a tubular neighbourhood of a simple closed geodesic that is homeomorphic to a cylinder.  
The Collar Lemma provides a quantitative description of the width of such collars, showing in particular that short closed geodesics are surrounded by relatively wide annular neighbourhoods.  
This phenomenon was first established by Keen~\cite{Keen}, and subsequently appeared in different formulations in the works of Basmajian~\cite{Basmajian}, Buser~\cite{BuserCollar}, and Chavel–Feldman~\cite{Chavel}, among others.

\begin{lemma}[Lemma 13.6, \cite{farb2011primer}]\label{lem: Collar}
Let $\gamma$ be a simple closed geodesic on a hyperbolic surface $S$.  
Then the neighbourhood
\[
N_\gamma = \{ x \in S : d(x,\gamma) \leq w \}
\]
is an embedded annulus, where the width $w$ is given by
\[
w = \operatorname{arcsinh}\!\left(\frac{1}{\sinh\!\left(\tfrac{1}{2}l(\gamma)\right)}\right).
\]
\end{lemma}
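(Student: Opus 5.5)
We reduce to a model computation in the upper half-plane. Since $\gamma$ is a simple closed geodesic on the complete hyperbolic surface $S$, we pass to the annular cover $\widetilde S=\mathbb{H}/\langle T\rangle$ corresponding to $\gamma$. Here $T(z)=e^{\ell}z$ with $\ell=l(\gamma)$, and the lift of $\gamma$ is the imaginary axis $I$. The $w$-neighbourhood of $I$ in $\mathbb{H}$ is the Euclidean sector $\{z: |\arg z-\pi/2|\le \theta_w\}$, with $\cosh w=1/\cos(\pi/2-\theta_w)$, by the distance formulas of Theorem 1.2.6 of \cite{Katok}. This sector is $T$-invariant, and its quotient is an annulus of width $w$ around $\gamma$. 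The claim therefore reduces to showing that the covering projection $\widetilde S\to S$ is injective on this annulus. Equivalently, for every deck element $g\in\pi_1(S)\setminus\langle T\rangle$, the $w$-neighbourhoods of $I$ and $g(I)$ are disjoint, i.e. $d(I,g(I))\ge 2w$. We also need the $w$-neighbourhood of $I$ not to overlap itself in the quotient, which follows from the same disjointness.

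To bound $d(I,g(I))$ we use simplicity of $\gamma$. Because $\gamma$ is simple, distinct lifts $I$ and $g(I)$ are disjoint geodesics and do not share endpoints; shared endpoints would force a common hyperbolic element, and discreteness would then make $g$ a power of $T$. Let $\delta$ be the common perpendicular from $I$ to $g(I)$, with length $d$. Its image in $S$ is a geodesic arc from $\gamma$ to itself. The lifts $I$, $g(I)$, $T(g(I))$, together with the perpendiculars $\delta$ and $T\delta$, bound a region. Cutting along the perpendicular from $I$ to $g(I)$ and its translate produces a right-angled hyperbolic pentagon, or equivalently two trirectangles (Lambert quadrilaterals). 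These have one side of length at most $\ell/2$ on $I$ (using the translate $T^{\pm1}$ nearest to $g(I)$), opposite side $\delta$ of half-length $d/2$, and the remaining sides lying along $g(I)$ and its translates, which are pairwise disjoint.

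The key trigonometric step applies the trirectangle relation to this configuration. The opposite sides along $g(I)$ and $Tg(I)$ must not meet because the lifts are disjoint, and this yields
\[
\sinh\!\left(\tfrac{\ell}{2}\right)\sinh\!\left(\tfrac{d}{2}\right)\ \ge\ 1 .
\]
This is the limiting case where the quadrilateral closes up with an ideal or zero angle, derived from Cosine Rule II and the right-triangle formulas of Theorem~\ref{pythagorus}. The inequality gives $d/2\ge \operatorname{arcsinh}(1/\sinh(\ell/2))=w$, so the two $w$-neighbourhoods are disjoint. A separate short check handles a lift $g(I)$ that crosses the perpendicular bisector region on the far side; this case is reduced by composing with $T^{k}$.

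The main obstacle is setting up this geometric configuration correctly. Concretely, we must prove that the relevant quadrilateral with sides $\ell/2$ and $d/2$ exists and is non-degenerate, and that disjointness of $g(I)$ from $T g(I)$ is exactly what forces the inequality. The approach I would try first is the one sketched above: place $\delta$ so that its foot on $I$ is at $i$, and compute explicitly in $\mathbb{H}$. The lift $g(I)$ is then the semicircle centred at $0$ orthogonal to $i e^{d}$. Its image $T^{\pm 1}g(I)$ is the semicircle scaled by $e^{\pm\ell}$. Disjointness of concentric semicircles is automatic, so the genuinely needed disjointness is between $g(I)$ and $T g^{-1}(I)$-type lifts obtained from the pentagon. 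Writing these semicircles explicitly, the condition that they do not intersect becomes an inequality in $e^{\ell}$ and $e^{d}$, which simplifies to the displayed bound.
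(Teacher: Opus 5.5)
\noindent Your reduction is sound. Embeddedness of $N_\gamma$ amounts to $N_w(I)\cap N_w(g(I))=\emptyset$ for every $g\notin\langle T\rangle$, and simplicity makes distinct lifts ultraparallel, so the common perpendicular $\delta$ exists. There are two slips: the sector's half-angle satisfies $\cosh w=1/\cos\theta_w$, and since $N_\gamma$ is closed you need $d>2w$, not $d\ge 2w$.

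\textbf{The gap.} The key trigonometric step fails as described. The configuration $I,\delta,g(I),Tg(I),T\delta$ closes up, via the common perpendicular $\eta$ of $g(I)$ and $Tg(I)$, into a right-angled hexagon with sides $\ell,d,\ast,\eta,\ast,d$. The perpendicular to $I$ midway between the feet of $\delta$ and $T\delta$ is a symmetry axis, and it cuts the hexagon into two right-angled pentagons with $\cosh(\eta/2)=\sinh(\ell/2)\sinh d$. So ultraparallelism of $g(I)$ and $Tg(I)$ is \emph{equivalent} to $\sinh(\ell/2)\sinh d>1$, i.e.\ $d>w$. That is a factor of two short. The trirectangle you describe, with $d/2$ opposite the side on $I$ and the other sides on $g(I)$ and its translates, does not exist.

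Nor can any argument using only disjointness of lifts work. On a torus with one cone point of angle $\theta$, the lifts of a non-separating simple closed geodesic are pairwise ultraparallel. Yet it has an orthogeodesic arc to itself of length $d$ with $\sinh(d/2)\sinh(\ell/2)=\cos(\theta/4)<1$. Your explicit normalisation is also inconsistent: a semicircle centred at $0$ meets the imaginary axis, so it cannot be $g(I)$.

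\textbf{The missing ingredient} is that $\pi_1(S)$ acts freely. Set up the following notation.
\begin{itemize}
\item $p\in I$ and $p'\in g(I)$ are the feet of $\delta$, $m$ is its midpoint, $\Delta$ is the line containing $\delta$, and $\beta$ is the perpendicular bisector of $\delta$.
\item $L_2\perp I$ at the point $q$ with $d(p,q)=\ell/2$.
\item $L_3\perp g(I)$ at distance $\ell/2$ from $p'$, on the same side of $\Delta$ as $q$.
\end{itemize}
Write $\sigma_L$ for the reflection in $L$, and choose $q$ on the correct side. Then $T=\sigma_{L_2}\sigma_\Delta$ and $(gTg^{-1})^{\epsilon}=\sigma_\Delta\sigma_{L_3}$ for some $\epsilon=\pm1$. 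Hence $T(gTg^{-1})^{\epsilon}=\sigma_{L_2}\sigma_{L_3}\in\pi_1(S)$.

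The figure is symmetric in $\beta$. Its half is the trirectangle $m,p,q,X$, whose two sides not incident to the fourth vertex $X$ are $pm$ (length $d/2$) and $pq$ (length $\ell/2$). These are adjacent sides, and the remaining two lie on $\beta$ and $L_2$, not on lifts of $\gamma$. Hence $L_2$ and $L_3$ meet, are asymptotic, or are ultraparallel according as $\sinh(\ell/2)\sinh(d/2)$ is $<1$, $=1$ or $>1$.
\begin{itemize}
\item In the first case $\sigma_{L_2}\sigma_{L_3}$ is a nontrivial elliptic element, contradicting freeness.
\item In the second case it is parabolic, which is impossible for closed $S$.
\end{itemize}
Equivalently, $\tfrac12|\mathrm{tr}\,T(gTg^{-1})^{\epsilon}|=|1-2\sinh^2(\ell/2)\sinh^2(d/2)|$. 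This gives $d>2w$ and finishes your argument, with no case analysis over $T^k$.

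Excluding parabolics is genuinely needed for the closed collar. On a once-punctured torus, a non-separating simple closed geodesic has an orthogeodesic to itself of length exactly $2w$.

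For comparison, the thesis only quotes the lemma from Farb--Margalit. The textbook proof (e.g.\ Buser, Thm.~4.1.1) cuts along a pants decomposition containing $\gamma$ and applies right-angled hexagon and pentagon formulas. There freeness is built in, because the pieces are genuine pairs of pants.
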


\section{Teichm\"uller Space}

Let $S = S_{g,b}$ be a surface of genus $g$ with $b$ boundary components.  
A \emph{marked hyperbolic surface} of signature $(g,b)$ is a pair $(X,\phi)$, where $X$ is a hyperbolic surface and $\phi : S \to X$ is a homeomorphism, referred to as the \emph{marking}.  

We introduce an equivalence relation on the set of marked hyperbolic surfaces as follows:  
\[
(X_1,\phi_1) \sim (X_2,\phi_2) \quad \Longleftrightarrow \quad \exists \ \text{an isometry } I : X_1 \to X_2 \ \text{such that } I \circ \phi_1 \simeq \phi_2,
\]
where $\simeq$ denotes homotopy.  
Equivalently, the diagram in Figure~\ref{fig:Teichmuller} commutes up to homotopy.  

\begin{figure}[htbp]
    \centering
    \begin{tikzpicture}
        \draw (0,0) node {S};
        \draw[->-=1] (-.2,-.3)--(-1.5,-1.5);
        \draw (-1.65,-1.85) node{$X_1$};
        \draw[->-=1] (.2,-.3)--(1.5,-1.5);
        \draw (1.65,-1.85) node{$X_2$};
        \draw[->-=1] (-1.3,-1.85)--(1.3,-1.85);
        \draw (-1.05,-.7) node {$\phi_1$} (1.05,-.7) node {$\phi_2$} (0,-1.6) node {$I$};
    \end{tikzpicture}
    \caption{Equivalence of two markings}
    \label{fig:Teichmuller}
\end{figure}

\begin{definition}
The \emph{Teichm\"uller space} of $S$ is the quotient
\[
\mathcal{T}(S) = \{ (X,\phi) \} / \sim.
\]
\end{definition}

Here, $S_{g,b}$ is referred to as the \emph{base surface}.  

\begin{example}[Proposition 10.5, \cite{farb2011primer}]
A \emph{pair of pants} is a compact surface of genus $0$ with three boundary components.  
Its Teichm\"uller space is homeomorphic to $\mathbb{R}^{3}$.
\end{example}

More generally, if $S_{g,b}$ is a surface of genus $g$ with $b$ boundary components satisfying $\chi(S_{g,b}) < 0$, then its Teichm\"uller space is topologically Euclidean.  

\begin{theorem}[Theorem 11, \cite{Schmutz1999}]
For a surface $S_{g,b}$ of negative Euler characteristic, the Teichm\"uller space is homeomorphic to
\[
\mathcal{T}(S_{g,b}) \cong \mathbb{R}^{6g - 6 + 3b}.
\]
\end{theorem}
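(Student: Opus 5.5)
The most efficient route is the Fenchel--Nielsen parametrization of $\mathcal{T}(S_{g,b})$. Concretely, the plan is to exhibit a homeomorphism $\mathcal{T}(S_{g,b})\to \mathbb{R}_{>0}^{3g-3+b}\times\mathbb{R}^{3g-3}\times \mathbb{R}_{>0}^{b}$, which is homeomorphic to $\mathbb{R}^{6g-6+3b}$.

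\emph{Step 1 (pants decomposition).} Since $\chi(S_{g,b})<0$, I would choose a maximal family of disjoint, pairwise non-isotopic essential simple closed curves $\gamma_1,\dots,\gamma_N$ in the interior of $S_{g,b}$. These curves cut the surface into pairs of pants. A pair of pants has $\chi=-1$, so there are $-\chi=2g-2+b$ of them. Counting their boundary curves gives $3(2g-2+b)=2N+b$, hence $N=3g-3+b$.

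\emph{Step 2 (coordinates).} For a marked surface $(X,\phi)$, each $\phi(\gamma_i)$ and each boundary curve is freely homotopic to a unique closed geodesic, so it has a well-defined length. This gives $N+b=3g-3+2b$ length parameters. By the Example (Proposition 10.5 of \cite{farb2011primer}), a hyperbolic pair of pants is determined up to isometry by its three boundary lengths. So once all lengths are fixed, the only remaining freedom is how the pants are glued along each interior geodesic $\gamma_i$. I would record this gluing by a twist parameter $\tau_i\in\mathbb{R}$, measured by the displacement of the feet of the common perpendiculars (seams) on the two sides of $\gamma_i$. The marking makes the full real value well defined rather than only its value modulo the length of $\gamma_i$, and a Dehn twist about $\gamma_i$ changes $\tau_i$ by exactly that length. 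This gives $N$ twist coordinates, for a total of $2N+b=6g-6+3b$ parameters.

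\emph{Step 3 (bijectivity and continuity).} Injectivity: two marked surfaces with the same lengths and twists are built from isometric pants glued in the same way, which produces an isometry compatible with the markings up to homotopy. Surjectivity: for any prescribed parameters, glue the corresponding pants with the prescribed twists; Theorem~\ref{thm: pasting} guarantees that the result is a hyperbolic surface, and the marking is transported along the construction. Continuity in both directions follows from the continuous dependence of right-angled hexagons on their side lengths, via the hyperbolic trigonometry of Theorem~\ref{Cosine Rule}.

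\emph{Main obstacle.} The delicate step is showing that the twist parameter is a well-defined real number, and that distinct real values give inequivalent marked surfaces. Without this, one would only recover the moduli space, where twists lie in a circle. I would handle it by comparing the images of a transverse arc crossing $\gamma_i$ under the two markings. If the twist values differ, the geodesic representative of a curve crossing $\gamma_i$ has different lengths for the two surfaces, so no isometry respecting the markings can exist. This point is where the notion of marking is genuinely used.
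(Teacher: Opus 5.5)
The thesis does not prove this statement; it quotes it from Schmutz Schaller \cite{Schmutz1999} as background, so there is no in-text argument to compare with. Your outline is the standard Fenchel--Nielsen proof and its overall structure is right, but several points need correcting or filling in.

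\textbf{The target space.} The target in your opening sentence, $\mathbb{R}_{>0}^{3g-3+b}\times\mathbb{R}^{3g-3}\times\mathbb{R}_{>0}^{b}$, has dimension $6g-6+2b$. There is one twist for each of the $N=3g-3+b$ interior curves, so the middle factor must be $\mathbb{R}^{3g-3+b}$, as your count $2N+b$ in Step 2 correctly has.

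\textbf{Conventions.} The formula rests on conventions you use without stating them:
\begin{itemize}
\item the pants curves must be non-peripheral;
\item every boundary component of $X$ must be a closed geodesic (the thesis's definition of a hyperbolic surface with boundary allows corners, and then the shape of the boundary carries extra moduli);
\item homotopies in the equivalence relation must be free on $\partial S$, which is why there are no boundary twists;
\item geodesic representatives of disjoint, pairwise non-homotopic simple curves are simple and pairwise disjoint, so that cutting $X$ along them really produces geodesic pants.
\end{itemize}

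\textbf{The twist parameter.} Your proposed fix addresses the wrong issue. Write $\ell_i=\ell(\gamma_i)$, and define $\tau_i(X,\phi)$ directly from the marking. For instance, take the unique real number such that the following path is homotopic, with endpoints sliding on the neighbouring pants curves, to the $\phi$-image of a fixed transverse arc in $S$: run along a seam from a neighbouring curve to $\gamma_i$, then a signed distance $\tau_i$ along $\gamma_i$, then along a seam of the pants on the other side. Invariance under the equivalence relation is then automatic, because isometries carry seams and geodesic representatives to seams and geodesic representatives. The claim ``distinct twists give inequivalent surfaces'' is just its contrapositive.

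The length argument is therefore unnecessary, and for a single crossing curve it fails. For fixed $\delta$ crossing $\gamma_i$, $\ell_\delta$ is a strictly convex proper function of $\tau_i$, so it takes each value above its minimum at exactly two twists. You would need at least two curves, such as $\delta$ and $T_{\gamma_i}(\delta)$.

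Also, twists taken modulo $\ell_i$ parametrize $\mathcal{T}/\langle T_{\gamma_1},\dots,T_{\gamma_N}\rangle$, which is an infinite cover of moduli space, not moduli space itself.

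\textbf{Injectivity.} This is where the real content lies. The isometry assembled from the unique orientation-preserving, label-respecting isometries between corresponding pants has to be homotopic to the change of marking. This rests on the following fact: an orientation-preserving homeomorphism of $S$ preserving every pair of pants and every $\gamma_i$ is homotopic to some $\prod_i T_{\gamma_i}^{k_i}$, where $k_i\ell_i$ equals the shift of the $i$-th real twist.

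\textbf{Continuity.} The thesis never topologizes $\mathcal{T}(S_{g,b})$. Continuous dependence of right-angled hexagons on their sides only controls the individual pants, not the twists or the inverse map. With the usual topology coming from holonomy representations into $\mathrm{PSL}(2,\mathbb{R})$ modulo conjugation:
\begin{itemize}
\item continuity of the inverse follows by writing the holonomy of a generating set as explicit matrix products depending continuously on $(\ell,\tau)$;
\item continuity of the forward map follows from $|\mathrm{tr}|=2\cosh(\ell/2)$ for lengths, and from the positions of the relevant axes for twists.
\end{itemize}
If one instead topologizes $\mathcal{T}$ through the Fenchel--Nielsen bijection, the homeomorphism is built in and only bijectivity needs proof.
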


Let \(X \in \mathcal{T}_g\) be a hyperbolic surface of genus \(g\).  
The collection of lengths of all simple closed geodesics on \(X\) is called the \emph{length spectrum} of \(X\), and is denoted by \(\mathrm{Spec}(X)\).  
It is known that \(\mathrm{Spec}(X) \subset \mathbb{R}_{>0}\) is a discrete set and has no accumulation point in \(\mathbb{R}\).  
The \emph{systolic length} of \(X\) is defined as
\[
\mathrm{sys}(X)=\min \mathrm{Spec}(X),
\]
and any simple closed geodesic on \(X\) achieving this minimum is called a \emph{systolic geodesic}.

This defines the \emph{systole function}
$
\mathrm{sys} : \mathcal{T}_g \longrightarrow \mathbb{R}_{>0},
$
which assigns to each hyperbolic surface its systolic length.  
A theorem of Akrout~\cite{akrout} states that this function is a topological Morse function on \(\mathcal{T}_g\).

\begin{proposition} \label{prop:no-minimum}
For any genus $g \geq 2$, the systole function $\operatorname{sys} : \mathcal{T}_g \to \mathbb{R}_{>0}$ does not attain a minimum on the moduli space $\mathcal{M}_g$.
\end{proposition}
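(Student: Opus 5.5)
The plan is to show that for every point $X \in \mathcal{T}_g$ there is another point $X'$ with $\operatorname{sys}(X') < \operatorname{sys}(X)$. Since $\operatorname{sys}$ is invariant under the mapping class group, it descends to $\mathcal{M}_g$, so this will show that no minimum is attained there. The tool is a pants-decomposition (Fenchel--Nielsen) deformation, combined with the Collar Lemma (Lemma~\ref{lem: Collar}) to control which curves can become short.

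\emph{Step 1.} Fix $X \in \mathcal{T}_g$ with $g\ge 2$. Choose a simple closed geodesic $\gamma$ on $X$; for concreteness take it to be a systolic geodesic, so $l_X(\gamma) = \operatorname{sys}(X)$. Extend $\gamma$ to a pants decomposition $\{\gamma = \gamma_1, \gamma_2, \dots, \gamma_{3g-3}\}$ of $X$. This uses the standard fact that any simple closed geodesic on a closed hyperbolic surface extends to a maximal system of disjoint simple closed geodesics cutting it into $2g-2$ pairs of pants.

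\emph{Step 2.} Work in Fenchel--Nielsen coordinates. The Teichm\"uller space is $\mathbb{R}^{6g-6}$ (the Schmutz theorem quoted above), and the pair-of-pants example shows that hyperbolic pants are determined by their three boundary lengths. Use this to define $X_t$ by replacing $l(\gamma_1)$ with $t\,l_X(\gamma_1)$ for $0<t<1$, keeping every other length and twist parameter fixed. Gluing the hexagon/pants pieces back by Theorem~\ref{thm: pasting} gives a genuine point $X_t \in \mathcal{T}_g$.

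\emph{Step 3.} In $X_t$ the geodesic $\gamma_1$ has length $t\,\operatorname{sys}(X) < \operatorname{sys}(X)$. Therefore
\[
\operatorname{sys}(X_t) \le l_{X_t}(\gamma_1) < \operatorname{sys}(X).
\]
This already shows that $X$ is not a minimum. Since $X$ was arbitrary, no point of $\mathcal{T}_g$ minimizes $\operatorname{sys}$, and by mapping class group invariance no point of $\mathcal{M}_g$ does either. Letting $t\to 0$ also gives $\inf \operatorname{sys} = 0$, which is not attained because $\operatorname{sys}>0$ everywhere.

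\emph{Main obstacle.} The delicate point is Step~2: justifying that shrinking a single Fenchel--Nielsen length coordinate produces a valid closed hyperbolic surface of the same genus. This amounts to the existence of hyperbolic pants with any prescribed boundary lengths, via right-angled hexagons. My first attempt would be to cite the homeomorphism $\mathcal{T}(S_{g,b}) \cong \mathbb{R}^{6g-6+3b}$ together with the pants case $\mathbb{R}^3$, which gives exactly this freedom. The Collar Lemma is not needed for the inequality itself. It is worth remarking, however, that as $t\to 0$ the collar around $\gamma_1$ becomes arbitrarily wide, which explains geometrically why the surfaces degenerate instead of approaching a minimizer.
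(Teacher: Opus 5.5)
Your proposal is correct and uses the same idea as the paper: pinch one Fenchel--Nielsen length coordinate while fixing all the others, so that $\operatorname{sys}(X_t)\le l_{X_t}(\gamma_1)$ becomes smaller than any prescribed value. You rightly note that the Collar Lemma, which the paper uses to get $\operatorname{sys}(X_\epsilon)=\epsilon$ exactly, is not needed for this inequality; and for Step~2 the right citation is Fenchel--Nielsen coordinates (pants with arbitrary boundary lengths, glued with arbitrary twists), since the bare homeomorphism $\mathcal{T}_g\cong\mathbb{R}^{6g-6}$ does not by itself say that a length coordinate can be freely shrunk.
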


\begin{proof}
We argue by contradiction. Suppose that the systole function attains a minimum at some surface $X_0 \in \mathcal{M}_g$. Since $X_0$ is a compact hyperbolic surface, its length spectrum is discrete and bounded away from zero. Thus, the shortest non-trivial simple closed geodesic on $X_0$ has strictly positive length, meaning $\operatorname{sys}(X_0) = c > 0$. We will show that $\inf_{X \in \mathcal{M}_g} \operatorname{sys}(X) = 0$, which yields a contradiction.

Fix a pair-of-pants decomposition on a reference surface of genus $g \geq 2$. This defines Fenchel-Nielsen coordinates $(\ell_i, \tau_i)_{i=1}^{3g-3}$ on Teichmüller space $\mathcal{T}_g$ (see Section $10.6$, \cite{farb2011primer}). Let $\gamma_1$ be one of the curves in this decomposition. We construct a $1$-parameter family of hyperbolic surfaces $X_\epsilon \in \mathcal{T}_g$ by setting $\ell_{X_\epsilon}(\gamma_1) = \epsilon$ for some $\epsilon > 0$, while fixing all other length parameters $\ell_i$ ($i \geq 2$) and twist parameters $\tau_i$ to arbitrary positive constants.

By the Collar Lemma (Lemma \ref{lem: Collar}), the simple closed geodesic $\gamma_1$ on $X_\epsilon$ has an embedded collar neighborhood of width
$$w(\epsilon) = \operatorname{arcsinh}\!\left(\frac{1}{\sinh\!\left(\tfrac{1}{2}\epsilon\right)}\right).$$
Notice that as $\epsilon \to 0^+$, the collar width $w(\epsilon) \to \infty$. 

For any simple closed geodesic $\beta$ on $X_\epsilon$ distinct from $\gamma_1$, there are two cases to consider:
\begin{enumerate}
    \item If $\beta$ intersects $\gamma_1$, it must cross the collar neighborhood of $\gamma_1$ at least twice. Therefore, $\ell_{X_\epsilon}(\beta) \geq 2w(\epsilon)$. Since $w(\epsilon) \to \infty$ as $\epsilon \to 0^+$, the length of any such intersecting curve becomes arbitrarily large.
    \item If $\beta$ does not intersect $\gamma_1$, its length is bounded from below by a positive constant that depends only on the fixed Fenchel-Nielsen length parameters $\ell_2, \dots, \ell_{3g-3}$.
\end{enumerate}

Consequently, for $\epsilon > 0$ sufficiently small, $\gamma_1$ is strictly shorter than any other non-trivial simple closed curve on $X_\epsilon$. This implies that
$$\operatorname{sys}(X_\epsilon) = \ell_{X_\epsilon}(\gamma_1) = \epsilon.$$

Passing to the moduli space $\mathcal{M}_g$, we take the limit as $\epsilon$ approaches zero:
$$\inf_{X \in \mathcal{M}_g} \operatorname{sys}(X) \leq \lim_{\epsilon \to 0^+} \operatorname{sys}(X_\epsilon) = 0.$$

Since $\operatorname{sys}(X) > 0$ for every $X \in \mathcal{M}_g$, the infimum of $0$ is not achieved by any surface in the moduli space. Therefore, the systole function does not attain a minimum on $\mathcal{M}_g$.
\end{proof}
 
\section{Bolza Surface}

As established in Proposition~\ref{prop:no-minimum}, the systole
function does not attain a minimum on the moduli space $\mathcal{M}_g$
for any genus $g \geq 2$. In contrast, the problem of determining the
global maximum of $\operatorname{sys}$ remains widely open for general
genus. The only completely understood case is genus $g=2$.

In this case, \cite{Jenni} proved that for any hyperbolic surface $X \in \mathcal{T}_2$, the following sharp inequality holds:
\[
\cosh\!\left(\tfrac{1}{2} \mathrm{sys}(X)\right) \leq \sqrt{2} + 1,
\]
where $\mathrm{sys}(X)$ denotes the systole of $X$. 

Moreover, equality holds if and only if $X$ is isometric to a unique surface of genus $2$, namely the \emph{Bolza surface}. In particular, the Bolza surface maximizes the systole function on $\mathcal{T}_2$.

\begin{definition}[Bolza surface]
The Bolza surface is the unique compact hyperbolic surface of genus $2$ obtained by the following construction:
\begin{itemize}
    \item Start with a regular hyperbolic octagon in $\mathbb{H}^2$ whose interior angles are all equal to $\pi/4$.
    \item Identify each pair of opposite sides via hyperbolic isometries.
\end{itemize}
This side-pairing produces a closed surface of genus $2$.

Equivalently, the Bolza surface can be described as a quotient
\[
B = \mathbb{H}^2 / \Gamma,
\]
where $\Gamma < \mathrm{PSL}(2,\mathbb{R})$ is a Fuchsian group generated by the side-pairing transformations (see Section~5 of \cite{schmutz1993riemann}).
\end{definition}

\subsection{Properties of the Bolza surface}

The Bolza surface enjoys several remarkable geometric and symmetry properties:

\begin{enumerate}
    \item \textbf{Number of systoles:}  
    The Bolza surface has exactly $12$ systolic geodesics, i.e., $12$ simple closed geodesics that realize the systole (the shortest length among all non-contractible closed curves) \cite{schmutz1993riemann}.

    \item \textbf{Geometric decomposition:}  
    These $12$ systolic geodesics decompose the surface into a collection of hyperbolic triangles (see Figure~\ref{systoles: Bolza surface}). Each triangle has interior angles equal to $\pi/4$, producing a highly symmetric tiling of the surface (see Theorem~3.1 in \cite{MR3877282}).

    \item \textbf{Position in moduli space:}  
    The Bolza surface lies in the Thurston set $\chi_2$, which consists of genus-$2$ surfaces exhibiting extremal geometric properties.

    \item \textbf{Automorphism group:}  
    The Bolza surface has the largest possible conformal automorphism group among all genus-$2$ hyperbolic surfaces. In fact, it admits exactly $48$ conformal automorphisms, making it maximally symmetric in genus $2$ \cite{schmutz1993riemann}.

    \item \textbf{Second systoles:}  
    In addition to the $12$ systoles, there are exactly $12$ \emph{second systolic geodesics}, i.e., simple closed geodesics realizing the second smallest length. These curves form a triangulation of the surface in which each triangle has interior angles $\pi/4$, $\pi/3$, and $\pi/3$ (see Theorem~4.1 in \cite{saha_sanki_2023_complexity_in_bolza_surface}).
\end{enumerate}

\begin{figure}[htbp]
          \centering
          \begin{tikzpicture}[scale=2]
          \draw [bend left] ({2*cos(22.7)},{2*sin(22.7)}) to ({2*cos(67.5)},{2*sin(67.5)});
          \draw [bend left] ({2*cos(67.5)},{2*sin(67.5)}) to ({2*cos(112.5)},{2*sin(112.5)});
          \draw [bend left] ({2*cos(112.5)},{2*sin(112.5)}) to ({2*cos(157.5)},{2*sin(157.5)});
          \draw [bend left] ({2*cos(157.5)},{2*sin(157.5)}) to ({2*cos(202.5)},{2*sin(202.5)});
          \draw [bend left] ({2*cos(202.5)},{2*sin(202.5)}) to ({2*cos(247.5)}, {2*sin(247.5)});
          \draw [bend left] ({2*cos(247.5)}, {2*sin(247.5)}) to ({2*cos(292.5)},{2*sin(292.5)});
          \draw [bend left] ({2*cos(292.5)},{2*sin(292.5)}) to ({2*cos(337.5)},{2*sin(337.5)});
          \draw [bend left] ({2*cos(337.5)},{2*sin(337.5)}) to ({2*cos(22.7)},{2*sin(22.7)});

          \draw ({1.93*cos(0)},{1.93*sin(0)}) node {$c$} ({1.9*cos(45)},{1.9*sin(45)}) node {$d$} ({1.9*cos(90)},{1.9*sin(90)}) node {$a$} ({1.9*cos(135)},{1.9*sin(135)}) node {$b$} ({1.9*cos(180)},{1.9*sin(180)}) node {$c$} ({1.9*cos(225)},{1.9*sin(225)}) node {$d$} ({1.9*cos(270)},{1.9*sin(270)}) node {$a$} ({1.93*cos(315)},{1.93*sin(315)}) node {$b$};

          \draw[bend left] ({1.62*cos(0)},{1.62*sin(0)}) to ({1.62*cos(45)},{1.62*sin(45)});
          \draw [bend left] ({1.62*cos(45)},{1.62*sin(45)}) to ({1.62*cos(90)},{1.62*sin(90)});
          \draw [bend left] ({1.62*cos(90)},{1.62*sin(90)}) to ({1.62*cos(135)},{1.62*sin(135)});
          \draw [bend left] ({1.62*cos(135)},{1.62*sin(135)}) to ({1.62*cos(180)},{1.62*sin(180)});
          \draw [bend left] ({1.62*cos(180)},{1.62*sin(180)}) to ({1.62*cos(225)},{1.62*sin(225)});
          \draw [bend left] ({1.62*cos(225)},{1.62*sin(225)}) to ({1.62*cos(270)},{1.62*sin(270)});
          \draw [bend left] ({1.62*cos(270)},{1.62*sin(270)}) to ({1.62*cos(315)},{1.62*sin(315)});
          \draw [bend left] ({1.62*cos(315)},{1.62*sin(315)}) to ({1.62*cos(0)},{1.62*sin(0)});

          \draw ({1.62*cos(0)},{1.62*sin(0)})--({1.62*cos(180)},{1.62*sin(180)});
          \draw ({1.62*cos(45)},{1.62*sin(45)})--({1.62*cos(225)},{1.62*sin(225)});
          \draw ({1.62*cos(90)},{1.62*sin(90)})--({1.62*cos(270)},{1.62*sin(270)});
          \draw ({1.62*cos(135)},{1.62*sin(135)})--({1.62*cos(315)},{1.62*sin(315)});
          
          \end{tikzpicture}
          \caption{Systolic geodesics in the Bolza Surface.}
          \label{systoles: Bolza surface}
\end{figure}

\chapter{Bolza like Surfaces in Thurston Set}
\label{chapter 6}

In this chapter, our main focus will be on constructing of Bolza like surfaces in Thurston set. At first, we define  Bolza like surfaces. Then  we construct Bolza-like surfaces for infinitely many genera $g$. Next, we will see an interesting application of the Bolza-like surfaces that we obtain, namely the construction of global maxima of the systole function. Furthermore, we show that the distribution of the systolic geodesics is symmetric in the sense that any simple closed geodesic intersects the Bolza-like surface at an even number of points.

\section{Construction of Bolza-like Surfaces}

A hyperbolic triangle of type $(p,q,r)$, where $p, q, r \in \mathbb{N}$ satisfies $\tfrac{1}{p} + \tfrac{1}{q} + \tfrac{1}{r} < 1$, is defined as a hyperbolic triangle whose interior angles are $\pi/p$, $\pi/q$, and $\pi/r$. Such triangles serve as fundamental building blocks for many hyperbolic surfaces.

It is well known that the systolic geodesics on the Bolza surface partition it into hyperbolic triangles of type $(4,4,4)$ (see Section~5.5.1). This remarkable geometric decomposition provides a natural way to study the structure of the surface through its shortest closed geodesics.

Inspired by this observation, we generalize the notion to a broader class of hyperbolic surfaces, called Bolza-like surfaces, which are defined as follows.

\begin{definition}
A Bolza-like surface is a closed hyperbolic surface whose systolic geodesics decompose the surface into hyperbolic triangles of type $(p,q,r)$, for some integers $p,q,r$ satisfying $\tfrac{1}{p} + \tfrac{1}{q} + \tfrac{1}{r} < 1$.
\end{definition}

In this section, we prove the main result of this chapter in Theorem~\ref{Bolza like surfaces}. Before going on to it, we prove two lemmas that are essential in proving the theorem.

\begin{lemma}\label{lemma1.2}
For $0\leq \epsilon<\frac{\pi}{4}$, there exists a unique hyperbolic quadrilateral with consecutive angles $\left(\frac{\pi}{4}-\epsilon\right),\left(\frac{\pi}{4}+\epsilon\right),\left(\frac{\pi}{4}-\epsilon\right) \text{ and }\left(\frac{\pi}{4}+\epsilon\right)$ and sides of equal length.
\end{lemma}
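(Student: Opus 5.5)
Write $\alpha=\frac{\pi}{4}-\epsilon$ and $\beta=\frac{\pi}{4}+\epsilon$. The plan is to split the desired quadrilateral along a diagonal into two congruent triangles, so that existence reduces to building one isosceles triangle and uniqueness follows from the rigidity of hyperbolic triangles with prescribed angles.

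Suppose a quadrilateral $ABCD$ has angles $\alpha,\beta,\alpha,\beta$ at $A,B,C,D$ and all sides of length $s$. The diagonal $BD$ cuts it into triangles $ABD$ and $CBD$. These share the side $BD$ and have $AB=AD=CB=CD=s$, so by SSS they are congruent. Hence:
\begin{itemize}
\item each triangle is isosceles with apex angle $\alpha$ (at $A$, respectively $C$);
\item the diagonal $BD$ bisects the angles at $B$ and $D$, so each triangle has base angles $\beta/2$.
\end{itemize}
Thus $ABD$ is a triangle with angles $(\alpha,\beta/2,\beta/2)$. Its angle sum is
\[
\tfrac{\pi}{4}-\epsilon+\tfrac{\pi}{4}+\epsilon=\tfrac{\pi}{2}<\pi,
\]
and every angle is positive because $0\le\epsilon<\frac{\pi}{4}$. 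By Theorem~\ref{thm: Gauss Bonnet} such a triangle must have area $\pi/2>0$.

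For existence, we need a hyperbolic triangle with angles $(\alpha,\beta/2,\beta/2)$. By the Cosine Rule II (Theorem~\ref{Cosine Rule}), the side $a$ opposite the angle $\alpha$ satisfies
\[
\cosh a=\frac{\cos\alpha+\cos^2(\beta/2)}{\sin^2(\beta/2)},
\]
and analogous formulas give the other two sides. For this triangle to exist we need each such right-hand side to exceed $1$, which holds exactly because the angle sum is less than $\pi$. One then places the triangle in $\mathbb{H}$ and glues it to its mirror image across the base $BD$. The resulting quadrilateral has angles $\alpha$ at the two apices and $2\cdot\beta/2=\beta$ at the two base vertices. The order around the boundary is apex, base, apex, base, which gives the consecutive pattern $\alpha,\beta,\alpha,\beta$. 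Its four sides are the equal legs of the isosceles triangle, so all have the same length.

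Convexity is needed so that the glued figure is an honest quadrilateral. It holds because every interior angle is less than $\pi$.

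For uniqueness (up to isometry), the first paragraph shows that any such quadrilateral decomposes into two copies of the triangle with angles $(\alpha,\beta/2,\beta/2)$. By Cosine Rule II the angles of a hyperbolic triangle determine its side lengths, so this triangle is unique up to isometry. The gluing along the base is then forced, so the quadrilateral is unique.

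The main step to argue carefully is the first one: that in an equilateral quadrilateral the diagonal bisects the angles at its endpoints. This is exactly what fixes the triangle's angles, and it relies on the SSS congruence of the two halves.
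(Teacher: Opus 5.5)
Your proof is correct and is essentially the paper's argument. You cut along a diagonal into two congruent isosceles triangles, obtain existence from a hyperbolic triangle with prescribed angles summing to less than $\pi$, and obtain uniqueness from the fact that the angles determine a hyperbolic triangle. One caveat: the existence step rests on the standard theorem the paper cites from Beardon, since checking that the $\cosh$ values from Cosine Rule II exceed $1$ is only a necessary condition, not a construction.

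The remaining differences are cosmetic. You cut along the diagonal joining the $(\frac{\pi}{4}+\epsilon)$-vertices, so your triangle has angles $(\frac{\pi}{4}-\epsilon,\frac{\pi}{8}+\frac{\epsilon}{2},\frac{\pi}{8}+\frac{\epsilon}{2})$ rather than the paper's $(\frac{\pi}{4}+\epsilon,\frac{\pi}{8}-\frac{\epsilon}{2},\frac{\pi}{8}-\frac{\epsilon}{2})$. Your SSS step also spells out why the diagonal bisects the angles, which the paper's one-line uniqueness argument leaves implicit.
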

\begin{proof}
Consider a hyperbolic triangle with angles $(\pi/4+\epsilon), (\pi/8-\epsilon/2) \text{ and }(\pi/8-\epsilon/2)$ [For the existence of such a triangle, see Theorem 7.16.2 of \cite{beardon2012geometry}]. The uniqueness (up to isometry) of this triangle follows from the trigonometric identity $(3)$ in Theorem~\ref{Cosine Rule}.

The quadrilateral is obtained by taking two disjoint isometric copies of such triangle and attaching them along the side opposite to angle $(\pi/4+\epsilon)$ (see Figure~\ref{quadrilateral}). 

\noindent\textbf{Uniqueness:} If such quadrilateral is cut along a diagonal, we get two isometric triangles. The uniqueness of the quadrilateral follows from the uniqueness of the triangles.
\begin{figure}[htbp]
\begin{center}
\begin{tikzpicture}[xscale=1,yscale=1]

\draw (0,0) circle (2);
\draw[dashed] (1,0)--(-1,0);
\draw (-1,0) [bend right] to (0,1) to (1,0) to (0,-1) to (-1,0); 
\draw (0,1.3) node{A} (-1.2,0) node{B} (0,-1.3) node{C} (1.2,0) node{D};

\end{tikzpicture}
\end{center}
 \caption{$\triangle ABD$ and $\triangle BDC$ are the isometric triangles with interior angles at each of the vertices $A$ and $C$ is $\frac{\pi}{4}-\epsilon$.}
\label{quadrilateral}
\end{figure}
\end{proof}

\textbf{Notation:} We call the quadrilateral in Lemma~\ref{lemma1.2} as \emph{square} and denote it by $Q_\epsilon$.

\begin{lemma}\label{lem:3.2} (\cite{DeySahaSankiToAppear})
    The common perpendiculars between the opposite sides of the square $Q_\epsilon$ pass through the point of intersection of the diagonals and they have the same length $\sqrt{2} \cos\epsilon +1$. 
\end{lemma}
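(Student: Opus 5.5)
The plan is to exploit the symmetries of $Q_\epsilon$ to locate the common perpendiculars, and then to compute their length with right-angled hyperbolic trigonometry (Theorem~\ref{pythagorus} and the Cosine Rule II of Theorem~\ref{Cosine Rule}). I read the length formula as $\cosh(\text{length})=\sqrt{2}\cos\epsilon+1$; at $\epsilon=0$ this gives the familiar value $\sqrt2+1$ for the regular right-angled-at-$\pi/4$ square.

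\emph{Symmetry step.} Label $Q_\epsilon=ABCD$ as in Figure~\ref{quadrilateral}, with angles $\frac{\pi}{4}-\epsilon$ at $A,C$ and $\frac{\pi}{4}+\epsilon$ at $B,D$.
\begin{itemize}
\item By the construction in Lemma~\ref{lemma1.2}, reflection in the diagonal $BD$ swaps the two isometric triangles $ABD$ and $CBD$. The same reasoning applied to the other diagonal shows that reflection in $AC$ is also an isometry of $Q_\epsilon$: all sides are equal, and the angles at $B$ and $D$ agree.
\item Hence both diagonals are axes of symmetry. They meet orthogonally at a point $O$.
\item The composition of the two reflections is the half-turn about $O$. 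It maps each side to its opposite side.
\item Opposite sides lie on disjoint (ultraparallel) geodesics, so their common perpendicular is unique. The half-turn therefore preserves that common perpendicular.
\item A geodesic preserved by a half-turn, with its endpoints swapped, must pass through the fixed point $O$.
\item Reflection in a diagonal interchanges the pair $\{AB,CD\}$ with the pair $\{AD,BC\}$. So the two common perpendiculars have equal length.
\end{itemize}
This gives everything except the numerical value.

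\emph{Computation.} Let $H$ be the foot of the perpendicular from $O$ to $AB$, and set $h=d(O,H)$. By the half-turn, the full common perpendicular has length $2h$. Write $\alpha=\frac{\pi}{8}-\frac{\epsilon}{2}$ and $\beta=\frac{\pi}{8}+\frac{\epsilon}{2}$ for the half-angles at $A$ and $B$. Since the diagonals bisect the vertex angles, these are the angles of triangle $AOB$ at $A$ and $B$, and its angle at $O$ is $\pi/2$. The segment $OH$ splits this triangle into two triangles, each right-angled at $H$, with angles $\theta_1,\theta_2$ at $O$ satisfying $\theta_1+\theta_2=\pi/2$.

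Applying the Cosine Rule II to each right triangle gives
\[
\cos\alpha=\sin\theta_1\cosh h, \qquad \cos\beta=\sin\theta_2\cosh h=\cos\theta_1\cosh h .
\]
Squaring and adding yields $\cosh^2 h=\cos^2\alpha+\cos^2\beta$. Therefore
\[
\cosh 2h=2\cos^2\alpha+2\cos^2\beta-1=\cos 2\alpha+\cos 2\beta+1=\cos(\tfrac{\pi}{4}-\epsilon)+\cos(\tfrac{\pi}{4}+\epsilon)+1=\sqrt2\cos\epsilon+1 .
\]

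\emph{Main obstacle.} The delicate point is justifying the geometric facts used above:
\begin{itemize}
\item that reflection in $AC$ really is a symmetry of $Q_\epsilon$, which follows from the uniqueness in Lemma~\ref{lemma1.2} applied to the triangles $ABC$ and $ADC$;
\item that the foot $H$ lies in the interior of side $AB$, which holds because both angles $\alpha,\beta$ of triangle $AOB$ are acute;
\item that opposite sides are ultraparallel, so that the common perpendicular exists and is unique.
\end{itemize}
Once these are in place, the trigonometric computation is routine.
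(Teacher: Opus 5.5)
Your argument is correct. Your reading $\cosh(\text{length})=\sqrt2\cos\epsilon+1$ is exactly what the paper proves, since it computes $\cosh MN$. The location step is essentially the paper's. There, the perpendiculars $OM$ and $ON$ to $AB$ and $CD$ are shown to be collinear via the congruence $\triangle AMO\cong\triangle CNO$, which is your half-turn in disguise. Note that the paper labels the vertices of angle $\frac{\pi}{4}+\epsilon$ as $A,C$, the opposite of your convention.

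The computation, however, takes a genuinely different route. The paper works in the right triangle $AMO$, whose hypotenuse is the half-diagonal. It writes $\sinh OM=\sinh OA\,\sin\bigl(\frac{\pi}{8}+\frac{\epsilon}{2}\bigr)$ and then $\cosh MN=1+(\cosh AC-1)\sin^2\bigl(\frac{\pi}{8}+\frac{\epsilon}{2}\bigr)$. It therefore needs the diagonal length $\cosh AC$, which is the quantity $\cosh x_\epsilon$ obtained from Cosine Rule II in the proof of Theorem~\ref{Bolza like surfaces}. Your altitude splitting of the right triangle $AOB$, using $\theta_1+\theta_2=\frac{\pi}{2}$, gives $\cosh^2 h=\cos^2\alpha+\cos^2\beta$ directly from the half-angles. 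No side or diagonal length is needed, and the same computation works verbatim for any equilateral quadrilateral with angles $2\alpha,2\beta,2\alpha,2\beta$. Your reflection in a diagonal also gives the equality of the two perpendiculars for free, whereas the paper only says the second pair is handled similarly.

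One point of ordering: you need not assume beforehand that opposite sides are ultraparallel. Take the perpendicular from $O$ to the line $AB$ together with its image under the half-turn. This is a single geodesic through $O$, orthogonal to both lines $AB$ and $CD$. Having a common perpendicular forces the two lines to be ultraparallel. The perpendicular is then unique, because a quadrilateral with four right angles would have zero area. This is how the paper organizes the argument, and it disposes of the third item on your list of obstacles.
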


\begin{proof}
    \begin{figure}[htbp]
        \centering
        \begin{tikzpicture}
            \draw [bend left=20] (-1.5,-1.5) to (1.5,-1.5) to (1.5,1.5) to (-1.5,1.5) to (-1.5,-1.5);
            \draw (-1.5,-1.5)--(1.5,1.5);
            \draw (1.5,-1.5)--(-1.5,1.5);
            \draw (0,-.35) node {$O$};
            \draw [dashed, thick, bend left=10] (-1.2,0) to (0,0) to (1.22,0);
            \draw (-1.6,0) node{$M$} (-1.75,1.5) node {$A$} (-1.75,-1.5) node {$B$} (1.5,0) node{$N$} (1.75,1.5) node {$D$} (1.75,-1.5) node {$C$};
        \end{tikzpicture}
        \caption{}
        \label{fig:3.2}
    \end{figure}
    Label the vertices of $Q_\epsilon$ by $A,B,C,$ and $D$ and let the interior angles at these vertices be $\frac{\pi}{4}+\epsilon$, $\frac{\pi}{4}-\epsilon$, $\frac{\pi}{4}+\epsilon$, and $\frac{\pi}{4}-\epsilon$, respectively. Let the diagonals of the square meet at $O$. Suppose that the perpendiculars from $O$ into the sides $AB$ and $CD$ meet at $M$ and $N$, respectively (see Figure \ref{fig:3.2}). We show that $MON$ is a geodesic, that is $\angle MON = \pi$ and this will prove that the common perpendicular between $AB$ and $CD$ passes through $O$. In triangles $\triangle AMO$ and $\triangle CNO$, $\angle MAO=\frac{\pi}{8}+ \frac{\epsilon}{2}=\angle NCO$, $\angle OMA=\frac{\pi}{2}= \angle ONC$, $OA=\frac{1}{2} AC= OC$. From the trigonometric identities for right-angled triangle, as discussed in Section 2, the triangles $\triangle AMO$ and $\triangle CNO$ are isometric and hence $\angle{AOM} =\angle{CON}$. This implies that $\angle MON= \pi$ because $AC$ is a geodesic. Now we find the length of the perpendicular $MN$. Using the identity $(2)$ in Theorem~\ref{pythagorus} in the triangle $\triangle AMO$, we have
    \begin{align*}
        \sinh{OM}=\sinh{OA} \sin{\left(\frac{\pi}{8}+\frac{\epsilon}{2}\right)}.
    \end{align*}
    Therefore,
    \begin{align*}
        \cosh{MN}&=2\sinh^2{OM}+1\\
        &=2 \sinh^2{OA} \sin^2{\left(\frac{\pi}{8}+\frac{\epsilon}{2}\right)}+1\\
        &=(\cosh{AC}-1) \sin^2{\left(\frac{\pi}{8}+\frac{\epsilon}{2}\right)} +1\\
        &=\sqrt{2} \cos{\epsilon} +1.
    \end{align*}
    Similarly, one can check that the common perpendicular between the sides $AD$ and $BC$ passes through $O$ and its length is also given by $\sqrt{2} \cos{\epsilon} +1$.
\end{proof}
\begin{note}
    The distance between the opposite sides of the square $Q_\epsilon$ decreases with $\epsilon$.
\end{note}

In this section, we construct a family of hyperbolic surfaces $S_g(t)$ of sufficiently large genus $g$, lying in Thurston’s set, with the property that their systolic length increases monotonically with the parameter $t$. 
More precisely, we establish Theorem~\ref{Bolza like surfaces}, which we restate below for clarity.

\begin{repthm}[\ref{Bolza like surfaces}] (\cite{DeySahaSankiToAppear})
   For every integer $g=kn+1$, where $k,n\in \N, k\geq 2, n\geq 4$, there is a continuous function $\mathcal{D}_g:\left[ 0, \frac{\pi}{12}\right] \to \mathcal{M}_g$ satisfying the following.
    \begin{enumerate}
        \item $\mathcal{D}_g(t)\in \chi_g$, for all $t\in \left[ 0, \frac{\pi}{12}\right]$, where $\chi_g$ is the Thurston set.
        \item $\mathrm{sys}(\mathcal{D}_g(t_1))<\mathrm{sys}(\mathcal{D}_g(t_2))$, for all $0\leq t_1 < t_2 \leq \frac{\pi}{12}.$
        \item The surface $\mathcal{D}_g\left(\frac{\pi}{12}\right)$ has $(6g-6)$ many systolic geodesics that give a triangle decomposition of the surface. In other words, $\mathcal{D}_g\left(\frac{\pi}{12}\right)$ is Bolza-like.
    \end{enumerate}
\end{repthm}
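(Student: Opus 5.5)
We will build $\mathcal{D}_g(t)$ by gluing copies of the square $Q_\epsilon$ of Lemma~\ref{lemma1.2}, with $\epsilon$ a fixed increasing affine function of $t$. The expected endpoint is $\epsilon = \pi/12$ at $t=\pi/12$. There the angles are $\pi/4\mp\pi/12 = \pi/6,\pi/3$, and each square splits along a diagonal into two triangles with angles $\pi/3$, $\pi/12$, $\pi/12$ (a $(3,12,12)$-triangle). Alternatively, the diagonals become the systoles and the triangles are cut differently; I will fix the precise identification once the angle bookkeeping at vertices is checked.

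\textbf{Construction.} For $g-1=kn$, take $4(g-1)=4kn$ copies of $Q_\epsilon$ (the area of $Q_\epsilon$ is $2\pi - 4\cdot\pi/4 = \pi$, and $4(g-1)\pi = 2\pi(2g-2)$, so this count is forced). Arrange them as an $n$-by-$k$ pattern of "tubes", for instance chains of $n$ squares glued along opposite sides. Glue the chains so that at every vertex exactly four angles of type $\pi/4-\epsilon$ and four of type $\pi/4+\epsilon$ meet. This makes the total angle $2\pi$ for every $\epsilon$, so Theorem~\ref{thm: pasting} gives a closed hyperbolic surface, and the gluing pattern is independent of $\epsilon$. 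An Euler characteristic count confirms the genus: $F=4kn$, $E=8kn$, $V=2kn$, so $\chi=-2kn=2-2g$. The sides of the squares then concatenate into closed geodesics. At each vertex the sides continue straight because the angles on each side sum to $\pi$, namely $(\pi/4-\epsilon)+(\pi/4+\epsilon)+\ldots$. Continuity of $\mathcal{D}_g$ in $\mathcal{M}_g$ follows from continuity of the polygon in $\epsilon$. The conditions $n\ge4$ and $k\ge2$ are used to make the edge-curves long enough that they are not the systoles.

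\textbf{Systole identification.} The candidate systoles are the closed geodesics formed by the common perpendiculars of Lemma~\ref{lem:3.2}. These pass through the centres of the squares and join to midpoints consistently across glued sides. Each such curve crosses some fixed number $m$ of squares, and its length is $m\cdot\operatorname{arccosh}(\sqrt2\cos\epsilon+1)$. Filling holds because these curves cut each square into four pieces, and the complement of the curves is a union of disks around the vertices, so $\mathcal{D}_g(t)\in\chi_g$.

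\textbf{Monotonicity of the systole.} By the Note after Lemma~\ref{lem:3.2}, $\sqrt2\cos\epsilon+1$ decreases in $\epsilon$. So if the systoles were the perpendicular curves, the systole would \emph{decrease}. Monotonic increase therefore requires the systoles to be curves whose length grows with $\epsilon$, such as the diagonals joined into closed geodesics, or the sides. The side length comes from the triangle with angles $\pi/4+\epsilon$, $\pi/8-\epsilon/2$, $\pi/8-\epsilon/2$; by Cosine Rule II it increases as the angle sum decreases. So I will first compute, via Theorem~\ref{Cosine Rule}, the side length $s(\epsilon)$ and the diagonal lengths, and choose the curve family that is increasing. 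I expect the systolic curves to be those formed by the long diagonals of the $(\pi/4-\epsilon)$-vertices, or by the sides. I will then verify that at $\epsilon=\pi/12$ there are exactly $6g-6$ of them, cutting the surface into triangles.

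\textbf{Main obstacle.} The hardest step is proving that no other closed geodesic is shorter. I would lift to the tiling of $\mathbb{H}$ by copies of $Q_\epsilon$ and use combinatorial length: any closed geodesic not in the candidate family either crosses many squares or travels along the boundary. Each crossing of a square between non-adjacent sides costs at least the distance $\operatorname{arccosh}(\sqrt2\cos\epsilon+1)$ between opposite sides. Each crossing between adjacent sides is bounded below by a corner estimate from Theorem~\ref{pythagorus}. Combined with the collar lemma (Lemma~\ref{lem: Collar}), this gives a lower bound exceeding the candidate length for every $\epsilon\in[0,\pi/12]$. The hypotheses $n\ge4$ and $k\ge2$ exclude short homotopically nontrivial loops that wrap around a tube.
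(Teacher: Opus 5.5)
There is a genuine gap: the proposal never pins down which curves are systolic, and each concrete guess it makes is wrong. In the paper the key mechanism is a competition inside $Q_\epsilon$ between two lengths. One is the side length $l_\epsilon$, which increases with $\epsilon$. The other is the \emph{short} diagonal $x_\epsilon$ joining the two $\left(\frac{\pi}{4}+\epsilon\right)$-vertices, which decreases. This diagonal cuts $Q_\epsilon$ into two triangles with angles $\frac{\pi}{4}-\epsilon,\frac{\pi}{8}+\frac{\epsilon}{2},\frac{\pi}{8}+\frac{\epsilon}{2}$, whose other two sides are sides of $Q_\epsilon$. Hence $x_\epsilon=l_\epsilon$ exactly when $\frac{\pi}{8}+\frac{\epsilon}{2}=\frac{\pi}{4}-\epsilon$, i.e.\ $\epsilon=\frac{\pi}{12}$, where that triangle is equilateral with all angles $\frac{\pi}{6}$.

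This is what fixes the interval $\left[0,\frac{\pi}{12}\right]$. Throughout it the systole is $2l_\epsilon$, which is increasing. At the endpoint the short diagonals join the sides as systoles, cutting each square into two $(6,6,6)$-triangles. Your errors, against this picture:
\begin{enumerate}
\item Your primary guess of $(3,12,12)$-triangles comes from the other diagonal.
\item The ``long diagonal of the $\left(\frac{\pi}{4}-\epsilon\right)$-vertices'' you expect to be systolic lies opposite the largest angle $\frac{\pi}{4}+\epsilon$ of its triangle. It is therefore always longer than a side and never systolic.
\item Your claim that $n\ge 4$, $k\ge 2$ keep the edge-curves from being systoles is backwards. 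Each systole is a closed geodesic made of just \emph{two} sides, and the grid hypothesis is what makes all \emph{other} curves long.
\item $l_\epsilon$ does not increase ``as the angle sum decreases''. The triangle with angles $\frac{\pi}{4}+\epsilon,\frac{\pi}{8}-\frac{\epsilon}{2},\frac{\pi}{8}-\frac{\epsilon}{2}$ has constant angle sum $\frac{\pi}{2}$. Monotonicity must come from differentiating the Cosine Rule II expressions, as the paper does.
\end{enumerate}

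The second missing ingredient is a structure that makes everything checkable. Your gluing (chains of squares with eight corners at each vertex) is never specified. In particular the corner types must alternate cyclically for four consecutive corners to sum to $\pi$. Without an explicit pattern you cannot say how sides concatenate, count systoles, or bound other lengths.

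The paper glues $mn$ squares, with $m=2k$, into an $m\times n$ grid torus $T_\epsilon$ with $mn$ cone points of angle $\pi$. It then passes to the branched double cover $S_g(\epsilon)$, whose involution $\sigma_\epsilon$ fixes $2g-2=mn$ points; this matches your counts of $4kn$ squares and $2kn$ vertices. Each saddle connection of $T_\epsilon$ lifts to one closed geodesic of twice its length. This gives $2mn+mn=6g-6$ systoles at $\epsilon=\frac{\pi}{12}$. Filling is immediate, since the complement is a union of open squares (or triangles).

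The involution also settles what you call the main obstacle. A simple closed geodesic through a fixed point is mapped to itself with reversed orientation by $\sigma_\epsilon$. It therefore passes through a second fixed point and has length at least $2\min\{l_\epsilon,x_\epsilon\}$. A geodesic avoiding the fixed points projects to a closed curve on $T_\epsilon$. There, $m,n\ge 4$ together with $4y_{\pi/12}>2l_{\pi/12}$ makes its length at least $2l_\epsilon$; here $y_\epsilon$ is the distance between opposite sides from Lemma~\ref{lem:3.2}. Your sketch via combinatorial length, corner estimates and the collar lemma does not supply this bound.
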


\begin{proof}
    First, we construct a hyperbolic torus with $2g-2$ cone points with angles $\pi$ at each cone point as follows. For any two numbers $m,n\in \mathbb{N}$, take $mn$ identical copies of the squares $Q_\epsilon$ and arrange them in an $m\times n$ rectangular grid. We further identify the sides of the rectangles in a natural way so that we obtain a torus $T_\epsilon$ with $mn$ singular points and the angle at each singular point is $\pi$. If $mn$ is even, then there is a hyperbolic surface of the genus $g=\frac{mn+2}{2}$, say, $S_g(\epsilon)$ and an orientation preserving involution $\sigma_\epsilon$ of $S_g(\epsilon)$ with $2g-2$ fixed points such that the quotient of the surface by the involution is the torus $T_\epsilon$, that is $S_g(\epsilon)/\sigma_\epsilon\cong T_\epsilon$ (see Figure \ref{fig:involution}). We define the path $\mathcal{D}_g: \left[0,\frac{\pi}{4}\right) \to \mathcal{M}_g$ by
    $$\mathcal{D}_g(\epsilon)=S_g(\epsilon), \text{ for all } \epsilon \in [0,\pi/4).$$

    \begin{figure}[htbp]
        \centering
        \begin{tikzpicture}
            \draw (-1.2,2)..controls(-.5,6) and (.5,6)..(1.2,2);
            %\draw (-4,1)--(-2,1);
            \draw (-1.2,2)..controls(-1.3,1.5) and (-1.7,1)..(-2,1);
            %\draw (4,1)--(2,1);
            \draw (1.2,2)..controls(1.3,1.5) and (1.7,1)..(2,1);

            \draw (-1.2,-2)..controls(-.5,-6) and (.5,-6)..(1.2,-2);
            %\draw (-4,-1)--(-2,-1);
            \draw (-1.2,-2)..controls(-1.3,-1.5) and (-1.7,-1)..(-2,-1);
            %\draw (4,-1)--(2,-1);
            \draw (1.2,-2)..controls(1.3,-1.5) and (1.7,-1)..(2,-1);

            \draw (-2,1)..controls(-4,.7) and (-4,-.7)..(-2,-1);
            \draw (2,1)..controls(4,.7) and (4,-.7)..(2,-1);

            \draw (0.05,.95) to [bend left=60] (0.05,2.05);
            \draw (-.08,1.1) to [bend right=60] (-.08,1.9);

            \draw (0.05,2.95) to [bend left=60] (0.05,4.05);
            \draw (-.08,3.1) to [bend right=60] (-.08,3.9);

            \draw (0.05,-.95) to [bend right=60] (0.05,-2.05);
            \draw (-.08,-1.1) to [bend left=60] (-.08,-1.9);

            \draw (0.05,-2.95) to [bend right=60] (0.05,-4.05);
            \draw (-.08,-3.1) to [bend left=60] (-.08,-3.9);

            %\draw [very thick] (-.1,0) node {$\vdots$};
            \foreach \y in {0,0.5,-.5}
            {
                \fill (-0.1,\y) circle (1.5pt);
            }

            \draw [dotted] (0,6)--(0,-6);

            % \draw (1.45,.05) to [bend right=60] (2.55,.05);
            % \draw (1.55,.0) to [bend left=60] (2.45,.0);
            
            \draw (1.45,0.05) to [bend right=60] (2.55,0.05);
            \draw (1.6,-.08) to [bend left=60] (2.4,-.08);

            \draw (-1.45,0.05) to [bend left=60] (-2.55,0.05);
            \draw (-1.6,-.08) to [bend right=60] (-2.4,-.08);

            % \draw (-2.05,.95-1.5) to [bend left=60] (-2.05,2.05-1.5);
            % \draw (1.92-4.1,1.1-1.5) to [bend right=60] (1.92-4.1,1.9-1.5);

            \draw [->-=1] (-.3,5.5) arc
            [
                start angle=100,
                end angle=420,
                x radius=1cm,
                y radius =.12cm
            ] ;
        
            \draw [->-=1] (4,0)--(5,0);
            \draw (1.1,5.2) node {$\sigma_\epsilon$};

            \draw (-1.2+9,2)..controls(-1.3+9,1.5) and (-1.7+9,1)..(-2+9,1);
            \draw (-1.2+9,-2)..controls(-1.3+9,-1.5) and (-1.7+9,-1)..(-2+9,-1);
            \draw (-2+9,1)..controls(-4+9,.7) and (-4+9,-.7)..(-2+9,-1);

            \draw (-1.45+9,0.05) to [bend left=60] (-2.55+9,0.05);
            \draw (-1.6+9,-.08) to [bend right=60] (-2.4+9,-.08);
            %\draw (5.5,0) node {$\bullet$};
            \draw (9,5)..controls (8.8,5) and (8.5,4.2)..(-1.2+9,2);
            \draw (9,-5)..controls (8.8,-5) and (8.5,-4.2)..(-1.2+9,-2);

            \draw (0.05+9,.95) to [bend left=60] (0.05+9,2.05);
            \draw (0.05+9,2.95) to [bend left=60] (0.05+9,4.05);
            \draw (0.05+9,-.95) to [bend right=60] (0.05+9,-2.05);
            \draw (0.05+9,-2.95) to [bend right=60] (0.05+9,-4.05);

            \draw (9,5) to [bend right=10] (0.05+9,4.05);
            \draw (0.05+9,2.95) to [bend right=10] (0.05+9,2.05);
            \draw [very thick, dotted] (0.05+9,.95) to [bend right=10] (0.05+9,-.95);
            \draw (9,-5) to [bend left=10] (0.05+9,-4.05);
            \draw (0.05+9,-2.95) to [bend left=10] (0.05+9,-2.05);
        \end{tikzpicture}
        \caption{}
        \label{fig:involution}
    \end{figure}

    Now, we find all the systolic geodesics of the surface $S_g(\epsilon)$. As the involution $\sigma_\epsilon$ acts as a rotation by the angle $\pi$, any geodesic segment in $T_\epsilon$ joining two distinct singular points (not passing through any other singular points) lift to a geodesic in $S_g(\epsilon)$. In particular, the sides of squares and the diagonals of the square joining the vertices with interior angle $\left (\frac{\pi}{4}+\epsilon\right)$ lift to geodesic in $S_g(\epsilon)$. We denote the length of these segments by $l_\epsilon$ and $x_\epsilon$, respectively (see Figure \ref{FIG:0.1}). We show that $\sys(S_g(\epsilon)) =\min\{2l_\epsilon,2x_\epsilon\}$ and the systolic geodesics are either the lifts of the sides of $Q_\epsilon$ or the lifts of the sides of diagonals or both (depending on $\epsilon$). By hyperbolic trigonometric identity $(3)$, Theorem \ref{Cosine Rule}, it follows that
    \begin{align}
        \cosh\left(l_\epsilon\right)&=\frac{\cos\left(\frac{\pi}{8}-\frac{\epsilon}{2}\right)+\cos\left(\frac{\pi}{8}-\frac{\epsilon}{2}\right)\cos\left(\frac{\pi}{4}+\epsilon\right)}{\sin\left(\frac{\pi}{8}-\frac{\epsilon}{2}\right)\sin\left(\frac{\pi}{4}+\epsilon\right)} \label{eqn: 3.1},\\
        \cosh (x_\epsilon)&= \frac{\cos\left( \frac{\pi}{4}-\epsilon\right)+ \cos^2\left( \frac{\pi}{8}+\frac{\epsilon}{2}\right)}{\sin^2\left( \frac{\pi}{8}+\frac{\epsilon}{2}\right)}\label{eqn:3.2}.
    \end{align}

    Initially, $l_0\approx 2.448 <3.057 \approx x_0$. Differentiating equations \eqref{eqn: 3.1} and \eqref{eqn:3.2} with respect to $\epsilon$, we have
    \begin{align*}
        \frac{d}{d\epsilon}\left(\cosh\left(l_\epsilon\right)\right)&=\frac{1}{4}\cdot \frac{\sin \left(\frac{\pi}{4}+\epsilon\right)-\sin \left(\frac{\pi}{4}- \epsilon\right)}{\sin^2 \left(\frac{\pi}{8}-\frac{\epsilon}{2}\right )\sin^2 \left(\frac{\pi}{4}+\epsilon\right )}>0, \ \text{ for } 0\leq \epsilon < \frac{\pi}{4}\\
        \frac{d}{d\epsilon}(\cosh{x_\epsilon})&= -\frac{\sqrt{2} \cdot\cos{\left( \frac{\pi}{8}-\frac{\epsilon}{2}\right)}}{\sin^3{\left( \frac{\pi}{8}+\frac{\epsilon}{2}\right)}}<0 \text{ for } 0\leq \epsilon<\frac{\pi}{4}.
    \end{align*}
    Thus, $l_\epsilon$ is strictly increasing and $x_\epsilon$ strictly decreasing in $0\leq \epsilon<\frac{\pi}{4}$ with $x_\epsilon\to 0$ as $\epsilon \to \frac{\pi}{4}$. Therefore, there exists some $\epsilon_0\in (0, \pi/4)$ such that $l_{\epsilon_0}=x_{\epsilon_0}$. But in this case 
    $$\frac{\pi}{8}+\frac{\epsilon_0}{2}=\frac{\pi}{4}-\epsilon_0 \implies \epsilon_0=\frac{\pi}{12}.$$

    Now we show that for any simple closed geodesic $\gamma$ in $S_g(\epsilon)$, $l(\gamma) \geq \min \{2l_\epsilon,2x_\epsilon\}$, for $0\leq \epsilon \leq \frac{\pi}{12}$. The following two cases to consider.

    \textbf{Case 1:} $\gamma$ passes through a fixed point of the involution $\sigma_\epsilon$. Then $\gamma$ must pass through another fixed point that is diametrically opposite to the first one. In the torus $T_\epsilon$, the shortest arc joining two singular points has lengths either $l_\epsilon$ or $x_\epsilon$ (depending on $\epsilon$). Thus, $l(\gamma)\geq \min \{2l_\epsilon,2x_\epsilon\}$.

    \textbf{Case 2:} $\gamma$ does not pass through any fixed points. Then $\gamma$ project onto a closed curve (possibly non-simple) in $T_\epsilon$. In this case, we choose $m,n$ large enough such that $l(\gamma)\geq \min \{2l_\epsilon,2x_\epsilon\}$. 
    
    Now, we find lower bounds of $m,n$ and, consequently, of the genus $g$. In $0\leq \epsilon \leq \pi/12$, $l_\epsilon< x_\epsilon$, maximum value $l_\epsilon$= $l_\frac{\pi}{12} \approx 2.55$. By Lemma \ref{lem:3.2}, the shortest distance $y_\epsilon$ between the opposite sides is decreasing and $y_{\pi/12} \approx 1.506$. As $4$ is the smallest integer such that $4\times y_{\pi/12}> 2\times l_{\pi/12}$, we have $m,n \geq 4$. Therefore, the genus $g=\frac{mn+2}{2} \geq 9$. As $mn$ is also even, we take $m=2k, k\geq 2$ and hence $g=kn+1$, where $k,n\in \N, k\geq 2, n\geq 4$.

    Now we show that $S_g(\epsilon)\in \chi_g$ for $0\leq \epsilon \leq \pi/12$. As $l_\epsilon< x_\epsilon$ for $0\leq \epsilon < \pi/12$, the systolic geodesics decompose $S_g(\epsilon)$ into the squares $Q_\epsilon$. For $\epsilon= \pi/12$, the diagonals further decompose it into triangles with each interior angle $\pi/6$. This completes the proof.

    \begin{figure}[htbp]
    \begin{center}
    \begin{tikzpicture}
    
     \draw[step=1.5 cm] (0,0) grid (7.5,7.5) ;
     \draw[thin]  (1.5,6) circle (.4 cm);
     \draw (.8,6.4) node{$\frac{\pi}{4}-\epsilon$} (2.25,6.5) node{$\frac{\pi}{4}+\epsilon$} (2.25,5.5) node{$\frac{\pi}{4}-\epsilon$} (.8,5.5) node{$\frac{\pi}{4}+\epsilon$};
     \draw (3,4.5)--(4.5,3);
     \draw [red] (3,4.1) arc
    [
        start angle=270,
        end angle=315,
        x radius=.4cm,
        y radius =.4cm
    ] ;
    \draw [red] (4.1,3) arc
    [
        start angle=180,
        end angle=135,
        x radius=.4cm,
        y radius =.4cm
    ] ;
    \draw [red] (3.4,3) arc
    [
        start angle=0,
        end angle=90,
        x radius=.4cm,
        y radius =.4cm
    ] ;
     \draw (2.5,4) node {\tiny$\frac{\pi}{8}+\frac{\epsilon}{2}$} (2.5,3.3) node {\tiny$\frac{\pi}{4}-\epsilon$} (4.1,2.7) node {\tiny$\frac{\pi}{8}+\frac{\epsilon}{2}$};
     \draw[thick, cyan] (3.9,3.9) node{$x_\epsilon$};
     \draw[thick, cyan] (4.7,3.9) node{$l_\epsilon$};
     \end{tikzpicture}
    \end{center}
    \caption{The torus $T_\epsilon$}
    \label{FIG:0.1}
    \end{figure}
\end{proof}

\begin{note}
\begin{enumerate}
   \item The set of systoles on the surface $S_g\left(\frac{\pi}{12}\right)$ forms a distance Delaunay triangulation with $2g-2$ vertices (for a detailed discussion on distance Delaunay triangulation, see Section 2, \cite{MinimalDelaunay}).

    \item Though, the systolic geodesics in $S_g\left(\frac{\pi}{12}\right)$ decompose the surface into triangles as in the Bolza surface, it is still not a maximal surface. Because a maximal surface of genus $g$ contains at least $6g-5$ systolic geodesics (see Theorem 2.8 \cite{Schmutz}). Note that a maximal surface is a surface where the systole function attains a maxima (local or global). 
    \end{enumerate}
\end{note}

\section{An application of Bolza-like surfaces}

\textbf{A construction of global maximal surfaces} (Section 3, \cite{DeySahaSankiToAppear}): Let $X$ be a closed orientable hyperbolic surface which admits a tesselation by triangles of equal side lengths and the number $N$ of triangles around each vertex is the same. Let $H_a$ denote a right-angled hyperbolic hexagon with lengths of each alternating side $a$. Then the surface $X(a)$ which has the same tesselation structure as $X$ in which the triangles are replaced by the hexagon $H_a$ and the triangle vertices by boundary geodesics of equal length $Na$ (see Figure \ref{fig:localPic}). Then $X(0)$ is the surface where the boundaries become cusps. In Section 3.1 \cite{SchmutzGlobalMaximal}, P. Schmutz proved that the surface $X(0)$ is a global maximal surface with a trivial automorphism group.

In the Bolza-like surface $S_g\left( \frac{\pi}{12}\right)$, as obtained in Theorem \ref{Bolza like surfaces}, the systolic geodesics gives a triangle decomposition with equal side lengths of the triangle and the number of vertices of the triangles is $2g-2$. Therefore, we will be able to find the global maximal surface in the Teichm\"uller space of genus $g$ and with $2g-2$ punctures, where $g=kn+1$, $k,n\in \N, k\geq 2, n\geq 4$. 

\begin{figure}[htbp]
    \centering
    \begin{tikzpicture}
        \draw (-2,-2)--(2,-2)--(2,2)--(-2,2)--cycle;
        \draw (-2,0)--(2,0) ;
        \draw (0,2)--(0,-2) ;
        \draw (-2,-2)--(2,2) ;
        \draw (2,-2)--(-2,2) ;

        \draw (3+2,1.6)--(3+2,.4);
        \draw (3+2,-1.6)--(3+2,-.4);
        \draw (7+2,1.6)--(7+2,.4);
        \draw (7+2,-1.6)--(7+2,-.4);
        \draw (3.4+2,2)--(4.6+2,2);
        \draw (5.4+2,2)--(6.6+2,2);
        \draw (3.4+2,-2)--(4.6+2,-2);
        \draw (5.4+2,-2)--(6.6+2,-2);
        \draw (3.4+2,0)--(4.6+2,0);
        \draw (5.4+2,0)--(6.6+2,0);
        \draw (5+2,.4)--(5+2,1.6);
        \draw (5+2,-.4)--(5+2,-1.6);
        \draw ({3+.4*cos(315)+2},{2+.4*sin(315)})--({5+.4*cos(135)+2},{.4*sin(135)});
        \draw ({5+.4*cos(315)+2},{.4*sin(315)})--({7+.4*cos(135)+2},{-2+.4*sin(135)});
        \draw ({7+.4*cos(225)+2},{2+.4*sin(225)})--({5+.4*cos(45)+2},{.4*sin(45)});
        \draw ({5+.4*cos(225)+2},{.4*sin(225)})--({3+.4*cos(45)+2},{-2+.4*sin(45)});

        \draw [thick, red] (5+2,0) circle(.4 cm);
        \draw[thick, red] (3+2,1.6) arc[start angle=270, end angle=360, radius=.4];
        \draw[thick, red] (6.6+2,2) arc[start angle=180, end angle=270, radius=.4];
        \draw[thick, red] (3+2,-1.6) arc[start angle=90, end angle=0, radius=.4];
        \draw[thick, red] (6.6+2,-2) arc[start angle=180, end angle=90, radius=.4];
        \draw[thick, red] (3+2,-.4) arc[start angle=270, end angle=450, radius=.4];
        \draw[thick, red] (7+2,.4) arc[start angle=90, end angle=270, radius=.4];
        \draw[thick, red] (4.6+2,2) arc[start angle=180, end angle=360, radius=.4];
        \draw[thick, red] (4.6+2,-2) arc[start angle=180, end angle=0, radius=.4];
        
        \draw (3.5,0) node {$\longrightarrow$};
    \end{tikzpicture}
    \caption{Local Picture around a vertex}
    \label{fig:localPic}
\end{figure}

\section{Distribution of the systolic geodesics on the Bolza-like surfaces}

In this section, we establish the symmetry in the distribution of systolic geodesics on the Bolza-like surfaces. 
More precisely, we prove Theorem~\ref{Distribution}, which we restate below for convenience.
\begin{repthm}[\ref{Distribution}] (\cite{DeySahaSankiToAppear}):
    Any simple closed geodesic on the Bolza-like surfaces $(\mathcal{D}_g\left(\frac{\pi}{12} \right))$, as obtained in Theorem \ref{Bolza like surfaces}, intersects the collection $\mathrm{Sys}\left(\mathcal{D}_g\left(\frac{\pi}{12} \right)\right)$ of all systolic geodesics at an even number of points.
\end{repthm}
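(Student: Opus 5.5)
The systolic geodesics on $X=\mathcal{D}_g(\pi/12)$ are exactly the lifts, under the branched double cover $X\to T_{\pi/12}$ by the involution $\sigma=\sigma_{\pi/12}$, of the square sides and of one family of diagonals. Together they form a triangulation of $X$ into equilateral triangles with all angles $\pi/6$, so exactly $12$ triangles meet at each of the $2g-2$ vertices (the fixed points of $\sigma$). I will prove the statement with a $\mathbb{Z}/2$ argument: the union $\mathrm{Sys}(X)$ of all systolic geodesics is a $1$-cycle that is null-homologous mod $2$. Its algebraic intersection number with any closed curve is therefore $0 \bmod 2$. Since distinct simple closed geodesics meet transversally, the count of geometric intersection points then equals this number mod $2$ and is even.

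The key step is to exhibit $\mathrm{Sys}(X)$ as a mod-$2$ boundary, and I would do it with a two-colouring of the triangles.

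\begin{itemize}
\item \textbf{Colouring.} The triangulation has even valence ($12$) at every vertex. I claim its faces can be coloured black/white so that adjacent faces (sharing an edge) get different colours. Locally this works because going around a vertex crosses $12$ edges. Globally one needs the dual graph to be bipartite, i.e.\ every closed path in the dual graph crosses an even number of edges.
\item \textbf{Checking on $T_{\pi/12}$.} On the torus the triangulation is the square grid with one diagonal in each square. I will colour the lower and upper triangles of each square differently, then check that crossing square sides and diagonals alternates colours consistently around the $m\times n$ grid. Since $m=2k$ and the vertex counts are fixed by the construction, I expect the parities to work. If they do not, I will use instead the checkerboard pattern of squares combined with the diagonal split.
\item \textbf{Lifting to $X$.} Because $\sigma$ maps each triangle of $X$ to a triangle of the same image in $T_{\pi/12}$, the colouring pulls back.
\end{itemize}

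With a proper two-colouring in hand, the sum of all black faces is a $2$-chain whose mod-$2$ boundary is exactly the union of all edges. Every edge lies on exactly one black face, so the edges form the boundary of the black region, and they are precisely the segments of the systolic geodesics.

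To finish, let $\gamma$ be a simple closed geodesic. If $\gamma$ is itself systolic, the intersections with the other systoles are handled the same way, and I would count its own contribution by a perturbation argument: push $\gamma$ slightly off itself and count crossings. If $\gamma$ passes through a vertex, I would perturb $\gamma$ near that vertex, noting that an even number ($6$) of systolic geodesics pass through each vertex. After perturbation, $\gamma$ crosses the boundary of the black region transversally. Each crossing switches it between a black and a white region, and $\gamma$ is closed, so the number of crossings is even.

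The main obstacle is the global consistency of the two-colouring, i.e.\ bipartiteness of the dual graph, equivalently that the edge set is null-homologous mod $2$. It is also important to count intersection points at vertices correctly: I would decide whether a point where six systoles meet is counted once or with multiplicity, and make sure the parity statement holds under the convention the theorem intends.
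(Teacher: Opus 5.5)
Your colouring route differs from the paper's, and most of it works. As written, though, it does not prove the statement when $\gamma$ passes through a vertex. The mod-$2$ argument sees only the $1$-cycle formed by all edges. At a vertex $v\in\gamma$ this cycle has local intersection number $6\equiv 0$ with $\gamma$, yet $v$ is a single point of $\gamma\cap\mathrm{Sys}$.

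Concretely, your perturbation replaces the one point $v$ by $6$ transverse crossings, namely the six edge-germs on one side of $\gamma$. None of them is tangent to $\gamma$, since otherwise $\gamma$ would be systolic. So evenness of the perturbed count only shows that the number of intersection points away from the vertex set $V$ is even:
\[
|\gamma\cap\mathrm{Sys}|\equiv \#(\gamma\cap V) \pmod 2 .
\]
Equivalently, a geodesic passing straight through $v$ enters and leaves through sectors six apart, and these have the same colour. The colouring is therefore blind to vertex passages, and your claim that the geometric count equals the mod-$2$ intersection number fails at vertices.

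Your argument is complete only if each point is counted with multiplicity $\sum_{\alpha}|\gamma\cap\alpha|$. The statement counts points, and the paper's Case~1 uses the even number of fixed points on $\gamma$ for exactly this reason. The missing ingredient is the involution. The vertices are exactly the fixed points of $\sigma=\sigma_{\pi/12}$. If $p\in\gamma$ is fixed, then $d\sigma_p=-\mathrm{id}$, so $\sigma(\gamma)=\gamma$ with orientation reversed. An orientation-reversing involution of a circle has exactly two fixed points, so $\#(\gamma\cap V)\in\{0,2\}$, and the statement follows.

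The step you flagged as the main obstacle is not a problem. On $T_{\pi/12}$ the triangulation is the grid with one diagonal per square, all diagonals parallel. Colour black the triangle above the diagonal in every square, and white the one below. Each horizontal side is the top of one square and the bottom of the next. Each vertical side is a side of two horizontally adjacent squares, lying in the upper triangle of one and the lower triangle of the other. The diagonal separates the two halves of its square. So the colouring is proper for all $m,n$; the parity of $m=2k$ plays no role, and the checkerboard fallback is unnecessary. The colouring pulls back along the projection $X\to T_{\pi/12}$, not along $\sigma$. The projection is a local isometry off the branch points, so the two triangles along an edge of $X$ map to the two triangles along its image.

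Once repaired, your proof compares well with the paper's. The paper handles curves avoiding the fixed points by counting lattice crossings of a $(p,q)$-curve in the universal cover of the torus, with a separate and rather informal treatment of non-simple projections. Your colouring disposes of that whole case at once. It shows that any closed curve avoiding the vertices and transverse to the systoles meets them an even number of times. The paper's $\sigma$-symmetry, on the other hand, is what controls the vertex passages, which no purely homological argument can do.
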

\begin{proof}
    Let $\gamma$ be a simple closed non-systolic geodesic in $S_g\left(\frac{\pi}{12} \right)$. Then the following two cases may arise.

    \noindent \textbf{Case 1:} Let $\gamma$ pass through some fixed points of the involution $\sigma_\epsilon$ (as discussed in Theorem \ref{Bolza like surfaces}). Then $\gamma$ is preserved by the involution $\sigma_\epsilon$ and it passes through an even number of fixed points of $\sigma_\epsilon
    $. Hence, if $\gamma$ intersects a systolic geodesic at a point, another point must exist at which $\gamma$ intersects the same geodesic. This point is the image of the first point of intersection under the involution. Thus, the theorem holds in this case.

    \noindent \textbf{Case 2:} Let $\gamma$ not pass through the fixed points of the involution $\sigma$. Then $\gamma$ descends to a closed geodesic $\overline{\gamma}$ in the torus $T_{\frac{\pi}{12}}$ which is not necessarily simple. As the parity of the geometric intersection number depends only on the topology of the surface, for convenience, we may consider $T=T_{\frac{\pi}{12}}$ as a topological surface. Let $T$ be obtained by identifying the opposite sides of the unit Euclidean square (see Figure \ref{fig torus}). Then the following two subcases may arise. 
    \begin{figure}[H]
        \centering
        \begin{tikzpicture}
            \draw[step=1cm,gray,very thin] (0,0) grid (5,4);
            % \foreach \x in {0,1,...,4} 
            % {
            % \draw[gray, very thin] (0,4-\x) -- (\x,4);
            % \draw[gray, very thin] (\x+1, 0) -- (5, 4-\x);
            % }
            \foreach \x in {0,1,...,4} 
            {
            \draw[gray, very thin] (0,\x) -- (\x,0);
            \draw[gray, very thin] (\x+1, 4) -- (5, \x);
            }
            % \draw (-.5,-.2) node {(0,0)};
            % \draw (5.5,4.1) node {(1,1)};
        \end{tikzpicture}
        \caption{Flat torus taking $m=4,n=5$}
        \label{fig torus}
    \end{figure}

    \noindent \textbf{Subcase 1:} $\overline{\gamma}$ is simple. Then $\overline{\gamma}$ is homotopic to some $(p,q)$ curve $\delta$ in the torus $T$ where $p,q$ are integers with $\gcd(p,q)=1$. Note that the number of points of intersection of $\overline{\gamma}$ with the systolic geodesics is the same as the number of points of intersection of $\delta$ and the curves triangulating the torus. Now, consider everything in the universal cover $\R^2$ (see Figure \ref{fig univer1}). Let $\Tilde{\delta}$ be the lift of $\delta$ in the universal cover. Then the number of points of intersection of $\Tilde{\delta}$ with the lifts of the triangulating curves is the same as the number of points of intersection of the red arc in Figure \ref{fig univer1} with the lifts of the triangulating curves. Now, we count the number of points of intersection of the red arc. First, suppose that $pq>0$. The number of points of intersection of the horizontal part of the red arc with the triangulating curves is $2pn+1$ and for the vertical part is $2qm-1$ (1 is subtracted because one intersection point is already counted). Hence, the total number is $2(pn+qm)$, which is even.

    Now, suppose that $pq < 0$. Without loss of generality, assume that $p<0$. In this case, the number of points of intersection of the horizontal part is $2|p|n-1$, and for the vertical part is $2qn-1$ (see Figure \ref{fig univer2}). Hence the total is $2(|p|n+qn-1)$, which is even.

    \begin{figure}[htbp]
        \centering
        \begin{tikzpicture}
            \fill[gray!20!white]  (0,0) rectangle (5,4);
            \fill[gray!20!white]  (6,5) rectangle (11,7);
            \draw[step=1cm,gray,very thin] (-.9,-.9) grid (11.9,6.9);
            \foreach \x in {0,1,...,6} 
            {
            \draw[gray, very thin] (-1,\x+1) -- (\x+1,-1);
            }
            \foreach \x in {0,1,...,3} 
            {
            \draw[gray, very thin] (\x,7) -- (\x+8,-1);
            }
            \foreach \x in {0,1,...,6} 
            {
            \draw[gray, very thin] (\x+4,7) -- (12,\x-1);
            }
            \draw [red] (0,.3)--(5.4,.3)--(5.4,5.4)--(6,5.4);
            \draw [cyan] (0,0) rectangle (5,4);
            \draw [cyan] (6,7)--(6,5)--(11,5)--(11,7);

            \draw (0,.3) node {\tiny $\bullet$} (6,5.4) node {\tiny $\bullet$} (6,5) node {\tiny $\bullet$} (0,0) node {\tiny $\bullet$};
            \fill[white!50!white] (5.1+1,4.5) rectangle (5.9+1,4.9); 
            \draw (-.5,-.3) node {$(0,0)$} (5.5+1,4.75) node {$(p,q)$};
        \end{tikzpicture}
        \caption{Tessellation in the universal cover; the shaded rectangle represents a fundamental domain}
        \label{fig univer1}
    \end{figure}

    \begin{figure}[htbp]
        \centering
        \begin{tikzpicture}
            
            \fill[gray!20!white]  (6,0) rectangle (11,4);
            \fill[gray!20!white]  (0,5) rectangle (5,7);
            \draw[step=1cm,gray,very thin] (-.9,-.9) grid (11.9,6.9);
            \foreach \x in {0,1,...,6} 
            {
            \draw[gray, very thin] (-1,\x+1) -- (\x+1,-1);
            }
            \foreach \x in {0,1,...,3}
            {
            \draw[gray, very thin] (\x,7) -- (\x+8,-1);
            }
            \foreach \x in {0,1,...,6} 
            {
            \draw [gray, very thin] (\x+4,7) -- (12,\x-1);
            }
            \draw [red] (6,.75)--(0.4,.75)--(0.4,5.5)--(0,5.5);
            \draw [cyan] (6,0) rectangle (11,4);
            \draw [cyan] (0,7)--(0,5)--(5,5)--(5,7);

            \draw (6,.75) node {\tiny $\bullet$} (0,5.5) node {\tiny $\bullet$} (0,5) node {\tiny $\bullet$} (6,0) node {\tiny $\bullet$};
            \fill[white!50!white] (-1.1+1,4.5) rectangle (-1.9+1,4.9);
            \fill[white!50!white] (-1.1+1+6,4.5-5) rectangle (-1.9+1+6,4.9-5);
            \draw (-.5+6,-.3) node {$(0,0)$} (5.5+1-7,4.75) node {$(p,q)$};

            %\draw[thick] plot[smooth cycle, tension=1] coordinates {(0,0) (2,1) (3,-1) (1,-3) (-1,-1)};
             %\draw[thick, domain=0:4, samples=100] plot (\x, {sin(90*\x)});
        \end{tikzpicture}
        \caption{Tessellation in the universal cover; the shaded rectangle represents a fundamental domain}
        \label{fig univer2}
    \end{figure}

\noindent\textbf{Subcase 2:} $\overline{\gamma}$ is not simple. Take a lift of $\overline{\gamma}$ on the universal cover $\R^2$ (see Figure \ref{fig univer3}). The red loop (or loops in general) intersects each triangulating curve at an even number (0 or 2) of points. The remaining portion produces a simple closed curve on the torus and by Subcase 1, this part also intersects the triangulating curve at an even number of points. This completes the proof.

\begin{figure}[htbp]
        \centering
        \begin{tikzpicture}
            \fill[gray!20!white]  (0,0) rectangle (5,4);
            \fill[gray!20!white]  (6,5) rectangle (11,7);
            \draw[step=1cm,gray,very thin] (-.9,-.9) grid (11.9,6.9);
            \foreach \x in {0,1,...,6} 
            {
            \draw[gray, very thin] (-1,\x+1) -- (\x+1,-1);
            }
            \foreach \x in {0,1,...,3} 
            {
            \draw[gray, very thin] (\x,7) -- (\x+8,-1);
            }
            \foreach \x in {0,1,...,6} 
            {
            \draw[gray, very thin] (\x+4,7) -- (12,\x-1);
            }
           \draw (0,.3) node {\tiny $\bullet$} (6,5.4) node {\tiny $\bullet$};
           \draw [red] (2.2,1.5)..controls(4.5,5) and (-1,3)..(2.2,1.5);
           \draw [cyan] (0,.3)..controls(1,.2) and (2,1.2)..(2.2,1.5);
           \draw [cyan] (2.2,1.5)..controls(4,1) and (5.5,5)..(6,5.4);
           \fill[white!50!white] (5.1+1,4.5) rectangle (5.9+1,4.9);
           \draw (-.5,-.3) node {$(0,0)$} (5.5+1,4.75) node {$(p,q)$};
           % \draw[thick] plot[smooth cycle, tension=1] coordinates {(2.2,1.7) (2.4,2.8) (1.5,3.5) (.5,2.5) (1,1.5)};
             %\draw[thick, domain=0:4, samples=100] plot (\x, {sin(90*\x)});
        \end{tikzpicture}
        \caption{Tessellation in the universal cover; the shaded rectangle represents a fundamental domain}
        \label{fig univer3}
    \end{figure}
    
\end{proof}

\clearpage
\thispagestyle{empty}
\hfill
\clearpage

\bibliographystyle{alpha}
\bibliography{bibliography.bib}

@book{farb2011primer,
  title={A primer on mapping class groups (pms-49)},
  author={Farb, Benson and Margalit, Dan},
  year={2011},
  publisher={Princeton university press}
}

@article{anderson2011small,
  title={Small filling sets of curves on a surface},
  author={Anderson, James W and Parlier, Hugo and Pettet, Alexandra},
  journal={Topology and its Applications},
  volume={158},
  number={1},
  pages={84--92},
  year={2011},
  publisher={Elsevier}
}

@book{buser2010geometry,
  title={Geometry and spectra of compact Riemann surfaces},
  author={Buser, Peter},
  year={2010},
  publisher={Springer Science \& Business Media}
}

@article {MR3877282,
    AUTHOR = {Sanki, Bidyut},
     TITLE = {Systolic fillings of surfaces},
   JOURNAL = {Bull. Aust. Math. Soc.},
  FJOURNAL = {Bulletin of the Australian Mathematical Society},
    VOLUME = {98},
      YEAR = {2018},
    NUMBER = {3},
     PAGES = {502--511},
      ISSN = {0004-9727},
   MRCLASS = {57M15 (05C10)},
  MRNUMBER = {3877282},
MRREVIEWER = {James W. Anderson},
       DOI = {10.1017/S0004972718000862},
       URL = {https://doi.org/10.1017/S0004972718000862}
}

@article{thurstonspine,
  title={A spine for {T}eichmuller space},
  author={Thurston,W.},
  journal={preprint},
  year={1986},
}

@article{mathieu2023estimating,
  title={Estimating the dimension of {T}hurston spine},
  author={Mathieu, Olivier},
  journal={arXiv preprint arXiv:2310.15618},
  year={2023}
}

@incollection {CannonHypGeom,
    AUTHOR = {Cannon, James W. and Floyd, William J. and Kenyon, Richard and
              Parry, Walter R.},
     TITLE = {Hyperbolic geometry},
 BOOKTITLE = {Flavors of geometry},
    SERIES = {Math. Sci. Res. Inst. Publ.},
    VOLUME = {31},
     PAGES = {59--115},
 PUBLISHER = {Cambridge Univ. Press, Cambridge},
      YEAR = {1997},
      ISBN = {0-521-62048-1; 0-521-62962-4},
   MRCLASS = {57M50 (57N10)},
  MRNUMBER = {1491098},
MRREVIEWER = {Lee\ Mosher},
}

@book{euclid2002elements,
 author={Fitzpatrick, Richard},
  title     = {Euclid's Elements of Geometry},
  year      = {2002},
  publisher = {Green Lion Press},
  address   = {Santa Fe, NM},
  note      = {Originally written around 300 BCE}
}

@book {Beardon,
    AUTHOR = {Beardon, Alan F.},
     TITLE = {The geometry of discrete groups},
    SERIES = {Graduate Texts in Mathematics},
    VOLUME = {91},
      NOTE = {Corrected reprint of the 1983 original},
 PUBLISHER = {Springer-Verlag, New York},
      YEAR = {1995},
     PAGES = {xii+337},
      ISBN = {0-387-90788-2},
   MRCLASS = {22E40 (11F06 20H15 30F35 57N10)},
  MRNUMBER = {1393195},
}

@book {Katok,
    AUTHOR = {Katok, Svetlana},
     TITLE = {Fuchsian groups},
    SERIES = {Chicago Lectures in Mathematics},
 PUBLISHER = {University of Chicago Press, Chicago, IL},
      YEAR = {1992},
     PAGES = {x+175},
      ISBN = {0-226-42582-7; 0-226-42583-5},
   MRCLASS = {20H10 (30F35)},
  MRNUMBER = {1177168},
MRREVIEWER = {I.\ Kra},
}

@incollection {Keen,
    AUTHOR = {Keen, Linda},
     TITLE = {Collars on {R}iemann surfaces},
 BOOKTITLE = {Discontinuous groups and {R}iemann surfaces ({P}roc. {C}onf.,
              {U}niv. {M}aryland, {C}ollege {P}ark, {M}d., 1973)},
    SERIES = {Ann. of Math. Stud.},
    VOLUME = {No. 79},
     PAGES = {263--268},
 PUBLISHER = {Princeton Univ. Press, Princeton, NJ},
      YEAR = {1974},
   MRCLASS = {30A46},
  MRNUMBER = {379833},
MRREVIEWER = {C.\ Maclachlan},
}

@article {Basmajian,
    AUTHOR = {Basmajian, Ara},
     TITLE = {Generalizing the hyperbolic collar lemma},
   JOURNAL = {Bull. Amer. Math. Soc. (N.S.)},
  FJOURNAL = {American Mathematical Society. Bulletin. New Series},
    VOLUME = {27},
      YEAR = {1992},
    NUMBER = {1},
     PAGES = {154--158},
      ISSN = {0273-0979,1088-9485},
   MRCLASS = {30F40 (53C22 57M50)},
  MRNUMBER = {1145576},
       DOI = {10.1090/S0273-0979-1992-00298-7},
       URL = {https://doi.org/10.1090/S0273-0979-1992-00298-7},
}

@article {BuserCollar,
    AUTHOR = {Buser, Peter},
     TITLE = {The collar theorem and examples},
   JOURNAL = {Manuscripta Math.},
  FJOURNAL = {Manuscripta Mathematica},
    VOLUME = {25},
      YEAR = {1978},
    NUMBER = {4},
     PAGES = {349--357},
      ISSN = {0025-2611,1432-1785},
   MRCLASS = {53C22 (10D05 30F35 32G15 58G25)},
  MRNUMBER = {509590},
MRREVIEWER = {R.\ J.\ Sibner},
       DOI = {10.1007/BF01168048},
       URL = {https://doi.org/10.1007/BF01168048},
}

@article {Chavel,
    AUTHOR = {Chavel, Isaac and Feldman, Edgar A.},
     TITLE = {Cylinders on surfaces},
   JOURNAL = {Comment. Math. Helv.},
  FJOURNAL = {Commentarii Mathematici Helvetici},
    VOLUME = {53},
      YEAR = {1978},
    NUMBER = {3},
     PAGES = {439--447},
      ISSN = {0010-2571,1420-8946},
   MRCLASS = {53C20},
  MRNUMBER = {493868},
MRREVIEWER = {Sh\^ukichi\ Tanno},
       DOI = {10.1007/BF02566089},
       URL = {https://doi.org/10.1007/BF02566089},
}

@article {Schmutz1999,
    AUTHOR = {Schmutz Schaller, Paul},
     TITLE = {Teichm\"uller space and fundamental domains of {F}uchsian
              groups},
   JOURNAL = {Enseign. Math. (2)},
  FJOURNAL = {L'Enseignement Math\'ematique. Revue Internationale. 2e
              S\'erie},
    VOLUME = {45},
      YEAR = {1999},
    NUMBER = {1-2},
     PAGES = {169--187},
      ISSN = {0013-8584},
   MRCLASS = {30F60 (20H10 30F35)},
  MRNUMBER = {1703366},
MRREVIEWER = {Steffen\ Timmann},
}

@book {Anderson,
    AUTHOR = {Anderson, James W.},
     TITLE = {Hyperbolic geometry},
    SERIES = {Springer Undergraduate Mathematics Series},
   EDITION = {Second},
 PUBLISHER = {Springer-Verlag London, Ltd., London},
      YEAR = {2005},
     PAGES = {xii+276},
      ISBN = {1-85233-934-9},
   MRCLASS = {51-01 (30-01 30F35 30F45 51M10)},
  MRNUMBER = {2161463},
}

@article {Bo21,
    AUTHOR = {Boissy, Corentin and Geninska, Slavyana},
     TITLE = {Systoles in translation surfaces},
   JOURNAL = {Bull. Soc. Math. France},
  FJOURNAL = {Bulletin de la Soci\'et\'e{} Math\'ematique de France},
    VOLUME = {149},
      YEAR = {2021},
    NUMBER = {2},
     PAGES = {417--438},
      ISSN = {0037-9484,2102-622X},
   MRCLASS = {32G15 (30F30)},
  MRNUMBER = {4307431},
MRREVIEWER = {Fr\'ed\'eric\ Palesi},
       DOI = {10.24033/bsmf.2833},
       URL = {https://doi.org/10.24033/bsmf.2833},
}

@incollection {Mas22,
    AUTHOR = {Massart, Daniel},
     TITLE = {A short introduction to translation surfaces, {V}eech
              surfaces, and {T}eichm\"uller dynamics},
 BOOKTITLE = {Surveys in geometry {I}},
     PAGES = {343--388},
 PUBLISHER = {Springer, Cham},
      YEAR = {[2022] \copyright2022},
      ISBN = {978-3-030-86694-5; 978-3-030-86695-2},
   MRCLASS = {37D40 (30F60 32G15)},
  MRNUMBER = {4404400},
       DOI = {10.1007/978-3-030-86695-2\_9},
       URL = {https://doi.org/10.1007/978-3-030-86695-2_9},
}

@article {Pa19,
    AUTHOR = {Judge, Chris and Parlier, Hugo},
     TITLE = {The maximum number of systoles for genus two {R}iemann
              surfaces with abelian differentials},
   JOURNAL = {Comment. Math. Helv.},
  FJOURNAL = {Commentarii Mathematici Helvetici. A Journal of the Swiss
              Mathematical Society},
    VOLUME = {94},
      YEAR = {2019},
    NUMBER = {2},
     PAGES = {399--437},
      ISSN = {0010-2571,1420-8946},
   MRCLASS = {32G15 (30F10 53C22)},
  MRNUMBER = {3942788},
MRREVIEWER = {Yunhui\ Wu},
       DOI = {10.4171/CMH/463},
       URL = {https://doi.org/10.4171/CMH/463},
}

@article {san19,
    AUTHOR = {Sanki, Bidyut and Gadgil, Siddhartha},
     TITLE = {Graphs of systoles on hyperbolic surfaces},
   JOURNAL = {J. Topol. Anal.},
  FJOURNAL = {Journal of Topology and Analysis},
    VOLUME = {11},
      YEAR = {2019},
    NUMBER = {1},
     PAGES = {1--20},
      ISSN = {1793-5253,1793-7167},
   MRCLASS = {57M50 (05C10 57M15)},
  MRNUMBER = {3918059},
MRREVIEWER = {Elena\ Pavelescu},
       DOI = {10.1142/S1793525319500018},
       URL = {https://doi.org/10.1142/S1793525319500018},
}

@article {sa19,
    AUTHOR = {Sanki, Bidyut},
     TITLE = {Embedding of metric graphs on hyperbolic surfaces},
   JOURNAL = {Bull. Aust. Math. Soc.},
  FJOURNAL = {Bulletin of the Australian Mathematical Society},
    VOLUME = {99},
      YEAR = {2019},
    NUMBER = {3},
     PAGES = {508--520},
      ISSN = {0004-9727,1755-1633},
   MRCLASS = {57M15 (05C10)},
  MRNUMBER = {3952485},
MRREVIEWER = {Ottorino\ Ori},
       DOI = {10.1017/s0004972719000145},
       URL = {https://doi.org/10.1017/s0004972719000145},
}

@article {AR15,
    AUTHOR = {Wright, Alex},
     TITLE = {Translation surfaces and their orbit closures: an introduction
              for a broad audience},
   JOURNAL = {EMS Surv. Math. Sci.},
  FJOURNAL = {EMS Surveys in Mathematical Sciences},
    VOLUME = {2},
      YEAR = {2015},
    NUMBER = {1},
     PAGES = {63--108},
      ISSN = {2308-2151,2308-216X},
   MRCLASS = {32G15 (30D05 30F30 30F60)},
  MRNUMBER = {3354955},
MRREVIEWER = {Jayadev\ S.\ Athreya},
       DOI = {10.4171/EMSS/9},
       URL = {https://doi.org/10.4171/EMSS/9},
}

@article {MR4790974,
    AUTHOR = {Columbus, Tobias and Herrlich, Frank and Muetzel, Bjoern and
              Weitze-Schmith\"usen, Gabriela},
     TITLE = {Systolic geometry of translation surfaces},
   JOURNAL = {Exp. Math.},
  FJOURNAL = {Experimental Mathematics},
    VOLUME = {33},
      YEAR = {2024},
    NUMBER = {3},
     PAGES = {357--378},
      ISSN = {1058-6458,1944-950X},
   MRCLASS = {30F10 (32G15 53C22)},
  MRNUMBER = {4790974},
       DOI = {10.1080/10586458.2022.2108946},
       URL = {https://doi.org/10.1080/10586458.2022.2108946},
}

@article {MR4089838,
    AUTHOR = {Lee, Hung-Chih and Chen, Zhen-Chun},
     TITLE = {Decomposing the complete graph into {H}amiltonian paths
              (cycles) and 3-stars},
   JOURNAL = {Discuss. Math. Graph Theory},
  FJOURNAL = {Discussiones Mathematicae. Graph Theory},
    VOLUME = {40},
      YEAR = {2020},
    NUMBER = {3},
     PAGES = {823--839},
      ISSN = {1234-3099,2083-5892},
   MRCLASS = {05C70 (05C45)},
  MRNUMBER = {4089838},
MRREVIEWER = {Ioan\ Tomescu},
       DOI = {10.7151/dmgt.2153},
       URL = {https://doi.org/10.7151/dmgt.2153},
}

@book{beardon2012geometry,
  title={The geometry of discrete groups},
  author={Beardon, Alan F},
  volume={91},
  year={2012},
  publisher={Springer Science \& Business Media}
}

@article {Akrout,
    AUTHOR = {Akrout, H.},
     TITLE = {Singularit\'es topologiques des systoles g\'en\'eralis\'ees},
   JOURNAL = {Topology},
  FJOURNAL = {Topology. An International Journal of Mathematics},
    VOLUME = {42},
      YEAR = {2003},
    NUMBER = {2},
     PAGES = {291--308},
      ISSN = {0040-9383},
   MRCLASS = {32G15 (57R70)},
  MRNUMBER = {1941437},
MRREVIEWER = {Yoichi\ Imayoshi},
       DOI = {10.1016/S0040-9383(01)00024-6},
       URL = {https://doi.org/10.1016/S0040-9383(01)00024-6},
}

@article {anderson6146relative,
    AUTHOR = {Anderson, James W. and Parlier, Hugo and Pettet, Alexandra},
     TITLE = {Relative shapes of thick subsets of moduli space},
   JOURNAL = {Amer. J. Math.},
  FJOURNAL = {American Journal of Mathematics},
    VOLUME = {138},
      YEAR = {2016},
    NUMBER = {2},
     PAGES = {473--498},
      ISSN = {0002-9327,1080-6377},
   MRCLASS = {32G15 (14H15 30F60 57N05 58D29)},
  MRNUMBER = {3483473},
MRREVIEWER = {Milagros\ Izquierdo},
       DOI = {10.1353/ajm.2016.0010},
       URL = {https://doi.org/10.1353/ajm.2016.0010},
}

@article{bourque2022dimension,
    AUTHOR = {Fortier Bourque, Maxime},
     TITLE = {The dimension of {T}hurston's spine},
   JOURNAL = {Int. Math. Res. Not. IMRN},
  FJOURNAL = {International Mathematics Research Notices. IMRN},
      YEAR = {2024},
    NUMBER = {4},
     PAGES = {3487--3496},
      DOI = {10.1093/imrn/rnad211},
       URL = {https://doi.org/10.1093/imrn/rnad211},
}

@article {MinimalDelaunay,
    AUTHOR = {Ebbens, Matthijs and Parlier, Hugo and Vegter, Gert},
     TITLE = {Minimal {D}elaunay triangulations of hyperbolic surfaces},
   JOURNAL = {Discrete Comput. Geom.},
  FJOURNAL = {Discrete \& Computational Geometry. An International Journal
              of Mathematics and Computer Science},
    VOLUME = {69},
      YEAR = {2023},
    NUMBER = {2},
     PAGES = {568--592},
      ISSN = {0179-5376,1432-0444},
   MRCLASS = {57K20 (32G15 52C45 57M15)},
  MRNUMBER = {4544681},
       DOI = {10.1007/s00454-022-00373-0},
       URL = {https://doi.org/10.1007/s00454-022-00373-0},
}

@book {Buser,
    AUTHOR = {Buser, Peter},
     TITLE = {Geometry and spectra of compact {R}iemann surfaces},
    SERIES = {Modern Birkh\"auser Classics},
      NOTE = {Reprint of the 1992 edition},
 PUBLISHER = {Birkh\"auser Boston, Ltd., Boston, MA},
      YEAR = {2010},
     PAGES = {xvi+454},
      ISBN = {978-0-8176-4991-3},
   MRCLASS = {58J50 (30F10 32G15 58J53)},
  MRNUMBER = {2742784},
       DOI = {10.1007/978-0-8176-4992-0},
       URL = {https://doi.org/10.1007/978-0-8176-4992-0},
}

@article {Schmutz,
    AUTHOR = {Schmutz, P.},
     TITLE = {Riemann surfaces with shortest geodesic of maximal length},
   JOURNAL = {Geom. Funct. Anal.},
  FJOURNAL = {Geometric and Functional Analysis},
    VOLUME = {3},
      YEAR = {1993},
    NUMBER = {6},
     PAGES = {564--631},
      ISSN = {1016-443X,1420-8970},
   MRCLASS = {30F99 (53A35 57N05)},
  MRNUMBER = {1250756},
MRREVIEWER = {Christophe\ Bavard},
       DOI = {10.1007/BF01896258},
       URL = {https://doi.org/10.1007/BF01896258},
}

@article {Jenni,
    AUTHOR = {Jenni, Felix},
     TITLE = {\"Uber den ersten {E}igenwert des {L}aplace-{O}perators auf
              ausgew\"ahlten {B}eispielen kompakter {R}iemannscher
              {F}l\"achen},
   JOURNAL = {Comment. Math. Helv.},
  FJOURNAL = {Commentarii Mathematici Helvetici},
    VOLUME = {59},
      YEAR = {1984},
    NUMBER = {2},
     PAGES = {193--203},
      ISSN = {0010-2571,1420-8946},
   MRCLASS = {58G25 (30F20)},
  MRNUMBER = {749104},
MRREVIEWER = {Scott\ Wolpert},
       DOI = {10.1007/BF02566345},
       URL = {https://doi.org/10.1007/BF02566345},
}

@article {SchmutzGlobalMaximal,
    AUTHOR = {Schmutz, Paul},
     TITLE = {Systoles on {R}iemann surfaces},
   JOURNAL = {Manuscripta Math.},
  FJOURNAL = {Manuscripta Mathematica},
    VOLUME = {85},
      YEAR = {1994},
    NUMBER = {3-4},
     PAGES = {429--447},
      ISSN = {0025-2611,1432-1785},
   MRCLASS = {57M50 (30F60 53C23)},
  MRNUMBER = {1305753},
MRREVIEWER = {Athanase\ Papadopoulos},
       DOI = {10.1007/BF02568209},
       URL = {https://doi.org/10.1007/BF02568209},
}

@incollection {Masur06,
    AUTHOR = {Masur, Howard},
     TITLE = {Ergodic theory of translation surfaces},
 BOOKTITLE = {Handbook of dynamical systems. {V}ol. 1{B}},
     PAGES = {527--547},
 PUBLISHER = {Elsevier B. V., Amsterdam},
      YEAR = {2006},
      ISBN = {0-444-52055-4},
   MRCLASS = {37A25 (28D15 37D40 37E35)},
  MRNUMBER = {2186247},
MRREVIEWER = {Richard\ Kenyon},
       DOI = {10.1016/S1874-575X(06)80032-9},
       URL = {https://doi.org/10.1016/S1874-575X(06)80032-9},
}

@book{Riemann1857,
  author    = {Bernhard Riemann},
  title     = {Theorie der Abel'schen Functionen},
  year      = {1857},
  publisher = {Crelle's Journal}
}

@book{Birkhoff1927,
  author    = {George D. Birkhoff},
  title     = {Dynamical Systems},
  year      = {1927},
  publisher = {AMS Colloquium Publications}
}

@article{Masur82,
  author    = {Howard Masur},
  title     = {Interval exchange transformations and measured foliations},
  journal   = {Annals of Mathematics},
  volume    = {115},
  number    = {1},
  pages     = {169--200},
  year      = {1982}
}

@article{Veech89,
  author    = {William A. Veech},
  title     = {Teichmüller curves in moduli space, Eisenstein series and an application to triangular billiards},
  journal   = {Inventiones Mathematicae},
  volume    = {97},
  pages     = {553--583},
  year      = {1989}
}

@article{MasurTabachnikov02,
  author    = {Howard Masur and Serge Tabachnikov},
  title     = {Rational billiards and flat structures},
  journal   = {Handbook of Dynamical Systems, Vol. 1A},
  pages     = {1015--1089},
  year      = {2002}
}

@incollection{Zorich06,
  author    = {Anton Zorich},
  title     = {Flat Surfaces},
  booktitle = {Frontiers in Number Theory, Physics, and Geometry. Vol. I},
  publisher = {Springer},
  year      = {2006},
  pages     = {439--586}
}

@article{McMullen03,
  author    = {Curtis T. McMullen},
  title     = {Billiards and {T}eichmüller curves on Hilbert modular surfaces},
  journal   = {Journal of the American Mathematical Society},
  volume    = {16},
  number    = {4},
  pages     = {857--885},
  year      = {2003}
}

@book{hatcher2002,
  author    = {Allen Hatcher},
  title     = {Algebraic Topology},
  publisher = {Cambridge University Press},
  year      = {2002},
  url       = {https://pi.math.cornell.edu/~hatcher/AT/ATpage.html}
}

@book{bridson1999metric,
  title={Metric Spaces of Non-Positive Curvature},
  author={Bridson, Martin R. and Haefliger, Andr{\'e}},
  series={Grundlehren der mathematischen Wissenschaften},
  volume={319},
  publisher={Springer-Verlag},
  year={1999},
  isbn={978-3-540-64324-1},
  doi={10.1007/978-3-662-12494-9},
  url={https://link.springer.com/book/10.1007/978-3-662-12494-9}
}

@article{schmutz1993riemann,
  author    = {Schmutz, Peter},
  title     = {Riemann surfaces with shortest geodesic of maximal length},
  journal   = {Geometric and Functional Analysis},
  volume    = {3},
  number    = {6},
  pages     = {564--631},
  year      = {1993},
  publisher = {Springer},
  doi       = {10.1007/BF01896444}
}

@article{saha_sanki_2023_complexity_in_bolza_surface,
  author       = {Bhola Nath Saha and Bidyut Sanki},
  title        = {Complexity in Bolza Surface},
  eprint       = {2311.18483},
  archivePrefix= {arXiv},
  primaryClass = {math.DG},
  year         = {2023},
  month        = {Nov},
  note         = {preprint},
  url          = {https://arxiv.org/abs/2311.18483}
}

@article {MR4985436,
    AUTHOR = {Dey, Achintya and Sanki, Bidyut},
     TITLE = {Systolic embedding of graphs on translation surfaces},
   JOURNAL = {Geom. Dedicata},
  FJOURNAL = {Geometriae Dedicata},
    VOLUME = {219},
      YEAR = {2025},
    NUMBER = {6},
     PAGES = {Paper No. 91, 20},
      ISSN = {0046-5755,1572-9168},
   MRCLASS = {30F10 (32G15 53C22)},
  MRNUMBER = {4985436},
MRREVIEWER = {Yumin\ Zhong},
       DOI = {10.1007/s10711-025-01056-9},
       URL = {https://doi.org/10.1007/s10711-025-01056-9},
}

@article{DeySahaSankiToAppear,
  author  = {Achintya Dey and Bhola Nath Saha and Bidyut Sanki},
  title   = {Bolza-like Surfaces in the {Thurston} Set},
  journal = {Proceedings of the American Mathematical Society},
  year    = {2026},
  note    = {doi: 10.1090/proc/17851}
}

\end{document}